\documentclass{article}
\usepackage{amssymb}
\usepackage{amsmath}
\usepackage{enumitem}
\usepackage{bbm}
\usepackage{bookmark}
\usepackage{tikz-cd}
\usepackage{hyperref}

\usepackage[titletoc]{appendix}
\usepackage{amsthm}
\usepackage{parskip}
\usepackage{amsfonts}
\usepackage{caption}
\usepackage{mathrsfs}
\usepackage{afterpage}
\usepackage{float}
\usepackage{multirow}
\usepackage{graphicx}
\usepackage{color}
\usepackage[margin=1.0in]{geometry}
\newcommand*{\SHom}{\mathscr{H}\kern -.5pt om}
\newcommand*{\SExt}{\mathscr{E}\kern -.5pt xt}
\newcommand{\blackqed}{\hfill\ensuremath{\blacksquare}}

\begin{document}
\title{$p$-curvature in non-commutative Hodge theory and the Kontsevich-Soibelman operad}
\date{}
\author{Zihong Chen}
\maketitle
\theoremstyle{definition}
\newtheorem{mydef}{Definition}[section]
\numberwithin{mydef}{section}
\newtheorem{rmk}[mydef]{Remark}
\newtheorem{conj}[mydef]{Conjecture}
\theoremstyle{plain}
\newtheorem{question}[mydef]{Question}
\theoremstyle{plain}
\newtheorem{cor}[mydef]{Corollary}
\newtheorem{ex}[mydef]{Example}
\newtheorem{lemma}[mydef]{Lemma}
\newtheorem{thm}[mydef]{Theorem}
\newtheorem{prop}[mydef]{Proposition}
\begin{abstract}
Let $\mathcal{C}$ be a differential $\mathbb{Z}/2$-graded category over $\mathbb{C}$. Its periodic cyclic homology $HH^{per}_*(\mathcal{C})$, when viewed as a vector bundle over the formal punctured disk, is equipped with a canonical connection $\nabla^{\mathcal{C}}_{\partial_t}$ called the Getzler-Gauss-Manin connection in the $t$-direction (or the categorical $t$-connection). Our main result is that when $\mathcal{C}$ is smooth and proper, this connection has a regular singularity at $t=0$ and quasi-unipotent monodromy, affirming a conjecture of Katzarkov-Kontsevich-Pantev \cite{KKP}. Our proof follows a reduction mod $p$ argument using a spreading out technique of To\"{e}n \cite{To} and a regularity criterion of Katz \cite{Ka1}. The main novelty is the proof of a multiplicative property of the $p$-curvature of $\nabla^{\mathcal{C}}_{\partial_t}$ through an interpretation in terms of the two-colored Kontsevich-Soibelman operad. \par\indent
We then explore two applications of the main result. First, we give an explicit description (under additional assumptions) of the non-commutative Hodge filtration on the periodic cyclic homology of a smooth proper d$(\mathbb{Z}/2)$g category, following a construction of Shklyarov \cite{Shk}. The second application, which is special to our particular method or proof, is an upper bound on the sizes of Jordan blocks of the monodromy of $\nabla^{\mathcal{C}}_{\partial_t}$, which simultaneously generalizes Scherk's local monodromy theorem for isolated hypersurface singularities \cite{Sche} and (partially) a recent result of Pomerleano-Seidel on the quantum connection of a closed monotone symplectic manifold \cite{PS2}. As a specialization, we show that the sizes of these Jordan blocks are bounded above by the diagonal dimension of $\mathcal{C}$ plus one. 

\end{abstract}

\def\acts{\curvearrowright}
\tableofcontents

\numberwithin{equation}{section}
\renewcommand{\theHequation}{\thesection.\arabic{equation}}
\section{Introduction}
\subsection{Main result and motivation}
This paper studies the interplay of topological operads and the arithmetic theory of differential equations, with an eye towards applications in non-commutative Hodge theory.\par\indent
To serve as motivation, we consider the following question. Fix $\mathcal{C}$ a differential $\mathbb{Z}/2$-graded (d($\mathbb{Z}/2$)g) category over $\mathbb{C}$. Its periodic cyclic homology $HH^{per}_*(\mathcal{C})$, when viewed as a vector bundle over the formal punctured disk, is equipped with a canonical connection called the \emph{Getzler-Gauss-Manin connection in the $t$-direction} (or the \emph{categorical $t$-connection}). Explicitly, let $(CC_*(\mathcal{C}),b)$ denote the cyclic bar complex and $B$ the Connes operator, then $HH^{per}_*(\mathcal{C})$ is computed by the chain complex $(CC_*(\mathcal{C})((t)),b+tB)$, where $t$ is a formal even variable (which we think of as the generator of $H^2(BS^1;\mathbb{C})$). At the chain level, the $t$-connection can be written as 
\begin{equation}\label{eq:categorical t connection intro}
\nabla^{\mathcal{C}}_{\partial_t}:=\frac{\partial}{\partial t}+\frac{\Gamma}{2t}+\frac{\iota_{d}}{2t^2},    
\end{equation}
where $\Gamma$ denotes the length operator on cyclic chains, $d$ denotes the differential of $\mathcal{C}$ viewed as a Hochschild cochain, and $\iota$ denotes the cyclic contraction operator in non-commutative calculus (cf. Section 2 for a more detailed introduction). \par\indent
When the $\mathbb{Z}/2$-grading on $\mathcal{C}$ can be lifted to a $\mathbb{Z}$-grading, the datum of the categorical $t$-connection is extraneous and fully subsumed into the standard $t$-filtration on $HH^{per}_*(\mathcal{C})$, cf. \cite[Section 3.4, 3.5]{Shk}. However, for a general d($\mathbb{Z}/2)$g category (also known as an \emph{nc-space}), the $t$-connection is a genuinely important ingredient in the study of its Hodge theoretic aspects \cite{KKP}\cite{Shk}. In fact, it should be viewed as part of the \emph{non-commutative Hodge filtration}, cf. Section 6.1 for more explanation. \par\indent
One of the complications concerning the categorical $t$-connection is its apparent irregularity at $t=0$ (for a formal classification of irregular connections, we refer the readers to e.g. \cite{Lev}\cite{Mal1}). For instance, unlike in the regular case, formal solutions to irregular connections may not admit any local meromorphic lifts. The following conjecture predicts, nonetheless, that in favorable situations this apparent second order pole is only a disguise.\\
\begin{conj}(Katzarkov-Kontsevich-Pantev)\label{thm:KKP conjecture}
For a smooth and proper d$(\mathbb{Z}/2)$g category $\mathcal{C}$, its categorical $t$-connection \eqref{eq:categorical t connection intro} has a regular singularity at $t=0$.    
\end{conj}
Explicitly, this means that there exists a formal gauge transformation (with poles in $t$ allowed) such that the gauge-transformed connection matrix of \eqref{eq:categorical t connection intro} has a simple pole at $t=0$. Conjecture \ref{thm:KKP conjecture} is partially motivated by homological mirror symmetry and appears as a small component of \cite[Conjecture 2.24 and Question 2.2.7]{KKP}. Indeed, the evidence so far towards this conjecture mainly comes from the following two geometric situations appearing in mirror symmetry\footnote{The apparent discrepancy between the `regular singular' property in Conjecture \ref{thm:KKP conjecture} and `unramified exponential type' in (i) and (ii) comes from the fact that $\mathrm{MF}(W)$ or $\mathrm{Fuk}(X)$ is most naturally considered as a curved d$(\mathbb{Z}/2)$g (or $A_{\infty}$) category, while the $\mathcal{C}$ appearing in Conjecture \ref{thm:KKP conjecture} is uncurved. For instance, in the context of (ii), to match the setting of Conjecture \ref{thm:KKP conjecture} we may take the uncurved category $\mathrm{Fuk}(X)_{\lambda}$ (for each eigenvalue $\lambda$ of $c_1(TX)\star$ separately) consisting of monotone Lagrangian branes with disk potential $\lambda$, cf. \cite{Sh}.}: 
\begin{enumerate}[label=\roman*)]
    \item when $\mathcal{C}=\mathrm{MF}(W)$ is the d$(\mathbb{Z}/2)$g category of matrix factorizations of a smooth function $W:\mathbb{C}^n\rightarrow \mathbb{C}$ with isolated singularities, its categorical $t$-connection can be identified with the (inverted) Fourier-Laplace dual of the Gauss-Manin system of $W$, which has regular singularities by the Monodromy Theorem \cite{Lan}; cf. Section 6.2 for more details. By general D-module theory this implies that the $t$-connection itself has \emph{unramified exponential type} \cite[Section 2.2]{PS1}, i.e. admits a decomposition into a finite direct sum
    \begin{equation}\label{eq:exponential type decomp}
    \bigoplus_{\lambda}(\mathbb{C}((t)),\frac{\partial}{\partial t}-\frac{\lambda}{t^2})\otimes \nabla^{reg,\lambda}_{\frac{\partial}{\partial t}},
    \end{equation}
    where each $\nabla^{reg,\lambda}_{\frac{\partial}{\partial t}}$ is regular singular, and $\lambda$ ranges over the critical values of $W$. 
    \item More recently, works of \cite{PS1}\cite{Che2} prove that the quantum connection of a closed monotone symplectic manifold $X$ (which roughly corresponds to setting $\mathcal{C}$ to be the monotone Fukaya category of $X$) is also of unramified exponential type, where the decomposition \eqref{eq:exponential type decomp} sums over the eigenvalues of the quantum multiplication by $c_1(TX)$. 
\end{enumerate}
The main result of this paper is the following, which in particular affirms Conjecture \ref{thm:KKP conjecture}.\\
\begin{thm}\label{thm:main theorem}
For a smooth and proper d$(\mathbb{Z}/2)$g category $\mathcal{C}$, its categorical $t$-connection \eqref{eq:categorical t connection intro} has a regular singularity at $t=0$. Moreover, the eigenvalues of its monodromy are roots of unity. 
\end{thm}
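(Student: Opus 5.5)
The plan is to use the reduction mod $p$ strategy announced in the abstract, via Katz's criterion. Katz's criterion says a connection on $K((t))$ has a regular singularity with quasi-unipotent monodromy provided the following holds for a finitely generated $\mathbb{Z}$-algebra $R\subset\mathbb{C}$ over which the connection is defined. For infinitely many primes $p$ (or a set of density one) and the reductions of $R$ mod $p$, the $p$-curvature $\psi_p(t\partial_t)=(t\nabla_{\partial_t})^p-t\nabla_{\partial_t}$ is nilpotent. In its refined form for a $\mathbb{Z}$-lattice $M\otimes R[[t]]$, nilpotence of the $p$-curvature of $t\nabla$ on the lattice gives both regularity and that the residue eigenvalues are rational.

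\textbf{Step 1: spreading out.} Using To\"{e}n's spreading out technique, I will realize $\mathcal{C}$ as the base change of a smooth proper d$(\mathbb{Z}/2)$g category $\mathcal{C}_R$ over a finitely generated smooth $\mathbb{Z}$-algebra $R\subset\mathbb{C}$. After shrinking $\operatorname{Spec}R$, I will also arrange:
\begin{itemize}
\item $HH_*(\mathcal{C}_R)$ is finite projective over $R$, and the Hodge-to-de Rham degeneration holds, so $HH^{per}$ is a free $R((t))$-module.
\item Formula \eqref{eq:categorical t connection intro} makes sense over $R$ with its $1/2$ and poles in $t$.
\end{itemize}
The degeneration should hold over $R$ after inverting finitely many primes, by Kaledin's results, or simply by generic flatness plus the result over $\mathbb{C}$. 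Reducing mod $p$ gives a smooth proper category $\mathcal{C}_k$ over a perfect field $k$ of characteristic $p$, together with its $t$-connection on $HP_*(\mathcal{C}_k)$.

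\textbf{Step 2: the key $p$-curvature identity.} This is the main obstacle. I would try to prove that the $p$-curvature of $\nabla^{\mathcal{C}_k}_{\partial_t}$ has a multiplicative form: $\psi_p$ is, up to a unit and a power of $t$, the operator $\iota_{d}^{\,p}$-type term, i.e. it is governed by a $p$-fold \emph{cup power} of the class of $d$ (essentially the curvature/potential class). Concretely, I would express $(t^2\nabla_{\partial_t})^p$ as an operation built from the Hochschild cochain $d$ and the circle action. Then I would use the two-colored Kontsevich--Soibelman operad, which acts on the pair (Hochschild cochains, Hochschild chains), to identify the $p$-fold composite with the contraction by the $p$-th cup power $d^{\cup p}$, modulo homotopy and modulo $p$. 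The expected shape is
\[
\psi_p(\nabla_{\partial_t}) \sim t^{-2p}\,\iota_{d^{\cup p}} + (\text{lower order terms}),
\]
with $d^{\cup p}$ arising from the Frobenius/restricted structure ($x\mapsto x^{[p]}$) on the homotopy Gerstenhaber algebra in characteristic $p$. Since $d$ is the differential of an uncurved category, its class in $HH^*(\mathcal{C}_k)$ is zero. Multiplicativity then forces the leading irregular part of $\psi_p$ to vanish.

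\textbf{Step 3: nilpotence.} From Step 2, the $p$-curvature of $t\nabla_{\partial_t}$ acts on a suitable lattice (the image of negative cyclic homology $HC^-$) as an operator whose action on the associated graded of the $t$-adic filtration is the $p$-curvature of the residue part $\Gamma/2$. That residue part has semisimple $p$-curvature with eigenvalues in $\mathbb{F}_p$, and the remaining part is nilpotent. Equivalently, $\psi_p(t\nabla)$ is nilpotent after a twist by Frobenius-compatible grading. Feeding this into Katz's criterion for all large $p$ yields that $\nabla^{\mathcal{C}}_{\partial_t}$ is regular singular at $t=0$ with rational residue eigenvalues, hence the monodromy eigenvalues are roots of unity.

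The delicate points are the following:
\begin{itemize}
\item making the operadic identity hold at chain level uniformly in $p$;
\item ensuring the lattice is preserved by $t^2\nabla$, using properness and degeneration;
\item ensuring everything commutes with base change from $R$ to the residue fields.
\end{itemize}
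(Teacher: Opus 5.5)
Your overall architecture matches the paper's: spread out, prove nilpotent $p$-curvature using the Kontsevich--Soibelman operad, then apply Katz. The argument breaks at the decisive point of Step 2, however.

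\textbf{Step 2: the class $[d]$ is not zero, only nilpotent.} The class that governs the $p$-curvature is $[d]\in HH^{even}$, the differential viewed as a length-one Hochschild cocycle. Equivalently, it is the Kodaira--Spencer class of the rescaling family $d_s=sd$. It has nothing to do with curvature, and being uncurved does not make it vanish.
\begin{itemize}
\item It is a coboundary when the $\mathbb{Z}/2$-grading lifts to a $\mathbb{Z}$-grading: it is then the Hochschild differential of the Euler grading operator, which is why the pole is simple in that case.
\item It is not a coboundary in general. For $\mathrm{MF}(W)$ one has $HH^*\cong\mathrm{Jac}(W)$ with $[d]\mapsto 2[W]$, which is nonzero whenever $W$ is not quasi-homogeneous.
\item The case $[d]=0$ is exactly the special case in which the paper deduces \emph{finite} monodromy. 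An argument resting on $[d]=0$ therefore cannot prove the general theorem.
\end{itemize}
What does hold for smooth $\mathcal{A}$ is that $[d]$ is \emph{nilpotent}. It lies in $F^1$ of the length filtration, and $F^i\cup F^j\subset F^{i+j}$. Since $\mathcal{A}\in\mathrm{Perf}(\mathcal{A}^e)$, the quasi-inverse of the bar augmentation factors through some $B_{\leq N_0}(\mathcal{A})$, which forces $[d]^{\cup(N_0+1)}=0$.

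Nilpotence of $[d]$ only passes to the $p$-curvature if the \emph{whole} $p$-curvature, not just a leading coefficient, is a multiplicative function of $[d]$. Your shape $t^{-2p}\iota_{d^{\cup p}}+(\text{lower order})$ gives no control over products of the lower-order terms. Moreover $(e_d+tE_d)^p$ is not $\iota_{d^{\cup p}}$.

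The paper obtains exact multiplicativity in three steps.
\begin{itemize}
\item Jacobson's formula with $[C,D]=-\tfrac12 D$, together with $C^p=C$, gives exactly $F^{\mathcal{A}}_{2t^2\partial_t}=(e_d+tE_d)^p$.
\item $(e+tE)^p$ is equivalent to a degree-$0$ cocycle of the $S^1$-Tate complex of unordered cyclic cacti. The inputs are the local relations valid for the length-one derivation $d$ with $d^2=0$, the Cartan formula $L=d_{cell}E+[e,B]$, the claim $[e+tE,L]\sim 0$, and $p=0$ in $\mathbf{k}$.
\item The topology of $\mathrm{UConf}_p(\mathbb{C}^*)$ shows that every such operation equals $c\cdot\bigcap^{C_p}([d],-)$, where $\bigcap^{C_p}$ is a multiplicative, Frobenius-semilinear action of $HH^{even}$. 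The ingredients are that the Connes-operator factor acts trivially on $HH^{per}$, $C_p$-localization to $S^1/C_p$, and the one-dimensional $H^0$ whose generator restricts to $[e_0]-[e_1]\theta$.
\end{itemize}
Then $(F^{\mathcal{A}}_{2t^2\partial_t})^N=c^N\bigcap^{C_p}([d]^{\cup N},-)=0$ for $N>N_0$.

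\textbf{Step 3 has independent problems.}
\begin{itemize}
\item It relies on Hodge-to-de Rham degeneration, which is not known in the $\mathbb{Z}/2$-graded setting. Kaledin's theorem is for $\mathbb{Z}$-graded categories, and there is no result over $\mathbb{C}$ to spread out. The paper never uses degeneration. It gets $HH^{per}_*(\mathcal{A}_R/R)$ finite free with the needed base-change isomorphisms from three facts: a minimal model $(P((t)),\sum_i\delta_i t^i)$ of the mixed complex, generic freeness over $R((t))$, and contractibility of acyclic complexes of projectives over regular rings of finite global dimension.
\item $t\nabla_{\partial_t}$ does not preserve the image of $HC^-$, because the term $\iota_d/2t$ has a pole. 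So there is no associated-graded reduction to the residue $\Gamma/2$.
\item A semisimple $p$-curvature with nonzero eigenvalues would not be nilpotent anyway. In fact the $\Gamma/2$ part contributes $C^p-C=0$.
\item Katz's criterion needs nilpotence at almost all maximal ideals. Nilpotence at infinitely many does not force rational exponents: $t\partial_t-\sqrt2$ over $\mathbb{Z}[\sqrt2]$ has zero $p$-curvature at all split primes yet non-quasi-unipotent monodromy. Your argument would cover all large $p$, so this point is minor.
\end{itemize}
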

By the following lemma, it suffices to prove Theorem \ref{thm:main theorem} for d$(\mathbb{Z}/2)$g \emph{algebras}, which we will work exclusively with from now on. \\
\begin{lemma}\label{thm:reduction from category to algebra}
For a smooth proper d$(\mathbb{Z}/2)$g category $\mathcal{C}$, there is a smooth proper d$(\mathbb{Z}/2)$g algebra $\mathcal{A}$ together with an isomorphism of connections
\begin{equation}
(HH^{per}_*(\mathcal{C}),\nabla^{\mathcal{C}}_{\partial_t})\cong (HH^{per}_*(\mathcal{A}),\nabla^{\mathcal{A}}_{\partial_t}). 
\end{equation}
\end{lemma}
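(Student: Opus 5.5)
The plan is to use the standard fact that a smooth proper d$(\mathbb{Z}/2)$g category is Morita equivalent to a d$(\mathbb{Z}/2)$g algebra. Then we check that the categorical $t$-connection, together with periodic cyclic homology, is invariant under Morita equivalence.

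\emph{Step 1: finding the algebra.} Since $\mathcal{C}$ is smooth and proper, its triangulated (perfect) hull $\mathrm{Perf}(\mathcal{C})$ has a classical generator $E$. One way to produce it: smoothness says the diagonal bimodule is perfect, so it is built from finitely many representables $h_{X_i}\boxtimes h^{X_j}$, and hence $E=\bigoplus_i X_i$ generates, up to retracts. Set $\mathcal{A}:=\mathrm{End}_{\mathrm{Perf}(\mathcal{C})}(E)$, a d$(\mathbb{Z}/2)$g algebra. The functor $\mathcal{A}\to\mathrm{Perf}(\mathcal{C})\leftarrow \mathcal{C}$ gives a zig-zag of Morita equivalences. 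Smoothness and properness are Morita invariant, so $\mathcal{A}$ is again smooth and proper.

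\emph{Step 2: invariance of the connection.} A d$(\mathbb{Z}/2)$g functor $F:\mathcal{D}\to\mathcal{D}'$ induces a map of cyclic bar complexes $F_*:CC_*(\mathcal{D})\to CC_*(\mathcal{D}')$. This map:
\begin{itemize}
\item commutes with $b$ and $B$;
\item preserves the length grading, so it commutes with $\Gamma$;
\item intertwines $\iota_{d_{\mathcal{D}}}$ with $\iota_{d_{\mathcal{D}'}}$, because $F$ commutes with the differentials and $\iota_d$ only inserts $d$ into one tensor slot (cyclically).
\end{itemize}
Hence $F_*$ is a chain map of complexes $(CC_*((t)),b+tB)$ that intertwines the chain-level operators $\nabla_{\partial_t}$ in \eqref{eq:categorical t connection intro}. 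We apply this to the inclusions $\mathcal{A}\hookrightarrow \mathrm{Perf}(\mathcal{C})$ (as a full subcategory on $E$) and $\mathcal{C}\hookrightarrow\mathrm{Perf}(\mathcal{C})$, both full embeddings.

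\emph{Step 3: quasi-isomorphism.} It remains to show that full embeddings whose images generate up to cones, shifts and retracts induce quasi-isomorphisms on $(CC_*((t)),b+tB)$. This is the main technical point.

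First, Morita invariance of Hochschild homology (Keller) gives a quasi-isomorphism on $(CC_*,b)$. Passing to $b+tB$ over $((t))$ needs care, since $((t))$-periodic complexes are completions. Filter by powers of $t$, i.e.\ use the Hodge-to-de Rham spectral sequence for the negative cyclic complex $CC_*[[t]]$. The filtration is complete, and the map on the associated graded is the Hochschild quasi-isomorphism, so the map on $CC_*[[t]]$ is a quasi-isomorphism. Inverting $t$ then gives the periodic statement.

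In the $\mathbb{Z}/2$-graded setting the $t$-adic filtration is still complete and exhaustive on $CC_*[[t]]$. Properness also gives finite-dimensional Hochschild homology, so the comparison argument applies.

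Finally, a quasi-isomorphism that intertwines the chain-level connections induces an isomorphism of the connections on cohomology. Composing along the zig-zag yields the claimed isomorphism $(HH^{per}_*(\mathcal{C}),\nabla^{\mathcal{C}}_{\partial_t})\cong (HH^{per}_*(\mathcal{A}),\nabla^{\mathcal{A}}_{\partial_t})$.
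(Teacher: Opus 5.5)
Your proposal is correct and follows the same route as the paper: take a generator of $\mathrm{Perf}(\mathcal{C})$, let $\mathcal{A}$ be its endomorphism algebra, and use the Morita zig-zag $\mathcal{C}\to\mathrm{Perf}(\mathcal{C})\leftarrow\mathcal{A}$ together with Morita invariance of $HH^{per}_*$ and functoriality of the chain-level $t$-connection. The paper cites these ingredients (Efimov for the generator, Hughes for functoriality) where you sketch them directly, and your sketches are sound, although the appeal to properness in the $t$-adic comparison step is unnecessary since that argument works for any quasi-isomorphism on Hochschild chains.
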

\emph{Proof}. By \cite[Lemma 2.5]{Efi}, the derived category of $\mathcal{C}$ has a compact generator $G$. Let $\mathcal{A}$ be the endomorphism d$(\mathbb{Z}/2)$g algebra of $G$, which is smooth and proper. Then there are Morita equivalences 
\begin{equation}
 \mathcal{C}\rightarrow \mathrm{Perf}(\mathcal{C}) \leftarrow \mathcal{A}.  
\end{equation}
The statement follows by the Morita invariance of periodic cyclic homology and the functoriality of the categorical $t$-connection \cite[Lemma 3.26]{Hug}. \qed 

\subsection{Methods of proof}
The proof of Theorem \ref{thm:main theorem} follows a reduction mod $p$ argument using a theorem of Katz, which reduces the regularity of the categorical $t$-connection over $\mathbb{C}$ to a question about a fundamental invariant of its reduction modulo primes (what this means will be made precise in Section 5) called the \emph{$p$-curvature}. The mod $p$ problem can then be solved by considerations involving the structure of certain topological operads. Below we give a brief introduction to various concepts that arise in this process. 
\subsubsection{$p$-curvature}
$p$-curvature is a key invariant of an algebraic differential equation defined over a field of positive characteristic $p$. Its definition rests on the following simple observation: if $X$ be a scheme over a field $\mathbf{k}$ of characteristic $p>0$, and $D: \mathcal{O}_X\rightarrow \mathcal{O}_X$ be a $\mathbf{k}$-linear derivation on $X$, then its $p$-th iteration 
\begin{equation}
D^p:=\overbrace{D\circ D\circ\cdots\circ D}^{p\;times}: \mathcal{O}_X\rightarrow \mathcal{O}_X  
\end{equation}
remains a derivation (an easy application of the Leibniz rule), sometimes called the \emph{$p$-th power of $D$}. \par\indent
In the above setting, further fix $(M,\nabla)$ an $\mathcal{O}_X$-module with connection. The \emph{$p$-curvature} of $\nabla$ along $D$ is defined to be
\begin{equation}
F^{\nabla}_D:=\nabla^p_{D}-\nabla_{D^p}: M\rightarrow M.    
\end{equation}
We refer the readers to \cite[Section 5]{Ka1} for its key properties and a more in-depth introduction.  \par\indent
One of the most important features of $p$-curvature is that it gives a criterion (known as \emph{Cartier descent}, cf. \cite[Theorem 5.1]{Ka1}) for when a differential equation in characteristic $p$ has a full set of algebraic solutions. Namely, this is the case if and only if its $p$-curvature vanishes. This starkly contrasts the case of characteristic $0$, where no such simple criterion for the existence of algebraic solutions to a general differential equation (with singularity) is known. A well-known (and still wildly open) conjecture of Grothendieck and Katz predicts that the existence of algebraic solutions to a differential equation in characteristic $0$ is encoded in the $p$-curvature of its mod $p$ reduction across different primes $p$; we refer the readers to \cite[Lecture 2]{Esn} for a nice introduction to this topic and \cite{Ka3} for a more detailed overview. \par\indent
The following theorem of Katz plays a central role in our argument.\\
\begin{thm}\cite[Theorem 13.0]{Ka1}\label{thm:Katz original theorem}
Let $R$ be a finite type integral domain over $\mathbb{Z}$ whose fraction field has characteristic $0$; $X\rightarrow\mathrm{Spec}\,R$ a smooth $R$-scheme of relative dimension one; and $(M,\nabla)$ a vector bundle $M$ on $X$ equipped with an integrable connection. Suppose for almost all maximal ideals $\mathfrak{m}\subset R$, the $p$-curvature ($p$ equals the residue characteristic of $\mathfrak{m}$) of $(M,\nabla)$ basechanged to $R/\mathfrak{m}$ is nilpotent, then $(M,\nabla)$ has regular singularities and quasi-unipotent monodromies after basechanged to $\mathrm{Frac}(R)$. \qed  
\end{thm}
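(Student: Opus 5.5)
The statement is Katz's theorem, which the paper quotes from \cite{Ka1}. I would prove it following Katz's strategy: reduce to a formal local problem at each point at infinity of $X$, and use the derivation $D=x\frac{d}{dx}$, which satisfies $D^p=D$ in characteristic $p$.

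\emph{Step 1: reduction to a formal local statement.} After replacing $R$ by a localization $R[1/f]$, which only removes finitely many closed fibres and so keeps the hypothesis on almost all $\mathfrak{m}$, I would assume the following. The curve $X$ embeds in a smooth proper curve $\bar X$ over $R$. The complement $\bar X\setminus X$ is a disjoint union of sections, each finite étale over $R$. The bundle $M$ is free near each such section, with a connection matrix that has poles only along the boundary. Regularity and monodromy are local at each boundary point. So, after a finite étale extension of $R$, it suffices to treat $M$ over $R((x))$ or a localization of $R[x]$, with a connection $\nabla_{x\,d/dx}=D+A(x)$ where $A$ has a pole of some order at $x=0$. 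The hypothesis transfers from $\frac{d}{dx}$ to $D$ by the $p$-linearity of $p$-curvature: $\psi_{fD'}=f^p\psi_{D'}$ whenever $(fD')^p=f^pD'^p$ up to a term controlled by Hochschild's formula. Hence $\psi_D=\nabla_D^p-\nabla_D$ is nilpotent modulo almost all $\mathfrak{m}$.

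\emph{Step 2: regularity.} I would argue by contradiction using a cyclic vector, which exists over $\mathrm{Frac}(R)(x)$ and hence over a localization of $R$. This turns the connection into a differential operator $L=D^n+a_1D^{n-1}+\dots+a_n$. Regularity is equivalent to the Fuchs condition: every $a_i$ is regular at $x=0$, i.e. the Newton polygon has no positive slopes. Suppose instead there is a positive slope $\lambda=s/r>0$. Passing to the ramified cover $x=y^r$ (harmless for nilpotence, since $p$-curvature is compatible with pullback for $p\nmid r$), there is a block on which $\nabla_D$ has leading term $y^{-s}C$ with $C$ not nilpotent. A leading-order computation of $\nabla_D^p-\nabla_D$ in the filtration by pole order then gives
\[
\psi_D\equiv y^{-sp}C^p \pmod{\text{lower order poles}}.
\]
Since $C$ is not nilpotent over $\mathrm{Frac}(R)$, its reduction is not nilpotent for almost all $\mathfrak{m}$. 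This contradicts the nilpotence of $\psi_D$.

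This leading-term analysis is where I expect the main obstacle. One has to ensure that, mod $p$, no cancellation occurs from lower terms or from derivatives of $A$. The point is that the derivative terms have strictly smaller pole order than the $p$-fold product. One also has to keep track of denominators, such as $1/r$ and the coefficients of the cyclic vector, so that they lie in $R$ after a single localization independent of $p$.

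\emph{Step 3: quasi-unipotence.} Once the connection is regular, I would gauge it, over a finite extension of $R$, to the form $\nabla_D=D+N+O(x)$ with $N$ constant. Reducing mod $x$, the $p$-curvature becomes $N^p-N$, using $D^p=D$. Nilpotence forces every eigenvalue $a$ of $N$, which is algebraic over $\mathrm{Frac}(R)$, to satisfy $a^p\equiv a$ modulo almost all $\mathfrak{m}$. In other words, $a$ reduces into the prime field $\mathbb{F}_p$ at almost all primes. A Chebotarev density argument, applied to the number field generated by $a$ (specialize $R$ to a number ring if $a$ is transcendental over $\mathbb{Q}$; then $a$ is a constant of the finitely generated field, and one can reduce to $\mathbb{Q}$), shows that $a\in\mathbb{Q}$. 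Therefore the local monodromy $\exp(-2\pi i N)$ has eigenvalues that are roots of unity. To control the dependence of the nonresonance shearing on $p$, I would first shear so that the eigenvalues differ by non-integers or coincide. This needs only finitely many shearing operations, defined over $R$ after localization.
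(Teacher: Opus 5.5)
Your proposal is correct and is essentially Katz's own proof, which the paper only cites: localize $R$ and pass to the formal punctured neighbourhood of each cusp; after ramification find a basis with $\nabla_{D}=D+y^{-s}(C+O(y))$, $s\geq 1$, $C$ non-nilpotent (Turrittin's lemma); play this against the leading term $y^{-sp}C^p$ of $\psi_D$; and in the regular case use that $\psi_D$ induces $N^p-N$ on the residue, which forces rational exponents. Two places are worth tidying. In Step 2, the cleanest way to get your ``block'' is to take the maximal Newton slope and shear the whole companion basis by $\mathrm{diag}(x^{j\lambda})$, which avoids any formal slope splitting. In Step 3, rule out a transcendental $a$ by noting that $T\mapsto a$ makes $\mathrm{Spec}\,R'\to\mathbb{A}^1_{\mathbb{Z}}$ dominant, so the good locus maps onto closed points $(p,f)$ with $\deg f\geq 2$, where $a \bmod \mathfrak{m}\notin\mathbb{F}_p$; hence $a$ is algebraic, and Chebotarev then applies.
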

More precisely, what is relevant to us is a formal variant of Katz's theorem which is an easy adaptation of the original one, cf. Theorem \ref{thm:Katz theorem}. Theorem \ref{thm:Katz original theorem} has been successfully used to prove regular singularity of connections in a variety of situations, for instance:
\begin{itemize}
    \item In Katz's original paper \cite{Ka1}, Theorem \ref{thm:Katz original theorem} is used to prove the regular singularity of the Gauss-Manin connection on the algebraic de Rham cohomology of a smooth and proper family $X\rightarrow S$. The proof relies on the computation that the $p$-curvature of Gauss-Manin connection vanishes on the graded pieces of the \emph{conjugate filtration}, which stabilizes in finite steps\footnote{Katz additionally derives in \cite{Ka2} a much deeper relation between the $(-1)$-shifted associated graded of the $p$-curvature of the Gauss-Manin connection and the Kodaira-Spencer class, which leads to the proof of the Grothendieck-Katz conjecture in this setting.}, and hence giving the nilpotence of $p$-curvature required to apply Theorem \ref{thm:Katz original theorem}.
    \item The work of Petrov-Vaintrob-Vologodsky \cite{PVV} generalizes Katz's approach above to prove regular singularity for the Getzler-Gauss-Manin connection of a smooth and proper family of $\mathbb{Z}$-graded dg categories. On the other hand, even though the statement of our Theorem \ref{thm:main theorem} bears some similarity with the main results in \cite{PVV}, the methods of loc.cit. cannot be directly adapted (to the author's knowledge) to the $\mathbb{Z}/2$-graded setting. Indeed, their proof relies on Kaledin's \emph{non-commutative conjugate filtration} \cite{Kal1}, which is in general only defined on the co-periodic cyclic homology. The transfer of this structure to the periodic cyclic homology only works when $\mathcal{A}$ is a smooth and homologically bounded dg algebra, and in particular breaks down in the $\mathbb{Z}/2$-graded setting. 
    \item In a slightly different direction, a recent work of the author \cite{Che2} uses Theorem \ref{thm:Katz original theorem} to prove that the quantum connection of a closed monotone symplectic manifold is of unramified exponential type \eqref{eq:exponential type decomp}. In loc.cit., the proof of nilpotence does not use a filtration argument and instead relies on an interpretation, using ideas from an earlier work of \cite{Lee}, of the $p$-curvature of the quantum connection in terms of the \emph{quantum Steenrod operations}, which are certain operations on (equivariant) quantum cohomology constructed out of mod $p$ counts of (equivariant) Gromov-Witten invariants \cite{SW}\cite{Wilk}. In some sense, the current paper is an attempt to adapt the high-level ideas of \cite{Che2} to the context of non-commutative Hodge theory. 
\end{itemize}

\subsubsection{The Kontsevich-Soibelman operad}
Deligne's Hochschild cohomology conjecture, now a theorem with various proofs \cite{KS2}\cite{MS}\cite{Tam}\cite{Vo2}, states that the Hochschild cochain complex admits an action by (the singular chains on) the operad of little disks. The Kontsevich-Soibelman operad \cite{KS1}, a two-colored extension of the little disks operad, gives a generalization of Deligne's conjecture as it parametrizes `universal operations' on the pair $(CC^*(\mathcal{A}),CC_*(\mathcal{A}))$ given by the Hochschild cochain and chain complexes of a dg or d($\mathbb{Z}/2$)g algebra $\mathcal{A}$. \par\indent
There are various different perspectives on the Kontsevich-Soibelman operad, of which we review three that are most relevant for this paper. \par\indent
\textbf{A two-colored version of the little disks operad}. As a most visual description, consider the two-colored topological operad $\{\mathrm{Cyl}(l,0),\mathrm{Cyl(k,1)}\}_{l\geq 1,k\geq 0}$, where $\{\mathrm{Cyl}(l,0)\}_{l\geq 1}$ is defined to be the little disks operad and $\mathrm{Cyl(k,1)}$ is defined as the configuration space of $k$ disks lying on a cylinder (with some extra data; cf. Section 3.2 for more details). The operadic composition rule is induced by insertion of configurations of disks or stacking of two cylinders. This topological model is particularly convenient for computations involving equivariant cohomology, cf. Section 4.2. On the other hand, from this description it is a priori not clear how to write down an explicit action on the Hochschild cochain/chain complex.  \par\indent
\textbf{Homotopy calculus}. Instead of directly defining the operad, one can start with describing the type of operations the given operad should parametrize. A \emph{Gerstenhaber algebra} is a graded module $V^{\bullet}$ together with a degree $0$ graded commutative product $\cdot$ and a degree $-1$ Lie bracket $[-,-]$ such that
\begin{equation}
 [a,bc]=[a,b]c+(-1)^{(|a|-1)|b|} b[a,c].   
\end{equation}
Classical examples include the multi-vector fields on a smooth manifold equipped with the wedge product and the Schouten bracket or the Hochschild cohomology of a dg or d$(\mathbb{Z}/2)$g algebra equipped with its cup product and Gerstenhaber bracket. The \emph{Gerstenhaber operad}, which is obtained by abstracting the cup product and Lie bracket (as well as their compatibility relations), agrees with the homology of the little disks operad (this was one of the initial motivations for Deligne's conjecture). \par\indent
A \emph{calculus} is a pair consisting of a Gerstenhaber algebra $V^{\bullet}$ and another graded module $\Omega^{\bullet}$, together with a degree $0$ graded commutative algebra action $i$ (which in later parts of the paper is denoted `$e$') of $V^{\bullet}$ on $\Omega^{\bullet}$, a degree $-1$ graded Lie algebra action $L$ of $V^{\bullet}$ on $\Omega^{\bullet}$, and a degree $-1$ differential $d$ on $\Omega^{\bullet}$ such that 
\begin{equation}
[i_a,L_b]=i_{[a,b]}\quad,\quad L_{ab}=L_ai_b+(-1)^{|a|}i_aL_b\quad,\quad [d,i_a]=L_a.    
\end{equation}
As its name suggest, a calculus is the abstraction of the various algebraic structures that appear in the study of multi-vector fields and differential forms on a smooth manifold; another important example is the pair given by the Hochschild cohomology and homology of a dg or d$(\mathbb{Z}/2)$g algebra. In parallel to the case of the Gerstenhaber operad, the calculus operad is equivalent to the homology of the two-colored little disks operad.   \par\indent
Roughly speaking, a homotopy calculus operad should encode chain level lifts of the cohomology operations (and their relations) parametrized by the calculus operad. For instance, one should expect that the categorical $t$-connection \eqref{eq:categorical t connection intro} admits some kind of description inside this framework. This can be abstractly achieved by taking a cofibrant resolution (e.g. the cobar-bar construction) of the calculus operad, cf. \cite{DTT}. However, in practice it is often desirable to use a smaller and more workable model. \par\indent
\textbf{The cyclic cacti operad}. The primary model we use for the Kontsevich-Soibelman operad is a variant of the \emph{cacti operad} first introduced by McClure-Smith \cite{MS} in their proof of Deligne's conjecture, cf. Section 3.1 for more details. Cacti are in bijection with the cells of a cellular operad up to homotopy which is homotopy equivalent to the little disks operad. The cacti operad both encodes (essentially tautologically) chain level operations on the Hochschild cochain complex of an associative/dg algebra and has convenient combinatorics (due to its simplicial structure) to work with. In \cite{Che3}, the author introduced a two-colored generalization of the cacti operad, called the \emph{cyclic cacti operad} (cf. Figure \ref{fig:cells_in_cyclic_cacti_and_their_actions} for some illustrations), and proved that it is equivariantly homotopy equivalent to the chains on the two-colored little disks operad, cf. Theorem \ref{thm:comparison of Cact with Cyl}.  \par\indent
Let 
\begin{equation}
\mathbf{KS}=\{\mathbf{KS}(l,0),\mathbf{KS}(k,1)\}_{l\geq 1,k
\geq 0}    
\end{equation}
denote the Kontsevich-Soibelman operad (for now we will be agnostic about which model we are using). By considering its induced action on the pair $(CC^*(\mathcal{A}),CC_*(\mathcal{A}))$ and taking appropriate (homotopy) fixed points/orbits, one obtains an action 
\begin{equation}\label{eq: equivariant action of cyclic cacti on HH_* and HH^* intro}
\Xi_{\mathcal{A}}: H^*((\mathbf{KS}(k,1)_{\Sigma_k})^{tS^1})\rightarrow \mathrm{Hom}_{H^2_{S^1}(pt;R)}(H^*((CC^*(\mathcal{A})^{\otimes_R k})^{h\Sigma_k})\otimes_R HH^{per}_*(\mathcal{A}), HH^{per}_*(\mathcal{A})).        
\end{equation}
Let $F^{\mathcal{A}}_{2t^2\partial_t}$ denote the $p$-curvature of the categorical $t$-connection along the vector field $2t^2\partial_t$. The key observation is the following. \\
\begin{prop}(cf. Corollary \ref{thm: p curvature as KS operation})\label{thm:p curvature as KS operation intro}
Let $\mathcal{A}$ be a d$(\mathbb{Z}/2)$g algebra over a field of odd characteristic $p$. Then 
\begin{equation}
F^{\mathcal{A}}_{2t^2\partial_t}=\Xi_{\mathcal{A}}([\alpha])([d^{\otimes p}],-)   
\end{equation}
for some $[\alpha]\in H^0((\mathbf{KS}(p,1)_{\Sigma_p})^{tS^1})$. 
\end{prop}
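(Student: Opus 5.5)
Expanding the $p$-th power of $\nabla_{2t^2\partial_t}$ gives an explicit chain-level formula for $F^{\mathcal{A}}_{2t^2\partial_t}$. We then recognize that formula as the operation attached to an explicit degree-$0$ Tate cocycle on $\mathbf{KS}(p,1)$, built from cells of the cyclic cacti model and inserted with $d^{\otimes p}$.

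\textbf{Step 1: explicit form of the $p$-curvature.} Set $D:=2t^2\partial_t$ on $\mathbf{k}((t))$. We have
\[
D(t^n)=2n\,t^{n+1},\qquad D^p(t^n)=2^p\, n(n+1)\cdots(n+p-1)\, t^{n+p},
\]
and the product $n(n+1)\cdots(n+p-1)$ is divisible by $p$. Hence $D^p=0$ in characteristic $p$, and
\[
F^{\mathcal{A}}_{2t^2\partial_t}=\big(\nabla_{2t^2\partial_t}\big)^p, \qquad \nabla_{2t^2\partial_t}=2t^2\partial_t+t\Gamma+\iota_d .
\]
Write $\nabla_{2t^2\partial_t}=t\,\mathcal{L}+\iota_d$ with $\mathcal{L}:=2t\partial_t+\Gamma$. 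The operator $\mathcal{L}$ is a grading (weight) operator: it acts on $t^n\otimes(\text{chain of length }m)$ by the scalar $2n+m$, modulo the appropriate shift conventions.

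Expand $(t\mathcal{L}+\iota_d)^p$ as a sum over words in the two letters. Using the commutation relation between $\mathcal{L}$ and $\iota_d$ (which shifts weight by a constant) and between $\mathcal{L}$ and $t$, collect the $\mathcal{L}$ factors into scalars. The result is a polynomial in $t$ with coefficients given by iterated compositions of $\iota_d$ and the operators $t^jB$-type pieces hidden in $b+tB$, with coefficients in $\mathbb{F}_p$. Terms that contain both $\mathcal{L}$ and $\iota_d$ in the ``wrong'' pattern should cancel modulo $p$, by the usual Jacobson-type identity for $p$-th powers. The goal is
\[
F^{\mathcal{A}}_{2t^2\partial_t}=\iota_d^{\,p}+(\text{explicit } t\text{-multiples}) ,
\]
with all terms still at most $p$-linear in $d$.

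\textbf{Step 2: cyclic cacti realization.} In the cyclic cacti model of Section 3.1, each term above is the action of a cell of $\mathbf{KS}(p,1)$ with all $p$ inputs labelled by $d$:
\begin{itemize}[label=$\cdot$]
\item $\iota_d$ is the basic one-lobe cell, and $\iota_d^{\,p}$ is the iterated composition of $p$ such cells;
\item $B$ and the powers of $t$ correspond to the $S^1$-rotation cells, which are the Tate parameter $u=t$ under the identification $H^*(BS^1)=\mathbf{k}[t]$.
\end{itemize}
Write the whole sum as $\alpha=\sum_j t^j\alpha_j$, with $\alpha_j$ a chain in $\mathbf{KS}(p,1)$, of the correct total degree $0$ in the Tate complex.

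Then check three things:
\begin{itemize}[label=$\cdot$]
\item $\alpha$ is a cycle in $(\mathbf{KS}(p,1)_{\Sigma_p})^{tS^1}$. This uses that $d\circ d=0$, i.e. the Maurer--Cartan equation $[d,d]=0$, which on the operad side corresponds to the relevant boundary faces cancelling.
\item $\alpha$ is $\Sigma_p$-coinvariant. This is automatic, because the sum over words is symmetric under permuting the identical inputs $d$.
\item $\Xi_{\mathcal{A}}([\alpha])([d^{\otimes p}],-)$ recovers the operator from Step 1. Here $d^{\otimes p}$ defines a class in $H^*((CC^*(\mathcal{A})^{\otimes p})^{h\Sigma_p})$ because $d$ is even and a cocycle.
\end{itemize}

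\textbf{Main obstacle.} The hard part is the cycle condition, together with matching the $\mathbb{F}_p$-coefficients produced by Step 1 with those of a single class that is independent of $\mathcal{A}$. The natural first attempt is to expect $\alpha$ to be the ``$p$-th power'' or Steenrod-type class of $\iota$: the image of $\iota^{\otimes p}$ under the $\Sigma_p$-equivariant diagonal in Tate cohomology. Then compare the mod $p$ cancellations in Step 1 with the standard vanishing of binomial coefficients $\binom{p}{i}$ for $0<i<p$. If a direct chain-level match fails, fall back on working universally: take $\mathcal{A}$ to be the free curved/dg model generated by an odd-degree square-zero element $d$. By naturality, the $p$-curvature operator is then determined by a universal element, and that element lies in the image of $\Xi$.
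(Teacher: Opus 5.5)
\textbf{The gap is the cycle condition in Step 2.} You argue that $d\circ d=0$ makes the relevant operadic boundary faces cancel. That cannot work, because the operad knows nothing about $\mathcal{A}$.

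Concretely, the chain that realizes your formula is the $p$-fold composite $(e+tE)^p$ of the cells $e,E$ representing $e_d,E_d$. It is \emph{not} a cocycle of $\bigl(C^{cell}_{-*}(\mathrm{UCact}^p_{\circlearrowright};\mathbf{k})((t)),d_{cell}-t[B,-]\bigr)$. Its boundary is $t\sum_{i=1}^p(e+tE)^{i-1}L(e+tE)^{p-i}$, where $L=d_{cell}E+[e,B]$ is the cell of the graphical Cartan homotopy formula. This sum vanishes only modulo relations that use specific properties of $d$, not in the operad itself.

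A warning sign is that your justification never uses the characteristic or the exponent $p$. Already on the algebra side, $[b+tB,\iota_d^n]=\pm n\,t\,L_d\,\iota_d^{n-1}$ (using $[L_d,\iota_d]=0$). This is not zero in general and is killed by the factor $n$ only when $p\mid n$.

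The missing ideas are:
\begin{enumerate}[label=(\roman*)]
\item An equivalence relation $\sim$ on cellular chains that becomes invisible once every lobe carries $d$. The paper generates it by local relations encoding that $d$ has length one, $d^2=0$, and $d$ is a derivation.
\item The commutation $[e+tE,L]\sim 0$. This is the operadic shadow of $[L_d,\iota_d]=0$, which follows from the Daletsky--Gelfand--Tsygan homotopy since $T(d,d)=0$. It makes all $p$ summands of the boundary agree, so the boundary is $\sim p\,tL(e+tE)^{p-1}=0$.
\item A way to upgrade ``cocycle modulo $\sim$'' to an honest cocycle with the same action on $d^{\otimes p}$. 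The paper shows that every cell is $\sim$ to $0$ or to a unique reduced cell, and that $d_{cell}$ and $[B,-]$ preserve reducedness. Hence $((e+tE)^p)_{red}$ is a genuine cocycle, and its class is the required $[\alpha]$.
\end{enumerate}

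\textbf{Your fallbacks do not close this.}
\begin{itemize}
\item There is no class of $\iota$ whose Steenrod-type $p$-th power you could take, since $e+tE$ is itself not a cocycle: $(d_{cell}-t[B,-])(e+tE)=tL$.
\item The universal-model argument assumes the conclusion. Naturality of an operation in $\mathcal{A}$ does not place it in the image of $\Xi_{\mathcal{A}}$; that is exactly what has to be shown.
\end{itemize}

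\textbf{Step 1 should end at an exact identity.} The correct result is $F^{\mathcal{A}}_{2t^2\partial_t}=\iota_d^p=(e_d+tE_d)^p$, with no extra terms; no $B$ enters the connection at all.

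The paper gets this as follows. Set $C=t\partial_t+\Gamma/2$ and $D=\iota_d/(2t)$, so that $[C,D]=-\tfrac12 D$. Jacobson's formula then gives $(C+D)^p=C^p+D^p+D$. Fermat gives $C^p=C$, hence $F^{\mathcal{A}}_{t\partial_t}=D^p$. Frobenius-linearity of $p$-curvature in the vector field then gives $F^{\mathcal{A}}_{2t^2\partial_t}=(2t)^pD^p=\iota_d^p$.

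Your direct route also works. Take $X=2t^2\partial_t+t\Gamma$ and $Y=\iota_d$. Then $X^p=0$ and $[X,t^kY]=(2k+1)t^{k+1}Y$. So the Jacobson correction carries the factor $3\cdot 5\cdots(2p-3)\equiv 0$. You need to actually carry this out, because any additional terms would also have to be realized operadically.
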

As a corollary of Proposition \ref{thm:p curvature as KS operation intro}, one obtains the following multiplicative property of $p$-curvature, from which the desired nilpotence property (which allows one to apply Katz's theorem \ref{thm:Katz original theorem}) will be derived.\\
\begin{cor}(cf. Theorem \ref{thm: p curvature equals multiplicative operation})\label{thm: p curvature equals multiplicative operation intro}
In the setting of Proposition \ref{thm:p curvature as KS operation intro}, there exists a multiplicative (with respect to the cup product on $HH^{even}(\mathcal{A})^{(1)}$) action
\begin{equation}
\bigcap\nolimits^{C_p}: HH^{even}(\mathcal{A})^{(1)}\otimes HH_*^{per}(\mathcal{A})\rightarrow  HH_*^{per}(\mathcal{A})   
\end{equation}
and a constant $c$ such that 
\begin{equation}
 F^{\mathcal{A}}_{2t^2\partial_t}=c\cdot\bigcap\nolimits^{C_p}([d],-).
\end{equation}
Here, $HH^{even}(\mathcal{A})$ denotes the even part of Hochschild cohomology and $(-)^{(1)}$ denotes the Frobenius twist.
\end{cor}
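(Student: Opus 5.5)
The plan is to deduce the Corollary from Proposition \ref{thm:p curvature as KS operation intro} by a Steenrod-type construction on the $\mathbf{KS}$-operad. We will define $\bigcap^{C_p}$ as the restriction of the $p$-fold operation to the cyclic subgroup $C_p\subset\Sigma_p$, evaluated on $p$-th tensor powers. Concretely, for a Hochschild cocycle $x$ of even degree, the class $[x^{\otimes p}]\in H^*((CC^*(\mathcal{A})^{\otimes p})^{hC_p})$ depends only on $[x]$. It depends Frobenius-semilinearly on $[x]$, since in odd characteristic $p$ the cross terms of $(x+y)^{\otimes p}$ form free $C_p$-orbits and therefore vanish in Tate/homotopy fixed points modulo norms. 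This is why the Frobenius twist $(-)^{(1)}$ appears.

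We fix a distinguished class $[\beta]\in H^0((\mathbf{KS}(p,1)_{C_p})^{tS^1})$ and set $\bigcap^{C_p}([x],m):=\Xi_{\mathcal{A}}([\beta])([x^{\otimes p}],m)$. The natural candidate for $\beta$ comes from the cylinder model $\mathrm{Cyl}(p,1)$: it is the configuration of $p$ disks placed at the $p$-th roots of unity on a single circle of the cylinder. Its $C_p$-orbit is an equivariant fundamental cycle, and rotating it by $S^1$ gives the class we need. We then show $[\alpha]=c\cdot\mathrm{tr}([\beta])$ up to the norm image, for the transfer $\mathrm{tr}$ from $C_p$ to $\Sigma_p$. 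The strategy is to compute $H^0((\mathbf{KS}(p,1)_{\Sigma_p})^{tS^1})$ via the equivariant comparison of Theorem \ref{thm:comparison of Cact with Cyl}. Since $p$ is odd and $|\Sigma_p:C_p|$ is prime to $p$, the $p$-local Tate cohomology is controlled by the normalizer of $C_p$. Localizing to the $C_p$-fixed locus should identify the relevant degree-$0$ group with a rank-one module generated by $[\beta]$, which forces $[\alpha]$ to be a scalar multiple $c$ of $[\beta]$.

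Multiplicativity, $\bigcap^{C_p}([x][y],m)=\bigcap^{C_p}([x],\bigcap^{C_p}([y],m))$, is proved geometrically in the cylinder model. Composing two copies of the root-of-unity configuration by stacking cylinders gives the operation for the pair $(x,y)$. Inserting the cup-product operation from $\mathrm{Cyl}(2,0)$ into each of the $p$ disks gives $(xy)^{\otimes p}$. We need a $C_p\times S^1$-equivariant homotopy between these two configurations in $\mathrm{Cyl}(2p,1)$. Such a homotopy exists by sliding each disk of the first circle toward the corresponding disk of the second circle, which is possible because the diagonal $C_p$-actions match. Since $|x|$ and $|y|$ are even, no Koszul signs intervene. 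Everything is then transported through $\Xi_{\mathcal{A}}$.

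The main obstacle is the rank-one identification of $[\alpha]$ with $c[\beta]$. It requires genuine control of the $S^1$-Tate and $C_p$-equivariant cohomology of $\mathrm{Cyl}(p,1)$ in degree $0$, together with checking that terms killed by norms genuinely act trivially after evaluation on $[d^{\otimes p}]$. We would first try a fixed-point localization argument: restrict to the $C_p$-fixed stratum of configurations invariant under rotation by $2\pi/p$, which is a circle's worth of root-of-unity configurations. There, Tate cohomology should reduce to a small computation that determines $c$, possibly only up to a unit, which is all the statement requires.
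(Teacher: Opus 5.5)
Your geometric picture (root-of-unity configurations, localization onto rotation-invariant configurations, stacking cylinders for multiplicativity) points in the right direction. But the step you single out as the main obstacle is false as you formulate it, and the idea the paper uses to get around it is missing from your plan. Under $\mathrm{UCyl}(p,1)\simeq\mathrm{UConf}_p(\mathbb{C}^*)\times S^1$, the second factor records the relative angle between the input and output marked points and carries the trivial $S^1$-action. The K\"{u}nneth formula then gives
\[
H^0\big(C_{-*}(\mathrm{UCyl}(p,1);\mathbf{k})^{tS^1}\big)\cong H^0\big(C_{-*}(\mathrm{UConf}_p(\mathbb{C}^*);\mathbf{k})^{tS^1}\big)\otimes H_0(S^1;\mathbf{k})\oplus H^1\big(C_{-*}(\mathrm{UConf}_p(\mathbb{C}^*);\mathbf{k})^{tS^1}\big)\otimes H_1(S^1;\mathbf{k}),
\]
and the second summand is nonzero: the same localization computation shows it is one-dimensional. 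So nothing forces $[\alpha]$ to be a multiple of your $[\beta]$; it can have a component in the second summand. The paper disposes of that component with Lemma \ref{thm:action of degree 0 equivariant cohomology of UCyl}: every class there has the form $B\circ[\gamma]$, with $B$ the Connes cell, and so acts by zero on $HH^{per}_*(\mathcal{A})$. Only after this does one reduce to the first summand. Your concern about norm terms is not the real issue. $\Sigma_p$ acts freely on $\mathrm{Cyl}(p,1)$, so $\mathbf{KS}(p,1)_{h\Sigma_p}\simeq C_{-*}(\mathrm{UCyl}(p,1);\mathbf{k})$, and no transfer or normalizer argument is needed.

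Relatedly, you conflate two cyclic groups. The relevant $C_p$ is the rotation subgroup $C_p\subset S^1$, not $C_p\subset\Sigma_p$. The Tate construction is taken with respect to $S^1$, and in characteristic $p$ it localizes onto configurations whose $S^1$-isotropy contains $C_p$. The paper uses three inputs:
\begin{itemize}
\item the equivariant equivalence $\mathrm{UConf}_p(\mathbb{C}^*)^{C_p}\simeq S^1/C_p$;
\item $C_p$-localization;
\item the splitting $X^{tC_p}=X^{tS^1}\oplus X^{tS^1}\theta$.
\end{itemize}
Together with the computation from the earlier cyclic-cacti work, these show that the first summand is one-dimensional, generated by a class whose restriction to the $C_p$-Tate construction is $[e_0]-[e_1]\theta$, not a single $C_p$-invariant configuration. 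This also affects your multiplicativity argument. Restriction from $S^1$ to $C_p$ is split injective, so the paper checks multiplicativity for $[e_0]-[e_1]\theta$ using $\theta^2=0$. That needs two inputs. The first is the stacking homotopy showing the $[e_0]$-operation is multiplicative, which is your picture. The second is a separate identity for the $\theta$-coefficient: the sum of the two mixed composites (an $[e_1]$-operation after an $[e_0]$-operation, and vice versa) equals the $[e_1]$-operation applied to the $p$-th power of the cup product. Your ``$C_p\times S^1$-equivariant homotopy'' is too vague to supply this second identity.
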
\par\indent
\begin{rmk}
 The $\bigcap^{C_p}$ in Corollary \ref{thm: p curvature equals multiplicative operation intro} is closed related to the \emph{$C_p$-equivariant cap product} introduced in \cite{Che1}, where it was shown to recover under favorable situations the quantum Steenrod operation of a closed monotone symplectic manifold mentioned at the end of Section 1.2.1. It is in this sense that the approach of this paper gives a non-commutative generalization of \cite{Che2}. On the other hand, using ideas from stable homotopy theory, a recent work of Rezchikov \cite{Rez} proved a version of Corollary \ref{thm: p curvature equals multiplicative operation intro} for Getzler-Gauss-Manin connections assuming $\mathcal{A}$ is smooth proper $\mathbb{Z}$-graded and satisfies suitable homological bounds \cite[Theorem 9.1]{Rez} (conditions that guarantee a lift to the sphere spectrum). 
\end{rmk}

\subsection{A non-commutative local monodromy theorem}
Perhaps surprisingly, the operadic interpretation of $p$-curvature allows one to bound the sizes of Jordan blocks of the monodromy of the categorical $t$-connection using ideas from \cite{PS2}. \\
\begin{thm}(cf. Theorem \ref{thm:nc Scherk}) \label{thm:nc Scherk intro}
Let $\mathcal{C}$ be a smooth proper d$(\mathbb{Z}/2)$g category over $\mathbb{C}$. Suppose the Hochschild cohomology class of its differential $[d]\in HH^*(\mathcal{C})$ satisfies $[d]^{\cup r}=0$, then the size of the largest Jordan block of the monodromy of the $t$-connection on $HH^{per}_*(\mathcal{C})$ is $\leq r$.      
\end{thm}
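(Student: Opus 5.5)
First reduce to an algebra. By Lemma \ref{thm:reduction from category to algebra} we may replace $\mathcal{C}$ by a smooth proper d$(\mathbb{Z}/2)$g algebra $\mathcal{A}$ with an isomorphic $t$-connection. Hochschild cohomology is Morita invariant as an algebra, and the class of the differential corresponds under this invariance, so the hypothesis $[d]^{\cup r}=0$ transfers to $\mathcal{A}$.

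Next spread out. Following the spreading-out technique of To\"{e}n used for Theorem \ref{thm:main theorem}, I would choose a finitely generated $\mathbb{Z}$-algebra $R\subset\mathbb{C}$ and a smooth proper model $\mathcal{A}_R$ with the following properties:
\begin{itemize}
\item $HH^*(\mathcal{A}_R)$ and $HH^{per}_*(\mathcal{A}_R)$ (the latter as a lattice over $R((t))$) are finite free;
\item they commute with base change to $R/\mathfrak{m}$ for almost all maximal ideals $\mathfrak{m}$;
\item the identity $[d]^{\cup r}=0$ holds already over $R$.
\end{itemize}
After inverting finitely many elements this is a routine Noetherian/generic-freeness argument. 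Then for almost all $\mathfrak{m}$, with residue field $\mathbf{k}$ of characteristic $p$, the reduction $\mathcal{A}_{\mathbf{k}}$ satisfies $[d]^{\cup r}=0$ in $HH^{even}(\mathcal{A}_{\mathbf{k}})$. Hence $[d]^{(1)}$ is nilpotent of order $\le r$ in the Frobenius twist.

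The mod $p$ step is where Corollary \ref{thm: p curvature equals multiplicative operation intro} enters. It gives $F^{\mathcal{A}}_{2t^2\partial_t}=c\cdot\bigcap^{C_p}([d],-)$ with $\bigcap^{C_p}$ multiplicative. Therefore
\[
(F^{\mathcal{A}}_{2t^2\partial_t})^r=c^r\bigcap\nolimits^{C_p}([d]^{\cup r},-)=0 .
\]
So the $p$-curvature along $2t^2\partial_t$ is nilpotent of order $\le r$, uniformly in $p$. Since $2t^2$ is invertible on the punctured disk and $p$-curvature is $p$-linear in the vector field, the $p$-curvature along $\partial_t$ (or along $t\partial_t$) is a unit multiple of this one, up to a factor such as $t^{-2p}$ that is central. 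It is therefore also nilpotent of order $\le r$.

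The remaining step, which I expect to be the main obstacle, is upgrading "nilpotent of exponent $\le r$ for almost all $p$" to "Jordan blocks of the monodromy have size $\le r$". Katz's formal criterion (Theorem \ref{thm:Katz theorem}) only yields regularity and quasi-unipotence. For the sharper bound I would follow Katz's refinement, as in \cite{PS2}. By Theorem \ref{thm:main theorem} we may pass to a finite pullback $t=s^N$ so that the monodromy is unipotent; this does not change Jordan block sizes, and the $p$-curvature nilpotence order is preserved for $p\nmid N$. Then choose a logarithmic lattice on which $\nabla_{s\partial_s}$ acts with residue $N_0$ nilpotent, in Deligne canonical form, so that the connection is gauge equivalent to $s\partial_s+N_0$ with constant $N_0$. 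This gauge transformation and lattice are defined over a localization of $R$, so they reduce mod $p$ for almost all $p$. For the constant-coefficient connection $s\partial_s+N_0$, the $p$-curvature along $s\partial_s$ is $N_0^p-N_0$, using $(s\partial_s)^p=s\partial_s$. Since $N_0$ is nilpotent and $p\ge$ rank, this equals $-N_0$ for large $p$. Hence $N_0^r\equiv0\bmod\mathfrak{m}$ for infinitely many (indeed almost all) $\mathfrak{m}$, so $N_0^r=0$ over $R$. Since the monodromy is $\exp(-2\pi iN_0)$, its Jordan blocks have size $\le r$. The delicate points are that the gauge transformation to the canonical logarithmic form must have bounded denominators, and that the regular-singular comparison is compatible with reduction mod $p$. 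I would handle these by performing the (finite, algebraic) Levelt reduction over $\mathrm{Frac}(R)$ and then enlarging $R$.
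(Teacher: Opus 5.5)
Up to the mod $p$ step your argument is the paper's. You reduce to an algebra and spread out so that $HH^*(\mathcal{A}_R/R)$ is free with $HH^*(\mathcal{A}_R/R)\otimes_R\mathbb{C}\cong HH^*(\mathcal{A})$. You conclude that $[d]^{\cup r}=0$ over $R$, hence over every $\kappa(\mathfrak{m})$, and multiplicativity of $\bigcap^{C_p}$ gives $(F^{\mathcal{A}}_{2t^2\partial_t})^r=0$.

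At that point the paper finishes by citation. The second half of Theorem \ref{thm:Katz theorem}, as stated in the paper, is exactly the implication from ``$p$-curvature nilpotent of order $\le r$ for almost all $\mathfrak{m}$'' to ``Jordan blocks of size $\le r$''. Your assertion that this theorem only yields regularity and quasi-unipotence overlooks its ``moreover'' clause, so your last paragraph is unnecessary.

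The self-contained substitute you give in that paragraph also contains a step that fails as stated. The gauge transformation bringing a logarithmic lattice to the constant form $s\partial_s+N_0$ is an infinite series $\sum_k Y_k s^k$. Its $k$-th coefficient requires inverting $k+\mathrm{ad}_{N_0}$, so in general every prime occurs in its denominators. Levelt's reduction is finite only up to producing the logarithmic lattice. Performing it over $\mathrm{Frac}(R)$ and inverting finitely many elements therefore does not put the normal form over $R[1/f]((s))$.

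Nilpotent, or even vanishing, $p$-curvature does not rescue this. Take $s\partial_s+\sum_{k\ge1}a_k s^k$ with $a_k=\prod_{q\text{ prime},\,q^2\mid k}(1+q)$. It has vanishing $p$-curvature for every $p$, because $a_{jp}\equiv a_j \bmod p$. Yet by Dieudonn\'e--Dwork its normalizing gauge $\exp(-\sum_k a_k s^k/k)$ lies in no $\mathbb{Z}[1/N][[s]]$. So you cannot reduce $s\partial_s+N_0$ mod $\mathfrak{m}$ and read off $N_0^r\equiv 0$.

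The repair, which is essentially Katz's argument behind the cited refinement, is to work only with the lattice:
\begin{enumerate}
\item The logarithmic lattice $\Lambda$ with nilpotent residue $N_0$ comes from finitely many shearings, so it is defined over some $R[1/f][[s]]$.
\item For almost all $\mathfrak{m}$, its reduction $\Lambda_\kappa$ is preserved by $F_{s\partial_s}=\nabla^p_{s\partial_s}-\nabla_{s\partial_s}$. Once $p$ exceeds the rank, this acts on $\Lambda_\kappa/s\Lambda_\kappa$ by $\bar N_0^p-\bar N_0=-\bar N_0$.
\item Thus $F^r_{s\partial_s}=0$ gives $\bar N_0^r=0$ for almost all $\mathfrak{m}$, hence $N_0^r=0$.
\item The normal form is then needed only over $\mathbb{C}$, to identify the monodromy with $\exp(-2\pi i N_0)$.
\end{enumerate}
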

Setting $\mathcal{C}$ to be the category of matrix factorization of an isolated hypersurface singularity germ $W$, Theorem \ref{thm:nc Scherk intro} recovers a classical result of Scherk \cite{Sche}, cf. Remark \ref{thm:remark about Scherk thm and FL transform}. Moreover, in the special case when $[d]=0$ (which is a non-commutative analogue of being \emph{quasi-homogeneous}), Theorem \ref{thm:nc Scherk intro} implies that the categorical $t$-connection has finite monodromy. \par\indent
Via Theorem \ref{thm:nc Scherk intro} we also obtain a curious relation between the categorical $t$-connection and the diagonal dimension of $\mathcal{C}$, a certain measure of the complexity of $\mathrm{Perf}(\mathcal{C})$. \\
\begin{thm} (cf. Theorem \ref{thm:bounding Jordan blocks by Ddim}) \label{thm:bounding Jordan blocks by Ddim intro}
Let $\mathcal{C}$ be a smooth proper d$(\mathbb{Z}/2)$g category over $\mathbb{C}$. Then the size of the largest Jordan block of the monodromy of $\nabla^{\mathcal{C}}_{\partial_t}$ is $\leq \mathrm{Ddim}(\mathcal{C})+1$.     
\end{thm}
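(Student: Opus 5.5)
The plan is to deduce this from Theorem \ref{thm:nc Scherk intro}. It suffices to show that the class of the differential satisfies $[d]^{\cup r}=0$ in $HH^*(\mathcal{C})$ with $r=\mathrm{Ddim}(\mathcal{C})+1$. By Lemma \ref{thm:reduction from category to algebra} and Morita invariance of Hochschild cohomology, of the cup product and of the diagonal dimension, we may replace $\mathcal{C}$ by a smooth proper d$(\mathbb{Z}/2)$g algebra $\mathcal{A}$. We then view Hochschild cohomology as $\mathrm{End}_{\mathcal{A}^e}(\mathcal{A})$, the endomorphisms of the diagonal bimodule, with the cup product identified with composition (Yoneda product).

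The key input is a ghost lemma argument in $\mathrm{Perf}(\mathcal{A}^e)$. Recall that $\mathrm{Ddim}(\mathcal{C})\le n$ means that the diagonal bimodule $\mathcal{A}$ lies, up to summands, in the $(n+1)$-fold cone closure of objects of the form $M\boxtimes N$, with $M\in\mathrm{Perf}(\mathcal{A})$ and $N\in\mathrm{Perf}(\mathcal{A}^{op})$. The plan has three steps.

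First, show that $[d]$, viewed as a bimodule endomorphism of $\mathcal{A}$, is a \emph{ghost} for the generating class: it induces zero on $\mathrm{Hom}_{\mathcal{A}^e}(M\boxtimes N,\mathcal{A})$ for every exterior product. Concretely, $[d]\cup-$ acting on $\mathrm{Hom}(M\boxtimes N,\mathcal{A})$ factors through the action of $[d]$ on the module $M$ (or $N$) via the characteristic morphism $HH^*(\mathcal{A})\to \mathrm{End}(M)$. That action is the class of the differential of $M$, which is null-homotopic: the "homotopy $d_M$" of $d_M$ is the identity-type operator coming from the fact that $d$ acts as a Hochschild cochain whose image in $\mathrm{End}(M)$ is the commutator $[d_M,-]$, hence is $0$ in cohomology.

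Second, apply the standard ghost lemma in the triangulated category $\mathrm{Perf}(\mathcal{A}^e)$. If an object lies in the $(n+1)$-th level generated by a class $\mathcal{G}$, then any composite of $n+1$ maps that are $\mathcal{G}$-ghosts vanishes on it. Taking the $n+1$ ghosts to be $[d]$ itself, we get $[d]^{\cup(n+1)}=0$ on $\mathcal{A}$. Direct summands are harmless here.

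Third, conclude via Theorem \ref{thm:nc Scherk intro} with $r=\mathrm{Ddim}(\mathcal{C})+1$.

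The main obstacle is the first step: correctly identifying cup product with $[d]$ on $\mathrm{Hom}(M\boxtimes N,\mathcal{A})$ and checking that the relevant class really vanishes in the $\mathbb{Z}/2$-graded, uncurved setting. There $[d]$ is the Hochschild 1-cochain $m_1$, and "$[d]$ acts on $M$ by $[d_M]=0$" must be stated carefully as the image under $HH^*(\mathcal{A})\to HH^*(\mathrm{End}(M))\to \mathrm{End}(M)$. I would first try to verify this directly at the level of the bar resolution of $M\boxtimes N$, using that the two characteristic actions of $HH^*(\mathcal{A})$ on a bimodule (left and right) agree up to homotopy.
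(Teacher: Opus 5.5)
Your route differs from the paper's at the key step. It can be made to work, but Step~1 carries all the content, and your justification of it is not correct as written.

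\textbf{How the two routes differ.} The paper never studies $[d]$ on arbitrary perfect modules. It uses only the elementary fact (Lemma~\ref{thm:F^1HH^* is C^e ghost}) that any class of positive length is an $\mathcal{A}^e$-ghost, which is immediate from the bar resolution. It then handles $\mathrm{Ddim}$ by changing the algebra to $\mathcal{B}=\mathrm{End}(\mathcal{A}\oplus G^{\vee}\oplus F)$. For $\mathcal{B}$ the witness $\mathcal{A}_\Delta\in\langle G\boxtimes F\rangle_n$ becomes $\mathcal{B}_\Delta\in\langle\mathcal{B}^e\rangle_n$, so $\mathrm{Hdim}(\mathcal{B})\le\mathrm{Ddim}$. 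Corollary~\ref{thm:bounding nilpotence order of d by Hdim} and Theorem~\ref{thm:nc Scherk} are then applied to $\mathcal{B}$.

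You keep $\mathcal{A}$, so you need $[d]$ to be a ghost for every exterior product $G\boxtimes F$. By adjunction, $\mathrm{Hom}_{\mathcal{A}^e}(G\boxtimes F,\mathcal{A}_\Delta)$ is a Hom between the perfect modules $F$ and $G^\vee$. Post-composition with $[\varphi]$ becomes composition with the characteristic image $\chi_{G^\vee}([\varphi])$. So Step~1 is equivalent to $\chi_M([d])=0$ in $H^*\mathrm{End}(M)$ for every perfect $M$.

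This does \emph{not} follow from $[d]\in F^1$. Vanishing on the free module plus naturality does not propagate through cones. For example, $\partial_x\in HH^1(k[x])$ has length one, yet it maps to the nonzero class $\partial_x(x)=1$ in $\mathrm{Ext}^1_{k[x]}(k[x]/x,k[x]/x)$. So Step~1 must use something specific to $d$.

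\textbf{Fixing Step~1.} What is specific to $d$ is the Maurer--Cartan equation. Your phrase ``the image in $\mathrm{End}(M)$ is the commutator $[d_M,-]$'' does not parse: $d_M$ is not $\mathcal{A}$-linear, and $[d_M,-]$ is the differential of $\mathrm{End}(M)$, not an element of it. The correct version goes as follows.
\begin{enumerate}
\item Model $M$ as a twisted complex $(V\otimes\mathcal{A},\tilde d+\delta)$, where $\delta$ is an odd matrix over $\mathcal{A}$ with $d(\delta)+\delta^2=0$ and $d$ is applied entrywise.
\item The characteristic image of a cochain $\varphi$ is $\sum_k\varphi^k(\delta,\ldots,\delta)$. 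For the length-one cocycle $d$ this is just $d(\delta)$.
\item The differential on $\mathrm{End}(M)$ is $D=d(-)+[\delta,-]$. Hence $D(\delta)=d(\delta)+2\delta^2=-d(\delta)$, so the class is exact. The null-homotopy is the twisting part $\delta$ of $d_M$, not $d_M$ itself.
\item Equivalently: $[d]$ is the Kodaira--Spencer class of the family $(\mathcal{A},sd)$, and every twisted complex deforms along it via $\delta\mapsto s\delta$, so the obstruction $\chi_M([d])$ vanishes.
\item Summands follow by naturality. If $\pi\iota=\mathrm{id}_P$, then $\chi_P=\pi\chi_T\iota=0$.
\end{enumerate}

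With this in place, your Steps~2--3 go through as stated. They give $[d]^{\cup(n+1)}=0$ directly from any witness $\mathcal{A}_\Delta\in\langle G\boxtimes F\rangle_n$, with no change of generator.

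\textbf{What each route buys.} Your route proves the cleaner statement that $[d]$ is killed by every characteristic morphism, but it needs the twisted-complex computation above. The paper's route needs only the bar-resolution lemma plus the Morita trick, and it yields the by-product $\mathrm{Hdim}(\mathcal{B})\le\mathrm{Ddim}(\mathcal{A})$ for a suitable generator.
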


\subsection*{Organization}
The organization of this paper is as follows. In Section 2, we review the definition of the Hochschild invariants of a d$(\mathbb{Z}/2)$g algebra and the categorical $t$-connection on its periodic cyclic homology. In Section 3, we review the various models for the two-colored Kontsevich-Soibelman operad and the notion of Kontsevich-Soibelman operation. In Section 4, we study the relation between the $p$-curvature and the Kontsevich-Soibelman operad, and prove the key results Proposition \ref{thm:p curvature as KS operation intro} and Corollary \ref{thm: p curvature equals multiplicative operation intro}. Section 5 contains a spreading out argument by applying a theorem of To\"{e}n and completes the proof of Theorem \ref{thm:main theorem}. In Section 6, we apply our main results to obtain an explicit formula of the non-commutative Hodge filtration and a non-commutative local monodromy theorem.

\subsection*{Acknowledgments}
We would like to thank Paul Seidel, Dmitry Kaledin and Sasha Petrov for helpful discussions or correspondences at various stages of this project. Part of this project was completed during the author's visit to ShanghaiTech University in April 2026, and we thank Junwu Tu for his hospitality and many enlightening conversations. The author is partially supported by a Title A Fellowship from Trinity College, Cambridge and a Postdoctoral Fellowship from the Herchel-Smith Fund.

\subsection*{AI disclosure}
We used Chatgpt 5.6 Pro to proofread, check for mathematical errors and suggest potential improvements for a draft of this paper (42 pages; available upon request). Notably, it found a refinement of Corollary \ref{thm: nilpotence of p curvature} (which suffices for proving the main theorem) using work of Katzarkov-Kerr \cite{KK}, cf. Lemma \ref{thm:F^1HH^* is C^e ghost} and Corollary \ref{thm:bounding nilpotence order of d by Hdim}; we collected these results and some implications into a separate Section 6.2.1. Apart from Section 6.2.1, all new ideas and their technical implementations in this paper are human generated.

\section{The $t$-connection on periodic cyclic homology}
Let $\mathcal{A}$ be a differential $\mathbb{Z}/2$-graded algebra over a ground ring $R$, i.e. an $R$-module $\mathcal{A}=\mathcal{A}_0\oplus\mathcal{A}_1$ equipped with an odd degree differential $d$ and an $R$-bilinear associative product $\cdot$ such that
\begin{equation}
d(x\cdot y)=dx\cdot y+(-1)^{|x|}x\cdot dy. \label{eq:leibniz}    
\end{equation}
We will assume that $\mathcal{A}$ contains a strict unit $1$. \par\indent
The goal of this section is to review the definition of Hochschild (co)homology and cyclic homology of $\mathcal{A}$, and then give an explicit chain level formula for the canonical $t$-connection.  
\subsection{Hochschild cohomology}
As a graded $R$-module, the \emph{Hochschild cochain complex} of $\mathcal{A}$ is
\begin{equation}
\mathcal{CC}^*(\mathcal{A}):=\prod_{k\geq 0}\mathrm{Hom}_{R}^*(\mathcal{A}[1]^{\otimes_R k},\mathcal{A}).\label{eq:CC^*}
\end{equation}
There is a chain level binary operation on \eqref{eq:CC^*}, called the \emph{circle product}, given by
\begin{equation}
(\varphi\circ\phi)(x_1,\cdots,x_n):=\sum_{j\leq k}(-1)^{\|\phi\|\cdot \sum_{i=1}^{j-1}\|x_i\|} \varphi(x_1,\cdots,x_{j-1},\phi(x_j,\cdots,x_{k-1}),x_k,\cdots,x_n),  
\end{equation}
where $\|x\|:=|x|-1$ denotes the reduced degree. This gives rise to a degree $-1$ chain level Lie bracket by
\begin{equation}
[\varphi,\phi]:=\varphi\circ \phi-(-1)^{\|\varphi\|\|\phi\|}\phi\circ\varphi.
\end{equation}
Define $\mu_{\mathcal{A}}\in \mathcal{CC}^{2}(\mathcal{A})$ to be the Hochschild cochain given by
\begin{equation}
\mu_{\mathcal{A}}^0=0,\;\mu^1_{\mathcal{A}}(x)=dx,\;\mu^2_{\mathcal{A}}(x,y)=(-1)^{|x|}xy,\;\mu^k_{\mathcal{A}}=0\;\;\textrm{for}\;\;k\geq 3.    
\end{equation}
Then associativity and the Leibniz rule \eqref{eq:leibniz} implies that $\mu_{\mathcal{A}}\circ \mu_{\mathcal{A}}=0$, and in particular, $[\mu_{\mathcal{A}},\mu_{\mathcal{A}}]=0$. One then defines the differential on $\mathcal{CC}^*(\mathcal{A})$ to be
\begin{equation}
[\mu_{\mathcal{A}},-]: \mathcal{CC}^*(\mathcal{A})\rightarrow \mathcal{CC}^{*+1}(\mathcal{A}),
\end{equation}
which squares to zero as a consequence of the Jacobi identity. We denote the cohomology of this complex by $HH^*(\mathcal{A})$. \par\indent
There is a product structure on the Hochschild cochain complex $\mathcal{CC}^*(\mathcal{A})$ called the \emph{cup product} (or \emph{Yoneda product}). On the chain level, it is given by
\begin{equation}
\phi\cup\psi(x_1,\cdots,x_k):=\sum_{i=1}^{k+1}(-1)^{|\psi|\cdot\sum_{j=1}^{i-1}\|x_j\|}\phi(x_1,\cdots,x_{i-1})\psi(x_{i},\cdots,x_k).
\end{equation}
The cup product descends to cohomology and defines an associative (and in fact graded commutative) algebra structure on $HH^*(\mathcal{A})$, for which $1\in\mathcal{A}$ (viewed as a Hochschild cocycle of length $0$) is the unit.\par\indent
It is often more convenient to use a slightly different chain model for $HH^*(\mathcal{A})$, namely the \emph{normalized Hochschild cochain complex} 
\begin{equation}\label{eq:normalized HH cochain}
CC^*(\mathcal{A})\subset \mathcal{CC}^*(\mathcal{A})    
\end{equation} 
consisting of multi-linear functions $\phi$ such that $\phi(\cdots,1,\cdots)=0$. It is a standard fact using the co-simplicial description of $HH^*$ that the inclusion \eqref{eq:normalized HH cochain} is a subcomplex and a quasi-isomorphism.

\subsection{Hochschild homology and cyclic homology}
The \emph{Hochschild chain complex} of $\mathcal{A}$ (over $R$) is
\begin{equation}
\mathcal{CC}_*(\mathcal{A}):=\bigoplus_{n\geq 0}\mathcal{A}\otimes_R\mathcal{A}[1]^{\otimes_R n},
\end{equation}
with differential given by
$$ b(x_0\otimes x_1\otimes\cdots\otimes x_n)=\sum_{i=0}^n (-1)^{\sum_{j=0}^{i-1}\|x_j\|} x_0\otimes x_1\otimes\cdots \otimes dx_i\otimes\cdots\otimes x_n$$
\begin{equation}
+\sum_{i=0}^{n-1} (-1)^{1+\sum_{j=0}^{i}\|x_j\|}x_0\otimes x_1\otimes \cdots\otimes x_{i}x_{i+1}\otimes\cdots\otimes x_n-(-1)^{\|x_n\|\cdot(|x_0|+\sum_{i=1}^{n-1}\|x_i\|)} x_n x_0\otimes x_1\otimes\cdots\otimes x_{n-1}.\label{eq:b}
\end{equation}
The cohomology of this complex is called the \emph{Hochschild homology} of $\mathcal{A}$, denoted $HH_*(\mathcal{A})$, and the above chain complex is also called the cyclic bar model.  \par\indent
A key feature of Hochschild homology is that it admits a chain level $S^1$-action. There are various ways to interpret what this means, which we further explore in later parts of the paper. For the moment, we recall one such interpretation via the \emph{Connes operator}.\par\indent
For the purpose of describing the Connes operator, it would be more convenient to use a quasi-isomorphic chain model for Hochschild homology, namely the \emph{normalized Hochschild complex}. It is defined as
\begin{equation}
CC_*(\mathcal{A}):=\mathcal{CC}_*(\mathcal{A})/N,     
\end{equation}
where $N$ denotes the subcomplex generated by elements of the form $\mathbf{x}\otimes x_1\otimes\cdots\otimes 1\otimes\cdots\otimes x_n$ ($1$ sits in the $i$-th entry, $i>0$). The differential on $CC_*(\mathcal{A})$ is inherited from $\mathcal{CC}_*(\mathcal{A})$ under the quotient, which we still denote by $b$. It is a standard fact using the simplicial description of the Hochschild chain complex that the quotient map $\mathcal{CC}_*(\mathcal{A})\rightarrow CC_*(\mathcal{A})$ is a quasi-isomorphism. \par\indent
The Connes operator is the degree $-1$ operator on $CC_*(\mathcal{A})$ given by
\begin{equation}
B(x_0\otimes x_1\otimes\cdots\otimes x_n):=\sum_{i=0}^n (-1)^{(\sum_{j=i}^n\|x_j\|)(\sum_{j=0}^{i-1}\|x_j\|)} 1\otimes x_i\otimes x_{i+1}\otimes\cdots\otimes x_n\otimes x_0\otimes\cdots\otimes x_{i-1}.\label{eq:B}
\end{equation}
$B$ satisfies 
\begin{equation}
b\circ B+B\circ b=0,\;\;\;B^2=0.    
\end{equation}
In particular, it defines an action of the dg algebra $R[\epsilon]/\epsilon^2\simeq C_*(S^1)$ (where $|\epsilon|=-1$) on $CC_*(\mathcal{A})$.\par\indent 
The \emph{negative cyclic chain complex} is the following explicit complex computing the homotopy fixed point of this $S^1$-action: 
\begin{equation}
CC^{-}_*(\mathcal{A}):=(CC_*(\mathcal{A})[[t]],b+tB)\cong \mathrm{RHom}_{R[\epsilon]/\epsilon^2}(R,CC_*(\mathcal{A})),\;|t|=2
\end{equation}
where we think of $t$ as the generator of $H^2_{S^1}(\mathrm{pt})$. The homology of this complex is called the \emph{negative cyclic homology} of $\mathcal{A}$ and denoted $HH^-_*(\mathcal{A})$. \par\indent
Finally, the \emph{periodic cyclic chain complex} is defined as 
\begin{equation}
CC^{per}_*(\mathcal{A}):=(CC_*(\mathcal{A})((t)),b+tB),
\end{equation}
whose homology is called the \emph{periodic cyclic homology} of $\mathcal{A}$ and denoted $HH^{per}_*(\mathcal{A})$.

\subsection{The Getzler-Gauss-Manin connection}
In this subsection, we work in the following relative setting: fix $\mathcal{R}$ a commutative ring over the base ring $R$ (the primary example of $\mathcal{R}$ would be the ring of functions on a smooth affine irreducible curve over $R$). Let $(\mathcal{A}, d,\cdot)$ be a $\mathbb{Z}/2$-graded dg-algebra over $\mathcal{R}$. \par\indent
Given a Hochschild cocycle $D\in CC^*(\mathcal{A}/\mathcal{R})$, Getzler defined a `contraction' operator
\begin{equation}e_D:CC_*(\mathcal{A}/\mathcal{R})\rightarrow CC_{*+|D|}(\mathcal{A}/\mathcal{R})\label{eq:e_D}
\end{equation}
 given by
\begin{equation}
e_D(x_0\otimes x_1\otimes\cdots\otimes x_n):= \sum (-1)^{(\sum_{j=k+1}^n\|x_j\|)\cdot(|x_0|+\sum_{i=1}^k \|x_i\|)} D(x_{k+1},\cdots,x_n)x_0\otimes x_1\otimes\cdots\otimes x_k;    
\end{equation}
a `Lie derivative' operator 
\begin{equation}\mathcal{L}_D: CC_*(\mathcal{A}/\mathcal{R})\rightarrow CC_{*+|D|-1}(\mathcal{A}/\mathcal{R})\label{eq:L_D}
\end{equation}
given by
$$\mathcal{L}_D(x_0\otimes x_1\otimes\cdots\otimes x_n):=\sum (-1)^{\|D\|\cdot(\sum_{i=0}^{k} \|x_i\|)}x_0\otimes\cdots\otimes D(x_{k+1},\cdots,x_l)\otimes x_{l+1}\otimes\cdots\otimes x_n$$
\begin{equation}
+\sum (-1)^{(\sum_{i=l+1}^n\|x_i\|)\cdot(\sum_{j=0}^l\|x_j\|)} D(x_{l+1},\cdots,x_0,\cdots,x_k)\otimes x_{k+1}\otimes\cdots\otimes x_l;    
\end{equation}

and an auxiliary operator
\begin{equation}
E_D: CC_*(\mathcal{A}/\mathcal{R})\rightarrow CC_{*+|D|-2}(\mathcal{A}/\mathcal{R})  \label{eq:E_D}  
\end{equation}
given by $E_D(x_0\otimes x_1\otimes\cdots\otimes x_n):=$
\begin{equation}
\sum (-1)^{|D|+\|D\|\cdot(\sum_{i=s+1}^k\|x_i\|)+(\sum_{i=s+1}^n\|x_i\|)\cdot(\sum_{i=0}^s\|x_i\|)} 1\otimes x_{s+1}\otimes \cdots\otimes D(x_{k+1},\cdots,x_l)\otimes\cdots\otimes x_0\otimes\cdots\otimes x_s.  
\end{equation}
These operations satisfy
\begin{equation}
[e_D,b]=0, \;\;[E_D,B]=0   \label{eq:e_D,E_D basic}
\end{equation}
and an analogue of the Cartan homotopy formula (up to homotopy): 
\begin{equation}
[e_D, B]=\mathcal{L}_D-[E_D,b].     \label{eq:cartan}
\end{equation}
We now recall Getzler's construction of the Gauss-Manin connection in \cite{Get}. First, after replacing $\mathcal{A}$ by a semi-free resolution, we fix an $\mathcal{R}$-basis of $\mathcal{A}=\bigoplus_i \mathcal{A}_i$ as a $\mathbb{Z}/2$-graded $\mathcal{R}$-module. Let 
\begin{equation}\nabla': \mathcal{A}\rightarrow \mathcal{A}\otimes_\mathcal{R}\Omega^1_\mathcal{R}\label{eq:nabla'}
\end{equation}
be the trivial connection on $\mathcal{A}$ with respect to this basis. Extend $\nabla'$ to a connection $\bigoplus_n \mathcal{A}^{\otimes n+1}\rightarrow \bigoplus_n \mathcal{A}^{\otimes n+1}\otimes_\mathcal{R}\Omega^1_\mathcal{R}$ by the Leibniz rule. Define
 \begin{equation}
  \kappa:=[\nabla',\mu]: \bigoplus_n\mathcal{A}^{\otimes n}\rightarrow \mathcal{A}\otimes_\mathcal{R}\Omega^1_\mathcal{R}.\label{eq:kappa}
 \end{equation}
 Equivalently, \eqref{eq:kappa} is obtained by expanding $\mu$ as a matrix with respect to the chosen $R$-basis, and differentiating entry-wise. In particular, \eqref{eq:kappa} gives rise to a Hochschild cocycle and its cohomology class $[\kappa]\in HH^2(\mathcal{A})\otimes_\mathcal{R}\Omega^1_\mathcal{R}$, which is independent of the choice of an $\mathcal{R}$-basis, is called the \emph{Kodaira-Spencer class} of $\mathcal{A}$. \par\indent
 $\nabla'$ induces a connection on the negative cyclic chain complex $CC^{-}_*(\mathcal{A})$ as a graded $\mathcal{R}$-module, which however does not commute with the differential $b+tB$. The failure to commute is measured by
 \begin{equation}
 [\nabla', b]=\mathcal{L}_{\kappa},\;\;[\nabla',B]=0.   \label{eq:nabla' property} 
 \end{equation}
\eqref{eq:e_D,E_D basic}, \eqref{eq:cartan} and \eqref{eq:nabla' property} imply that the chain level connection on $CC^{-}_*(\mathcal{A})$
 \begin{equation}
 \nabla^{GGM}:=\nabla'-\frac{1}{t}\iota_{\kappa},    \label{eq:GGM}
 \end{equation}
 where $\iota_D=e_D+tE_D$,
 descends to the negative cyclic homology. \\
 \begin{mydef}
The connection $HH^{-}_*(\mathcal{A}/\mathcal{R})\rightarrow HH^{-}_*(\mathcal{A}/\mathcal{R})\otimes_\mathcal{R}\Omega^1_\mathcal{R}$ induced by \eqref{eq:GGM} is called the \emph{Getzler-Gauss-Manin connection} of $\mathcal{A}/\mathcal{R}$. We also refer to the induced (by inverting $t$) connection on $HH^{per}_*(\mathcal{A}/\mathcal{R})$ as the Getzler-Gauss-Manin connection.  
 \end{mydef}
 
\subsection{From Getzler-Gauss-Manin connection to the $t$-connection}
The main result of this paper concerns another connection on the periodic cyclic homology in the `$t$-direction' introduced in \cite[Section 2.2.5]{KKP}; see also \cite{Shk}. We now recall its definition, and specifically, its relation with the Getzler-Gauss-Manin connection.\par\indent
Given a d($\mathbb{Z}/2$)g algebra $\mathcal{A}/R$, let $\mathcal{R}=R[s]$ and consider the one-parameter family of d($\mathbb{Z}/2$)g algebra $(\mathcal{A}_s,d_s,\cdot_s)/\mathcal{R}$ given by 
\begin{equation}\label{eq:t-family}
(\mathcal{A}_s:=\mathcal{A}[s],\; d_s:=sd,\;a\cdot_sb:=a\cdot b).    
\end{equation}
In particular, the Kodaira-Spencer class of \eqref{eq:t-family} in the direction $\partial_s$ is just the differential $d$, viewed as an element of $HH^2(\mathcal{A})$ of length one.\par\indent
Consider the chain level operator
\begin{equation}\label{eq:Euler grading}
Gr:=s\frac{\partial}{\partial s}+ 2t\frac{\partial}{\partial t}+\Gamma
\end{equation}
on $CC_*(\mathcal{A}_s)[[t]]$, where 
\begin{equation}\label{eq:length operator}
\Gamma(x_0\otimes x_1\otimes \cdots \otimes x_n):=-n\cdot x_0\otimes x_1\otimes \cdots \otimes x_n.  
\end{equation}
A straightforward verification shows that $Gr$ satisfies
\begin{equation}
[Gr, b+tB]= b+tB.
\end{equation}
As a result, the chain level formula
\begin{equation}\label{eq:t connection}
\nabla_{t\partial_t}^{\mathcal{A}}:= \frac{1}{2}Gr-\frac{1}{2}\nabla^{GGM}_{s\partial_s}=t\frac{\partial}{\partial t}+\frac{\Gamma}{2}+\frac{\iota_{sd}}{2t}
\end{equation}
descends\footnote{Even though it is not a chain map, and instead satisfies $[\nabla^{\mathcal{A}}_{t\partial_t},b+tB]=\frac{1}{2}(b+tB)$.} to a well defined connection with second order pole at $t=0$ on $HH^{per}_*(\mathcal{A}_s/\mathcal{R})$.\\
\begin{mydef}\label{thm:t connection}
The restriction of \eqref{eq:t connection} to $s=1$ (after $t$-completion) gives a connection on $HH^{per}_*(\mathcal{A}/R)$ with second order pole at $t=0$, which is called the \emph{$t$-connection on the periodic cyclic homology} (or just the \emph{categorical $t$-connection} for short).     
\end{mydef}
We view this as a connection on the vector bundle $HH^{per}_*(\mathcal{A}/R)$ over the formal punctured disk $\mathrm{Spf}\,R((t))$. It is an easy fact that when the $\mathbb{Z}/2$-grading of $\mathcal{A}$ can be lifted to a $\mathbb{Z}$-grading, then $\nabla_{\partial_t}^{\mathcal{A}}$ only has a first order pole at $t=0$, cf. \cite[Lemma 3.2]{CLT}.

\section{The Kontsevich-Soibelman operad}
The purpose of this section is to introduce the two-colored Kontsevich-Soibelman operad \begin{equation}
\mathbf{KS}=\{\mathbf{KS}(l,0),\mathbf{KS}(k,1)\}_{l\geq 1,k
\geq 0}    
\end{equation}valued in chain complexes \cite{KS1}, which acts on the pair $(CC^*(\mathcal{A}),CC_*(\mathcal{A}))$ of (normalized) Hochschild (co)chain complexes of a dg or d($\mathbb{Z}/2$)g algebra $\mathcal{A}$. \par\indent
We introduce two models for this operad. The first model, reviewed in Section 3.1, is based on the combinatorial theory of cyclic cacti developed in \cite{Che3}, building on earlier foundational works of \cite{MS} \cite{Vo1} \cite{Kau} \cite{Sal1}. The second model, reviewed in Section 3.2, is based on variants of configuration spaces in classical topology, see for instance \cite{KS1} \cite{DTT}. In Section 3.3, we recall the definition of \emph{Kontsevich-Soibelman operations} introduced in \cite{Che3}. In words, these are endomorphisms of the periodic cyclic homology induced from certain equivariant homology classes of $\mathbf{KS}$. These operations and their operadic properties will play a crucial role later in studying the $p$-curvature of the $t$-connection. 

\subsection{A combinatorial model}
\subsubsection{Cacti}
Recall that an \emph{$n$-fold semi-simplicial (resp. semi-cosimplicial) object in a category $C$} is a covariant (resp. contravariant) functor from $(\overrightarrow{\Delta}^{op})^n$ to $C$. The \emph{geometric realization} of an $n$-fold semi-simplicial set $X$ is defined to be
\begin{equation}\label{eq:geometric realization of n fold semisimplicial set}
|X|:=\coprod_{a_1,\cdots,a_n\geq 0} X_{a_1,\cdots,a_n}\times \Delta^{a_1}\times\cdots\Delta^{a_n}/\sim,  
\end{equation}
where $\sim$ is the equivalence relation given by 
\begin{equation}
(f^*(x),y)\sim (x,f_*(y)),\;\mathrm{where}\;f\in \overrightarrow{\Delta}([a_1,\cdots,a_n],[b_1,\cdots,b_n]), x\in X_{b_1,\cdots,b_n}, y\in\prod_{i=1}^n\Delta^{a_i}.
\end{equation}
\begin{mydef}(\cite[Section 4.2]{MS},\cite[Definition 4.2]{Sal1})\label{thm:cacti}
$\mathfrak{Cact}^k=\mathfrak{Cact}^k_{\bullet,\cdots,\bullet}$ is the $k$-fold semi-simplicial set defined as follows. For $[m_1,\cdots,m_k]\in (\overrightarrow{\Delta}^{op})^k$, $\mathfrak{Cact}^k_{m_1,\cdots,m_k}$ is the set of surjective maps $f: \{1,\cdots,m+k\}\rightarrow \{1,\cdots,k\}$ with $m=\sum_{i=1}^k m_i$ such that
\begin{enumerate}[label=\arabic*)]
    \item $f^{-1}(j)$ has cardinality $m_j+1$ for each $1\leq j\leq k$.
    \item $f(a)\neq f(a+1)$ for $1\leq a<m+k$.
    \item There are no values $1\leq a<b<c<d\leq m+k$ such that $f(a)=f(c)\neq f(b)=f(d).$
\end{enumerate}
We will equivalently represent $f$ by the sequence $f(1)f(2)\cdots f(m+k)$. Fix $1\leq j\leq k$, and denote $f^{-1}(j)=\{a_0<a_1<\cdots<a_{m_j}\}$. Then the $i$-th ($0\leq i\leq m_j$) face map of the $j$-th component is defined by
\begin{equation}
d^{(j)}_i(f):=f(1)\cdots\widehat{f(a_i)}\cdots f(m+k): \{1,\cdots,m+k-1\}\rightarrow \{1,\cdots,k\}.
\end{equation}
The geometric realization of this $k$-fold semi-simplicial set $\mathrm{Cact}^k:=|\mathfrak{Cact}^k|$ is called the \emph{space of cacti with $k$ lobes}. \par\indent
Note that $\mathrm{Cact}^k$ has a natural cellular structure induced from the (multi-)semi-simplicial structure of $\mathfrak{Cact}^k$. Then $C^{cell}_*(\mathrm{Cact}^k;R)$ agrees with the realization of $R\langle\mathfrak{Cact}^k\rangle$ in the category of $R$-chain complexes. 
\end{mydef}
\textbf{Geometric intuition}. The name `cactus' comes from the following interpretation. An element $(f,t^1,\cdots,t^k)\in \mathfrak{Cact}^k_{m_1,\cdots,m_k}\times \Delta^{m_1}\times\cdots\Delta^{m_k}$ may be viewed as a partition of $[0,k]$ into $m+k$ closed sub-intervals 
\begin{equation}[0,t_1], [t_1,t_1+t_2],\cdots, [\sum_{i=1}^{m+k-1}t_i,\sum_{i=1}^{m+k}t_i=k]\end{equation}
such that if we denote $f^{-1}(j)=\{a_0<a_1<\cdots<a_{m_j}\}$ then $(t_{a_0},\cdots,t_{a_{m_j}})=t^j$, for each $1\leq j\leq k$. The $i$-th interval $[\sum_{l=1}^{i-1}t_l,\sum_{l=1}^it_l]$ is said to have `color' $f(i)\in\{1,\cdots,k\}$. Note the sum of lengths of intervals of each color is $1$. By identifying the endpoints of $[0,k]$, we view this as a partition of $S^1$. We define an equivalence relation on $S^1=[0,k]/0\sim k$ where $z\sim z'$ if $z$ and $z'$ are the boundary points of the same connected component of $S^1\backslash\mathrm{int}(I_j)$ for some $1\leq j\leq k$, where $I_j$ is the union of all intervals of color $j$. Condition 2) of Definition \autoref{thm:cacti} implies that the quotient under the prior equivalence relation is a connected union of $k$ circles of length $1$. This quotient is called the \emph{cactus associated with $(f,t^1,\cdots,t^k)$} (we will also abuse terminology and refer to $(f,t^1,\cdots,t^k)$ itself as a cactus); the image of $I_j$ is called \emph{the $j$-th lobe} of the cactus; the image of $0$ is \emph{the basepoint of the cactus} and the image of an endpoint of an interval $[\sum_{l=1}^{i-1}t_l,\sum_{l=1}^it_l]$ is called a \emph{marked point on the cactus}. See Figure \ref{fig:cactus} for an illustration.  
\begin{figure}[H]
 \centering
 \includegraphics[width=0.9\textwidth]{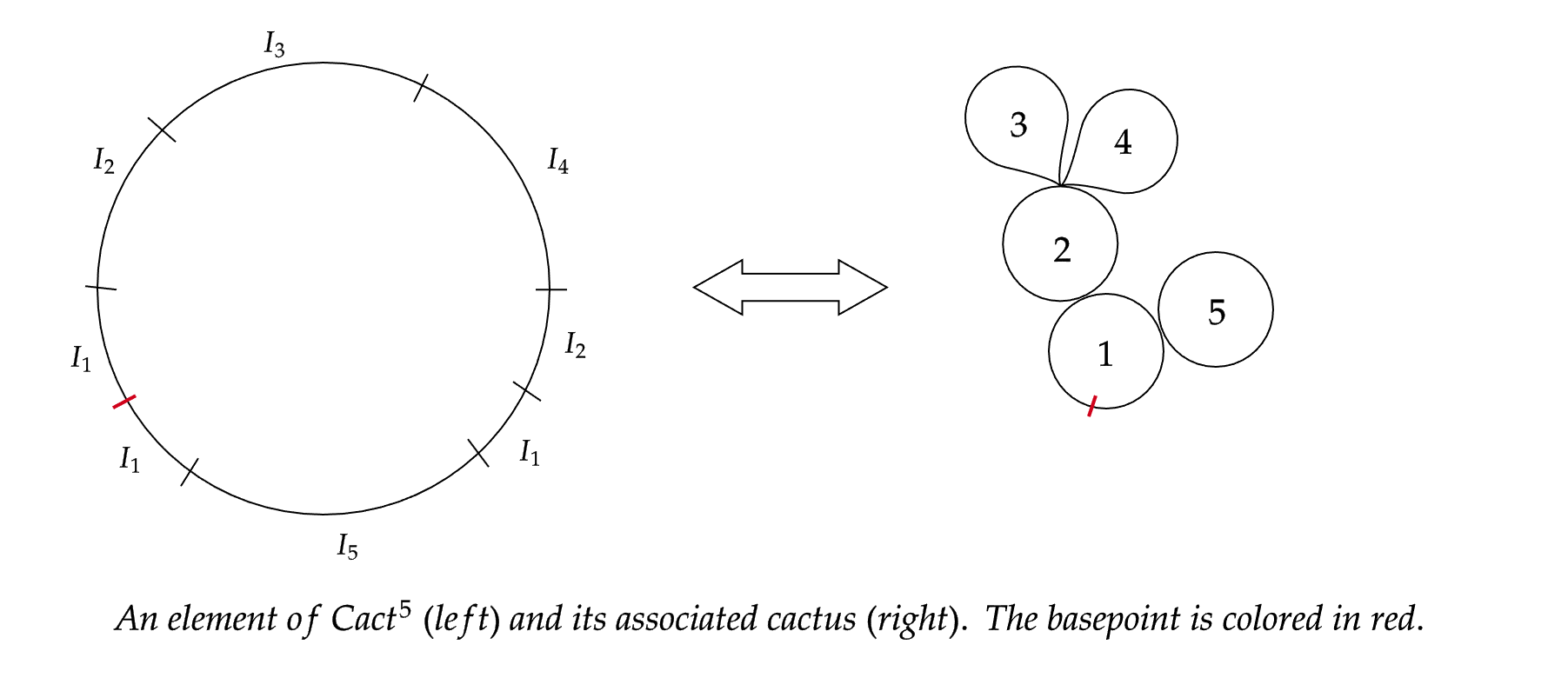}
 \caption{}
 \label{fig:cactus}
\end{figure}
\begin{mydef}\label{thm:cactus map}
Let $x=(f,t^1,\cdots,t^k)\in \mathrm{Cact}^k$ be a cactus. The \emph{cactus map} is a piecewise (linear) oriented isometry
\begin{equation}\label{eq:cactus map}
c_x=(c_x^1,\cdots,c_x^k): [0,k]\rightarrow [0,1]^k   
\end{equation}
defined by
\begin{itemize}
    \item $c_x(0)=(0,\cdots,0)$.
    \item If $t\in [\sum_{l=1}^{i-1}t_l,\sum_{l=1}^{i}t_l]$, then for $1\leq j\leq k$, $c^j_x(t)=\begin{cases} c^j_x(\sum_{l=1}^{i-1}t_l)\,,\,\mathrm{if}\;j\neq f(i)\\
    c_x^j(\sum_{l=1}^{i-1}t_l)+(t-\sum_{l=1}^{i-1}t_l)\,,\,\mathrm{if}\;j=f(i)     
    \end{cases}$.
\end{itemize}
\end{mydef}
Intuitively, imagine a point going around the cactus clockwise with unit speed starting from its basepoint. Then as the point moves on the $j$-th lobe, the $j$-th coordinate of $c_x$ increases with unit speed, while all other coordinates remain constant. It is clear from this description that a cactus $x$ is uniquely determined by its cactus map $c_x$. \par\indent
\textbf{The operadic structure on cacti}. For positive integers $k,n_1,\cdots,n_k$, there are operadic structure maps
\begin{equation}\label{eq:operadic structure on cacti}
\theta_{k,n_1,\cdots,n_k}: \mathrm{Cact}^{n_1}\times\cdots\times\mathrm{Cact}^{n_k}\times \mathrm{Cact}^k\rightarrow \mathrm{Cact}^{n},\;\mathrm{where}\;n=\sum_{i=1}^kn_i.    
\end{equation}
In words, the operadic composition of $(x_1,\cdots,x_k,x)$ is obtained from $x$ by inserting the cactus $x_j$ into its $j$-th lobe (via a piecewise linear identification) in a way such that the basepoint of $x_j$ coincides with the \emph{local basepoint} of the $j$-th lobe of $x$. The local basepoint of a lobe is given by the basepoint if the lobe contains the basepoint; else it is the intersection of that lobe with the connected component of the closure of its complement that contains the basepoint. The precise definition of $\theta$ can be formulated in terms of cactus maps.\par\indent
Namely, there exists a unique piecewise linear map $\alpha: [0,k]\rightarrow [0,n]$ and a piecewise oriented isometry $c: [0,n]\rightarrow\prod_{j=1}^k[0,n_j]$ making the following diagram commute
\begin{center}
\begin{tikzcd}[row sep=1.2cm, column sep=0.8cm]
[0,k]\arrow[r,"{c_x}"]\arrow[d,"{\alpha}"]& {[0,1]}^k\arrow[d,"{(t_1,\cdots,t_k)\mapsto(n_1t_1,\cdots,n_kt_k)}"] \\
{[0,n]} \arrow[r,"c"] &\prod_{j=1}^k[0,n_j]
\end{tikzcd}\label{eq:cacti operadic composition diagram}
\end{center}
\begin{mydef}\label{thm:operadic structure on cacti}
$\theta(x_1,\cdots,x_k;x)$ is the cactus whose cactus map is
\begin{equation}\label{eq:cactus map of operadic composition}
(\prod_{j=1}^kc_{x_j})\circ c.    
\end{equation}
\end{mydef}
It turns out that $\theta$ is only associative up to homotopy, due to a rescaling issue (since by definition, the `total length' of a cactus is normalized to $1$). However, we still have the following. \\  
\begin{thm}(\cite[Theorem 4.10, Corollary 4.11]{Sal1})\label{thm:associativity of cacti operad}
$\theta$ defines an operadic structure up to homotopy on $\{\mathrm{Cact}^k\}_{k\geq 1}$. Furthermore, it induces a (strictly associative) dg operadic structure on 
\begin{equation}\label{eq:cacti dg operad}
\mathrm{Cact}_R:=\{C^{cell}_{-*}(\mathrm{Cact}^k;R)\}_{k\geq 1},    
\end{equation}
where the cells are products of simplices coming from the construction of $\mathrm{Cact}^k$ as a multi-semi-simplcial realization (cf. Definition \ref{thm:cacti}).
\end{thm}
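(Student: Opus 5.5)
The plan is to verify the two assertions of Theorem \ref{thm:associativity of cacti operad} separately, using cactus maps (Definition \ref{thm:cactus map}) throughout. For the first assertion, operadic structure up to homotopy, I first check that $\theta$ is well defined and continuous. Given $(x_1,\dots,x_k;x)$, the map $(\prod_j c_{x_j})\circ c$ is a piecewise oriented isometry $[0,n]\to[0,1]^n$ with the right pattern: unit speed in exactly one coordinate at a time, and each coordinate sweeping $[0,1]$ once. I must show it is the cactus map of a genuine cactus. This means checking that conditions 2) and 3) of Definition \ref{thm:cacti} hold for the induced sequence $f$. Condition 3) (no interleaving $abab$) holds because the lobes of $x_j$ are inserted into a single lobe of $x$ at its local basepoint, so colors coming from different lobes of $x$ remain nested. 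Adjacent repeated colors are merged, which amounts to passing to a face. Continuity in the simplicial coordinates is then clear, since $\alpha$ and $c$ depend piecewise linearly on the lengths.

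Next come equivariance and unit. These follow because relabelling lobes commutes with the diagram defining $c$. Associativity holds up to homotopy. The two iterated composites $\theta(\theta(\cdots);\cdots)$ and $\theta(\cdots;\theta(\cdots))$ produce the same combinatorial cactus, meaning the same sequence $f$, and they differ only in the lengths assigned to the intervals, because of the different rescalings by the $n_j$. The straight-line homotopy in the length coordinates stays inside the same open cell, so the cell has constant type along the homotopy. This gives the required homotopies, and they are compatible, since the spaces of such rescalings are convex.

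For the second assertion, I pass to cellular chains. The key point is that $\theta$ is cellular, once each product of cells is subdivided: the composite of a cell of $x_j$ (type $f_j$) with a cell of $x$ (type $f$) lands in a union of cells. These cells are indexed by the ways to shuffle the $m_j+1$ marked points of the $j$-th lobe of $f$ with the arcs of $f_j$. This is exactly the McClure–Smith ``sequence'' composition formula. I then define the dg composition on $C^{cell}_{-*}$ as the signed sum over these shuffles, with orientations induced from the simplex products. Strict associativity can now be checked combinatorially. Both iterated composites are sums over the same set of compatible shuffles, because the homotopy above preserves cells and so the cellular images agree exactly. Compatibility with the differential follows from cellularity, since the face maps $d^{(j)}_i$ commute with insertion.

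The main obstacle I expect is sign and orientation bookkeeping, together with the precise identification of the cellular image of $\theta$ with the shuffle sum. To handle the second point, I would first treat the case $k=2$ with a single insertion $\circ_j$ and show that the image of $\Delta^{a}\times\Delta^{b}$ is triangulated by the standard shuffle decomposition. Associativity then reduces to the associativity of shuffle products of simplices, which is classical (Eilenberg–Zilber). Here I would follow the argument of \cite{Sal1}.
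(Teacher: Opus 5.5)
\textbf{There is a genuine gap in your associativity step.} You claim that $\theta(\theta(\cdots);\cdots)$ and $\theta(\cdots;\theta(\cdots))$ produce the same sequence $f$ and differ only in interval lengths, so that a straight-line homotopy stays inside one open cell. This is false.

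In both bracketings the inner cactus $w_j=\theta(z_{j,1},\dots,z_{j,n_j};y_j)$ is the same. Write $n_j$ for the number of lobes of $y_j$, $l_{j,a}$ for the number of lobes of $z_{j,a}$, and $L_j=\sum_a l_{j,a}$. The breakpoints of the $j$-th lobe of $x$ (where its other lobes hang off) are placed on the outer circle $[0,L_j]$ of $w_j$ through an increasing homeomorphism $\rho_j:[0,1]\to[0,L_j]$.
\begin{itemize}
\item If the inner composition is done first, $\rho^{\mathrm{in}}_j(u)=L_ju$.
\item If $x$ is composed with the $y_j$ first, $\rho^{\mathrm{out}}_j$ is $u\mapsto n_ju$ followed by stretching the $a$-th lobe of $y_j$ by the factor $l_{j,a}$.
\end{itemize}
These differ unless all $l_{j,a}$ are equal, and then attachment points land on different arcs of $w_j$.

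Concretely, take $x=121$ with lengths $(s,1,1-s)$, $y_1=12$, and $z=12$ inserted into the first lobe of $y_1$, with identities elsewhere. Call $b$ the second lobe of $x$, $c$ the second lobe of $y_1$, and $z_1,z_2$ the lobes of $z$.
\begin{itemize}
\item Composing $x$ with $y_1$ first gives $z_1bz_1z_2c$ for $s<1/4$ and $z_1z_2bz_2c$ for $1/4<s<1/2$.
\item Composing $y_1$ with $z$ first gives $z_1bz_1z_2c$ for all $s<1/3$.
\end{itemize}
So for $1/4<s<1/3$ the two composites lie in different open $1$-cells. This is exactly the rescaling defect that makes $\theta$ only homotopy associative, and it changes cells, not just coordinates. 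Consequently your justification of strict associativity on $C^{cell}_{-*}$ (``the homotopy above preserves cells, so the cellular images agree exactly'') collapses. That is the substantive half of the statement.

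\textbf{What is true, and suffices, is skeleton preservation rather than cell preservation.} Use the homotopy $\rho^\lambda_j=\lambda\rho^{\mathrm{out}}_j+(1-\lambda)\rho^{\mathrm{in}}_j$ to attach $x$ to the fixed $w_j$'s. Each $\rho^\lambda_j$ is still an increasing homeomorphism with $\rho^\lambda_j(0)=0$ and $\rho^\lambda_j(1)=L_j$.

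Let $P$ be a product of closed cells of total dimension $d$.
\begin{itemize}
\item Generic points of $P$ go to $d$-cells, since each of the $m_j$ breakpoints subdivides one arc of $w_j$.
\item A point of $\partial P$ has a zero-length interval. Because $\rho^\lambda_j$ is injective and fixes the endpoints, coincident breakpoints stay coincident and degenerate arcs of $w_j$ stay degenerate. Hence such a point stays in the $(d-1)$-skeleton for every $\lambda$.
\end{itemize}
A homotopy through maps of pairs $(X^d,X^{d-1})\to(Y^d,Y^{d-1})$ induces the same map on $H_d(X^d,X^{d-1})$. So the two iterated composites induce literally equal cellular chain maps, and the same homotopy gives the homotopy associativity.

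Equivalently, you can argue directly: on the interior of $P$ each composite is injective. For any increasing $\rho_j$, each composite covers the same top cells, namely all interleavings of the breakpoints of $x$ with those of $w_j$, once each and with the same orientation.

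With this replacement, the rest of your outline (well-definedness, cellularity, identification of $\theta_*$ with the McClure--Smith sequence composition) goes through. The paper itself gives no argument and simply cites Salvatore. Note also that the combinatorial identity behind chain-level associativity is the associativity of that sequence composition, not literally Eilenberg--Zilber associativity of shuffles.
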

\begin{figure}[H]
 \centering
 \includegraphics[width=0.85\textwidth]{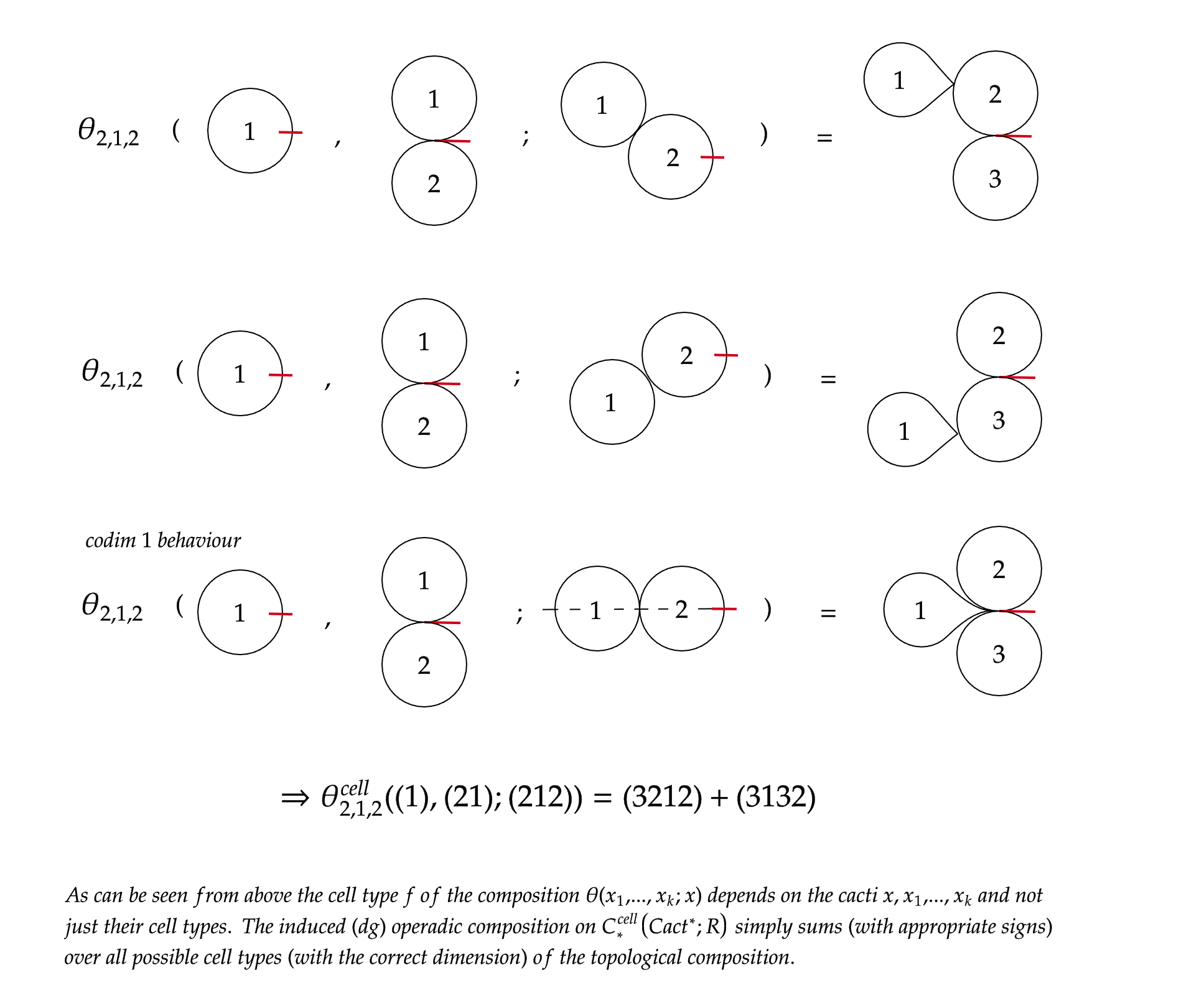}
 \caption{}
 \label{fig:cacti_composition}
\end{figure}
See Figure \ref{fig:cacti_composition} for an illustration of the operadic composition. \par\indent
\textbf{Action on Hochschild cochains}. A key property of the $R$-linear cacti operad is its action on normalized Hochschild cochains, which can be viewed as a version of Deligne's conjecture. \\
\begin{thm}(\cite{MS})\label{thm: action of cacti on HH^*}
For a dg or d($\mathbb{Z}/2$)g algebra $\mathcal{A}$ over $R$, there is an action of the $R$-linear cactus operad on $CC^*(\mathcal{A})$, i.e. there are $R$-linear $\Sigma_k$-equivariant chain maps (for $k\geq 1$)
\begin{equation}\label{eq: action of cacti on HH^*}
\mathrm{Act}_{\mathcal{A}}: \mathrm{Cact}^k_R\rightarrow \mathrm{Hom}_R(CC^*(\mathcal{A})^{\otimes_R k},CC^*(\mathcal{A}))    
\end{equation}
compatible with operadic compositions. 
\end{thm}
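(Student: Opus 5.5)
The action is defined cell by cell, as in McClure--Smith's construction via the surjection (sequence) operad; operadic compatibility is then checked combinatorially.

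\textbf{Step 1: the action of a single cell.} Fix $f\in\mathfrak{Cact}^k_{m_1,\dots,m_k}$ and cochains $\phi_1,\dots,\phi_k\in CC^*(\mathcal{A})$. We define $\mathrm{Act}_{\mathcal{A}}(f)(\phi_1,\dots,\phi_k)$ on an input $(x_1,\dots,x_n)$ as follows.
\begin{enumerate}[label=(\roman*)]
\item Sum over all ways of cutting the input string into $m+k$ consecutive (possibly empty) blocks, one for each interval of the cactus in the order $1,\dots,m+k$.
\item For each lobe $j$, its $m_j+1$ intervals (positions $f^{-1}(j)$) cut $\phi_j$ into $m_j+1$ ``arcs''. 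The blocks strictly between consecutive intervals of color $j$ are exactly the outputs of the nested lobes.
\item Evaluate by the prescription encoded by the cactus tree. A lobe $j$ evaluates $\phi_j$ with $m_j+1$ of its input slots fed by the corresponding blocks concatenated with outputs of the lobes nested between them. Condition 3), noncrossing, makes this nesting well defined. Condition 2) together with normalization ensures degenerate pieces vanish.
\item Insert Koszul signs via the reduced degrees $\|x_i\|$ and the degrees of the $\phi_j$ and of the cell.
\end{enumerate}
Equivalently, each cell gives a composite of brace operations $\phi_{j_0}\{\dots\}$ multiplied via $\mu^2$ at the basepoint lobe. The $\Sigma_k$-action permutes colors, so equivariance is immediate from the definition.

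\textbf{Step 2: chain maps.} We must show
\[\mathrm{Act}(\partial f)=[\mu_{\mathcal{A}},\mathrm{Act}(f)]\pm\sum_j\mathrm{Act}(f)(\dots,[\mu_{\mathcal{A}},\phi_j],\dots).\]
Expanding the Hochschild differential, $\mu^1=d$ terms cancel by the Leibniz rule. The $\mu^2$ terms fall into two kinds:
\begin{itemize}
\item Terms merging two adjacent input blocks inside an arc of $\phi_j$ cancel against the $\mu^2$ part of $[\mu,\phi_j]$.
\item Terms multiplying the output of a nested lobe with a neighbouring block correspond exactly to collapsing an interval, i.e.\ to the face maps $d^{(j)}_i$.
\end{itemize}
Normalization kills the faces producing $f(a)=f(a+1)$, which is why the semi-simplicial face maps (deleting a letter) are the right boundary.

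\textbf{Step 3: compatibility with composition.} By Theorem \ref{thm:associativity of cacti operad} the cellular composition is a strict dg operad structure, given on cells by summing over ways of distributing the letters of $f_j$ into the arcs of lobe $j$ of $f$. This is the standard surjection-operad composition. On the cochain side, inserting $\mathrm{Act}(f_j)(\dots)$ into slot $j$ of $\mathrm{Act}(f)$ expands, by the same block-splitting, into the identical sum. The two sides therefore agree term by term, with signs matched by a common Koszul convention.

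\textbf{Main obstacle.} The real content is the sign bookkeeping in the $\mathbb{Z}/2$-graded setting. It requires verifying that the signs in Steps 2--3 only use parity, so the $\mathbb{Z}$-graded proof of \cite{MS} transfers verbatim. I would check this by reducing both identities to the brace-algebra relations and the associativity and Leibniz identities encoded in $\mu_{\mathcal{A}}\circ\mu_{\mathcal{A}}=0$, all of which are parity-only statements.
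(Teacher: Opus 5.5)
Your outline is the McClure--Smith construction that the paper invokes only by citation: non-crossing sequences act by brace/cup-type insertion formulas, the chain-map property is checked via faces, and compatibility with $\theta_{cell}$ comes from the surjection-operad composition. So it follows the same route as the cited proof, with two small corrections: by condition 3), deleting a letter from a cell never produces an adjacent repeat, so normalization is not what selects the boundary --- the face $d^{(j)}_i$ comes from the $\mu^2$-terms of $[\mu_{\mathcal{A}},\phi_j]$ in which the $i$-th block of $\phi_j$ is empty, whereas products of a nested output with a nonempty neighbouring block cancel in pairs against the outer multiplications in $[\mu_{\mathcal{A}},\phi_l]$ of the nested lobe $l$ --- and $\mu^2$-products occur at every junction where several lobes meet, not only at the basepoint (e.g.\ the cell $1231$ acts as $\phi_1\{\phi_2\cup\phi_3\}$ up to sign).
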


\subsubsection{Cyclic cacti}
For $k\geq 0$, consider the space of cacti with $k+1$ lobes $\mathrm{Cact}^{k+1}$, with the lobes labeled $0,1,\cdots,k$. Recall from Section 3.1.1 that each cactus is uniquely determined by a partition of $S^1\cong \mathbb{R}/(k+1)\mathbb{Z}$ into closed $1$-dimensional submanifolds $I_0,\cdots,I_k$ (recall $I_j$ is the union of intervals with color $j$).\\
\begin{mydef}\label{thm:space of cyclic cacti}
The \emph{space of cyclic cacti with $k$ lobes} $\mathrm{Cact}^k_{\circlearrowright}$ is the space $\mathrm{Cact}^{k+1}\times S^1$. For $k\geq 1$, we equip it with the $S^1\times S^1$-action given by the product of the $S^1$-action on $\mathrm{Cact}^{k+1}$ that clockwisely rotates the basepoint along the periphery of the cactus, i.e.  
\begin{equation}
t\cdot(I_0,\cdots,I_k)= (I_0-t,\cdots,I_k-t) \quad (t\in S^1\cong \mathbb{R}/(k+1)\mathbb{Z})
\end{equation}
with the regular action of $S^1$. 
Pictorially, we think of the second $S^1$ factor of $\mathrm{Cact}^{k}_{\circlearrowright}$ as an extra marked point moving along the $0$-th lobe; this requires a choice of a reference point on the $0$-th lobe, which we take to be the intersection of the $0$-th lobe with the connected component of its complement closure that contains the $1$-st lobe. We also call the $0$-th lobe the \emph{base lobe}. The basepoint of the underlying (non-cyclic) cactus $x$ will be called the \emph{input basepoint}; the extra marked point on the base lobe will be called the \emph{output basepoint}; the \emph{adjusted local basepoint} of the $j$-th lobe ($1\leq j\leq k$) is its intersection with the connected component of its complement closure that contains the base lobe, and if $j=0$ it is the output basepoint. See Figure \ref{fig:cyclic_cacti} for an example of a cyclic cactus.\par\indent
For $k=0$, we think of $S^1=\mathrm{Cact}^0_{\circlearrowright}$ as measuring the clockwise angle from the input basepoint to the output basepoint; it is equipped with the $S^1\times S^1$-action via the quotient $S^1\times S^1/\mathrm{diag}\cong S^1$.  
\begin{figure}[H]
 \centering
 \includegraphics[width=0.9\textwidth]{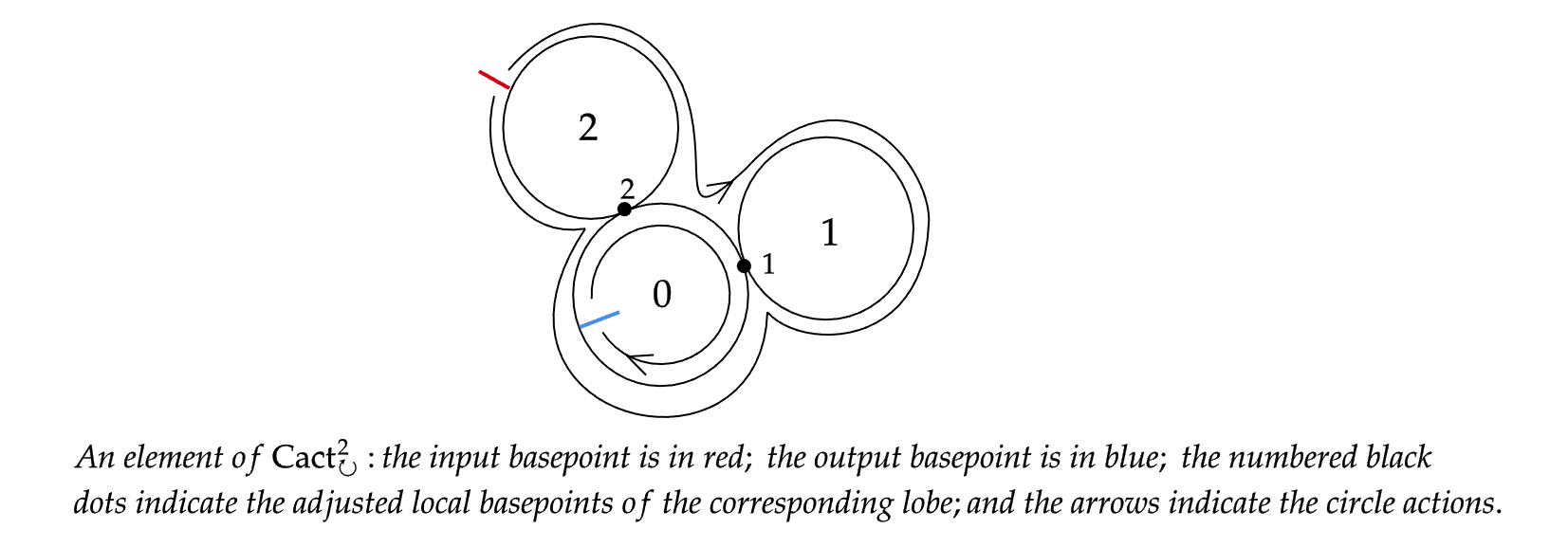}
 \caption{}
 \label{fig:cyclic_cacti}
\end{figure}
\end{mydef}
The `cactus map' of a cyclic cactus $(x,\theta)\in \mathrm{Cact}^k_{\circlearrowright}$ is defined to be the pair $(c_x,\theta)$, where $c_x$ is as in Definition \autoref{thm:cactus map}.\par\indent
\textbf{Cellular structure}. We equip the space of cyclic cacti with the following cellular structure. 
\begin{itemize}
    \item When $k=0$, the space $\mathrm{Cact}^0_{\circlearrowright}= S^1$ is equipped with the standard cellular structure on the circle with one $0$-cell and one $1$-cell.
    \item More generally, recall from Definition \ref{thm:cacti} that a cell $f$ of $\mathrm{Cact}^{k+1}$ corresponds to a product of simplices $\Delta^{i_0}\times \Delta^{i_1}\times\cdots\times\Delta^{i_k}$. Identify $I=[0,1]$ with the fundamental chain of $S^1$, and take the prismatic subdivision $I\times \Delta^{i_0}=\bigcup_{l=0}^{i_0}\Delta^{i_0+1}_l$ equipped with the induced simplicial complex structure, where each top dimensional simplex $\Delta^{i_0+1}_l$ is a copy of the standard simplex $\Delta^{i_0+1}$. We then equip $\mathrm{Cact}^k_{\circlearrowright}\cong S^1\times \mathrm{Cact}^{k+1}$ with the cellular structure where the cells are of the form
    \begin{equation}\label{eq:cells for cyclic cacti}
    K\times \Delta^{i_1}\times\cdots\times \Delta^{i_k},  
    \end{equation}
    where $K$ is a simplex of the simplicial complex $I\times \Delta^{i_0}$. We furthermore equip \eqref{eq:cells for cyclic cacti} with the product orientation where each simplex 
    \begin{equation}
     \Delta^{r}=\{(x_0,\cdots,x_r)\,|\,\sum_{i=0}^r{x_i}=1\;,\;x_i\geq 0\}   
    \end{equation}
    is given the standard orientation induced by $dx_1\wedge \cdots \wedge dx_r$.
\end{itemize}
With respect to this cellular structure, denote 
\begin{equation}\label{eq:dg cyclic cacti operad}
 \mathrm{Cact}_{\circlearrowright,R}^k:=C_{-*}^{cell}(\mathrm{Cact}^k_{\circlearrowright};R).
\end{equation}
From now on, we will represent cyclic cacti cells by drawings as in Figure \ref{fig:graphical_representation_of_cyclic_cacti_cell};
\begin{figure}[H]
 \centering
 \includegraphics[width=1.1\textwidth]{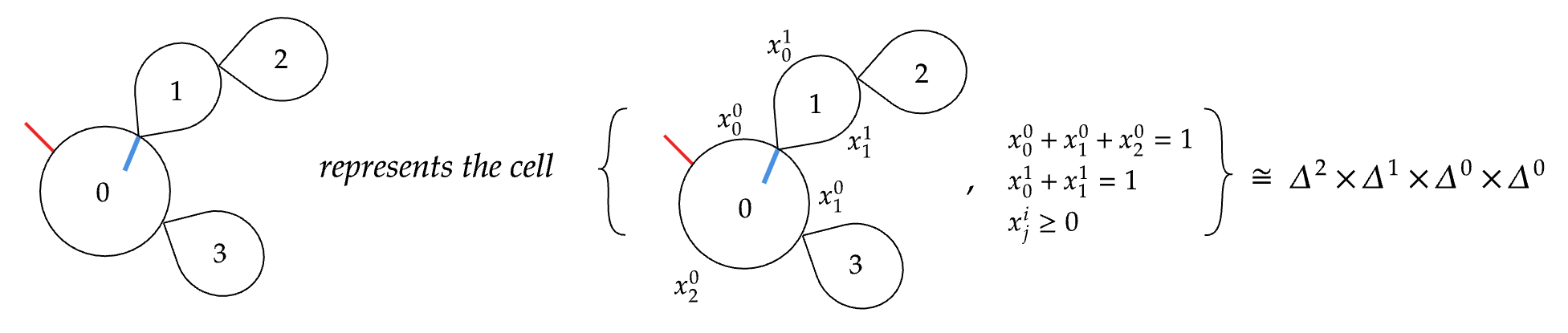}
 \caption{}
 \label{fig:graphical_representation_of_cyclic_cacti_cell}
\end{figure}
see Figure \ref{fig:cells_in_cyclic_cacti_and_their_actions} for more examples of cyclic cacti cells (where we omit the numbering of lobes) and their action on the Hochschild cochain/chain complex (explained in more details in Theorem \ref{thm: action of cyclic cacti on HH_* and HH^*}). 
 \begin{figure}[H]
 \centering
 \includegraphics[width=0.9\textwidth]{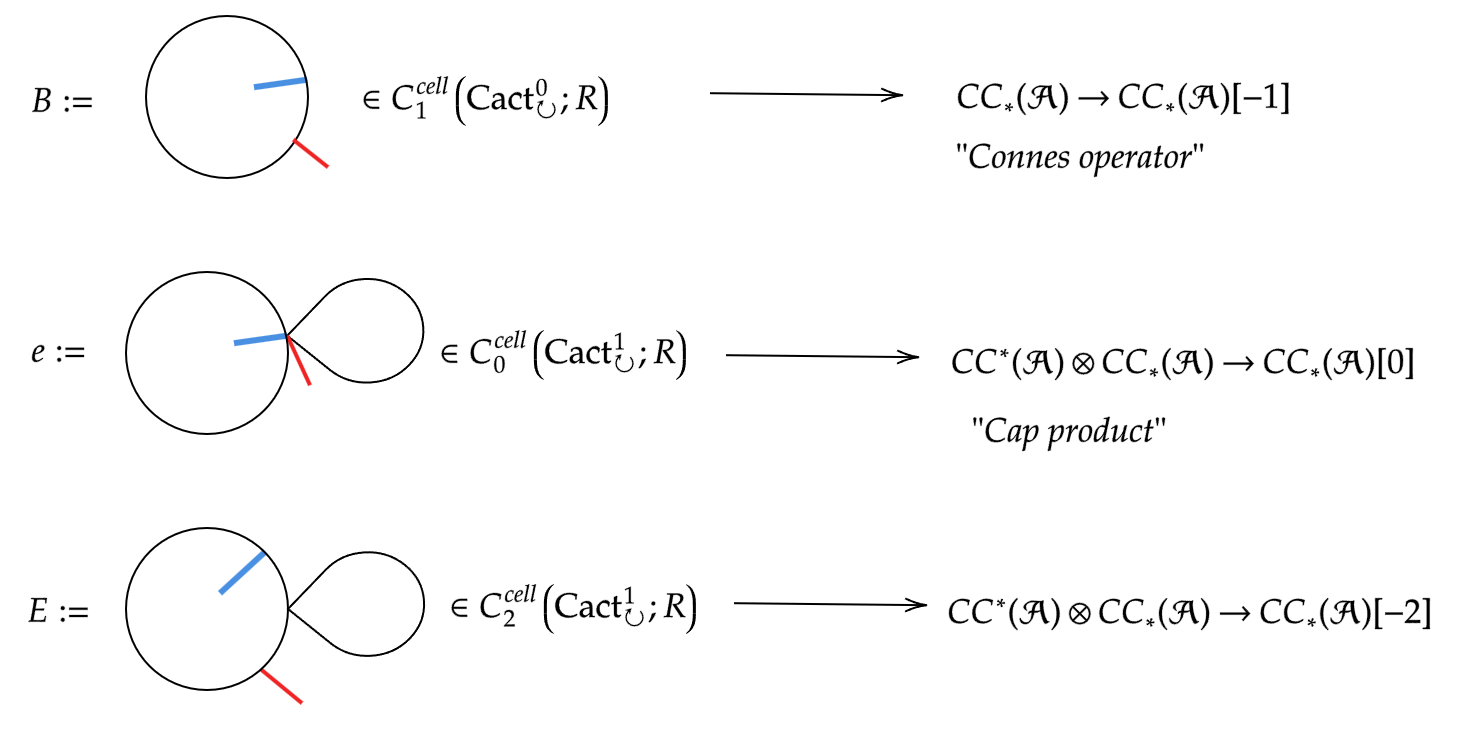}
 \caption{}
 \label{fig:cells_in_cyclic_cacti_and_their_actions}
\end{figure}
\textbf{The operadic structure on cyclic cacti}. We define structure maps
\begin{equation}\label{eq:cyclic cacti operadic structure type 0}
\theta^{\circlearrowright,0}_j: \mathrm{Cact}^l\times \mathrm{Cact}^k_{\circlearrowright}\rightarrow \mathrm{Cact}^{k+l-1}_{\circlearrowright}, \quad1\leq j\leq k    
\end{equation}
and 
\begin{equation}\label{eq:cyclic cacti operadic structure type 1}
\theta^{\circlearrowright,1}: \mathrm{Cact}^l_{\circlearrowright}\times \mathrm{Cact}^k_{\circlearrowright}\rightarrow \mathrm{Cact}^{k+l}_{\circlearrowright} 
\end{equation}
such that when combined with $\theta$ from \eqref{eq:operadic structure on cacti} makes $\{\mathrm{Cact}^l,\mathrm{Cact}^k_{\circlearrowright}\}_{l,k\geq 1}$ into a two colored topological operad up to homotopy. \\
\begin{mydef}\label{thm:operadic structure on cyclic cacti}
\begin{itemize}
    \item $\theta^{\circlearrowright,0}_j$ inserts the first cactus into the $j$-th lobe of the second cyclic cactus (upon applying a piecewise linear oriented isometry) so that the basepoint of the former matches the adjusted local basepoint of the latter. 
    \item $\theta^{\circlearrowright,1}$ inserts the second cyclic cactus into the base lobe of the first cyclic cactus (upon applying a piecewise linear oriented isometry) so that the input basepoint of the second cyclic cactus matches the output basepoint of the first cyclic cactus. The base lobe of the second cyclic cactus will become the base lobe of the composition. 
\end{itemize}
See Figure \ref{fig:composition_of_cyclic_cacti} for an illustration and \cite[Equation (4.35) and (4.37)]{Che3} for a formulaic definition in terms of the cactus map. 
\begin{figure}[H]
 \centering
 \includegraphics[width=0.9\textwidth]{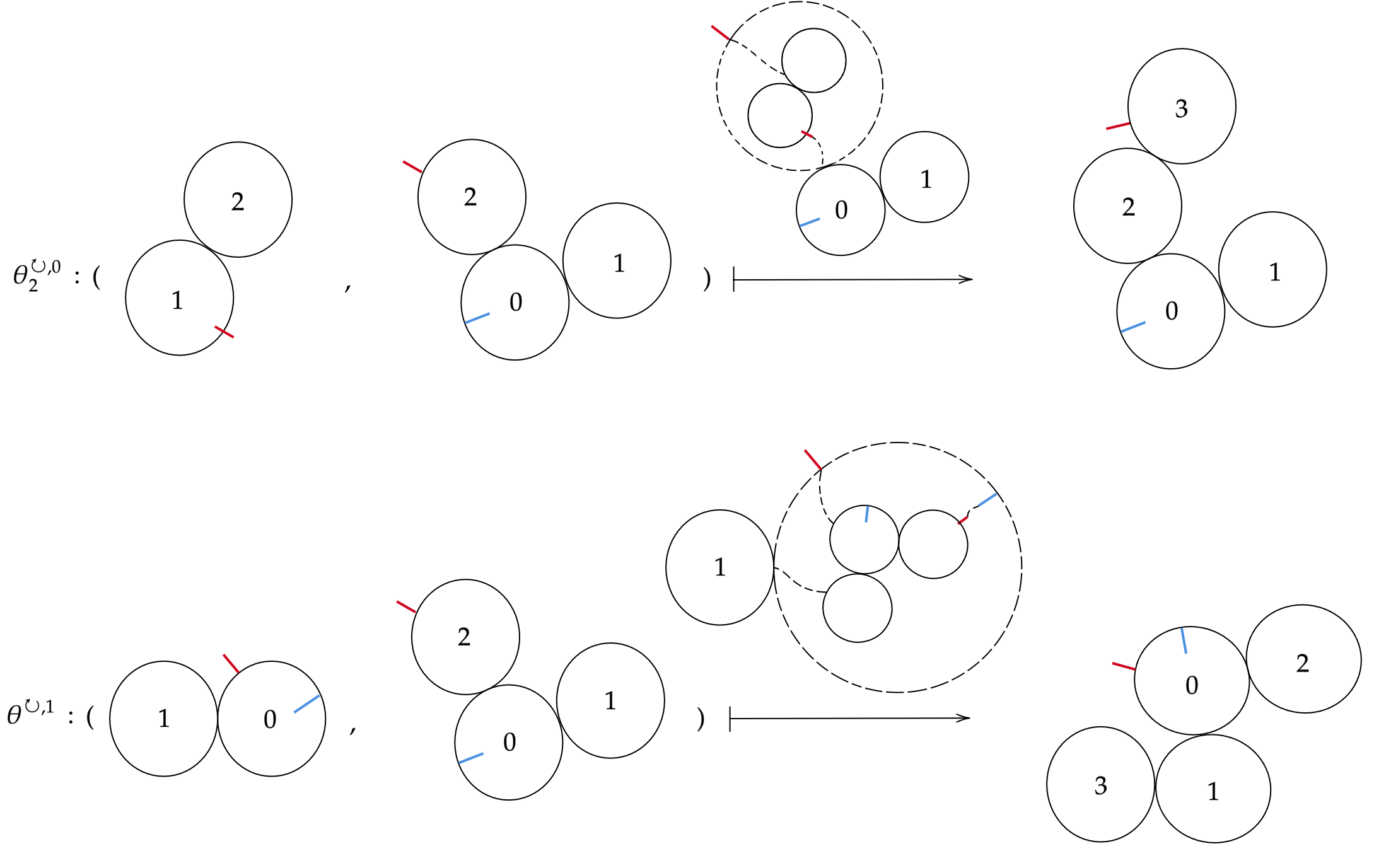}
 \caption{}
 \label{fig:composition_of_cyclic_cacti}
\end{figure}
\end{mydef}
Parallel to the case of ordinary cacti (cf. Theorem \autoref{thm:associativity of cacti operad}), $\theta^{\circlearrowright,0},\theta^{\circlearrowright,1}$ are only associative up to homotopy. Nonetheless, they are cellular maps and moreover induce a strictly associative operadic structure on the associated cellular operad. We record this as Lemma \ref{thm: associativity and equivariance of cyclic cacti operad}, whose proof follows from exactly the same argument as \cite[Theorem 4.10 and Corollary 4.11]{Sal1}.  \\
\begin{lemma}\label{thm: associativity and equivariance of cyclic cacti operad}
\begin{itemize}
    \item The maps $\theta, \theta^{\circlearrowright,0},\theta^{\circlearrowright,1}$ induce on
    \begin{equation}
     \mathrm{Cact}_{2col}:=\{\mathrm{Cact}^l,\mathrm{Cact}^k_{\circlearrowright}\}_{l,k\geq 1}  
    \end{equation}the structure of a two-colored topological operad up to homotopy.
    \item The maps on cellular complexes induced by $\theta, \theta^{\circlearrowright,0},\theta^{\circlearrowright,1}$, denoted $\theta_{cell}, \theta^{\circlearrowright,0}_{cell},\theta^{\circlearrowright,1}_{cell}$, equip
    \begin{equation}
     \mathrm{Cact}_{2col,R}:=\{C_{-*}^{cell}(\mathrm{Cact}^l;R),C_{-*}^{cell}(\mathrm{Cact}^k_{\circlearrowright};R)\}_{l,k\geq 1}   
    \end{equation}
    the structure of a (strict) two-colored dg operad.  \qed
\end{itemize}
\end{lemma}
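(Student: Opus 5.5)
The plan is to follow the argument of \cite[Theorem 4.10, Corollary 4.11]{Sal1} for ordinary cacti, adapted to the two new structure maps $\theta^{\circlearrowright,0}_j$ and $\theta^{\circlearrowright,1}$. Everything is phrased through (cyclic) cactus maps: a cyclic cactus $(x,\theta)$ is determined by $(c_x,\theta)$. By Definition \ref{thm:operadic structure on cacti} and its cyclic analogue, each composite has cactus map $(\prod c_{x_j})\circ c$. Here $c$ is the unique piecewise oriented isometry making the rescaling square commute.

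\textbf{Homotopy associativity.} For a triple composite, evaluated in either order, the two resulting cactus maps agree after reparametrizing the source interval. The only discrepancy is in the piecewise linear rescalings $\alpha$, which redistribute lengths within each lobe. I would write down the straight-line homotopy between the two reparametrizations of the lobes, using the convexity of the simplex coordinates $t^j$ (and, for the base lobe, of the $S^1$-coordinate $\theta$). This is the same mechanism as in \cite{Sal1}.

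For $\theta^{\circlearrowright,1}$ one must also check that the homotopy stays in the space of cacti. Conditions 2) and 3) of Definition \ref{thm:cacti} are preserved because insertion never changes the combinatorial type $f$ along the homotopy, only lengths. One must likewise check that the output basepoint and base lobe are tracked consistently: the base lobe of the inner cyclic cactus is the base lobe of the composite in both orders. The same argument gives the unit and equivariance axioms up to homotopy. This settles the first bullet.

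\textbf{Cellularity and the cellular operad.} First check that each structure map sends a product of cells into a union of cells of the target, i.e.\ it is cellular. For the factors in $\mathrm{Cact}^{k+1}$ this is \cite{Sal1}. For the extra $S^1$-coordinate, the prismatic subdivision of $I\times\Delta^{i_0}$ is chosen precisely so that inserting along the base lobe, with the output marked point moving, maps simplices of the subdivision to simplices. I expect this to be the main obstacle.

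I would handle it as follows. The position of the output basepoint relative to the marked points on the base lobe determines which prism simplex $\Delta^{i_0+1}_l$ one is in. Under $\theta^{\circlearrowright,1}$, the inserted cactus's marked points become new vertices of the base lobe. So the image of $\Delta^{i_0+1}_l\times(\text{cells})$ is a union of the analogous prism simplices, compatibly with the shuffle product decomposition of products of simplices.

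Given cellularity, the induced maps $\theta_{cell},\theta^{\circlearrowright,0}_{cell},\theta^{\circlearrowright,1}_{cell}$ exist on $C^{cell}_{-*}$. Strict associativity then holds, as in \cite[Corollary 4.11]{Sal1}, for the following reason. The homotopies above preserve every cell: they only move points within a closed cell, never altering the combinatorial data $f$ or the prism index $l$. Hence the two compositions induce the same map on cellular chains. Signs agree because both composites are computed with the product orientation fixed in \eqref{eq:cells for cyclic cacti}. Dimension preservation under the maps shows that no degenerate lower-dimensional images contribute. This gives the strict two-colored dg operad of the second bullet.
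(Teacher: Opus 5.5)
Your route is the paper's route. The paper proves this lemma only by saying that the argument of \cite[Theorem 4.10, Corollary 4.11]{Sal1} goes through verbatim, and your outline is that argument adapted: compositions via cactus maps, straight-line homotopies between the two rescalings, cellularity of $\theta^{\circlearrowright,1}$ by reading the prism simplex as recording which base-lobe arc contains the output point, then strictness on cellular chains. Your conclusion is right, but the step you use to get \emph{strict} associativity is misstated, and one supporting claim is false.

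First, the two bracketings of a triple composite need not lie in the same open cell. So the homotopy does change the combinatorial type of the composite (and, for $\theta^{\circlearrowright,1}$, possibly the prism index), contrary to what you assert. Suppose $y$ (with $m$ lobes) goes into lobe $i$ of $x_j$ and the result goes into lobe $j$ of $x$. A point at position $s$ on lobe $j$ of $x$ lands at $Ns$, with $N=n_j+m-1$, in one bracketing, and at $\beta(n_js)$ in the other, where $\beta$ rescales the lobe-$i$ arcs of $x_j$ by $m$. For example, take $x$ of type $121$, $x_1$ and $y$ of type $12$, $i=2$ and $s=0.4$. Then lobe $2$ of $x$ ends up attached to $y$ in one bracketing and to lobe $1$ of $x_1$ in the other.

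What is true, and what suffices, is the following. One bracketing equals the other precomposed with a self-map of the source product that reparametrizes the outer lobe by an increasing homeomorphism; for the base lobe, positions are measured from the output point. Such a map preserves every open product cell and its orientation. The straight-line homotopy through such maps sends each closed $n$-dimensional product cell into its image under one fixed bracketing, hence into the $n$-skeleton. A homotopy of cellular maps with $H(X^{(n)}\times I)\subset Y^{(n)}$ induces identical maps on cellular chains, and this is the real reason strictness holds.

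Second, your claim that no degenerate lower-dimensional images occur is false for $\theta^{\circlearrowright,1}$. If the output point of the first factor and the input basepoint of the second are both free, moving them by compensating amounts leaves the composite unchanged, so a dimension collapses. This is exactly the relation of Figure \ref{fig:local_relations_2}, which the paper uses to get $[E,B]=0$. Such cells simply go to $0$ and are handled by the skeleton argument, so the lemma survives, but that sentence must be dropped rather than relied on.
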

\begin{figure}[H]
 \centering
 \includegraphics[width=1.0\textwidth]{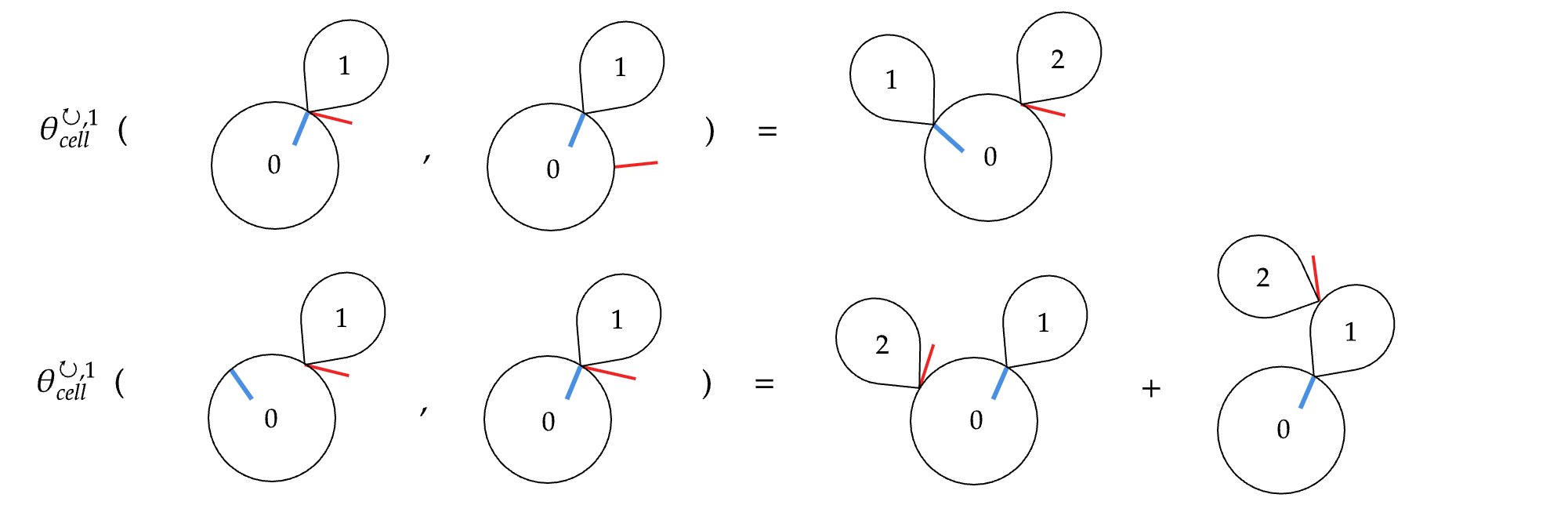}
 \caption{Some examples of the composition $\theta^{\circlearrowright,1}_{cell}$.}
 \label{fig:example_of_cyclic_cacti_cellular_composition}
\end{figure}

\textbf{Action on (normalized) Hochschild cochains and chains}. The following theorem is a generalization of Theorem \ref{thm: action of cacti on HH^*} to the case of the two-colored operad $\mathrm{Cact}_{2col,R}$, and was proved in \cite[Lemma 4.28 and Theorem 4.26]{Che3}. We remark that this statement also has counterparts using `minimal operads', which appeared in earlier works of Kontsevich-Soibelman e.g. \cite[Theorem 11.3.1]{KS1}.\\
\begin{thm} \label{thm: action of cyclic cacti on HH_* and HH^*}
For a dg or d$(\mathbb{Z}/2$)g algebra $\mathcal{A}$ over $R$, there are chain maps
\begin{equation}\label{eq: action of cyclic cacti on HH_* and HH^*}
\mathrm{Act}_{\mathcal{A}}: \mathrm{Cact}^k_{\circlearrowright,R}\rightarrow \mathrm{Hom}_R(CC^*(\mathcal{A})^{\otimes_R k}\otimes_R CC_*(\mathcal{A}), CC_*(\mathcal{A}))    
\end{equation}
such that when combined with \eqref{eq: action of cacti on HH^*}, make the pair $(CC^*(\mathcal{A}),CC_*(\mathcal{A}))$ into an algebra over the two-colored operad $\mathrm{Cact}_{2col,R}$. Moreover, the chain map \eqref{eq: action of cyclic cacti on HH_* and HH^*} is $\Sigma_k\times S^1\times S^1$-equivariant. Here, the $\Sigma_k$-action on the left hand side is given by permuting the labels of the lobes numbered $1,\cdots,k$ in the cyclic cactus, and on the right hand side is given by permuting the tensor factors of $CC^*(\mathcal{A})$; the $S^1\times S^1$-action on the left hand side is induced by rotations of the output/input marked points (cf. Definition \ref{thm:space of cyclic cacti}) and on the right hand side is induced by the standard circle actions on the two copies of $CC_*(\mathcal{A})$. \qed
\end{thm}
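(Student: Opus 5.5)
The plan is to build $\mathrm{Act}_{\mathcal{A}}$ cell by cell, extending the brace-type formulas of McClure--Smith behind Theorem \ref{thm: action of cacti on HH^*}. Each cyclic cactus cell will act on cyclic chains by a combinatorial pattern that generalizes the operators $e_D$, $\mathcal{L}_D$, $E_D$ and $B$ of Section 2. Equivariance will then be built in by construction, and the chain-map and operadic identities reduced to face-by-face bookkeeping.

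\textbf{Step 1: definition on cells.} First take a cell of the form $\{0\}\times f\times\Delta^{i_1}\times\cdots\times\Delta^{i_k}$, where the $S^1$-coordinate is the $0$-cell and $f\in\mathfrak{Cact}^{k+1}_{i_0,\dots,i_k}$. Read the cyclic word $f$ starting at the output basepoint, and treat the $i_0+1$ arcs of the base lobe as slots for the entries $x_0,x_1,\dots,x_n$ of a Hochschild chain.
\begin{itemize}
\item Each arc of the base lobe receives a (possibly empty) consecutive block of the $x$'s. Emptiness is allowed, as in $e_D$; $x_0$ is always placed at the output basepoint.
\item Each lobe $j\geq 1$ receives the cochain $\phi_j$. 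It is evaluated, with its $i_j+1$ arcs as separators, on the consecutive entries and on the outputs of the lobes nested inside it, exactly as in the McClure--Smith brace formula.
\end{itemize}
Lobes attached directly to the base lobe produce entries of the output chain, as in $\mathcal{L}_D$. A lobe that straddles the output basepoint is instead multiplied into the zeroth slot, as in $e_D$. Signs come from the Koszul rule for the reduced degrees $\|x\|$, together with the orientation convention fixed for \eqref{eq:cells for cyclic cacti}.

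Next take the cells of the form $K\times\Delta^{i_1}\times\cdots\times\Delta^{i_k}$, where $K$ is a top simplex $\Delta^{i_0+1}_l$ of the prismatic subdivision of $I\times\Delta^{i_0}$. Such a cell records that the output basepoint has passed the $l$-th marked point of the base lobe. I define its action by the corresponding cyclic rotation of the inputs, followed by inserting a new entry $1$ at the new output basepoint, as in $B$ and $E_D$. For $k=0$ this recovers $\mathrm{id}$ on the $0$-cell and $B$ on the $1$-cell.

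\textbf{Step 2: the chain map property.} I will compare the cellular boundary of each cell with the commutator of its operation with the differentials. The faces $d^{(j)}_i$ for $j\geq1$ merge two adjacent arcs of lobe $j$. They should match the terms of $[\mu_{\mathcal{A}},\phi_j]$ in which $\mu^2$ multiplies adjacent arguments, and the terms with $\mu^1$ should cancel against $d$ acting on the inputs. The faces with $j=0$ should match the multiplication terms of $b$ in \eqref{eq:b}, including the wrap-around term $x_nx_0$. Faces at which a lobe would become empty, or at which a $1$ lands in a positive slot, vanish because we use the normalized complexes $CC^*(\mathcal{A})$ and $CC_*(\mathcal{A})$. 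This is the reason for working with normalized complexes throughout. For the prismatic cells, the two ends of $I$ and the internal prism faces should reproduce, respectively, the unrotated and fully rotated operations and the adjacent rotation stages. This is the cyclic-cacti analogue of the identities \eqref{eq:e_D,E_D basic} and \eqref{eq:cartan}.

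\textbf{Step 3: operadic compatibility and equivariance.} For $\theta^{\circlearrowright,0}_{cell}$, inserting a cactus into lobe $j$ corresponds to substituting a brace composite for $\phi_j$; this reduces to the McClure--Smith verification. For $\theta^{\circlearrowright,1}_{cell}$, stacking cyclic cacti corresponds to feeding the output chain of one operation into the base lobe of the other. I would check this directly on the cells, using the prismatic decomposition to match the cells of the two sides. The $\Sigma_k$-equivariance follows from relabeling, with Koszul signs. The $S^1\times S^1$-equivariance means compatibility with $B$ on both the input and the output copies of $CC_*(\mathcal{A})$; this follows from the fact that the prismatic cells were defined as $B$-type rotations.

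\textbf{Main obstacle.} I expect the hardest step to be the sign-consistent verification, in Step 2, of the chain-map identity on the prismatic cells, where the Connes-type rotation interacts with $b$ and with lobes straddling the output basepoint. My first approach there would be to write these cells as operadic composites, via $\theta^{\circlearrowright,1}_{cell}$, of the $k=0$ $1$-cell ($B$) with $0$-cells. The identity would then follow from the $0$-cell case together with $bB+Bb=0$.
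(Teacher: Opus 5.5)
Your overall strategy (define the action cell by cell through brace-type formulas, then check the chain-map, operadic and equivariance identities face by face) is the natural one. Note that this paper does not prove the theorem; it imports it from the cyclic-cacti paper and recalls only the recipe. In that recipe, $x_0$ sits at the \emph{input} basepoint, and $x_1,\dots,x_n$ are distributed clockwise along the \emph{whole} circumference, so each lobe $j\geq1$ receives consecutive blocks. The output chain is then read off lobe $0$ starting from the \emph{output} basepoint.

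Your Step 1 departs from this at an essential point. You put $x_0$ at the output basepoint, use only the arcs of the base lobe as slots, and read $f$ as a cyclic word from there. This discards the input basepoint, i.e.\ the basepoint of the underlying cactus $f\in\mathfrak{Cact}^{k+1}$, which lies on lobe $f(1)$ and need not be the base lobe. Already for $k=1$ your rule cannot tell apart the $0$-cells $01$ and $10$ of $\mathrm{Cact}^2$. Both basepoints of these cells sit at the junction, and the cells differ only in which lobe is traversed first from the input basepoint. Yet they act by the two different cap products $\phi_1(x_{j+1},\dots,x_n)x_0\otimes x_1\otimes\cdots\otimes x_j$ and $x_0\phi_1(x_1,\dots,x_j)\otimes x_{j+1}\otimes\cdots\otimes x_n$.

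The decisive failure is in Step 3. Consider compatibility with $\theta^{\circlearrowright,1}_{cell}$ when the $1$-cell $B$ of $\mathrm{Cact}^0_{\circlearrowright}$ is composed on the input side. This is the same as input-side $S^1$-equivariance, since that circle action is $\alpha\mapsto(-1)^{|\alpha|}\alpha\circ B$ at the chain level. It forces $\mathrm{Act}(e\circ B)=\pm e_{\phi_1}\circ B$. The right-hand side contains the terms $\pm\phi_1(x_m,\dots,x_n,x_0,x_1,\dots,x_{i-1})\otimes x_i\otimes\cdots\otimes x_{m-1}$, in which $x_0$ is an argument of $\phi_1$ and the zeroth slot is a value of $\phi_1$.

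Geometrically, $e\circ B$ is the chain of cells obtained by sweeping the input basepoint around the whole figure-eight. When the basepoint is on lobe $1$ one gets the cell $101$. In the actual construction this cell acts by the second half of $\mathcal{L}_{\phi_1}$, which is the second picture in the paper's formula for $L$. In your recipe the zeroth slot of every operation is either $1$ (prismatic cells) or $x_0$ multiplied by lobe outputs ($\{0\}$-cells). So no combination of your cells produces these terms. Your claim that equivariance follows because the prismatic cells are $B$-type rotations therefore covers only the output circle.

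Your fallback for the prismatic cells has a related problem. Composing a $\{0\}$-cell with $B$ on the output side produces the sum over the whole prism $I\times\Delta^{i_0}$. It does not isolate the individual simplices $\Delta^{i_0+1}_l$, and the chain-map identity must be checked on each of them. The repair is to anchor $x_0$ at the input basepoint wherever it lies, and let the output basepoint only determine where the resulting chain is read off.
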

In words, the action \eqref{eq: action of cyclic cacti on HH_* and HH^*} is defined as follows. Fix Hochschild cochains $\varphi_1,\cdots,\varphi_k$, Hochschild chain $a_0\otimes a_1\otimes\cdots\otimes a_n$ and a cyclic cacti cell $\alpha$. Put $\varphi_i$ in the $i$-th lobe of $\alpha$; sum (with appropriate Koszul signs) over all ways to insert $a_0,a_1,\cdots,a_n$ in a clockwise manner along the circumference of $\alpha$ with $a_0$ inserted at the input (red) basepoint (and that no other $a_i$ coincide with a basepoint); one then evaluates the inserted $a_0,\cdots,a_n$ by the various $\varphi_i$'s according to the shape of $\alpha$, and read off the resulting Hochschild chain $b_0\otimes b_1\otimes\cdots\otimes b_m$ lying on the $0$-th lobe clockwisely, with $b_0$ being the entry at the output (blue) basepoint. See \cite[Definition 4.27]{Che3} for more details. \par\indent
See Figure \ref{fig:cells_in_cyclic_cacti_and_their_actions} right for the cyclic cacti cells corresponding to the various chain-level operations appearing in the formula for the Getzler-Gauss-Manin connection (cf. Section 2.3). We refer the readers to \cite[Section 4.2]{Che3} for a more detailed discussion of the circle actions and the equivariance property in Theorem \ref{thm: action of cyclic cacti on HH_* and HH^*}. \par\indent
Since \eqref{eq: action of cyclic cacti on HH_* and HH^*} is $\Sigma_k$-equivariant, it induces a chain map
\begin{equation}\label{eq: unordered action of cyclic cacti on HH_* and HH^*}
\mathrm{Act}_{\mathcal{A}}^{\Sigma_k}: \mathrm{UCact}^k_{\circlearrowright,R}\rightarrow \mathrm{Hom}_R((CC^*(\mathcal{A})^{\otimes_R k})^{h\Sigma_k}\otimes_R CC_*(\mathcal{A}), CC_*(\mathcal{A})),
\end{equation}
where we denote
\begin{equation}\label{eq:unordered cyclic cacti}
 \mathrm{UCact}^k_{\circlearrowright,R}:=\mathrm{Cact}^k_{\circlearrowright,R}/\Sigma_k\simeq (\mathrm{Cact}^k_{\circlearrowright,R})_{h\Sigma_k},
\end{equation}
the last equivalence following from the freeness of the $\Sigma_k$-action. Intuitively, generators of $\mathrm{UCact}^k_{\circlearrowright,R}$ correspond to \emph{unordered cyclic cacti}, i.e. the labels on the lobes numbered $1,\cdots,k$ are forgotten (but there is still a distinguished $0$-th lobe/baselobe). 

\subsection{A topological model}
In this subsection, we recall the construction of a topological model for the Kontsevich-Soibelman operad based on configuration spaces \cite{KS1}\cite{DTT}, which has the advantage of being amenable to computations. \par\indent
The values of the two-colored operad $\{\mathrm{Cyl}(l,0),\mathrm{Cyl(k,1)}\}_{l\geq 1,k\geq 0}$ is defined as follows. We set $\{\mathrm{Cyl}(l,0)\}_{l\geq 1}$ to be the little disk operad.\par\indent
On the other hand, $\mathrm{Cyl}(n,1), n\geq 0$ is the topological space of 
\begin{itemize}
    \item When $n\geq1$, cylinder $S^1\times [a,c],a<c$ together with a configuration of $n$ disks on $S^1\times(a,c)$ and two marked points $in\in S^1\times\{c\}, out\in S^1\times\{a\}$. When $n=0$, $\mathrm{Cyl}(0,1)$ is the space of two points $in,out\in S^1$ modulo simultaneous rotation. 
    \item These configurations are considered up to equivalence generated by parallel shifts $S^1\times[a,c]\rightarrow S^1\times [a+l,c+l]$ and overall $S^1$-rotation of the cylinder. 
\end{itemize}
The two kinds of operadic compositions involving $\mathrm{Cyl}(n,1)$
\begin{equation}\label{eq:cyl operadic 1}
\lambda_i:\mathrm{Cyl}(n,0)\times_i \mathrm{Cyl}(n',1)\rightarrow \mathrm{Cyl}(n+n'-1,1)  
\end{equation}
\begin{equation}\label{eq:cyl operadic 2}
\eta:\mathrm{Cyl}(n,1)\times \mathrm{Cyl}(n',1)\rightarrow \mathrm{Cyl}(n+n',1)    
\end{equation}
are given by insertion of configuration of little disks, and stacking two cylinders on top of each other while matching the $out$ of the first cylinder with the $in$ of the second cylinder (after a rotation), respectively; see Figure \ref{fig:figure3.5}. These should be viewed as counterparts of the operadic compositions on $\mathrm{Cact}_{2col}$ introduced in Definition \ref{thm:operadic structure on cyclic cacti}.
\begin{figure}[H]
 \centering
 \includegraphics[width=0.9\textwidth]{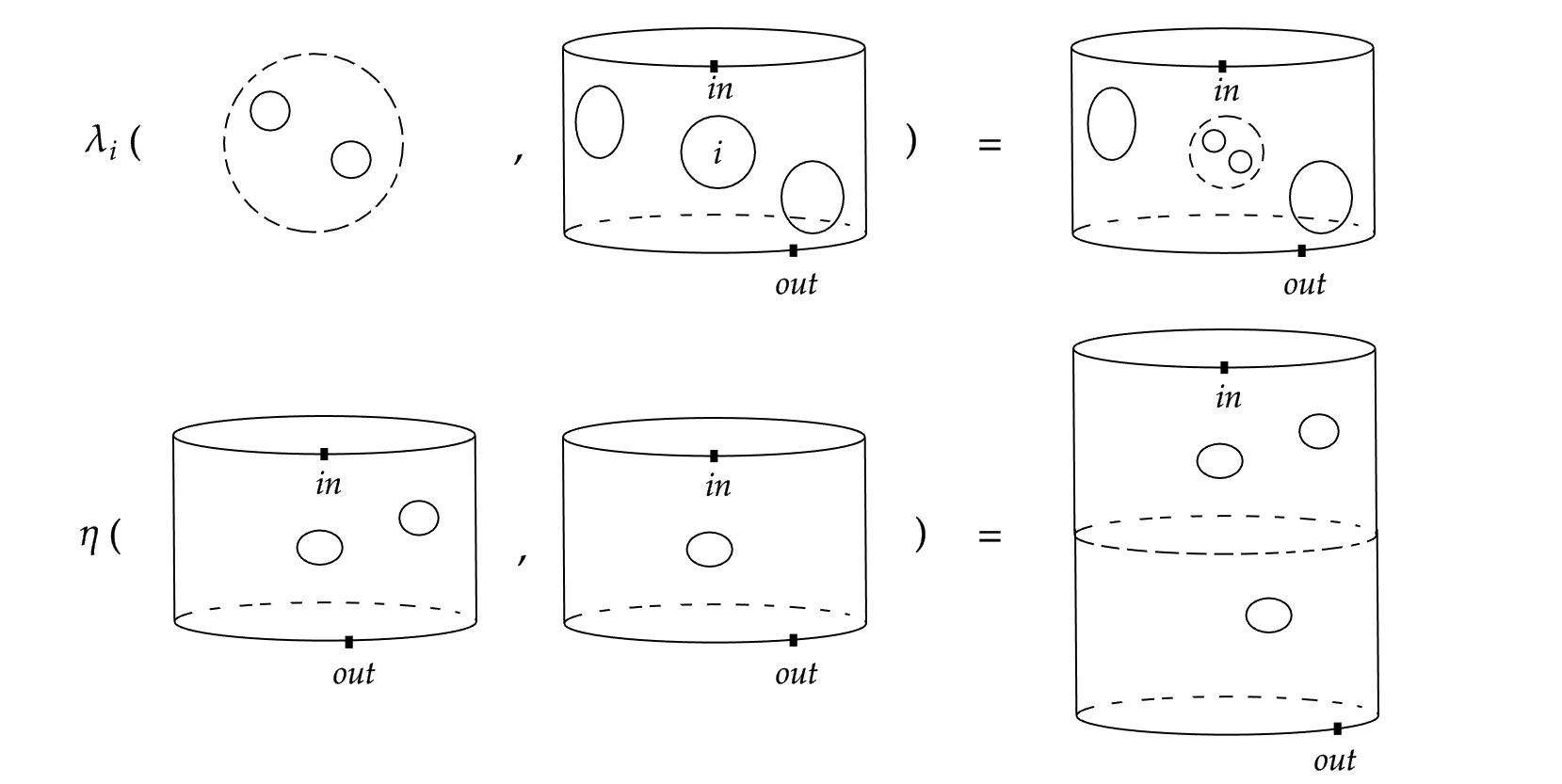}
 \caption{}
 \label{fig:figure3.5}
\end{figure}
Note that there is a homotopy equivalence 
\begin{equation}\label{eq:cyl as conf}
\mathrm{Cyl}(n,1)\simeq \mathrm{Conf}_n(\mathbb{C}^*)\times S^1,
\end{equation}
given as follows. Up to homotopy, we may assume everything lies on the standard cylinder $S^1\times [0,1]$ and up to $S^1$-rotation, we fix $in$ to be at $(0,1)\in S^1\times\{1\}$. Then, upon identifying $S^1\times (0,1)\cong \mathbb{C}^*$, we record the positions of the centers of the disks and the position of $out$, which gives the desired homotopy equivalence. \par\indent
The following comparison result relates the two different models for the Kontsevich-Soibelman operad. For details of the proof, we refer the readers to \cite[Theorem 4.26 and Theorem 4.30]{Che3}; we also remark that a version using the `minimal operad' model was proved in earlier works of \cite{KS1} and \cite{Wil}.\\ 
\begin{thm}\label{thm:comparison of Cact with Cyl}
$\mathrm{Cact}_{2col,R}$ is quasi-equivalent to the two-colored dg operad of singular chains on $\mathrm{Cyl}$. Moreover, the quasi-equivalence $\mathrm{Cact}^n_R\simeq C_{-*}(\mathrm{Cyl}(n,0);R)$ is $\Sigma_n$-equivariant and the quasi-equivalence $\mathrm{Cact}^n_{\circlearrowright,R}\simeq C_{-*}(\mathrm{Cyl}(n,1);R)$ is $\Sigma_n\times S^1\times S^1$-equivariant. 
\end{thm}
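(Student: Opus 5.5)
We prove the comparison by passing through the standard intermediate models, one color at a time, and then checking that the colors interact correctly.

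\textbf{Step 1: the one-colored part.} For the color $(l,0)$ we invoke the known theorem of McClure--Smith and Salvatore. It says that $\mathrm{Cact}^l$ is $\Sigma_l$-equivariantly homotopy equivalent to the little disks space $\mathrm{Cyl}(l,0)$, compatibly with operadic compositions up to coherent homotopy. Its cellular chains are then quasi-equivalent, as a dg operad, to $C_{-*}(\mathrm{Cyl}(\bullet,0);R)$. The comparison is made through a zig-zag of operads, for instance via the $E_2$-operad of spineless cacti or the Fiedorowicz recognition principle. Equivariance under $\Sigma_l$ comes for free because the $\Sigma_l$-actions are free on both sides.

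\textbf{Step 2: a space-level map for the second color.} For the color $(k,1)$ we build an explicit map
\[
\Phi:\mathrm{Cact}^k_{\circlearrowright}=\mathrm{Cact}^{k+1}\times S^1\longrightarrow \mathrm{Conf}_k(\mathbb{C}^*)\times S^1\simeq \mathrm{Cyl}(k,1).
\]
The idea is to view the base lobe as the cylinder boundary. The remaining $k$ lobes are grouped into ``trees'' hanging off the base lobe, and each tree is sent to a little-disk configuration placed at the corresponding angle on the cylinder, using the Step 1 equivalence $\mathrm{Cact}\simeq$ little disks. The extra $S^1$ coordinate is sent to the position of $out$, and the input basepoint to $in$. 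We then check two things:
\begin{itemize}
\item $\Phi$ is $\Sigma_k\times S^1\times S^1$-equivariant. Rotating the cactus basepoint corresponds to rotating $in$, and the regular $S^1$ corresponds to rotating $out$.
\item $\Phi$ is a homotopy equivalence.
\end{itemize}
For the homotopy equivalence we would argue by comparing fibrations. Forgetting a lobe adjacent to the base lobe on one side, and forgetting the outermost disk on the other, gives Fadell--Neuwirth-type fibrations. Induction on $k$ then reduces the claim to the case $k=0$, where both sides are $S^1$.

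\textbf{Step 3: operadic compatibility.} We verify that $\Phi$ intertwines $\theta^{\circlearrowright,0}_j$ with $\lambda_j$ and $\theta^{\circlearrowright,1}$ with $\eta$, up to coherent homotopy. Insertion into a lobe corresponds to insertion into a little disk. Inserting one cyclic cactus into the base lobe of another corresponds to stacking cylinders, with the output basepoint matched to the input basepoint.

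\textbf{Step 4: rectification.} To obtain a strict quasi-equivalence of dg operads, we replace homotopy operads by strict ones, for example with a $W$-construction (Boardman--Vogt) on $\mathrm{Cact}_{2col}$. This produces a zig-zag
\[
\mathrm{Cact}_{2col,R}\xleftarrow{\sim} C^{cell}(W\mathrm{Cact}_{2col})\to C_{-*}(W\mathrm{Cact}_{2col})\to C_{-*}(\mathrm{Cyl}).
\]
Here we use that cellular and singular chains agree, and that the cellular composition maps of Lemma \ref{thm: associativity and equivariance of cyclic cacti operad} are strict. The $S^1\times S^1$-equivariance at chain level would be handled by working with $C_*(S^1)$-modules via the cellular model of $S^1$ (a $0$-cell and a $1$-cell), so that the circle action is encoded by degree $1$ operators, as in the prismatic cell structure.

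\textbf{Main obstacle.} I expect the hardest step to be Step 4: keeping the $S^1\times S^1$-equivariance strict through the rectification, since the cellular circle action is only a homotopy action. If a strict comparison fails, I would fall back to constructing the quasi-equivalence directly on cellular chains. This would be done on generators via the action on Hochschild chains (Theorem \ref{thm: action of cyclic cacti on HH_* and HH^*}), matched with the Kontsevich--Soibelman minimal operad, and the homology computation of both sides would then be checked to agree with the calculus operad.
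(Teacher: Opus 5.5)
The paper does not prove this statement itself. It imports it from \cite[Theorems 4.26 and 4.30]{Che3}, noting an earlier minimal-operad version due to \cite{KS1} and \cite{Wil}. Measured against what is asserted, your proposal has a genuine gap, and it sits exactly in the step you defer. The theorem asserts a zig-zag of \emph{strict} quasi-isomorphisms of two-colored dg operads over an arbitrary ring $R$; Section 4 uses it with $\mathbf{k}$ of characteristic $p$.

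Step 3 only asserts compatibility ``up to coherent homotopy'' and never constructs a single higher homotopy. Step 4 cannot supply them. The Boardman--Vogt $W$-construction takes a strict operad as input, whereas $\theta,\theta^{\circlearrowright,0},\theta^{\circlearrowright,1}$ are associative only up to homotopy on spaces; strict associativity holds only on cellular chains. So $W\mathrm{Cact}_{2col}$ and its augmentation are not available, and the first arrow of your zig-zag does not exist. Moreover, a map $W\mathrm{Cact}_{2col}\to\mathrm{Cyl}$ \emph{is} precisely the coherent data Step 3 leaves unbuilt. Separately, the arrow $C^{cell}\to C_{-*}$ is not canonical and needs a skeleton-adapted zig-zag to respect compositions.

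Your fallback is closer to a workable route, but only if you do two things. First, exhibit an actual map of two-colored dg operads, for example to the minimal operad, that induces an isomorphism on homology. Second, invoke a chain-level comparison of that model with $C_{-*}(\mathrm{Cyl};R)$ valid over arbitrary $R$. Identifying both homology operads abstractly with the calculus operad proves nothing, because these chain operads are not formal in characteristic $p$. Already $C_*(\mathrm{Conf}_p(\mathbb{C});\mathbb{F}_p)$ and its homology have non-equivalent $\Sigma_p$-homotopy orbits. For the chains the action is free, so the orbits have finite-dimensional homology. For the homology, the orbits contain the infinite-dimensional $H_*(\Sigma_p;\mathbb{F}_p)$ coming from $H_0$. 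This is exactly the kind of information Section 4 extracts.

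The obstacle you single out, by contrast, is largely a misdiagnosis. The chain-level circle actions on $\mathrm{Cact}^k_{\circlearrowright,R}$ are strict and operadic. As recalled in Section 4.1, they are $(-1)^{|\cdot|}(-)\circ B$ and $-B\circ(-)$, with $B$ the $1$-cell of $\mathrm{Cact}^0_{\circlearrowright}$. Moreover $B\circ B=0$, since $\mathrm{Cact}^0_{\circlearrowright}$ is $1$-dimensional. On the $\mathrm{Cyl}$ side the two rotations are likewise stacking with $\mathrm{Cyl}(0,1)\cong S^1$. Hence any zig-zag of strict two-colored operad maps that includes arity $(0,1)$ is automatically compatible with the $S^1\times S^1$-actions. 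It is also compatible with the $\Sigma_n$-actions, simply because it consists of maps of symmetric operads. Freeness of the actions does not create equivariance; it only lets equivariant quasi-isomorphisms descend to orbits.

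Step 2 also needs repair.
\begin{itemize}
\item A zig-zag, or a non-canonical homotopy inverse, from Step 1 does not give a continuous, equivariant recipe for placing the trees.
\item As described, $\Phi$ is not equivariant for the first circle, which moves the input basepoint around the whole periphery of length $k+1$. While the basepoint runs around a tree, $in$ would sit still at that tree's attaching angle. Angles would have to be measured by arc length along the full periphery instead.
\item Forgetting a lobe is not a locally trivial fibration. The whole $2$-cell $20212$ of $\mathrm{Cact}^3$ lies over the vertex $01$ of $\mathrm{Cact}^2$, whereas fibres over interior points of the edge $010$ are at most $1$-dimensional. The Fadell--Neuwirth induction would therefore need a quasifibration argument.
\end{itemize}
The last point is not where the difficulty lies anyway: the non-equivariant equivalence $\mathrm{Cact}^{k+1}\simeq\mathrm{Conf}_{k+1}(\mathbb{C})\simeq\mathrm{Conf}_k(\mathbb{C}^*)$ is already available.
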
\qed

\subsection{Kontsevich-Soibelman operations}
Consider the action map \eqref{eq: unordered action of cyclic cacti on HH_* and HH^*}, which we recall is $\Sigma_n\times S^1\times S^1$-equivariant. By taking appropriate $S^1$ homotopy orbits or fixed points (cf. \cite[Section 4.4]{Che3} for details), one obtains a map
\begin{equation}\label{eq: equivariant action of cyclic cacti on HH_* and HH^*}
\Xi_{\mathcal{A}}: H^*((\mathrm{UCact}^k_{\circlearrowright,R})^{hS^1})\rightarrow \mathrm{Hom}_{H^*_{S^1}(pt;R)}(H^*((CC^*(\mathcal{A})^{\otimes_R k})^{h\Sigma_k})\otimes_R HH^-_*(\mathcal{A}), HH^-_*(\mathcal{A})).        
\end{equation}
Sometimes it is convenient to consider the localized version, i.e. after inverting the generator $t\in H^2_{S^1}(pt;R)$, one obtains a map (still denoted by the same letter)
\begin{equation}\label{eq: equivariant action of cyclic cacti on HH_* and HH^*}
\Xi_{\mathcal{A}}: H^*((\mathrm{UCact}^k_{\circlearrowright,R})^{tS^1})\rightarrow \mathrm{Hom}_{H^*_{S^1}(pt;R)}(H^*((CC^*(\mathcal{A})^{\otimes_R k})^{h\Sigma_k})\otimes_R HH^{per}_*(\mathcal{A}), HH^{per}_*(\mathcal{A})).        
\end{equation}
In particular, given elements 
\begin{equation}
[\phi]\in H^*((CC^*(\mathcal{A})^{\otimes_R k})^{h\Sigma_k}),\quad[\alpha]\in H^*((\mathrm{UCact}^k_{\circlearrowright,R})^{hS^1})\;\;(\mathrm{resp.}\;\;H^*((\mathrm{UCact}^k_{\circlearrowright,R})^{tS^1}),
\end{equation}
there is an $H^*_{S^1}(pt;R)$-linear endomorphism of $ HH^-_*(\mathcal{A})$ (resp.  $HH^{per}_*(\mathcal{A})$) 
\begin{equation}
 \Xi_{\mathcal{A}}([\alpha])([\phi],-).   
\end{equation}
Such endomorphisms are called \emph{Kontsevich-Soibelman operations} on the negative (resp. periodic) cyclic homology of $\mathcal{A}$.\par\indent
When the base ring $R$ has characteristic $p$, it is often useful to study variants of Kontsevich-Soibelman operations by restricting to the finite cyclic subgroup $C_p\subset S^1$, and we denote $\Xi^p_{\mathcal{A}}$ for the analogue of $\Xi_{\mathcal{A}}$ in this case. We record the following commutative diagram induced from the restriction of homotopy fixed points along $C_p\subset S^1$, cf. \cite[Section 5.1]{Che3}.
\begin{equation}\label{eq:restricting KS action}
\begin{tikzcd}[row sep=1.2cm, column sep=0.8cm]
H^*(\mathrm{UCact}^k_{\circlearrowright,R})^{tS^1})\otimes_{H^*_{S^1}(pt;R)} HH_*^{per}(\mathcal{A})\arrow[d,"{\mathrm{Res}_{C_p\subset S^1}}\;\otimes\; {\mathrm{Res}_{C_p\subset S^1}}"]\arrow[rrr,"{\Xi_{\mathcal{A}}(-)([\phi],-)}"]& & & HH_*^{per}(\mathcal{A})\arrow[d,"{\mathrm{Res}_{C_p\subset S^1}}"]    \\
H^*(\mathrm{UCact}^k_{\circlearrowright,R})^{tC_p})\otimes_{H^*_{C_p}(pt;R)} HH_*^{C_p,per}(\mathcal{A})\arrow[rrr,"{\Xi^p_{\mathcal{A}}(-)([\phi],-)}"]& & &HH_*^{C_p,per}(\mathcal{A})
\end{tikzcd}    
\end{equation}
Here, $HH_*^{C_p,per}(\mathcal{A})$ stands for the $C_p$-Tate fixed points of the $C_p\subset S^1$ action on $CC_*(\mathcal{A})$; see \cite[Section 2.3]{Che3} for an explicit chain model.

\section{Multiplicative property of $p$-curvature}
Throughout this section, we work over a base field $\mathbf{k}$ of odd characteristic $p$ unless otherwise specified. The main goal of this section is to prove the following theorem. \\
\begin{thm}\label{thm: p curvature equals multiplicative operation}
Let $\mathcal{A}$ be a d($\mathbb{Z}/2$)g algebra over $\mathbf{k}$, and denote by $F^{\mathcal{A}}_{2t^2\partial_t}$ the $p$-curvature of the $t$-connection (cf. Definition \ref{thm:t connection}) on the periodic cyclic homology along the vector field $2t^2\partial_t$. Then, there exists a map
\begin{equation}
\bigcap\nolimits^{C_p}: HH^{even}(\mathcal{A})\times HH_*^{per}(\mathcal{A})\rightarrow  HH_*^{per}(\mathcal{A})   
\end{equation}
such that
\begin{equation}\label{eq:multiplicative plus Frobenius linear plus zero property}
\bigcap\nolimits^{C_p}([\phi]\cup[\varphi],\alpha)=  \bigcap\nolimits^{C_p}([\phi],\bigcap\nolimits^{C_p}([\varphi],\alpha))\quad,\quad\bigcap\nolimits^{C_p}([a\phi],\alpha)=a^p\bigcap\nolimits^{C_p}([\phi],\alpha)\quad,\quad \bigcap\nolimits^{C_p}(0,\alpha)=0
\end{equation}
for all $[\phi],[\varphi]\in HH^{even}(\mathcal{A}), \alpha\in HH^{per}(\mathcal{A}), a\in \mathbf{k}$, together with a constant $c\in \mathbf{k}$ such that 
\begin{equation}
 F^{\mathcal{A}}_{2t^2\partial_t}=c\cdot\bigcap\nolimits^{C_p}([d],-).
\end{equation}
Here, $HH^{even}(\mathcal{A})$ denotes the even part of Hochschild cohomology, $\cup$ denotes the cup product and the differential $d$ is viewed as a Hochschild cohomology class of length one. 
\end{thm}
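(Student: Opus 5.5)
The plan has three steps: compute the $p$-curvature at the chain level, recognize it as a Kontsevich--Soibelman operation attached to a specific cyclic cactus class, and then use equivariant topology of $\mathrm{Cyl}(p,1)$ to factor that operation through a $C_p$-equivariant cap product. That cap product will be multiplicative and Frobenius-semilinear for formal reasons.

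\textbf{Step 1: chain-level formula.} Write $\nabla:=\nabla^{\mathcal{A}}_{2t^2\partial_t}=2t^2\partial_t+t\Gamma+\iota_d$, using \eqref{eq:t connection} and $\iota_d=e_d+tE_d$. In characteristic $p$ we have $(2t^2\partial_t)^p=2^p t^{2p}\partial_t^p+(\text{lower terms})$. A standard computation gives $(t^2\partial_t)^p=t^{p+1}\cdot(\text{const})\,t\partial_t$, and the $p$-th power of this derivation is explicit. Hence
\begin{equation*}
F^{\mathcal{A}}_{2t^2\partial_t}=\nabla^p-\nabla_{(2t^2\partial_t)^p}.
\end{equation*}
I would expand $\nabla^p$ as a sum of words in $t^2\partial_t$, $t\Gamma$, $e_d$ and $tE_d$. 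The operators $\Gamma$ and $t\partial_t$ act diagonally and commute past $e_d,E_d$ up to explicit shifts, so after reordering the expression becomes a polynomial in $t$ whose coefficients are composites of $p$ copies of $e_d$ and $E_d$. Each such composite is $\mathrm{Act}_{\mathcal{A}}$ applied to a cyclic cactus cell with $p$ lobes, all decorated by $d$ (see Figure \ref{fig:cells_in_cyclic_cacti_and_their_actions}). The terms involving $\partial_t$ should cancel against $\nabla_{(2t^2\partial_t)^p}$ modulo $p$, by the usual argument that the $p$-curvature is $\mathcal{O}$-linear.

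\textbf{Step 2: identification as a KS operation.} The resulting combination is a chain $\alpha=\sum_j t^j\alpha_j$ in $\mathrm{Cact}^p_{\circlearrowright,\mathbf{k}}((t))$. I would show that $\alpha$ is a cycle for the Tate differential (the cellular differential plus $t$ times the $S^1$-rotation operator). This should follow from the facts that $F$ commutes with $b+tB$ and that the action is universal, or else by a direct cellular check. Symmetrizing over $\Sigma_p$ then gives $[\alpha]\in H^0((\mathrm{UCact}^p_{\circlearrowright})^{tS^1})$ with
\begin{equation*}
F^{\mathcal{A}}_{2t^2\partial_t}=\Xi_{\mathcal{A}}([\alpha])([d^{\otimes p}],-).
\end{equation*}
This is Proposition \ref{thm:p curvature as KS operation intro}.

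\textbf{Step 3: computing $[\alpha]$.} Using Theorem \ref{thm:comparison of Cact with Cyl}, I would pass to $\mathrm{Cyl}(p,1)\simeq\mathrm{Conf}_p(\mathbb{C}^*)\times S^1$ and compute $H^0$ of its $\Sigma_p$-orbits, $S^1$-Tate construction. After inverting $t$, localization should concentrate this group on the $S^1$-fixed locus in the orbit space. That locus is the set of $C_p$-symmetric configurations, namely $p$ disks spaced evenly around the cylinder; compare with diagram \eqref{eq:restricting KS action}. I expect this Tate group to be one-dimensional in degree $0$, spanned by the fundamental class $[\beta]$ of that symmetric locus. It would then follow that $[\alpha]=c[\beta]$ for some constant $c$.

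We then define
\begin{equation*}
\bigcap\nolimits^{C_p}([\phi],-):=\Xi_{\mathcal{A}}([\beta])([\phi^{\otimes p}],-).
\end{equation*}
This is well defined on cohomology because $\phi\mapsto\phi^{\otimes p}$ induces a Frobenius-semilinear map $HH^{even}\to H^*((CC^{*\otimes p})^{h\Sigma_p})$ (the Steenrod construction), which also gives $a\mapsto a^p$ and the property at $0$.

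For multiplicativity, I would use operadic compatibility. The cup product $\phi\cup\varphi$ comes from a two-lobe cactus, and $(\phi\cup\varphi)^{\otimes p}$ is obtained by inserting $p$ copies of it into the lobes of $\beta$. On the other side, composing $\beta$ with $\beta$ via $\theta^{\circlearrowright,1}$, that is, stacking cylinders, gives $\bigcap^{C_p}([\phi],\bigcap^{C_p}([\varphi],-))$. So the identity reduces to an equality of classes in $H^0((\mathrm{Cyl}(2p,1)_{\Sigma_p\wr\Sigma_2\ldots})^{tS^1})$: a symmetric stack of two $C_p$-orbits of disks should be homotopic, equivariantly, to a $C_p$-orbit of pairs of nearby disks. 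This can be seen by sliding the two rings of disks together on the cylinder. I expect it to hold on the localized fixed locus, since both sides are fundamental classes of equivalent fixed components.

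\textbf{Main obstacle.} The hardest part will be the Tate computation in Step 3: controlling $H^0$ of the Tate construction of $\mathrm{Conf}_p(\mathbb{C}^*)_{\Sigma_p}\times S^1$ rigorously in the cellular cactus model, proving that it is rank one (or at least that $[\alpha]$ lies in the span of $[\beta]$), and showing that the homotopy used for multiplicativity is compatible with the localization. I would first attempt this through an explicit cellular description of the $C_p$-symmetric cacti, where the $p$ lobes are attached at equally spaced points of the base lobe, and a localization theorem for the $S^1$-action.
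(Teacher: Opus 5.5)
\textbf{The gap is in Step 2.} Step 1 can be made precise. The paper does it with Jacobson's formula: with $C=t\partial_t+\Gamma/2$ and $D=\iota_d/2t$ one has $[C,D]=-\tfrac12 D$, hence $(C+D)^p=C^p+D^p+D$ and $C^p=C$, so $F^{\mathcal{A}}_{2t^2\partial_t}=(e_d+tE_d)^p$. Note that $(t^2\partial_t)^p=0$ in characteristic $p$, so nothing is cancelled against $\nabla_{(2t^2\partial_t)^p}$.

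However, the resulting cellular chain $(e+tE)^p$ is \emph{not} a cycle of $(C^{cell}_{-*}(\mathrm{UCact}^p_{\circlearrowright};\mathbf{k})((t)),d_{cell}-t[B,-])$. Its Tate differential is $t\sum_{i=1}^p(e+tE)^{i-1}L(e+tE)^{p-i}$, where $L=d_{cell}E+[e,B]$ is the Lie-derivative cell. The paper shows that $L$ commutes with $e+tE$ only after discarding cells that act trivially on $d^{\otimes p}$, and in the cellular operad itself it does not.

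Your appeal to ``$F$ commutes with $b+tB$ and the action is universal'' cannot repair this. The input is always the special cocycle $d^{\otimes p}$. The action on it cannot distinguish cells that differ by the local moves expressing $d\circ d=0$, the Leibniz rule for $d$, and the vanishing of $d$ on two or more inputs. So the action is far from faithful, and $\mathrm{Act}^{\Sigma_p}_{\mathcal{A}}\big((d_{cell}-t[B,-])\alpha\big)(d^{\otimes p},-)=0$ tells you nothing about $(d_{cell}-t[B,-])\alpha$. Without an honest cocycle there is no class in $H^0((\mathrm{UCact}^p_{\circlearrowright,\mathbf{k}})^{tS^1})$ for Step 3 to compute.

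The missing idea is the paper's key observation, in three steps:
\begin{enumerate}
\item Introduce the equivalence relation $\sim$ generated by exactly those local moves. It is compatible with $d_{cell}$ and operadic composition, and it preserves the action on $d^{\otimes p}$.
\item Check graphically that $[e,L]\sim0$ and $[E,L]\sim0$. Then the Tate differential of $(e+tE)^p$ is $\sim tL\cdot p\,(e+tE)^{p-1}=0$. This uses $p=0$ in $\mathbf{k}$, which no universality argument would see.
\item Replace $(e+tE)^p$ by its unique reduced representative. This is a genuine Tate cocycle, because $d_{cell}$ and $[B,-]$ preserve reducedness.
\end{enumerate}

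Step 3 has the right outline but three inaccuracies that matter.
\begin{enumerate}
\item In characteristic $p$ the $S^1$-Tate construction is not concentrated on the $S^1$-fixed locus, which is empty for $\mathrm{UConf}_p(\mathbb{C}^*)$. It is detected on $\mathrm{UConf}_p(\mathbb{C}^*)^{C_p}\simeq S^1/C_p$, through the split injective restriction along $C_p\subset S^1$ followed by $C_p$-localization.
\item $H^0$ of the $S^1$-Tate construction of $C_{-*}(\mathrm{UCyl}(p,1);\mathbf{k})$, with $\mathrm{UCyl}(p,1)\simeq\mathrm{UConf}_p(\mathbb{C}^*)\times S^1$, is not one-dimensional. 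The K\"{u}nneth summand $H^{1}(C_{-*}(\mathrm{UConf}_p(\mathbb{C}^*);\mathbf{k})^{tS^1})\otimes H_1(S^1;\mathbf{k})$, coming from the relative angle of the two marked points, also contributes. So $[\alpha]$ need not be a multiple of your $[\beta]$. The paper disposes of this summand by observing that it acts through $B\circ(-)$, and the Connes operator is zero on $HH^{per}_*(\mathcal{A})$. Only then does the one-dimensionality of $H^0(C_{-*}(\mathrm{UConf}_p(\mathbb{C}^*);\mathbf{k})^{tS^1})$, imported from earlier work, apply.
\item The generator of that group restricts in the $C_p$-Tate construction to $[e_0]-[e_1]\theta$, not merely to the class $[e_0]$ of the symmetric configuration. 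Your sliding homotopy is precisely the paper's argument for the $[e_0]$-part. It must be supplemented by a Leibniz-type identity for the $\theta$-coefficient, schematically $e_1(\varphi\cup\phi)=e_1(\varphi)e_0(\phi)+e_0(\varphi)e_1(\phi)$. Together with $\theta^2=0$, this yields multiplicativity of $\Xi_{\mathcal{A}}([e_0]-[e_1]\theta)([(-)^{\otimes p}],-)$.
\end{enumerate}
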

The proof of Theorem \ref{thm: p curvature equals multiplicative operation} will take up the rest of this section. Before that, we record the following corollary.\\
\begin{cor}\label{thm: nilpotence of p curvature}
Let $\mathcal{A}$ be a smooth d($\mathbb{Z}/2$)g algebra over $\mathbf{k}$, then the $p$-curvature of its categorical $t$-connection is nilpotent. 
\end{cor}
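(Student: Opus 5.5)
The plan is to deduce nilpotence from the multiplicative structure in Theorem \ref{thm: p curvature equals multiplicative operation}, which reduces the problem to showing that the class $[d]\in HH^{even}(\mathcal{A})$ is nilpotent for the cup product.

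\textbf{Step 1: reduce to nilpotence of $[d]$.} By Theorem \ref{thm: p curvature equals multiplicative operation} we have $F^{\mathcal{A}}_{2t^2\partial_t}=c\cdot\bigcap\nolimits^{C_p}([d],-)$. Iterating and using the first identity in \eqref{eq:multiplicative plus Frobenius linear plus zero property} gives, for every $r\geq 1$,
\begin{equation*}
(F^{\mathcal{A}}_{2t^2\partial_t})^{r}=c^{r}\cdot\bigcap\nolimits^{C_p}([d]^{\cup r},-).
\end{equation*}
This step uses that each operation $\bigcap^{C_p}([\varphi],-)$ is linear in the $HH^{per}_*$ variable, which I expect to be clear from its construction as a Kontsevich–Soibelman operation. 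If $[d]^{\cup r}=0$, the third identity $\bigcap^{C_p}(0,-)=0$ then gives $(F^{\mathcal{A}}_{2t^2\partial_t})^r=0$. The $p$-curvature along $\partial_t$ differs from this one by the invertible factor $2t^2$ on the punctured disk (here $p$ is odd). Since $p$-curvature is $\mathcal{O}$-linear in the vector field up to a $p$-th power twist, nilpotence along $\partial_t$ follows as well.

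\textbf{Step 2: $[d]$ is a ghost.} Identify $HH^*(\mathcal{A})\cong \mathrm{Ext}^*_{\mathcal{A}^e}(\mathcal{A},\mathcal{A})$, where $\mathcal{A}^e=\mathcal{A}\otimes\mathcal{A}^{op}$, so that cup product becomes Yoneda composition. The class $[d]$ is then a morphism $\delta:\mathcal{A}\to\mathcal{A}$ of odd (or shifted) degree in the derived category of bimodules. I will show that $\delta$ is a ghost with respect to the generator $\mathcal{A}^e$, meaning the induced map
\begin{equation*}
\mathrm{Hom}(\mathcal{A}^e,\mathcal{A})\cong H^*(\mathcal{A})\longrightarrow H^*(\mathcal{A})
\end{equation*}
is zero. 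Under the bar-resolution identification, this induced map sends a class $[x]$ to $[dx]$, up to terms involving the length-one component acting on cycles. Because $\mu^1=d$, this map vanishes on cohomology. I expect this to be the main obstacle: the chain-level identification of the Hochschild cocycle $d$, which has length one, with a bimodule endomorphism and its effect on $\mathrm{Hom}(\mathcal{A}^e,-)$ needs care with signs and with the normalized models. What I would try first is to compute directly with the two-sided bar resolution $\mathcal{A}\otimes T(\mathcal{A}[1])\otimes\mathcal{A}$, where a length-one cochain acts by inserting $d$ in a single slot.

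\textbf{Step 3: apply the ghost lemma.} Smoothness means $\mathcal{A}$ is perfect over $\mathcal{A}^e$. Hence $\mathcal{A}$ lies in $\langle\mathcal{A}^e\rangle_n$ for some finite level $n$, i.e. it is built from shifts and summands of $\mathcal{A}^e$ by $n$ cones; this works verbatim in the $\mathbb{Z}/2$-graded triangulated setting. The ghost lemma then says that a composition of $n$ ghost maps out of an object of level $n$ vanishes. Therefore $[d]^{\cup n}=\delta^{\circ n}=0$, and by Step 1 we get $(F^{\mathcal{A}}_{2t^2\partial_t})^n=0$.
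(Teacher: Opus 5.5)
Your proof is correct. Step 1 is the paper's reduction, but for the nilpotence of $[d]$ you take a different route from the paper's proof of this corollary. Your Steps 2--3 are exactly the refinement the paper records later, in Section 6.2.1: Lemma~\ref{thm:F^1HH^* is C^e ghost}, the ghost lemma~\ref{thm:ghost lemma}, and Corollary~\ref{thm:bounding nilpotence order of d by Hdim}.

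The paper's argument here avoids generation-time formalism:
\begin{itemize}
\item Since $\mathcal{A}$ is compact in $D(\mathcal{A}^e)$, $\mathrm{Hom}(\mathcal{A},-)$ commutes with the filtered colimit $B(\mathcal{A})=\mathrm{hocolim}_N B_{\leq N}(\mathcal{A})$. Hence the quasi-inverse $s:\mathcal{A}\to B(\mathcal{A})$ factors through some $B_{\leq N_0}(\mathcal{A})$.
\item The length filtration $F^N$ (cochains vanishing on $B_{\leq N-1}$) is multiplicative and contains $[d]$ in $F^1$, so $[d]^{\cup(N_0+1)}\circ s=0$.
\end{itemize}
Both arguments rest on the same input, perfectness of the diagonal bimodule. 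The paper's version is more hands-on. Yours packages that input as the finite level of $\mathcal{A}_\Delta$ in $\langle\mathcal{A}^e\rangle$. This gives the intrinsic bound $\mathrm{Hdim}(\mathcal{A})+1$, which is what leads to the $\mathrm{Ddim}$ bound on Jordan blocks in Theorem~\ref{thm:bounding Jordan blocks by Ddim}.

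Two small corrections.
\begin{itemize}
\item In Step 2 the composite is zero on the nose; it is not ``$[x]\mapsto[dx]$''. The bar-resolution representative $f_d:B(\mathcal{A})\to\mathcal{A}$ of the length-one cochain $d$ is supported on $B_1(\mathcal{A})$. The map $\mathcal{A}^e\to\mathcal{A}$ given by a cycle $x$ lifts to a bimodule map $\mathcal{A}^e\to B_0(\mathcal{A})$ sending $1\otimes 1\mapsto x\otimes 1$, and $f_d$ vanishes there. The same argument works for any class in $F^1HH^*(\mathcal{A})$. Also, $[d]$ is even (it lies in $HH^2$), not odd, which is why Theorem~\ref{thm: p curvature equals multiplicative operation} applies to it.
\item In Step 3 there is an off-by-one. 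An object built from $\mathcal{A}^e$ by $n$ cones, i.e.\ lying in $\langle\mathcal{A}^e\rangle_n$, is killed by compositions of $n+1$ ghosts, not $n$. For example, $\mathbb{Z}/p=\mathrm{Cone}(p)$ in $D(\mathbb{Z})$ carries the Bockstein, which is a nonzero $\mathbb{Z}$-ghost. So you get $[d]^{\cup(n+1)}=0$, which is harmless for nilpotence.
\end{itemize}
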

\emph{Proof}. By Theorem \ref{thm: p curvature equals multiplicative operation}, it suffices to show that $[d]\in HH^*(\mathcal{A})$ is nilpotent with respect to the cup product. To see this, recall that 
\begin{equation}
HH^*(\mathcal{A})=\mathrm{Hom}_{D(\mathcal{A}^e)}(\mathcal{A},\mathcal{A}),    
\end{equation}
where $\mathcal{A}^e=\mathcal{A}^{op}\otimes^{\mathbb{L}}_{\mathbf{k}} \mathcal{A}$ and $D(-)$ denotes its enhanced derived category. Let $B(\mathcal{A})\xrightarrow{\epsilon}\mathcal{A}$ be the bar resolution, and consider the increasing length filtration on $B(A)$ given by
\begin{equation}\label{eq:bar resolution}
B_{\leq N}(\mathcal{A}):=\bigoplus_{i\leq N} \mathcal{A}\otimes \mathcal{A}[1]^{\otimes i}\otimes \mathcal{A} \subset B(\mathcal{A}).
\end{equation}
By the smoothness assumption, $\mathcal{A}\in \mathrm{Perf}(\mathcal{A}^e)$ and hence $\mathrm{Hom}_{D(\mathcal{A}^e)}(\mathcal{A},-)$ commutes with filtered homotopy colimits. Applying this to the length filtration, we deduce that 
\begin{equation}
 \mathrm{Hom}_{D(\mathcal{A}^e)}(\mathcal{A},B(\mathcal{A})) =\mathrm{hocolim}_N\;\mathrm{Hom}_{D(\mathcal{A}^e)}(\mathcal{A},B_{\leq N}(\mathcal{A})).   
\end{equation}
In particular, the quasi-inverse $s: \mathcal{A}\rightarrow B(\mathcal{A})$ of $\epsilon$ factors through $\mathcal{A}\rightarrow B_{\leq N_0}(\mathcal{A})\subset B(\mathcal{A})$ for some $N_0$. On the other hand, there is a decreasing filtration on Hochschild cohomology induced by
\begin{equation}\label{eq:length filtration on HH^*}
F^{N}CC^*(\mathcal{A}):=\{\phi\in \mathrm{Hom}_{\mathcal{A}^e}(B(\mathcal{A}),\mathcal{A})\,|\,\phi|_{B_{\leq N-1}(\mathcal{A})}=0\},
\end{equation}
which moreover satisfies $F^i \cup F^j\subset F^{i+j}$. In particular, since $[d]\in F^1HH^*(\mathcal{A})$, $[d]^{\cup N_0+1}\in F^{N_0+1}HH^*(\mathcal{A})$. Consider the composition
\begin{equation}\label{eq:[d]^N_0 composed with s}
\mathcal{A}\xrightarrow{s} B(\mathcal{A})\xrightarrow{[d]^{\cup N_0+1}}\mathcal{A}.    
\end{equation}
The fact that $[d]^{\cup N_0+1}\in F^{N_0+1}HH^*(\mathcal{A})$ and that $s$ factors through $B_{\leq N_0}(\mathcal{A})$ implies that \eqref{eq:[d]^N_0 composed with s} is null, and hence (as $s$ has a quasi-inverse) so is $[d]^{\cup N_0+1}$. \qed

\subsection{$p$-curvature as Kontsevich-Soibelman operation}
The key to the proof of Theorem \ref{thm: p curvature equals multiplicative operation} is the interpretation of $p$-curvature (of the $t$-connection) in terms of the Kontsevich-Soibelman operad.  \par\indent
We first make some simplifications. Since $(t\partial_t)^p=t\partial_t$,
\begin{align}
F^{\mathcal{A}}_{t\partial_t}&=(\nabla^{\mathcal{A}}_{t\partial_t})^p-\nabla^{\mathcal{A}}_{t\partial_t}\\
&=(C+D)^p-(C+D),
\end{align}
where we denote
\begin{equation}
C:= t\partial_t+\frac{\Gamma}{2}\quad,\quad D:=\frac{\iota_d}{2t}.     
\end{equation}
To simplify this expression, we notice that
\begin{equation}
[C,D]=-\frac{1}{2}D.     
\end{equation}
\begin{lemma}\label{thm: simplify (A+B)^p using Jacobson}
Let $C,D$ be two elements in an algebra over a field of characteristic $p>2$ such that $[C,D]=-\frac{1}{2}D$. Then
\begin{equation}
(C+D)^p=C^p+D^p+D.     
\end{equation}
\end{lemma}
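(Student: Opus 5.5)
The plan is to apply Jacobson's formula for $p$-th powers in an associative algebra of characteristic $p$:
\begin{equation*}
(a+b)^p=a^p+b^p+\sum_{i=1}^{p-1}s_i(a,b),
\end{equation*}
where $i\,s_i(a,b)$ is the coefficient of $T^{i-1}$ in $\mathrm{ad}(Ta+b)^{p-1}(a)$. I will take $a=C$ and $b=D$, and set $\lambda:=-\tfrac12\in\mathbb{F}_p^\times$, so the hypothesis reads $[C,D]=\lambda D$. The key structural observation is that $\mathrm{span}(C,D)$ is closed under brackets. Indeed $\mathrm{ad}_C(D)=\lambda D$, $\mathrm{ad}_D(D)=0$ and $\mathrm{ad}_D(C)=-\lambda D$. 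So the span is a two-dimensional Lie algebra whose derived algebra is the line $\mathbf{k}D$.

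First I show that every iterated commutator in $C,D$ containing at least two copies of $D$ vanishes. Any bracket containing at least one $D$ lies in $\mathbf{k}D$, by induction on the bracket length using the relations above. A bracket containing at least two $D$'s can be written as a commutator $[u,v]$ in which both $u$ and $v$ contain a $D$; alternatively, it contains a subexpression $\mathrm{ad}_D(w)$ with $w$ containing a $D$. In either case it is a bracket of two multiples of $D$, hence $0$. The same applies to a bracket containing no $C$ at all, except the single letter $D$. Consequently each $s_i(C,D)$, being a Lie polynomial homogeneous of degree $i$ in $C$ and $p-i$ in $D$, vanishes unless $p-i=1$. So only $s_{p-1}$ can survive.

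Next I compute $s_{p-1}$, the part linear in $b$. Expanding $\mathrm{ad}(Ta+b)^{p-1}(a)$ and keeping the terms with exactly one factor $\mathrm{ad}(b)$, I note that $\mathrm{ad}(a)(a)=0$. This forces the $\mathrm{ad}(b)$ to be applied first, which gives
\begin{equation*}
T^{p-2}\,\mathrm{ad}(a)^{p-2}[b,a]=-T^{p-2}\,\mathrm{ad}(a)^{p-1}(b).
\end{equation*}
Hence $(p-1)s_{p-1}=-\mathrm{ad}(a)^{p-1}(b)$, i.e. $s_{p-1}=\mathrm{ad}(a)^{p-1}(b)$. Specializing to our case,
\begin{equation*}
s_{p-1}(C,D)=\mathrm{ad}_C^{p-1}(D)=\lambda^{p-1}D=D,
\end{equation*}
by Fermat's little theorem, since $\lambda=-\tfrac12\neq0$ in $\mathbb{F}_p$ (using $p>2$). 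Combining the two steps gives $(C+D)^p=C^p+D^p+D$.

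The only delicate point is the bookkeeping in the Jacobson coefficient, namely getting the sign and the normalization of $s_{p-1}$ right; I checked this above via the $\mathrm{ad}(a)(a)=0$ argument. As a sanity check, or as an alternative, one could argue directly. The relation $DC=(C-\lambda)D$ lets one move all $D$'s to the right in $(C+D)^p$. One would then compare the answer with the case of the two-dimensional algebra $C=\lambda x\partial_x$, $D=x$ acting on $\mathbf{k}[x]$. There $(C+D)^p$ can be computed by hand, but I expect the Jacobson route to be shorter.
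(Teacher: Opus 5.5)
Your proposal is correct and follows essentially the same route as the paper: both apply Jacobson's formula and use the relation $[C,D]=-\tfrac12 D$ to show that only $s_{p-1}$ survives, with $s_{p-1}(C,D)=D$. The paper does this in one step, computing $\mathrm{ad}_{TC+D}^{p-1}(C)=-T^{p-2}D$ directly and matching coefficients. You instead split it into a vanishing argument for $s_i$ with $i<p-1$ and the general identity $s_{p-1}(a,b)=\mathrm{ad}(a)^{p-1}(b)$; this is only an organizational difference.
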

\emph{Proof}. Recall the Jacobson identity
\begin{equation}
(C+D)^p=C^p+D^p+\sum_{i=1}^{p-1} s_i(C,D),    
\end{equation}
where $s_i(C,D)$ are Lie polynomials in $C,D$ defined via
\begin{equation}\label{eq: lie polynomial identity}
\sum_{i=1}^{p-1}is_i(C,D)\lambda^{i-1}=ad_{\lambda C+D}^{p-1}(C).    
\end{equation}
Since $[C,D]=-\frac{1}{2}D$, a straightforward computation shows that
\begin{equation}
ad_{\lambda C+D}^{p-1}(C)= (-1)^{p-2}(\frac{1}{2})^{p-1}\lambda^{p-2}D=-\lambda^{p-2}D.
\end{equation}
Matching coefficients in \eqref{eq: lie polynomial identity} implies $s_{p-1}(C,D)=D$ and $s_i=0$ for $i\neq p-1$, which completes the proof. \qed\par\indent
Therefore, 
\begin{equation}
F^{\mathcal{A}}_{t\partial_t}=(C^p+D^p+D)-(C+D)=C^p-C+D^p=D^p,  
\end{equation}
where in the last equality we used the fact that the operation $C=t\partial_t+\frac{\Gamma}{2}$ simply multiplies a cyclic chain by a half integer, and hence by Fermat's little theorem $C^p-C=0$ (recall that $p>2$). Finally, since $p$-curvature is Frobenius-linear with respect to the vector field, we immediately obtain that\\
\begin{cor}\label{thm: simplified p curvature}
The $p$-curvature of the chain level $t$-connection satisfies
\begin{equation}\label{eq: simplified p curvature}
F^{\mathcal{A}}_{2t^2\partial_t}=(\iota_d)^p=(e_d+tE_d)^p.
\end{equation}    
\end{cor}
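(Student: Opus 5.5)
The plan is to deduce the formula for $F^{\mathcal{A}}_{2t^2\partial_t}$ from the already established identity $F^{\mathcal{A}}_{t\partial_t}=D^p$, with $D=\frac{\iota_d}{2t}$, by using the Frobenius-linearity of $p$-curvature in the vector field. Everything happens at the chain level, on the $\mathbf{k}((t))$-module $CC_*(\mathcal{A})((t))$. I will treat $\nabla^{\mathcal{A}}$ as an honest (non-chain) connection there: an operator $\nabla_{f\partial_t}$ defined for $f\in\mathbf{k}((t))$, $\mathbf{k}((t))$-linear in $f$, and satisfying the Leibniz rule $[\nabla_X,g]=X(g)$ for $g\in\mathbf{k}((t))$. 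This holds because $\Gamma$ and $\iota_d=e_d+tE_d$ commute with multiplication by $t$, so only $\partial_t$ contributes to the commutator. In particular
\begin{equation*}
\nabla^{\mathcal{A}}_{2t^2\partial_t}=2t\cdot\nabla^{\mathcal{A}}_{t\partial_t}=2t^2\partial_t+t\Gamma+\iota_d .
\end{equation*}

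The key step is the standard identity of Katz \cite[Section 5]{Ka1}: for a connection $\nabla$ in characteristic $p$, a derivation $X$ and a function $f$,
\begin{equation*}
F^{\nabla}_{fX}=f^pF^{\nabla}_X .
\end{equation*}
It comes from Deligne's formula $(fX)^p=f^pX^p+(fX)^{p-1}(f)\,X$, which holds both for derivations of the function ring and, with the same correction term, for the operators $f\nabla_X$. The proof is purely algebraic: it uses only the commutation relation $[\nabla_X,g]=X(g)$ inside the associative algebra generated by functions and $\nabla_X$. So it applies verbatim to our chain-level operators, even though they do not commute with $b+tB$. I will either cite it or reproduce the short computation in this setting.

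I apply this with $X=t\partial_t$ and $f=2t$. First check that $F_{t\partial_t}$ is computed with the correct Frobenius term: $(t\partial_t)^p=t\partial_t$ on $\mathbf{k}((t))$, which is what was used above. Then
\begin{equation*}
F^{\mathcal{A}}_{2t^2\partial_t}=(2t)^pF^{\mathcal{A}}_{t\partial_t}=(2t)^p\Bigl(\frac{\iota_d}{2t}\Bigr)^p .
\end{equation*}
Since $\iota_d$ commutes with multiplication by $t$ and by scalars, $\bigl(\frac{\iota_d}{2t}\bigr)^p=(2t)^{-p}\iota_d^p$. This gives $F^{\mathcal{A}}_{2t^2\partial_t}=\iota_d^p=(e_d+tE_d)^p$, as claimed.

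The only real point needing care, and so the main (mild) obstacle, is justifying Frobenius-linearity for chain-level operators rather than for a connection on a module over a ring of functions. Because the argument is a formal identity in the Weyl-type algebra generated by $\mathbf{k}((t))$ and $\nabla_{t\partial_t}$, this reduces to verifying the Leibniz relation, which was done in the first paragraph. One should also note that the identity holds on the nose at the chain level, so it descends to $HH^{per}_*(\mathcal{A})$.
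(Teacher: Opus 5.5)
Your proposal is correct and follows the paper's argument. The paper likewise takes $F^{\mathcal{A}}_{t\partial_t}=D^p$, obtained just before the corollary from the Jacobson-formula lemma and $C^p=C$, and passes to $2t^2\partial_t$ by Frobenius-linearity of $p$-curvature in the vector field, giving $F^{\mathcal{A}}_{2t^2\partial_t}=(2t)^p\bigl(\frac{\iota_d}{2t}\bigr)^p=\iota_d^p$. Your added check, that $F_{fX}=f^pF_X$ follows formally from the Leibniz relation $[\nabla_X,g]=X(g)$ and so applies to the chain-level operators, which are not chain maps, spells out a step the paper leaves implicit.
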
\qed\par\indent
We rewrite \eqref{eq: simplified p curvature} pedagogically as
\begin{equation}
\mathrm{Act}_{\mathcal{A}}^{\Sigma_p}((e+tE)^p)(d,-),
\end{equation}
where $e, E$ are the cyclic cacti cells from Figure \ref{fig:cells_in_cyclic_cacti_and_their_actions} and $\mathrm{Act}_{\mathcal{A}}^{\Sigma_p}$ is the chain-level action map (extended $t$-linearly) from \eqref{eq: unordered action of cyclic cacti on HH_* and HH^*}. \par\indent
We view
\begin{equation}\label{eq: cell expression of p curvature}
 (e+tE)^p   
\end{equation}
as a degree $0$ element (treating $t$ as a formal variable of degree $2$) of the chain complex
\begin{equation}\label{eq: explicit chain model for S^1 fixed point of UCact}
(C_{-*}^{cell}(\mathrm{UCact}^p_{\circlearrowright};\mathbf{k})((t)), d_{cell}-t[B,-])
\end{equation}
computing $(\mathrm{UCact}^p_{\circlearrowright,\mathbf{k}})^{tS^1}$, where $[B,-]$ denotes the graded commutator. We elaborate on the notations in \eqref{eq: cell expression of p curvature} and \eqref{eq: explicit chain model for S^1 fixed point of UCact}. 
\begin{itemize}
    \item By definition
\begin{equation}
 (e+tE)^p:= \overbrace{(e+tE)\circ \cdots\circ (e+tE)}^{p\;\mathrm{times}},   
\end{equation}
where $\circ$ is shorthand for the operadic composition $\theta^{\circlearrowright,1}_{cell}$ of Definition \ref{thm:operadic structure on cyclic cacti} with the convention 
\begin{equation}\label{eq:change of composition order}
\alpha\circ \beta:=(-1)^{|\alpha||\beta|}\theta_{cell}^{\circlearrowright,1}(\beta,\alpha)
\end{equation}
(or more precisely, the induced composition on \emph{unordered} cyclic cacti cells).
\item $d_{cell}$ is the differential of the cellular chain complex.
    \item Recall there is an $S^1\times S^1$-action on $\mathrm{UCact}^p_{\circlearrowright}$ where the two copies of $S^1$ rotate the input (resp. output) marked points; at the dg level, the induced (anti-commuting) actions by the fundamental classes of the two copies of $S^1$ are given by $(-1)^{|(-)|}(-)\circ B$ and $-B\circ (-)$, respectively, where $B\in C_1^{cell}(\mathrm{Cact^0_{\circlearrowright}})$ is the generator representing the `Connes operator', cf. Figure \ref{fig:cells_in_cyclic_cacti_and_their_actions}. Therefore, $(\mathrm{UConf}^p_{\circlearrowright,\mathbf{k}})^{tS^1}$ (recall the $S^1$ action is the diagonal one) is indeed computed by  
    \begin{equation}
     (C_{-*}^{cell}(\mathrm{UCact}^p_{\circlearrowright};\mathbf{k})((t)), d_{cell}-t[B,-]).   
    \end{equation}
\end{itemize}\par\indent
\begin{mydef}
Fix a d($\mathbb{Z}/2$)g algebra $\mathcal{A}$ over $\mathbf{k}$ and a cocycle $[\phi]\in (CC^*(\mathcal{A})^{\otimes k})^{h\Sigma_k}$. We say that $x,y\in C_{-*}^{cell}(\mathrm{UCact}^k_{\circlearrowright};\mathbf{k})$ are \emph{$(\mathcal{A},[\phi])$-equivalent}, denoted
\begin{equation}
x\overset{(\mathcal{A},[\phi])}{\sim} y,
\end{equation}
if $\mathrm{Act}^{\Sigma_k}_{\mathcal{A}}(x)([\phi],-)=\mathrm{Act}^{\Sigma_k}_{\mathcal{A}}(y)([\phi],-)$ as endomorphisms of the $\mathbf{k}$-vector space $CC_*(\mathcal{A})$. We extend this to an equivalence relation on $C_{-*}^{cell}(\mathrm{UCact}^k_{\circlearrowright};\mathbf{k})((t))$ by $t$-linearity.
\end{mydef}
Note that since $[\phi]$ is a cocycle and $\mathrm{Act}^{\Sigma_k}_{\mathcal{A}}$ is a map of (two-colored) dg operads, $\overset{(\mathcal{A},[\phi])}{\sim}$ is closed under differential $d_{cell}$ and operadic composition $\circ$.\\
\begin{rmk}
In plain words, $x,y$ are $(\mathcal{A},[\phi])$-equivalent if they cannot be told apart by their action (with respect to $[\phi]$) on the Hochschild/cyclic chain complex of $\mathcal{A}$. We emphasize that $(\mathcal{A},[\phi])$-equivalence is a fundamentally chain-level notion, and is dependent on the specific chain models we are using for e.g. the Kontsevich-Soibelman operad, Hochschild invariants.  
\end{rmk}
For subsequent graphical calculations, it is convenient to introduce another set of (chain-level) equivalence relations for $C_{-*}^{cell}(\mathrm{UCact}^k_{\circlearrowright};\mathbf{k})((t))$.\\
\begin{mydef}
Let $\sim$ denote the linear equivalence relation on $C_{-*}^{cell}(\mathrm{UCact}^k_{\circlearrowright};\mathbf{k})$ generated by local equivalences described in Figure \ref{fig:local_relations_1}, where the lobe drawn in the center of the figure is \emph{not} the baselobe. Extend this to an equivalence relation on $C_{-*}^{cell}(\mathrm{UCact}^p_{\circlearrowright};\mathbf{k})((t))$ by $t$-linearity.
\end{mydef}
It is straightforward to check that $\sim$ is closed under the differential and operadic compositions. \\
\begin{lemma}\label{thm:sim equivalence implies other equivalence}
$x\sim y$ implies $x\overset{(\mathcal{A},[d^{\otimes k}])}{\sim} y$, where $d$ denotes the differential of $\mathcal{A}$ viewed as a Hochschild cocycle.     
\end{lemma}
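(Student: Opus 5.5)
We need to show that each generating local relation in Figure \ref{fig:local_relations_1} is respected by the action $\mathrm{Act}^{\Sigma_k}_{\mathcal{A}}(-)([d^{\otimes k}],-)$. Then the general statement follows formally. Since $\sim$ is the linear equivalence relation generated by these local moves, and $(\mathcal{A},[d^{\otimes k}])$-equivalence is itself a linear equivalence relation, it suffices to treat a single local move. This means comparing two cells $x,y$ that agree outside a neighbourhood of one non-base lobe and differ only in how that lobe is configured.

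\textbf{Step 1: localize.} By the description of the action after Theorem \ref{thm: action of cyclic cacti on HH_* and HH^*}, $\mathrm{Act}_{\mathcal{A}}(x)(\varphi_1,\dots,\varphi_k;a_0\otimes\cdots\otimes a_n)$ is a sum over ways of inserting the $a_i$ along the circumference, followed by evaluation lobe by lobe. Each non-base lobe $j$ contributes only through the value of $\varphi_j$ on the inputs it receives, and these inputs are the $a_i$'s together with outputs of lobes attached to it. I would write $x$ and $y$ as operadic composites $\theta^{\circlearrowright,0}_j(\xi,z)$ and $\theta^{\circlearrowright,0}_j(\xi',z)$, possibly also composing other cacti into lobe $j$. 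Here $\xi,\xi'\in \mathrm{Cact}^l_R$ are the two local pictures in the figure, and $z$ is the common complement. Since $\mathrm{Act}_{\mathcal{A}}$ is a map of two-colored operads, it is then enough to check that $\mathrm{Act}_{\mathcal{A}}(\xi)(d,\dots,d)=\mathrm{Act}_{\mathcal{A}}(\xi')(d,\dots,d)$ as Hochschild cochains. Where a relation involves marked points on the lobe rather than a full sub-cactus, I would instead check the corresponding identity directly on the local tensor factors.

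\textbf{Step 2: use that $d$ has length one.} The cochain $d$ is $\mu^1_{\mathcal{A}}$ and takes exactly one input. Consequently, in any cactus cell each lobe labelled by $d$ must receive exactly one input; otherwise the term vanishes, since $d$ is zero on other arities in $CC^*$. In particular, a lobe carrying $d$ with $m_j\ge 2$ marked points acts by zero. The local relations are presumably of exactly this type: they either kill such lobes, or identify configurations that differ by where a single-input $d$-lobe sits relative to a neighbouring lobe. In the latter case both sides evaluate to the same composite, for example $\varphi(\dots,d(a),\dots)$ versus $\varphi(\dots,d(a),\dots)$ arising from different cells. Normalization, $d(1)=0$, handles configurations where a $d$-lobe would receive a unit coming from the normalized complex. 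So for each pictured relation I would compute both sides by direct expansion of the action.

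\textbf{Step 3: signs.} I expect the main obstacle to be the Koszul signs. The cell orientations come from the product orientation on simplices and from the convention \eqref{eq:change of composition order}, while $d$ is odd of reduced degree $0$. Hence $[d^{\otimes k}]$ lies in the $\Sigma_k$-homotopy fixed points with the trivial sign action, so that reordering identical $d$-lobes introduces no extra sign. I would fix the sign conventions of \cite[Definition 4.27]{Che3} and verify, relation by relation, that the orientation sign difference between $\xi$ and $\xi'$ matches the sign arising in the evaluation.

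With these three steps, closure under $t$-linearity is immediate, which gives the lemma.
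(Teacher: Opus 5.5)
You reduce to the generating local relations of Figure~\ref{fig:local_relations_1} and then extend by linearity and $t$-linearity, which is what the paper does. But Step~2 misidentifies what makes those relations hold, and as written it verifies only one of the three relation types. The paper's proof uses three distinct properties of $d$:
\begin{itemize}
\item relation (b) follows from $d$ having length one, which is your Step~2;
\item relations (a) and (c) follow from $d^2=0$ and from $d$ being a derivation of $(\mathcal{A},\cdot)$, i.e.\ \eqref{eq:leibniz}.
\end{itemize}
These two identities are the arity-one and arity-two components of $\mu_{\mathcal{A}}\circ\mu_{\mathcal{A}}=0$. Neither follows from $d$ having length one, nor from normalization $d(1)=0$.

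The argument breaks at your claim that, in the non-vanishing relations, ``both sides evaluate to the same composite.'' For relations of types (a) and (c), the two sides evaluate to genuinely different expressions in $\mathcal{A}$. For example, an iterated $d(d(\cdots))$ is compared against $0$, or $d$ applied to a product is compared against the sum of $d$ applied to each factor. These agree only because $d^2=0$ and $d(xy)=dx\cdot y+(-1)^{|x|}x\cdot dy$. Your argument never uses either fact.

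A sanity check confirms this. The paper's later remark extends this lemma to a length-one even cocycle $\phi$ only under the extra hypothesis $\phi\circ\phi=0$; for a length-one cochain, the cocycle condition itself is what supplies the derivation property. So an argument using only arity and normalization cannot be the whole story. To repair the proposal, add these two identities to Step~2 and check the three relation types separately.

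A minor point: as a Hochschild cochain, $d$ lies in degree $2$, so it is even with reduced degree $1$, not odd of reduced degree $0$. Your conclusion that permuting $d$-lobes introduces no sign is still correct.
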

\emph{Proof}. We check the statement for each of the relations in Figure \ref{fig:local_relations_1}. Relations of type (a) and (c) follow from $d^2=0$ and the fact $d$ is a derivation for the graded algebra $(\mathcal{A},\cdot)$. Relation of type (b) follows from the fact that $d$ is a length one Hochschild cochain and hence vanishes when evaluated at more than one inputs. \qed\par\indent
\begin{figure}[H]
 \centering
 \includegraphics[width=1.0\textwidth]{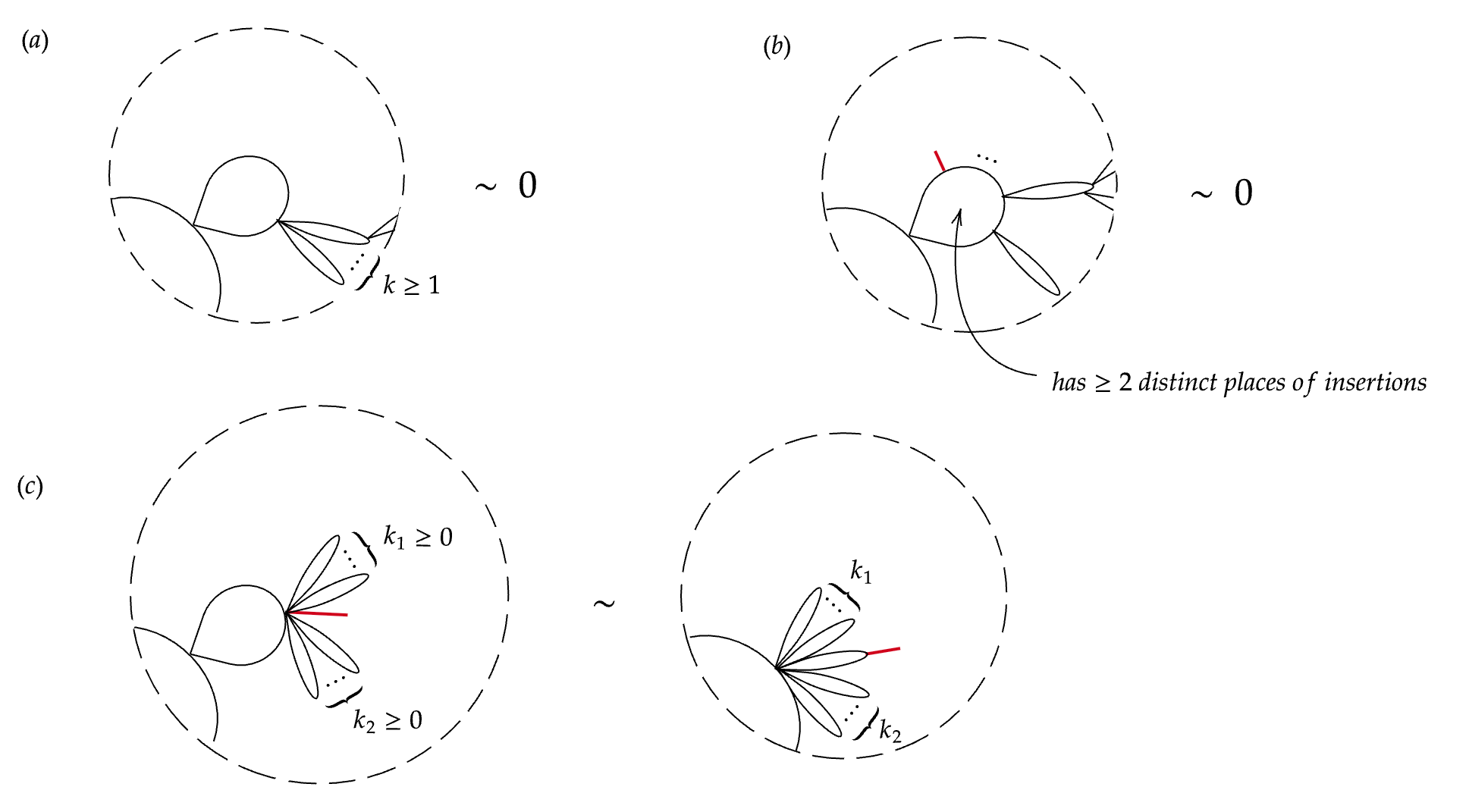}
 \caption{}
 \label{fig:local_relations_1}
\end{figure}   

Another local relation involving the composition $\theta^{\circlearrowright,1}_{cell}$ of two cyclic cacti cells that will often be used is
\begin{figure}[H]
 \centering
 \includegraphics[width=1.0\textwidth]{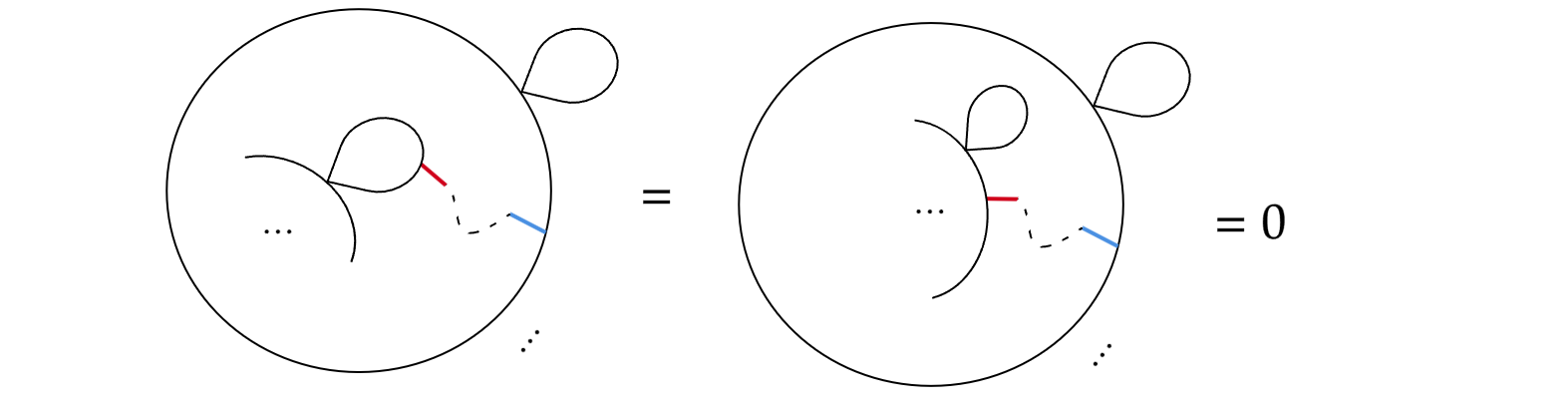}
 \caption{}
 \label{fig:local_relations_2}
\end{figure}  
The relation \ref{fig:local_relations_2} reflects the fact that composing via $\theta^{\circlearrowright,1}_{cell}$ (cf. Definition \ref{thm:operadic structure on cyclic cacti}) two cyclic cacti cells such that the matching input/output marked points are both unconstrained gives a degenerate cell. Algebraically, \ref{fig:local_relations_2} corresponds to the fact that inserting the unit into a normalized Hochschild cochain gives zero. \par\indent
We say that a cyclic cacti cell is \emph{reduced} if the adjusted local basepoint (cf. Definition \ref{thm:space of cyclic cacti}) of the $j$-th lobe, for each $1\leq j\leq k$, lies on the baselobe (note this also makes sense for unordered cyclic cacti cell). By induction on the maximal depth (with respect to the dual tree of the cyclic cactus cell) of a lobe, one obtains the following lemma, see Figure \ref{fig:reduction_of_cyclic_cacti_cell} for an illustration.\\ 
\begin{lemma}\label{thm:reduction of cyclic cacti}
For each (unordered) cyclic cacti cell $x$ either $x\sim 0$ or there is a unique reduced (unordered) cyclic cacti cell $x'$ such that $x\sim x'$. In either case we call $0$ (resp. $x'$) the \emph{reduction} of $x$ and denote it $x_{red}$, and extend $(-)_{red}$ by linearity to the $C_{-*}^{cell}(\mathrm{UCact}^k_{\circlearrowright};\mathbf{k})((t))$. \qed
\end{lemma}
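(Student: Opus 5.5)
The plan is to proceed by induction on the depth of the lobes in the dual tree of the cell, as the statement already suggests. Here the dual tree is the planar tree whose vertices are the lobes, rooted at the baselobe, with an edge whenever two lobes touch. A lobe of depth one is attached directly to the baselobe. A cell is reduced exactly when every non-base lobe has depth one. This uses the fact that its adjusted local basepoint lies on the baselobe, which is its intersection with the component of the complement closure containing the baselobe.

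\textbf{Existence.} Fix a cell $x$ and look at a lobe $L$ of maximal depth $\geq 2$. Its parent $L'$ is not the baselobe. Around $L'$ we see, in order along its boundary, the marked points where its children and its own parent attach, together with the intervals carrying inputs. The local relations of Figure \ref{fig:local_relations_1} involve a central lobe that is not the baselobe, and $L'$ is such a lobe. They say the following.
\begin{itemize}
\item By relation (b), if $L'$ receives more than one input, the cell is $\sim 0$.
\item By relations (a) and (c), a child lobe $L$ sitting on $L'$ can be slid off $L'$ onto the lobe below it, or the configuration vanishes. Algebraically this is because $d$ is a derivation and $d^2=0$: the composite ``$d$ applied after $d$ applied to one entry'' either vanishes or is redistributed by Leibniz.
\end{itemize}
Applying these relations at $L'$ expresses $x$ as $0$ or as a cell in which $L$ has strictly smaller depth. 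We have to make sure no new cells appear with a larger multiset of depths. We then induct on the pair (maximal depth, number of lobes at maximal depth), ordered lexicographically. Since every relation in $\sim$ is linear with coefficients in $\mathbf{k}$, the result is that $x$ is $\sim$ to $0$ or to a $\mathbf{k}$-multiple of a reduced cell. We rescale, or absorb the sign into $x'$, in accordance with the statement's normalization. Here I take the signs $\pm1$ from relation (c) to combine into a single signed cell.

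\textbf{Uniqueness.} This is the main obstacle. Sliding lobes could a priori be done in different orders and give different reduced cells. I would handle this in two steps. First, define the reduction canonically: the reduced cell $x'$ is obtained by recording the clockwise cyclic order in which the lobes are encountered along the periphery of $x$, starting at the input basepoint. We forget the nesting, attach all lobes in that order directly to the baselobe, and keep the positions of the input and output basepoints. Second, check that each individual local relation of Figure \ref{fig:local_relations_1} preserves this recorded data whenever both sides are nonzero. Then any chain of relations from $x$ to a reduced cell must land on this canonical $x'$. Reduced cells are fixed by the recipe, so two distinct reduced cells cannot be $\sim$-equivalent unless one side vanishes.

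A cleaner alternative for uniqueness, if the combinatorics become messy, is to exhibit a linear invariant. This would be a map from cells to the span of reduced cells, given by the recipe above with the appropriate sign, or zero when relation (b) forces vanishing. We would show it kills every generating relation of $\sim$ and is the identity on reduced cells. Finally, extend $(-)_{red}$ $t$-linearly to $C^{cell}_{-*}(\mathrm{UCact}^k_{\circlearrowright};\mathbf{k})((t))$. Everything is compatible with passing to unordered cells, since the relations are $\Sigma_k$-equivariant.
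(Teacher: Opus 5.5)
Your existence half is the paper's argument. It runs by induction on the maximal depth: the local relations of Figure~\ref{fig:local_relations_1} either kill the cell or slide a lobe of maximal depth off its non-base parent. Two small repairs are needed there. First, define the parent via the adjusted local basepoint, or use the bipartite tree of lobes and intersection points. ``An edge whenever two lobes touch'' is not a tree once three lobes meet at a point, and a slide produces exactly that. Second, no ``rescaling'' is available. You land on $\pm x'$ because in each Leibniz expansion every term but one contains $d$ applied to the output of a $d$-lobe, and those die by the $d^2=0$ relation.

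The gap is in uniqueness. It matters because the paper uses uniqueness in linear form. In the proof of Proposition~\ref{thm: main proposition} one needs that a linear combination of reduced cells which is $\sim 0$ is actually $0$. Equivalently, the span of reduced cells must meet the span of the relations trivially, so that $(-)_{red}$ is well defined on $\sim$-classes.

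Your first argument only checks that some recorded data survives each move ``whenever both sides are nonzero'', and then follows chains of moves. But $\sim$ is generated linearly, and some generators set a single cell to $0$: relation (b) and the $d^2=0$ relation. Nothing in your argument rules out a chain of slides joining a reduced cell $x'$ to a cell killed by such a relation. In that case $x'\sim 0$, so $x\sim 0$ and $x\sim x'$ hold simultaneously. The argument also says nothing about linear combinations.

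Your second version is the correct formulation: a $\mathbf{k}$-linear map $\rho$ onto the span of reduced cells that kills every generating relation and is the identity on reduced cells. But constructing $\rho$ and checking it against each relation is the entire content, and you have not carried it out. This needs at least two things.
\begin{itemize}
\item A check that no vanishing relation applies to a reduced cell. This should hold: there every non-base lobe is a leaf, whose only forced input is possibly the input basepoint.
\item A recipe that is actually well defined. ``Attach all lobes in that order directly to the baselobe and keep the basepoints'' does not record which lobes share an attachment point on the baselobe, or how the baselobe arcs and the output basepoint sit between them. Two leaves at one point and two leaves at distinct points both occur in the computations of the \textbf{Claim}. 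Nor does it say where the input basepoint goes when it sits at a junction. In the slide occurring in $L\circ e$, the nested lobe becomes a sibling of its parent at the parent's attachment point, and the input basepoint stays on the former parent.
\end{itemize}
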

\begin{figure}[H]
 \centering
 \includegraphics[width=1.0\textwidth]{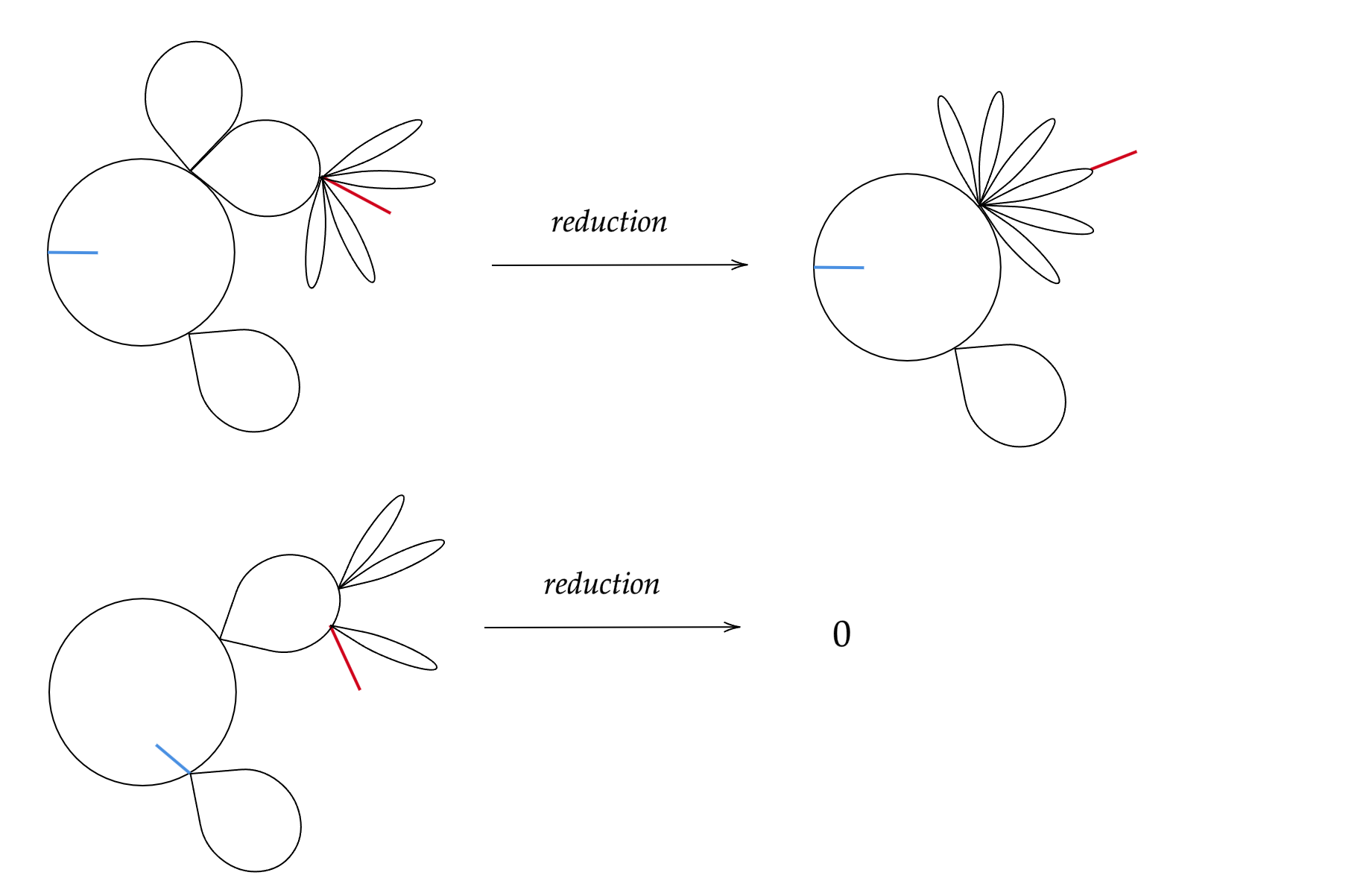}
 \caption{}
 \label{fig:reduction_of_cyclic_cacti_cell}
\end{figure} 
We now make the following simple yet key observation.\\
\begin{prop}\label{thm: main proposition}
$(e+tE)^p$ is $\sim$ (and hence $(\mathcal{A},[d^{\otimes p}])$-equivalent to) a degree $0$ cocycle of $(C_{-*}^{cell}(\mathrm{UCact}^p_{\circlearrowright};\mathbf{k})((t)), d_{cell}-t[B,-])$.    
\end{prop}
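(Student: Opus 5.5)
The plan is to compute the image of $(e+tE)^p$ under the differential $\delta:=d_{cell}-t[B,-]$ of \eqref{eq: explicit chain model for S^1 fixed point of UCact}, and to show that after passing to reductions (Lemma \ref{thm:reduction of cyclic cacti}) it becomes $\sim 0$. The cocycle we are after is then either $((e+tE)^p)_{red}$ itself, or that element plus an explicit correction term that is $\sim 0$ and cancels the remaining boundary terms.

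\emph{Step 1: the defect of one factor.} First I would compute the defect of a single factor $e+tE$. The chain-level identities \eqref{eq:e_D,E_D basic} and \eqref{eq:cartan} ($[e_D,b]=0$, $[E_D,B]=0$, $[e_D,B]=\mathcal{L}_D-[E_D,b]$) are realised by cell-level identities in $\mathrm{Cact}^1_{\circlearrowright,\mathbf{k}}$, since the action map of Theorem \ref{thm: action of cyclic cacti on HH_* and HH^*} is a map of dg operads. They should give
\[
\delta(e+tE)=\pm t\,L,
\]
where $L$ is the cyclic cacti cell representing the Lie derivative $\mathcal{L}$.

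\emph{Step 2: the defect of the $p$-th power.} The differential $\delta$ is a derivation for $\circ=\theta^{\circlearrowright,1}_{cell}$. This holds because the twisting term $t[B,-]$ is a graded commutator with $B$, and $B$ is compatible with the composition in the way described before \eqref{eq: explicit chain model for S^1 fixed point of UCact}. Therefore
\[
\delta\big((e+tE)^p\big)=\pm t\sum_{i=0}^{p-1}(e+tE)^{i}\circ L\circ (e+tE)^{p-1-i}.
\]

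\emph{Step 3: reduce and cancel mod $p$.} I would now reduce every summand using the local relations of Figures \ref{fig:local_relations_1} and \ref{fig:local_relations_2}. The type (b) relation expresses that $d$ has length one, so a lobe can carry at most one input. Combined with Figure \ref{fig:local_relations_2}, this kills every cell in which two matched marked points are both free. The surviving reduced cells should be unordered cyclic cacti in which the $p$ lobes sit directly on the baselobe at distinct points, each with at most one entry. These are weighted by powers of $t$ according to how many $E$-type insertions (which carry the unit $1$ at the output basepoint) occur.

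Because the lobes are unordered, each such reduced cell is hit by a number of summands equal to the number of cyclic positions $i$ at which $L$ can be placed, arising from the rotational symmetry of the $p$-fold composite. I expect this symmetry to organise the $p$ terms $(e+tE)^i\circ L\circ(e+tE)^{p-1-i}$ into orbits of a free $C_p$-action. Their contributions then add up to a multiple of $p$ and vanish in $\mathbf{k}$. Whatever terms do not cancel this way should be shown to be $\sim 0$ using relations (a) and (c), i.e.\ $d^2=0$ and the Leibniz rule for $d$. Since $\sim$ is closed under $\delta$ and $\circ$, this gives $\delta((e+tE)^p)_{red}\sim 0$ at the level of reduced cells, where the relation is realised by an explicit equality of reduced cells. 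By uniqueness of reductions in Lemma \ref{thm:reduction of cyclic cacti}, $((e+tE)^p)_{red}$ is then an honest cocycle.

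\emph{Degree and main obstacle.} The degree statement is immediate: $e$ has degree $0$, $E$ has degree $-2$, and $t$ has degree $2$. The main obstacle is Step 3, namely making the combinatorial count precise. This means identifying exactly which reduced cells appear in the expansion, together with their signs and multiplicities, and verifying that the $C_p$-orbits are free. The delicate point is that the output basepoint breaks the cyclic symmetry, so freeness can fail for cells where $L$ lands next to the basepoint. If the naive cancellation fails there, I would instead look for an explicit correction term $y\sim 0$ (built from cells containing a degenerate matched pair) with $\delta y$ cancelling those boundary contributions. The cocycle would then be $((e+tE)^p)_{red}+y$.
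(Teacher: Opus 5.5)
Your proposal has a genuine gap in Step 3, which is where the content of the proposition lies. Steps 1--2 agree with the paper. Put $X:=e+tE$ and $tL:=(d_{cell}-t[B,-])X=t(d_{cell}E+[e,B])$, using $d_{cell}e=0$ and $[E,B]=0$ from Figure \ref{fig:local_relations_2}. This is a direct cell-level check. It cannot be deduced from \eqref{eq:cartan} through the action map, because that map only transports identities from cells to operators. One then gets $(d_{cell}-t[B,-])X^p=t\sum_{i=1}^{p}X^{i-1}LX^{p-i}$, and by Lemma \ref{thm:reduction of cyclic cacti} it suffices to show this is $\sim 0$.

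The mechanism you propose for that vanishing does not exist. You posit a rotational symmetry acting freely on the $p$ summands, so that reduced cells come in $C_p$-orbits. But $\circ=\theta^{\circlearrowright,1}_{cell}$ is not commutative, and nothing in the operad structure moves $L$ from one slot of the stacked composite to the next. Forgetting lobe labels does not help either. The lobe contributed by $L$ sits differently relative to the input and output basepoints than those contributed by $e$ and $E$ (see \eqref{eq:graphical Cartan homotopy formula}). The fact that every reduced cell appears with the same coefficient in all $p$ summands is precisely the nontrivial statement that needs proof.

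The missing idea is the commutation relation $[e+tE,L]\sim 0$, that is, $[e,L]\sim 0$ and $[E,L]\sim 0$. The paper verifies both by explicit graphical computation, using the local relations of Figures \ref{fig:local_relations_1} and \ref{fig:local_relations_2}, in particular that $d$ has length one. This is the cellular shadow of $[\mathcal{L}_d,\iota_d]=0$, which follows from the Daletsky--Gelfand--Tsygan homotopy formula because $T(d,d)=0$ for a length-one cochain. With it, each summand satisfies $X^{i-1}LX^{p-i}\sim LX^{p-1}$. The sum is then $\sim p\,LX^{p-1}=0$ in characteristic $p$, and no orbit counting is needed.

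Your fallback cannot rescue a failure of the cancellation. Since $\sim$ is closed under $d_{cell}$ and $\circ$, any cocycle $z\sim X^p$ forces $(d_{cell}-t[B,-])X^p\sim(d_{cell}-t[B,-])z=0$. A correction $y\sim 0$ can therefore only cancel terms that are already $\sim 0$. In that case $(X^p)_{red}$ is already a cocycle by uniqueness of reductions, with no correction needed. So $(d_{cell}-t[B,-])X^p\sim 0$ is necessary and must be proved directly, and the commutation relation is what supplies it.
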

\emph{Proof.} Note that by Lemma \ref{thm:reduction of cyclic cacti}, it suffices to show that
\begin{equation}\label{eq:differential reduces to 0}
d_{cell}(e+tE)^p-t[B,(e+tE)^p]\sim 0 \in    C_{-*}^{cell}(\mathrm{UCact}^p_{\circlearrowright};\mathbf{k})((t)).  
\end{equation}
Indeed, by the compatibility of $\sim$ with $d_{cell}$ and operadic compositions, \eqref{eq:differential reduces to 0} will imply that
\begin{equation}d_{cell}((e+tE)^p)_{red}-t[B,((e+tE)^p)_{red}]\sim 0.\end{equation} 
On the other hand, $((e+tE)^p)_{red}$ is a linear combination of reduced cells and applying $d_{cell}$ or $[B,-]$ preserves reducedness, and thus by uniqueness of reduction one must have
\begin{equation}d_{cell}((e+tE)^p)_{red}-t[B,((e+tE)^p)_{red}]= 0,\end{equation} 
which gives the desired statement.\par\indent
Since $C_{-*}^{cell}(\mathrm{UCact}^p_{\circlearrowright};\mathbf{k})$ is part of a two-colored dg operad,
\begin{equation}\label{eq:boundary of e+tE pth power}
d_{cell}(e+tE)^p=\sum_{i=1}^p(e+tE)^{i-1}d_{cell}(e+tE)(e+tE)^{p-i}.
\end{equation}
On the other hand, taking a telescoping sum gives
\begin{equation}\label{eq:[-,B] of e+tE pth power}
[(e+tE)^p,B]=   \sum_{i=1}^p(e+tE)^{i-1}[(e+tE),B](e+tE)^{p-i}.
\end{equation}
Note that no Koszul sign appears since $e+tE$ has even degree. Combining \eqref{eq:boundary of e+tE pth power} and \eqref{eq:[-,B] of e+tE pth power} gives 
\begin{equation}\label{eq:equivariant boundary of e+tE pth power}
(d_{cell}-t[B,-]) (e+tE)^p=   \sum_{i=1}^p(e+tE)^{i-1}\big(d_{cell}(e+tE)-t[B,(e+tE)]\big)(e+tE)^{p-i}
\end{equation}
Define $L\in C^{cell}_{1}(\mathrm{UCact^p_{\circlearrowright};\mathbf{k})}$ by
\begin{equation}\label{eq:Lie action L}
tL:=  d_{cell}(e+tE)-t[B,(e+tE)]= t(d_{cell}(E)+[e,B]), 
\end{equation}
where the second equality above follows from the fact that $e$ has dimension $0$ and that $[E,B]=0$ (by the relation in Figure \ref{fig:local_relations_2}). An explicit graphical calculation then shows

\tikzset{every picture/.style={line width=0.75pt}} 

\begin{equation}\label{eq:graphical Cartan homotopy formula}
\begin{tikzpicture}[x=0.75pt,y=0.75pt,yscale=-0.7,xscale=0.7]

\draw [color={rgb, 255:red, 239; green, 36; blue, 36 }  ,draw opacity=1 ][line width=1.5]    (263,125.5) -- (284,123.6) ;
\draw [color={rgb, 255:red, 74; green, 144; blue, 226 }  ,draw opacity=1 ][line width=2.25]    (223,108.6) -- (222.5,85) ;
\draw   (182,125.5) .. controls (182,103.13) and (200.13,85) .. (222.5,85) .. controls (244.87,85) and (263,103.13) .. (263,125.5) .. controls (263,147.87) and (244.87,166) .. (222.5,166) .. controls (200.13,166) and (182,147.87) .. (182,125.5) -- cycle ;
\draw   (241.12,64.64) .. controls (241.12,64.64) and (241.12,64.64) .. (241.12,64.64) .. controls (239.84,49.8) and (250.78,35.71) .. (265.55,33.17) .. controls (280.33,30.63) and (293.35,40.61) .. (294.63,55.45) .. controls (295.92,70.29) and (284.98,84.38) .. (270.2,86.92) .. controls (252.36,89.98) and (243.42,91.55) .. (243.38,91.6) .. controls (243.42,91.55) and (242.66,82.55) .. (241.12,64.64) -- cycle ;
\draw [color={rgb, 255:red, 239; green, 36; blue, 36 }  ,draw opacity=1 ][line width=1.5]    (239.38,258.6) -- (267.7,265.6) ;
\draw [color={rgb, 255:red, 74; green, 144; blue, 226 }  ,draw opacity=1 ][line width=2.25]    (230.7,279.6) -- (239.38,258.6) ;
\draw   (178,292.5) .. controls (178,270.13) and (196.13,252) .. (218.5,252) .. controls (240.87,252) and (259,270.13) .. (259,292.5) .. controls (259,314.87) and (240.87,333) .. (218.5,333) .. controls (196.13,333) and (178,314.87) .. (178,292.5) -- cycle ;
\draw   (237.12,231.64) .. controls (237.12,231.64) and (237.12,231.64) .. (237.12,231.64) .. controls (237.12,231.64) and (237.12,231.64) .. (237.12,231.64) .. controls (235.84,216.8) and (246.78,202.71) .. (261.55,200.17) .. controls (276.33,197.63) and (289.35,207.61) .. (290.63,222.45) .. controls (291.92,237.29) and (280.98,251.38) .. (266.2,253.92) .. controls (248.36,256.98) and (239.42,258.55) .. (239.38,258.6) .. controls (239.42,258.55) and (238.66,249.55) .. (237.12,231.64) -- cycle ;
\draw [color={rgb, 255:red, 239; green, 36; blue, 36 }  ,draw opacity=1 ][line width=1.5]    (420,279.5) -- (441,277.6) ;
\draw [color={rgb, 255:red, 74; green, 144; blue, 226 }  ,draw opacity=1 ][line width=2.25]    (380,262.6) -- (379.5,239) ;
\draw   (339,279.5) .. controls (339,257.13) and (357.13,239) .. (379.5,239) .. controls (401.87,239) and (420,257.13) .. (420,279.5) .. controls (420,301.87) and (401.87,320) .. (379.5,320) .. controls (357.13,320) and (339,301.87) .. (339,279.5) -- cycle ;
\draw [color={rgb, 255:red, 239; green, 36; blue, 36 }  ,draw opacity=1 ][line width=1.5]    (455,125.5) -- (476,123.6) ;
\draw [color={rgb, 255:red, 74; green, 144; blue, 226 }  ,draw opacity=1 ][line width=2.25]    (427.7,128.6) -- (455,125.5) ;
\draw   (374,125.5) .. controls (374,103.13) and (392.13,85) .. (414.5,85) .. controls (436.87,85) and (455,103.13) .. (455,125.5) .. controls (455,147.87) and (436.87,166) .. (414.5,166) .. controls (392.13,166) and (374,147.87) .. (374,125.5) -- cycle ;
\draw   (433.12,64.64) .. controls (433.12,64.64) and (433.12,64.64) .. (433.12,64.64) .. controls (431.84,49.8) and (442.78,35.71) .. (457.55,33.17) .. controls (472.33,30.63) and (485.35,40.61) .. (486.63,55.45) .. controls (487.92,70.29) and (476.98,84.38) .. (462.2,86.92) .. controls (444.36,89.98) and (435.42,91.55) .. (435.38,91.6) .. controls (435.42,91.55) and (434.66,82.55) .. (433.12,64.64) -- cycle ;
\draw [color={rgb, 255:red, 239; green, 36; blue, 36 }  ,draw opacity=1 ][line width=1.5]    (640,124.5) -- (661,122.6) ;
\draw [color={rgb, 255:red, 74; green, 144; blue, 226 }  ,draw opacity=1 ][line width=2.25]    (607.7,112.6) -- (620.38,90.6) ;
\draw   (559,124.5) .. controls (559,102.13) and (577.13,84) .. (599.5,84) .. controls (621.87,84) and (640,102.13) .. (640,124.5) .. controls (640,146.87) and (621.87,165) .. (599.5,165) .. controls (577.13,165) and (559,146.87) .. (559,124.5) -- cycle ;
\draw   (618.12,63.64) .. controls (618.12,63.64) and (618.12,63.64) .. (618.12,63.64) .. controls (616.84,48.8) and (627.78,34.71) .. (642.55,32.17) .. controls (657.33,29.63) and (670.35,39.61) .. (671.63,54.45) .. controls (672.92,69.29) and (661.98,83.38) .. (647.2,85.92) .. controls (629.36,88.98) and (620.42,90.55) .. (620.38,90.6) .. controls (620.42,90.55) and (619.66,81.55) .. (618.12,63.64) -- cycle ;
\draw [color={rgb, 255:red, 239; green, 36; blue, 36 }  ,draw opacity=1 ][line width=1.5]    (795.38,91.6) -- (824.7,97.6) ;
\draw [color={rgb, 255:red, 74; green, 144; blue, 226 }  ,draw opacity=1 ][line width=2.25]    (775,108.6) -- (774.5,85) ;
\draw   (734,125.5) .. controls (734,103.13) and (752.13,85) .. (774.5,85) .. controls (796.87,85) and (815,103.13) .. (815,125.5) .. controls (815,147.87) and (796.87,166) .. (774.5,166) .. controls (752.13,166) and (734,147.87) .. (734,125.5) -- cycle ;
\draw   (793.12,64.64) .. controls (793.12,64.64) and (793.12,64.64) .. (793.12,64.64) .. controls (791.84,49.8) and (802.78,35.71) .. (817.55,33.17) .. controls (832.33,30.63) and (845.35,40.61) .. (846.63,55.45) .. controls (847.92,70.29) and (836.98,84.38) .. (822.2,86.92) .. controls (804.36,89.98) and (795.42,91.55) .. (795.38,91.6) .. controls (795.42,91.55) and (794.66,82.55) .. (793.12,64.64) -- cycle ;
\draw [color={rgb, 255:red, 239; green, 36; blue, 36 }  ,draw opacity=1 ][line width=1.5]    (232.38,485.6) -- (251.7,499.6) ;
\draw [color={rgb, 255:red, 74; green, 144; blue, 226 }  ,draw opacity=1 ][line width=2.25]    (210.7,449.6) -- (219.38,428.6) ;
\draw   (158,462.5) .. controls (158,440.13) and (176.13,422) .. (198.5,422) .. controls (220.87,422) and (239,440.13) .. (239,462.5) .. controls (239,484.87) and (220.87,503) .. (198.5,503) .. controls (176.13,503) and (158,484.87) .. (158,462.5) -- cycle ;
\draw   (217.12,401.64) .. controls (215.84,386.8) and (226.78,372.71) .. (241.55,370.17) .. controls (256.33,367.63) and (269.35,377.61) .. (270.63,392.45) .. controls (271.92,407.29) and (260.98,421.38) .. (246.2,423.92) .. controls (228.36,426.98) and (219.42,428.55) .. (219.38,428.6) .. controls (219.42,428.55) and (218.66,419.55) .. (217.12,401.64) -- cycle ;
\draw [color={rgb, 255:red, 239; green, 36; blue, 36 }  ,draw opacity=1 ][line width=1.5]    (465.38,375.6) -- (480.7,359.6) ;
\draw [color={rgb, 255:red, 74; green, 144; blue, 226 }  ,draw opacity=1 ][line width=2.25]    (414.7,450.6) -- (423.38,429.6) ;
\draw   (362,463.5) .. controls (362,441.13) and (380.13,423) .. (402.5,423) .. controls (424.87,423) and (443,441.13) .. (443,463.5) .. controls (443,485.87) and (424.87,504) .. (402.5,504) .. controls (380.13,504) and (362,485.87) .. (362,463.5) -- cycle ;
\draw   (421.12,402.64) .. controls (419.84,387.8) and (430.78,373.71) .. (445.55,371.17) .. controls (460.33,368.63) and (473.35,378.61) .. (474.63,393.45) .. controls (475.92,408.29) and (464.98,422.38) .. (450.2,424.92) .. controls (432.36,427.98) and (423.42,429.55) .. (423.38,429.6) .. controls (423.42,429.55) and (422.66,420.55) .. (421.12,402.64) -- cycle ;
\draw [color={rgb, 255:red, 239; green, 36; blue, 36 }  ,draw opacity=1 ][line width=1.5]    (637.38,431.6) -- (665.7,438.6) ;
\draw [color={rgb, 255:red, 74; green, 144; blue, 226 }  ,draw opacity=1 ][line width=2.25]    (601.7,461.6) -- (579.38,450.6) ;
\draw   (576,465.5) .. controls (576,443.13) and (594.13,425) .. (616.5,425) .. controls (638.87,425) and (657,443.13) .. (657,465.5) .. controls (657,487.87) and (638.87,506) .. (616.5,506) .. controls (594.13,506) and (576,487.87) .. (576,465.5) -- cycle ;
\draw   (635.12,404.64) .. controls (633.84,389.8) and (644.78,375.71) .. (659.55,373.17) .. controls (674.33,370.63) and (687.35,380.61) .. (688.63,395.45) .. controls (689.92,410.29) and (678.98,424.38) .. (664.2,426.92) .. controls (646.36,429.98) and (637.42,431.55) .. (637.38,431.6) .. controls (637.42,431.55) and (636.66,422.55) .. (635.12,404.64) -- cycle ;
\draw [color={rgb, 255:red, 239; green, 36; blue, 36 }  ,draw opacity=1 ][line width=1.5]    (381,626.35) -- (402,624.45) ;
\draw [color={rgb, 255:red, 74; green, 144; blue, 226 }  ,draw opacity=1 ][line width=2.25]    (353.7,629.45) -- (381,626.35) ;
\draw   (300,626.35) .. controls (300,603.98) and (318.13,585.85) .. (340.5,585.85) .. controls (362.87,585.85) and (381,603.98) .. (381,626.35) .. controls (381,648.71) and (362.87,666.85) .. (340.5,666.85) .. controls (318.13,666.85) and (300,648.71) .. (300,626.35) -- cycle ;
\draw   (359.12,565.49) .. controls (357.84,550.65) and (368.78,536.56) .. (383.55,534.02) .. controls (398.33,531.48) and (411.35,541.45) .. (412.63,556.29) .. controls (413.92,571.13) and (402.98,585.22) .. (388.2,587.76) .. controls (370.36,590.83) and (361.42,592.39) .. (361.38,592.45) .. controls (361.42,592.39) and (360.67,583.41) .. (359.12,565.49) -- cycle ;
\draw [color={rgb, 255:red, 239; green, 36; blue, 36 }  ,draw opacity=1 ][line width=1.5]    (593.38,537.6) -- (608.7,521.6) ;
\draw [color={rgb, 255:red, 74; green, 144; blue, 226 }  ,draw opacity=1 ][line width=2.25]    (542.7,612.6) -- (551.38,591.6) ;
\draw   (490,625.5) .. controls (490,603.13) and (508.13,585) .. (530.5,585) .. controls (552.87,585) and (571,603.13) .. (571,625.5) .. controls (571,647.87) and (552.87,666) .. (530.5,666) .. controls (508.13,666) and (490,647.87) .. (490,625.5) -- cycle ;
\draw   (549.12,564.64) .. controls (547.84,549.8) and (558.78,535.71) .. (573.55,533.17) .. controls (588.33,530.63) and (601.35,540.61) .. (602.63,555.45) .. controls (603.92,570.29) and (592.98,584.38) .. (578.2,586.92) .. controls (560.36,589.98) and (551.42,591.55) .. (551.38,591.6) .. controls (551.42,591.55) and (550.66,582.55) .. (549.12,564.64) -- cycle ;

\draw (16,86.4) node [anchor=north west][inner sep=0.75pt]  [font=\Large]  {$d_{cell} E\ =$};
\draw (129,86.4) node [anchor=north west][inner sep=0.75pt]  [font=\Large]  {$d_{cell}$};
\draw (153,259.4) node [anchor=north west][inner sep=0.75pt]  [font=\Large]  {$[$};
\draw (307,267.4) node [anchor=north west][inner sep=0.75pt]  [font=\Large]  {$,$};
\draw (463,258.4) node [anchor=north west][inner sep=0.75pt]  [font=\Large]  {$]$};
\draw (320,92.4) node [anchor=north west][inner sep=0.75pt]  [font=\Large]  {$=$};
\draw (522,92.4) node [anchor=north west][inner sep=0.75pt]  [font=\Large]  {$-$};
\draw (696,92.4) node [anchor=north west][inner sep=0.75pt]  [font=\Large]  {$+$};
\draw (16,259.4) node [anchor=north west][inner sep=0.75pt]  [font=\Large]  {$[ e,B] \ =$};
\draw (90,406.4) node [anchor=north west][inner sep=0.75pt]  [font=\Large]  {$=$};
\draw (310,429.4) node [anchor=north west][inner sep=0.75pt]  [font=\Large]  {$+$};
\draw (520,427.4) node [anchor=north west][inner sep=0.75pt]  [font=\Large]  {$-$};
\draw (6,588.4) node [anchor=north west][inner sep=0.75pt]  [font=\Large]  {$\Rightarrow \ L=d_{cell} E+[ e,B] =$};
\draw (440,591.4) node [anchor=north west][inner sep=0.75pt]  [font=\Large]  {$+$};

\end{tikzpicture}
\end{equation}

\textbf{Claim}. $[e+tE,L]\sim 0$. \par\indent
\emph{Proof of Claim}. This follows from the following graphical calculations.

\tikzset{every picture/.style={line width=0.75pt}} 

\tikzset{every picture/.style={line width=0.75pt}} 
\begin{equation}
\begin{tikzpicture}[x=0.75pt,y=0.75pt,yscale=-0.7,xscale=0.7]

\draw [color={rgb, 255:red, 239; green, 36; blue, 36 }  ,draw opacity=1 ][line width=1.5]    (189.38,69.6) -- (217.7,76.6) ;
\draw [color={rgb, 255:red, 74; green, 144; blue, 226 }  ,draw opacity=1 ][line width=2.25]    (180.7,90.6) -- (189.38,69.6) ;
\draw   (128,103.5) .. controls (128,81.13) and (146.13,63) .. (168.5,63) .. controls (190.87,63) and (209,81.13) .. (209,103.5) .. controls (209,125.87) and (190.87,144) .. (168.5,144) .. controls (146.13,144) and (128,125.87) .. (128,103.5) -- cycle ;
\draw   (187.12,42.64) .. controls (185.84,27.8) and (196.78,13.71) .. (211.55,11.17) .. controls (226.33,8.63) and (239.35,18.61) .. (240.63,33.45) .. controls (241.92,48.29) and (230.98,62.38) .. (216.2,64.92) .. controls (198.36,67.98) and (189.42,69.55) .. (189.38,69.6) .. controls (189.42,69.55) and (188.66,60.55) .. (187.12,42.64) -- cycle ;
\draw [color={rgb, 255:red, 239; green, 36; blue, 36 }  ,draw opacity=1 ][line width=1.5]    (444,106.1) -- (465,104.2) ;
\draw [color={rgb, 255:red, 74; green, 144; blue, 226 }  ,draw opacity=1 ][line width=2.25]    (416.7,109.2) -- (444,106.1) ;
\draw   (363,106.1) .. controls (363,83.73) and (381.13,65.6) .. (403.5,65.6) .. controls (425.87,65.6) and (444,83.73) .. (444,106.1) .. controls (444,128.47) and (425.87,146.6) .. (403.5,146.6) .. controls (381.13,146.6) and (363,128.47) .. (363,106.1) -- cycle ;
\draw   (422.12,45.24) .. controls (420.84,30.4) and (431.78,16.31) .. (446.55,13.77) .. controls (461.33,11.23) and (474.35,21.21) .. (475.63,36.05) .. controls (476.92,50.89) and (465.98,64.98) .. (451.2,67.52) .. controls (433.36,70.58) and (424.42,72.15) .. (424.38,72.2) .. controls (424.42,72.15) and (423.66,63.15) .. (422.12,45.24) -- cycle ;
\draw [color={rgb, 255:red, 239; green, 36; blue, 36 }  ,draw opacity=1 ][line width=1.5]    (656.38,17.36) -- (671.7,1.35) ;
\draw [color={rgb, 255:red, 74; green, 144; blue, 226 }  ,draw opacity=1 ][line width=2.25]    (605.7,92.35) -- (614.38,71.36) ;
\draw   (553,105.25) .. controls (553,82.89) and (571.13,64.75) .. (593.5,64.75) .. controls (615.87,64.75) and (634,82.89) .. (634,105.25) .. controls (634,127.62) and (615.87,145.75) .. (593.5,145.75) .. controls (571.13,145.75) and (553,127.62) .. (553,105.25) -- cycle ;
\draw   (612.12,44.4) .. controls (610.84,29.56) and (621.78,15.47) .. (636.55,12.93) .. controls (651.33,10.39) and (664.35,20.36) .. (665.63,35.2) .. controls (666.92,50.04) and (655.98,64.13) .. (641.2,66.67) .. controls (623.36,69.74) and (614.42,71.29) .. (614.38,71.36) .. controls (614.42,71.29) and (613.66,62.31) .. (612.12,44.4) -- cycle ;
\draw [color={rgb, 255:red, 239; green, 36; blue, 36 }  ,draw opacity=1 ][line width=1.5]    (209.38,257.6) -- (237.7,264.6) ;
\draw [color={rgb, 255:red, 74; green, 144; blue, 226 }  ,draw opacity=1 ][line width=2.25]    (200.7,278.6) -- (209.38,257.6) ;
\draw   (148,291.5) .. controls (148,269.13) and (166.13,251) .. (188.5,251) .. controls (210.87,251) and (229,269.13) .. (229,291.5) .. controls (229,313.87) and (210.87,332) .. (188.5,332) .. controls (166.13,332) and (148,313.87) .. (148,291.5) -- cycle ;
\draw   (207.12,230.64) .. controls (207.12,230.64) and (207.12,230.64) .. (207.12,230.64) .. controls (207.12,230.64) and (207.12,230.64) .. (207.12,230.64) .. controls (205.84,215.8) and (216.78,201.71) .. (231.55,199.17) .. controls (246.33,196.63) and (259.35,206.61) .. (260.63,221.45) .. controls (261.92,236.29) and (250.98,250.38) .. (236.2,252.92) .. controls (218.36,255.98) and (209.42,257.55) .. (209.38,257.6) .. controls (209.42,257.55) and (208.66,248.55) .. (207.12,230.64) -- cycle ;
\draw   (147.62,247.6) .. controls (147.62,247.6) and (147.62,247.6) .. (147.62,247.6) .. controls (147.62,247.6) and (147.62,247.6) .. (147.62,247.6) .. controls (133.03,244.58) and (122.67,230.06) .. (124.47,215.18) .. controls (126.26,200.29) and (139.55,190.67) .. (154.13,193.69) .. controls (168.72,196.71) and (179.09,211.23) .. (177.29,226.11) .. controls (175.11,244.09) and (174.06,253.1) .. (174.1,253.16) .. controls (174.06,253.1) and (165.22,251.25) .. (147.62,247.6) -- cycle ;
\draw [color={rgb, 255:red, 239; green, 36; blue, 36 }  ,draw opacity=1 ][line width=1.5]    (403.38,258.6) -- (431.7,265.6) ;
\draw [color={rgb, 255:red, 74; green, 144; blue, 226 }  ,draw opacity=1 ][line width=2.25]    (394.7,279.6) -- (403.38,258.6) ;
\draw   (342,292.5) .. controls (342,270.13) and (360.13,252) .. (382.5,252) .. controls (404.87,252) and (423,270.13) .. (423,292.5) .. controls (423,314.87) and (404.87,333) .. (382.5,333) .. controls (360.13,333) and (342,314.87) .. (342,292.5) -- cycle ;
\draw   (401.12,231.64) .. controls (401.12,231.64) and (401.12,231.64) .. (401.12,231.64) .. controls (401.12,231.64) and (401.12,231.64) .. (401.12,231.64) .. controls (399.84,216.8) and (410.78,202.71) .. (425.55,200.17) .. controls (440.33,197.63) and (453.35,207.61) .. (454.63,222.45) .. controls (455.92,237.29) and (444.98,251.38) .. (430.2,253.92) .. controls (412.36,256.98) and (403.42,258.55) .. (403.38,258.6) .. controls (403.42,258.55) and (402.66,249.55) .. (401.12,231.64) -- cycle ;
\draw   (374.5,218.74) .. controls (374.5,218.74) and (374.5,218.74) .. (374.5,218.74) .. controls (374.5,218.74) and (374.5,218.74) .. (374.5,218.74) .. controls (359.6,218.38) and (346.81,205.96) .. (345.91,190.99) .. controls (345.02,176.02) and (356.37,164.18) .. (371.26,164.55) .. controls (386.15,164.91) and (398.95,177.33) .. (399.84,192.3) .. controls (400.92,210.37) and (401.49,219.42) .. (401.54,219.48) .. controls (401.49,219.42) and (392.47,219.19) .. (374.5,218.74) -- cycle ;
\draw [color={rgb, 255:red, 74; green, 144; blue, 226 }  ,draw opacity=1 ][line width=2.25]    (592.7,279.6) -- (601.38,258.6) ;
\draw   (540,292.5) .. controls (540,270.13) and (558.13,252) .. (580.5,252) .. controls (602.87,252) and (621,270.13) .. (621,292.5) .. controls (621,314.87) and (602.87,333) .. (580.5,333) .. controls (558.13,333) and (540,314.87) .. (540,292.5) -- cycle ;
\draw   (578.44,244.25) .. controls (578.44,244.25) and (578.44,244.25) .. (578.44,244.25) .. controls (578.44,244.25) and (578.44,244.25) .. (578.44,244.25) .. controls (565.8,236.38) and (561.08,219.17) .. (567.9,205.82) .. controls (574.73,192.47) and (590.51,188.03) .. (603.16,195.9) .. controls (615.81,203.77) and (620.53,220.97) .. (613.7,234.32) .. controls (605.46,250.43) and (601.36,258.53) .. (601.38,258.6) .. controls (601.36,258.53) and (593.71,253.74) .. (578.44,244.25) -- cycle ;
\draw [color={rgb, 255:red, 239; green, 36; blue, 36 }  ,draw opacity=1 ][line width=1.5]    (669.09,241.7) -- (691.07,238.89) ;
\draw   (617.68,234.91) .. controls (617.68,234.91) and (617.68,234.91) .. (617.68,234.91) .. controls (626.01,222.57) and (643.37,218.49) .. (656.46,225.8) .. controls (669.55,233.12) and (673.41,249.06) .. (665.07,261.4) .. controls (656.74,273.75) and (639.38,277.83) .. (626.29,270.52) .. controls (610.49,261.69) and (602.56,257.28) .. (602.48,257.3) .. controls (602.56,257.28) and (607.62,249.82) .. (617.68,234.91) -- cycle ;
\draw [color={rgb, 255:red, 239; green, 36; blue, 36 }  ,draw opacity=1 ][line width=1.5]    (203.38,420.6) -- (231.7,427.6) ;
\draw [color={rgb, 255:red, 74; green, 144; blue, 226 }  ,draw opacity=1 ][line width=2.25]    (194.7,441.6) -- (203.38,420.6) ;
\draw   (142,454.5) .. controls (142,432.13) and (160.13,414) .. (182.5,414) .. controls (204.87,414) and (223,432.13) .. (223,454.5) .. controls (223,476.87) and (204.87,495) .. (182.5,495) .. controls (160.13,495) and (142,476.87) .. (142,454.5) -- cycle ;
\draw   (201.12,393.64) .. controls (199.84,378.8) and (210.78,364.71) .. (225.55,362.17) .. controls (240.33,359.63) and (253.35,369.61) .. (254.63,384.45) .. controls (255.92,399.29) and (244.98,413.38) .. (230.2,415.92) .. controls (212.36,418.98) and (203.42,420.55) .. (203.38,420.6) .. controls (203.42,420.55) and (202.66,411.55) .. (201.12,393.64) -- cycle ;
\draw   (141.62,410.6) .. controls (127.03,407.58) and (116.67,393.06) .. (118.47,378.18) .. controls (120.26,363.29) and (133.55,353.67) .. (148.13,356.69) .. controls (162.72,359.71) and (173.09,374.23) .. (171.29,389.11) .. controls (169.11,407.09) and (168.06,416.1) .. (168.1,416.16) .. controls (168.06,416.1) and (159.22,414.25) .. (141.62,410.6) -- cycle ;
\draw [color={rgb, 255:red, 74; green, 144; blue, 226 }  ,draw opacity=1 ][line width=2.25]    (370.7,450.6) -- (379.38,429.6) ;
\draw   (318,463.5) .. controls (318,441.13) and (336.13,423) .. (358.5,423) .. controls (380.87,423) and (399,441.13) .. (399,463.5) .. controls (399,485.87) and (380.87,504) .. (358.5,504) .. controls (336.13,504) and (318,485.87) .. (318,463.5) -- cycle ;
\draw   (356.44,415.25) .. controls (343.8,407.38) and (339.08,390.17) .. (345.9,376.82) .. controls (352.73,363.47) and (368.51,359.03) .. (381.16,366.9) .. controls (393.81,374.77) and (398.53,391.97) .. (391.7,405.32) .. controls (383.46,421.43) and (379.36,429.53) .. (379.38,429.6) .. controls (379.36,429.53) and (371.71,424.74) .. (356.44,415.25) -- cycle ;
\draw [color={rgb, 255:red, 239; green, 36; blue, 36 }  ,draw opacity=1 ][line width=1.5]    (447.09,412.7) -- (469.07,409.89) ;
\draw   (395.68,405.91) .. controls (395.68,405.91) and (395.68,405.91) .. (395.68,405.91) .. controls (395.68,405.91) and (395.68,405.91) .. (395.68,405.91) .. controls (404.01,393.57) and (421.37,389.49) .. (434.46,396.8) .. controls (447.55,404.12) and (451.41,420.06) .. (443.07,432.4) .. controls (434.74,444.75) and (417.38,448.83) .. (404.29,441.52) .. controls (388.49,432.69) and (380.56,428.28) .. (380.48,428.3) .. controls (380.56,428.28) and (385.62,420.82) .. (395.68,405.91) -- cycle ;

\draw (30,67.4) node [anchor=north west][inner sep=0.75pt]  [font=\Large]  {$e\circ L=$};
\draw (503,71.15) node [anchor=north west][inner sep=0.75pt]  [font=\Large]  {$+$};
\draw (262,68.4) node [anchor=north west][inner sep=0.75pt]  [font=\Large]  {$\circ $};
\draw (321,68.4) node [anchor=north west][inner sep=0.75pt]  [font=\Large]  {$($};
\draw (693,68.4) node [anchor=north west][inner sep=0.75pt]  [font=\Large]  {$)$};
\draw (80,249.4) node [anchor=north west][inner sep=0.75pt]  [font=\Large]  {$=$};
\draw (279,249.15) node [anchor=north west][inner sep=0.75pt]  [font=\Large]  {$+$};
\draw (490,251.15) node [anchor=north west][inner sep=0.75pt]  [font=\Large]  {$+$};
\draw (80,421.4) node [anchor=north west][inner sep=0.75pt]  [font=\Large]  {$\sim $};
\draw (278,421.15) node [anchor=north west][inner sep=0.75pt]  [font=\Large]  {$+$};

\end{tikzpicture}
\end{equation}
\tikzset{every picture/.style={line width=0.75pt}} 
\begin{equation}
\begin{tikzpicture}[x=0.75pt,y=0.75pt,yscale=-0.7,xscale=0.7]

\draw [color={rgb, 255:red, 239; green, 36; blue, 36 }  ,draw opacity=1 ][line width=1.5]    (621.38,72.45) -- (648.7,77.6) ;
\draw [color={rgb, 255:red, 74; green, 144; blue, 226 }  ,draw opacity=1 ][line width=2.25]    (610.7,93.6) -- (621.38,72.45) ;
\draw   (560,106.35) .. controls (560,83.98) and (578.13,65.85) .. (600.5,65.85) .. controls (622.87,65.85) and (641,83.98) .. (641,106.35) .. controls (641,128.71) and (622.87,146.85) .. (600.5,146.85) .. controls (578.13,146.85) and (560,128.71) .. (560,106.35) -- cycle ;
\draw   (619.12,45.49) .. controls (617.84,30.65) and (628.78,16.56) .. (643.55,14.02) .. controls (658.33,11.48) and (671.35,21.45) .. (672.63,36.29) .. controls (673.92,51.13) and (662.98,65.22) .. (648.2,67.76) .. controls (630.36,70.83) and (621.42,72.39) .. (621.38,72.45) .. controls (621.42,72.39) and (620.67,63.41) .. (619.12,45.49) -- cycle ;
\draw [color={rgb, 255:red, 239; green, 36; blue, 36 }  ,draw opacity=1 ][line width=1.5]    (243,110.1) -- (264,108.2) ;
\draw [color={rgb, 255:red, 74; green, 144; blue, 226 }  ,draw opacity=1 ][line width=2.25]    (215.7,113.2) -- (243,110.1) ;
\draw   (162,110.1) .. controls (162,87.73) and (180.13,69.6) .. (202.5,69.6) .. controls (224.87,69.6) and (243,87.73) .. (243,110.1) .. controls (243,132.47) and (224.87,150.6) .. (202.5,150.6) .. controls (180.13,150.6) and (162,132.47) .. (162,110.1) -- cycle ;
\draw   (221.12,49.24) .. controls (219.84,34.4) and (230.78,20.31) .. (245.55,17.77) .. controls (260.33,15.23) and (273.35,25.21) .. (274.63,40.05) .. controls (275.92,54.89) and (264.98,68.98) .. (250.2,71.52) .. controls (232.36,74.58) and (223.42,76.15) .. (223.38,76.2) .. controls (223.42,76.15) and (222.66,67.15) .. (221.12,49.24) -- cycle ;
\draw [color={rgb, 255:red, 74; green, 144; blue, 226 }  ,draw opacity=1 ][line width=2.25]    (404.7,96.35) -- (413.38,75.36) ;
\draw   (352,109.25) .. controls (352,86.89) and (370.13,68.75) .. (392.5,68.75) .. controls (414.87,68.75) and (433,86.89) .. (433,109.25) .. controls (433,131.62) and (414.87,149.75) .. (392.5,149.75) .. controls (370.13,149.75) and (352,131.62) .. (352,109.25) -- cycle ;
\draw   (411.12,48.4) .. controls (409.84,33.56) and (420.78,19.47) .. (435.55,16.93) .. controls (450.33,14.39) and (463.35,24.36) .. (464.63,39.2) .. controls (465.92,54.04) and (454.98,68.13) .. (440.2,70.67) .. controls (422.36,73.74) and (413.42,75.29) .. (413.38,75.36) .. controls (413.42,75.29) and (412.66,66.31) .. (411.12,48.4) -- cycle ;
\draw [color={rgb, 255:red, 239; green, 36; blue, 36 }  ,draw opacity=1 ][line width=1.5]    (457.38,22.36) -- (472.7,6.35) ;
\draw [color={rgb, 255:red, 74; green, 144; blue, 226 }  ,draw opacity=1 ][line width=2.25]    (191.7,285.2) -- (219,282.1) ;
\draw   (138,282.1) .. controls (138,259.73) and (156.13,241.6) .. (178.5,241.6) .. controls (200.87,241.6) and (219,259.73) .. (219,282.1) .. controls (219,304.47) and (200.87,322.6) .. (178.5,322.6) .. controls (156.13,322.6) and (138,304.47) .. (138,282.1) -- cycle ;
\draw   (197.12,221.24) .. controls (197.12,221.24) and (197.12,221.24) .. (197.12,221.24) .. controls (197.12,221.24) and (197.12,221.24) .. (197.12,221.24) .. controls (195.84,206.4) and (206.78,192.31) .. (221.55,189.77) .. controls (236.33,187.23) and (249.35,197.21) .. (250.63,212.05) .. controls (251.92,226.89) and (240.98,240.98) .. (226.2,243.52) .. controls (208.36,246.58) and (199.42,248.15) .. (199.38,248.2) .. controls (199.42,248.15) and (198.66,239.15) .. (197.12,221.24) -- cycle ;
\draw [color={rgb, 255:red, 239; green, 36; blue, 36 }  ,draw opacity=1 ][line width=1.5]    (218.24,281.23) -- (230.85,306.01) ;
\draw   (238.21,262.98) .. controls (249.18,252.9) and (267.02,252.96) .. (278.05,263.11) .. controls (289.09,273.26) and (289.14,289.66) .. (278.17,299.74) .. controls (267.2,309.82) and (249.37,309.76) .. (238.33,299.61) .. controls (225.01,287.36) and (218.31,281.22) .. (218.24,281.23) .. controls (218.31,281.22) and (224.97,275.15) .. (238.21,262.98) -- cycle ;
\draw [color={rgb, 255:red, 74; green, 144; blue, 226 }  ,draw opacity=1 ][line width=2.25]    (425.7,267.35) -- (434.38,246.36) ;
\draw   (373,280.25) .. controls (373,257.89) and (391.13,239.75) .. (413.5,239.75) .. controls (435.87,239.75) and (454,257.89) .. (454,280.25) .. controls (454,302.62) and (435.87,320.75) .. (413.5,320.75) .. controls (391.13,320.75) and (373,302.62) .. (373,280.25) -- cycle ;
\draw   (432.12,219.4) .. controls (432.12,219.4) and (432.12,219.4) .. (432.12,219.4) .. controls (432.12,219.4) and (432.12,219.4) .. (432.12,219.4) .. controls (430.84,204.56) and (441.78,190.47) .. (456.55,187.93) .. controls (471.33,185.39) and (484.35,195.36) .. (485.63,210.2) .. controls (486.92,225.04) and (475.98,239.13) .. (461.2,241.67) .. controls (443.36,244.74) and (434.42,246.29) .. (434.38,246.36) .. controls (434.42,246.29) and (433.66,237.31) .. (432.12,219.4) -- cycle ;
\draw [color={rgb, 255:red, 239; green, 36; blue, 36 }  ,draw opacity=1 ][line width=1.5]    (480.82,196.18) -- (501.16,215.15) ;
\draw   (493.33,172.2) .. controls (500.19,158.97) and (516.97,152.92) .. (530.81,158.67) .. controls (544.65,164.43) and (550.32,179.82) .. (543.47,193.05) .. controls (536.62,206.27) and (519.84,212.33) .. (505.99,206.57) .. controls (489.29,199.62) and (480.89,196.16) .. (480.82,196.18) .. controls (480.89,196.16) and (485.07,188.16) .. (493.33,172.2) -- cycle ;
\draw [color={rgb, 255:red, 74; green, 144; blue, 226 }  ,draw opacity=1 ][line width=2.25]    (188.7,444.2) -- (216,441.1) ;
\draw   (135,441.1) .. controls (135,418.73) and (153.13,400.6) .. (175.5,400.6) .. controls (197.87,400.6) and (216,418.73) .. (216,441.1) .. controls (216,463.47) and (197.87,481.6) .. (175.5,481.6) .. controls (153.13,481.6) and (135,463.47) .. (135,441.1) -- cycle ;
\draw   (194.12,380.24) .. controls (192.84,365.4) and (203.78,351.31) .. (218.55,348.77) .. controls (233.33,346.23) and (246.35,356.21) .. (247.63,371.05) .. controls (248.92,385.89) and (237.98,399.98) .. (223.2,402.52) .. controls (205.36,405.58) and (196.42,407.15) .. (196.38,407.2) .. controls (196.42,407.15) and (195.66,398.15) .. (194.12,380.24) -- cycle ;
\draw [color={rgb, 255:red, 239; green, 36; blue, 36 }  ,draw opacity=1 ][line width=1.5]    (215.24,440.23) -- (227.85,465.01) ;
\draw   (235.21,421.98) .. controls (246.18,411.9) and (264.02,411.96) .. (275.05,422.11) .. controls (286.09,432.26) and (286.14,448.66) .. (275.17,458.74) .. controls (264.2,468.82) and (246.37,468.76) .. (235.33,458.61) .. controls (222.01,446.36) and (215.31,440.22) .. (215.24,440.23) .. controls (215.31,440.22) and (221.97,434.15) .. (235.21,421.98) -- cycle ;
\draw [color={rgb, 255:red, 74; green, 144; blue, 226 }  ,draw opacity=1 ][line width=2.25]    (415.7,426.35) -- (424.38,405.36) ;
\draw   (363,439.25) .. controls (363,416.89) and (381.13,398.75) .. (403.5,398.75) .. controls (425.87,398.75) and (444,416.89) .. (444,439.25) .. controls (444,461.62) and (425.87,479.75) .. (403.5,479.75) .. controls (381.13,479.75) and (363,461.62) .. (363,439.25) -- cycle ;
\draw   (399.69,394.29) .. controls (386.08,388.24) and (379.03,371.85) .. (383.96,357.69) .. controls (388.88,343.53) and (403.91,336.95) .. (417.52,343.01) .. controls (431.13,349.06) and (438.17,365.45) .. (433.25,379.61) .. controls (427.3,396.71) and (424.34,405.29) .. (424.38,405.36) .. controls (424.34,405.29) and (416.12,401.6) .. (399.69,394.29) -- cycle ;
\draw [color={rgb, 255:red, 239; green, 36; blue, 36 }  ,draw opacity=1 ][line width=1.5]    (485.88,376.26) -- (514.7,367.6) ;
\draw   (433.43,379.59) .. controls (438.09,365.44) and (453.7,356.8) .. (468.28,360.29) .. controls (482.86,363.77) and (490.9,378.06) .. (486.24,392.21) .. controls (481.57,406.36) and (465.97,415) .. (451.39,411.52) .. controls (433.78,407.31) and (424.95,405.22) .. (424.88,405.26) .. controls (424.95,405.22) and (427.8,396.67) .. (433.43,379.59) -- cycle ;

\draw (12,70.4) node [anchor=north west][inner sep=0.75pt]  [font=\Large]  {$L\circ e=$};
\draw (302,75.15) node [anchor=north west][inner sep=0.75pt]  [font=\Large]  {$+$};
\draw (522,76.4) node [anchor=north west][inner sep=0.75pt]  [font=\Large]  {$\circ $};
\draw (120,72.4) node [anchor=north west][inner sep=0.75pt]  [font=\Large]  {$($};
\draw (492,72.4) node [anchor=north west][inner sep=0.75pt]  [font=\Large]  {$)$};
\draw (78,225.4) node [anchor=north west][inner sep=0.75pt]  [font=\Large]  {$=$};
\draw (77,394.4) node [anchor=north west][inner sep=0.75pt]  [font=\Large]  {$\sim $};
\draw (324,229.15) node [anchor=north west][inner sep=0.75pt]  [font=\Large]  {$+$};
\draw (321,388.15) node [anchor=north west][inner sep=0.75pt]  [font=\Large]  {$+$};

\end{tikzpicture}
\end{equation}

\tikzset{every picture/.style={line width=0.75pt}} 

\begin{equation}
\begin{tikzpicture}[x=0.75pt,y=0.75pt,yscale=-0.6,xscale=0.6]

\draw [color={rgb, 255:red, 239; green, 36; blue, 36 }  ,draw opacity=1 ][line width=1.5]    (205,109.35) -- (226,107.45) ;
\draw [color={rgb, 255:red, 74; green, 144; blue, 226 }  ,draw opacity=1 ][line width=2.25]    (165,92.45) -- (164.5,68.85) ;
\draw   (124,109.35) .. controls (124,86.98) and (142.13,68.85) .. (164.5,68.85) .. controls (186.87,68.85) and (205,86.98) .. (205,109.35) .. controls (205,131.71) and (186.87,149.85) .. (164.5,149.85) .. controls (142.13,149.85) and (124,131.71) .. (124,109.35) -- cycle ;
\draw   (183.12,48.49) .. controls (181.84,33.65) and (192.78,19.56) .. (207.55,17.02) .. controls (222.33,14.48) and (235.35,24.45) .. (236.63,39.29) .. controls (237.92,54.13) and (226.98,68.22) .. (212.2,70.76) .. controls (194.36,73.83) and (185.42,75.39) .. (185.38,75.45) .. controls (185.42,75.39) and (184.67,66.41) .. (183.12,48.49) -- cycle ;
\draw [color={rgb, 255:red, 239; green, 36; blue, 36 }  ,draw opacity=1 ][line width=1.5]    (439,108.1) -- (460,106.2) ;
\draw [color={rgb, 255:red, 74; green, 144; blue, 226 }  ,draw opacity=1 ][line width=2.25]    (411.7,111.2) -- (439,108.1) ;
\draw   (358,108.1) .. controls (358,85.73) and (376.13,67.6) .. (398.5,67.6) .. controls (420.87,67.6) and (439,85.73) .. (439,108.1) .. controls (439,130.47) and (420.87,148.6) .. (398.5,148.6) .. controls (376.13,148.6) and (358,130.47) .. (358,108.1) -- cycle ;
\draw   (417.12,47.24) .. controls (415.84,32.4) and (426.78,18.31) .. (441.55,15.77) .. controls (456.33,13.23) and (469.35,23.21) .. (470.63,38.05) .. controls (471.92,52.89) and (460.98,66.98) .. (446.2,69.52) .. controls (428.36,72.58) and (419.42,74.15) .. (419.38,74.2) .. controls (419.42,74.15) and (418.66,65.15) .. (417.12,47.24) -- cycle ;
\draw [color={rgb, 255:red, 74; green, 144; blue, 226 }  ,draw opacity=1 ][line width=2.25]    (600.7,94.35) -- (609.38,73.36) ;
\draw   (548,107.25) .. controls (548,84.89) and (566.13,66.75) .. (588.5,66.75) .. controls (610.87,66.75) and (629,84.89) .. (629,107.25) .. controls (629,129.62) and (610.87,147.75) .. (588.5,147.75) .. controls (566.13,147.75) and (548,129.62) .. (548,107.25) -- cycle ;
\draw   (607.12,46.4) .. controls (605.84,31.56) and (616.78,17.47) .. (631.55,14.93) .. controls (646.33,12.39) and (659.35,22.36) .. (660.63,37.2) .. controls (661.92,52.04) and (650.98,66.13) .. (636.2,68.67) .. controls (618.36,71.74) and (609.42,73.29) .. (609.38,73.36) .. controls (609.42,73.29) and (608.66,64.31) .. (607.12,46.4) -- cycle ;
\draw [color={rgb, 255:red, 239; green, 36; blue, 36 }  ,draw opacity=1 ][line width=1.5]    (653.38,20.36) -- (668.7,4.35) ;
\draw [color={rgb, 255:red, 239; green, 36; blue, 36 }  ,draw opacity=1 ][line width=1.5]    (226,266.35) -- (247,264.45) ;
\draw [color={rgb, 255:red, 74; green, 144; blue, 226 }  ,draw opacity=1 ][line width=2.25]    (186,249.45) -- (185.5,225.85) ;
\draw   (145,266.35) .. controls (145,243.98) and (163.13,225.85) .. (185.5,225.85) .. controls (207.87,225.85) and (226,243.98) .. (226,266.35) .. controls (226,288.71) and (207.87,306.85) .. (185.5,306.85) .. controls (163.13,306.85) and (145,288.71) .. (145,266.35) -- cycle ;
\draw   (204.12,205.49) .. controls (204.12,205.49) and (204.12,205.49) .. (204.12,205.49) .. controls (204.12,205.49) and (204.12,205.49) .. (204.12,205.49) .. controls (202.84,190.65) and (213.78,176.56) .. (228.55,174.02) .. controls (243.33,171.48) and (256.35,181.45) .. (257.63,196.29) .. controls (258.92,211.13) and (247.98,225.22) .. (233.2,227.76) .. controls (215.36,230.83) and (206.42,232.39) .. (206.38,232.45) .. controls (206.42,232.39) and (205.67,223.41) .. (204.12,205.49) -- cycle ;
\draw   (136.92,225.34) .. controls (136.92,225.34) and (136.92,225.34) .. (136.92,225.34) .. controls (136.92,225.34) and (136.92,225.34) .. (136.92,225.34) .. controls (122.55,221.4) and (113.13,206.26) .. (115.87,191.51) .. controls (118.6,176.77) and (132.47,168.01) .. (146.83,171.95) .. controls (161.2,175.89) and (170.63,191.03) .. (167.89,205.78) .. controls (164.58,223.57) and (162.96,232.5) .. (162.99,232.56) .. controls (162.96,232.5) and (154.26,230.09) .. (136.92,225.34) -- cycle ;
\draw [color={rgb, 255:red, 239; green, 36; blue, 36 }  ,draw opacity=1 ][line width=1.5]    (426,268.35) -- (447,266.45) ;
\draw [color={rgb, 255:red, 74; green, 144; blue, 226 }  ,draw opacity=1 ][line width=2.25]    (386,251.45) -- (385.5,227.85) ;
\draw   (345,268.35) .. controls (345,245.98) and (363.13,227.85) .. (385.5,227.85) .. controls (407.87,227.85) and (426,245.98) .. (426,268.35) .. controls (426,290.71) and (407.87,308.85) .. (385.5,308.85) .. controls (363.13,308.85) and (345,290.71) .. (345,268.35) -- cycle ;
\draw   (433.59,229.12) .. controls (433.59,229.12) and (433.59,229.12) .. (433.59,229.12) .. controls (433.59,229.12) and (433.59,229.12) .. (433.59,229.12) .. controls (438.5,215.06) and (454.25,206.69) .. (468.77,210.43) .. controls (483.29,214.17) and (491.08,228.6) .. (486.17,242.66) .. controls (481.26,256.72) and (465.5,265.09) .. (450.98,261.35) .. controls (433.45,256.84) and (424.66,254.6) .. (424.6,254.64) .. controls (424.66,254.6) and (427.65,246.1) .. (433.59,229.12) -- cycle ;
\draw   (389.35,209.28) .. controls (389.35,209.28) and (389.35,209.28) .. (389.35,209.28) .. controls (389.35,209.28) and (389.35,209.28) .. (389.35,209.28) .. controls (381.6,196.56) and (385.12,179.08) .. (397.22,170.22) .. controls (409.32,161.37) and (425.42,164.5) .. (433.17,177.22) .. controls (440.93,189.94) and (437.41,207.42) .. (425.31,216.28) .. controls (410.71,226.96) and (403.39,232.34) .. (403.38,232.42) .. controls (403.39,232.34) and (398.72,224.63) .. (389.35,209.28) -- cycle ;
\draw [color={rgb, 255:red, 239; green, 36; blue, 36 }  ,draw opacity=1 ][line width=1.5]    (629,266.35) -- (650,264.45) ;
\draw [color={rgb, 255:red, 74; green, 144; blue, 226 }  ,draw opacity=1 ][line width=2.25]    (589,249.45) -- (588.5,225.85) ;
\draw   (548,266.35) .. controls (548,243.98) and (566.13,225.85) .. (588.5,225.85) .. controls (610.87,225.85) and (629,243.98) .. (629,266.35) .. controls (629,288.71) and (610.87,306.85) .. (588.5,306.85) .. controls (566.13,306.85) and (548,288.71) .. (548,266.35) -- cycle ;
\draw   (607.12,205.49) .. controls (607.12,205.49) and (607.12,205.49) .. (607.12,205.49) .. controls (607.12,205.49) and (607.12,205.49) .. (607.12,205.49) .. controls (605.84,190.65) and (616.78,176.56) .. (631.55,174.02) .. controls (646.33,171.48) and (659.35,181.45) .. (660.63,196.29) .. controls (661.92,211.13) and (650.98,225.22) .. (636.2,227.76) .. controls (618.36,230.83) and (609.42,232.39) .. (609.38,232.45) .. controls (609.42,232.39) and (608.67,223.41) .. (607.12,205.49) -- cycle ;
\draw   (672.52,171.99) .. controls (672.52,171.99) and (672.52,171.99) .. (672.52,171.99) .. controls (672.52,171.99) and (672.52,171.99) .. (672.52,171.99) .. controls (679.03,158.59) and (695.65,152.11) .. (709.64,157.51) .. controls (723.62,162.9) and (729.69,178.14) .. (723.18,191.54) .. controls (716.67,204.94) and (700.05,211.42) .. (686.07,206.03) .. controls (669.18,199.51) and (660.7,196.27) .. (660.63,196.29) .. controls (660.7,196.27) and (664.67,188.16) .. (672.52,171.99) -- cycle ;
\draw [color={rgb, 255:red, 239; green, 36; blue, 36 }  ,draw opacity=1 ][line width=1.5]    (257,422.35) -- (278,420.45) ;
\draw [color={rgb, 255:red, 74; green, 144; blue, 226 }  ,draw opacity=1 ][line width=2.25]    (217,405.45) -- (216.5,381.85) ;
\draw   (176,422.35) .. controls (176,399.98) and (194.13,381.85) .. (216.5,381.85) .. controls (238.87,381.85) and (257,399.98) .. (257,422.35) .. controls (257,444.71) and (238.87,462.85) .. (216.5,462.85) .. controls (194.13,462.85) and (176,444.71) .. (176,422.35) -- cycle ;
\draw   (264.59,383.12) .. controls (269.5,369.06) and (285.25,360.69) .. (299.77,364.43) .. controls (314.29,368.17) and (322.08,382.6) .. (317.17,396.66) .. controls (312.26,410.72) and (296.5,419.09) .. (281.98,415.35) .. controls (264.45,410.84) and (255.66,408.6) .. (255.6,408.64) .. controls (255.66,408.6) and (258.65,400.1) .. (264.59,383.12) -- cycle ;
\draw   (220.35,363.28) .. controls (212.6,350.56) and (216.12,333.08) .. (228.22,324.22) .. controls (240.32,315.37) and (256.42,318.5) .. (264.17,331.22) .. controls (271.93,343.94) and (268.41,361.42) .. (256.31,370.28) .. controls (241.71,380.96) and (234.39,386.34) .. (234.38,386.42) .. controls (234.39,386.34) and (229.72,378.63) .. (220.35,363.28) -- cycle ;
\draw [color={rgb, 255:red, 74; green, 144; blue, 226 }  ,draw opacity=1 ][line width=2.25]    (418,406.45) -- (417.5,382.85) ;
\draw   (377,423.35) .. controls (377,400.98) and (395.13,382.85) .. (417.5,382.85) .. controls (439.87,382.85) and (458,400.98) .. (458,423.35) .. controls (458,445.71) and (439.87,463.85) .. (417.5,463.85) .. controls (395.13,463.85) and (377,445.71) .. (377,423.35) -- cycle ;
\draw   (436.12,362.49) .. controls (434.84,347.65) and (445.78,333.56) .. (460.55,331.02) .. controls (475.33,328.48) and (488.35,338.45) .. (489.63,353.29) .. controls (490.92,368.13) and (479.98,382.22) .. (465.2,384.76) .. controls (447.36,387.83) and (438.42,389.39) .. (438.38,389.45) .. controls (438.42,389.39) and (437.67,380.41) .. (436.12,362.49) -- cycle ;
\draw   (478.32,395.9) .. controls (490.2,386.91) and (507.95,388.66) .. (517.97,399.81) .. controls (527.99,410.97) and (526.49,427.3) .. (514.61,436.29) .. controls (502.74,445.28) and (484.99,443.53) .. (474.97,432.37) .. controls (462.87,418.91) and (456.78,412.18) .. (456.71,412.17) .. controls (456.78,412.18) and (463.99,406.75) .. (478.32,395.9) -- cycle ;
\draw [color={rgb, 255:red, 239; green, 36; blue, 36 }  ,draw opacity=1 ][line width=1.5]    (525,421.17) -- (546.71,425.62) ;
\draw [color={rgb, 255:red, 239; green, 36; blue, 36 }  ,draw opacity=1 ][line width=1.5]    (239,586.35) -- (260,584.45) ;
\draw [color={rgb, 255:red, 74; green, 144; blue, 226 }  ,draw opacity=1 ][line width=2.25]    (199,569.45) -- (198.5,545.85) ;
\draw   (158,586.35) .. controls (158,563.98) and (176.13,545.85) .. (198.5,545.85) .. controls (220.87,545.85) and (239,563.98) .. (239,586.35) .. controls (239,608.71) and (220.87,626.85) .. (198.5,626.85) .. controls (176.13,626.85) and (158,608.71) .. (158,586.35) -- cycle ;
\draw   (217.12,525.49) .. controls (217.12,525.49) and (217.12,525.49) .. (217.12,525.49) .. controls (217.12,525.49) and (217.12,525.49) .. (217.12,525.49) .. controls (215.84,510.65) and (226.78,496.56) .. (241.55,494.02) .. controls (256.33,491.48) and (269.35,501.45) .. (270.63,516.29) .. controls (271.92,531.13) and (260.98,545.22) .. (246.2,547.76) .. controls (228.36,550.83) and (219.42,552.39) .. (219.38,552.45) .. controls (219.42,552.39) and (218.67,543.41) .. (217.12,525.49) -- cycle ;
\draw   (149.92,545.34) .. controls (135.55,541.4) and (126.13,526.26) .. (128.87,511.51) .. controls (131.6,496.77) and (145.47,488.01) .. (159.83,491.95) .. controls (174.2,495.89) and (183.63,511.03) .. (180.89,525.78) .. controls (177.58,543.57) and (175.96,552.5) .. (175.99,552.56) .. controls (175.96,552.5) and (167.26,550.09) .. (149.92,545.34) -- cycle ;
\draw [color={rgb, 255:red, 74; green, 144; blue, 226 }  ,draw opacity=1 ][line width=2.25]    (369,564.45) -- (368.5,540.85) ;
\draw   (328,581.35) .. controls (328,558.98) and (346.13,540.85) .. (368.5,540.85) .. controls (390.87,540.85) and (409,558.98) .. (409,581.35) .. controls (409,603.71) and (390.87,621.85) .. (368.5,621.85) .. controls (346.13,621.85) and (328,603.71) .. (328,581.35) -- cycle ;
\draw   (388.12,519.49) .. controls (388.12,519.49) and (388.12,519.49) .. (388.12,519.49) .. controls (388.12,519.49) and (388.12,519.49) .. (388.12,519.49) .. controls (386.84,504.65) and (397.78,490.56) .. (412.55,488.02) .. controls (427.33,485.48) and (440.35,495.45) .. (441.63,510.29) .. controls (442.92,525.13) and (431.98,539.22) .. (417.2,541.76) .. controls (399.36,544.83) and (390.42,546.39) .. (390.38,546.45) .. controls (390.42,546.39) and (389.67,537.41) .. (388.12,519.49) -- cycle ;
\draw   (429.32,553.9) .. controls (429.32,553.9) and (429.32,553.9) .. (429.32,553.9) .. controls (441.2,544.91) and (458.95,546.66) .. (468.97,557.81) .. controls (478.99,568.97) and (477.49,585.3) .. (465.61,594.29) .. controls (453.74,603.28) and (435.99,601.53) .. (425.97,590.37) .. controls (413.87,576.91) and (407.78,570.18) .. (407.71,570.17) .. controls (407.78,570.18) and (414.99,564.75) .. (429.32,553.9) -- cycle ;
\draw [color={rgb, 255:red, 239; green, 36; blue, 36 }  ,draw opacity=1 ][line width=1.5]    (476,579.17) -- (497.71,583.62) ;

\draw (0,70.4) node [anchor=north west][inner sep=0.75pt]  [font=\Large]  {$E\circ L=$};
\draw (498,73.15) node [anchor=north west][inner sep=0.75pt]  [font=\Large]  {$+$};
\draw (258,73.4) node [anchor=north west][inner sep=0.75pt]  [font=\Large]  {$\circ $};
\draw (316,70.4) node [anchor=north west][inner sep=0.75pt]  [font=\Large]  {$($};
\draw (688,70.4) node [anchor=north west][inner sep=0.75pt]  [font=\Large]  {$)$};
\draw (78,240.4) node [anchor=north west][inner sep=0.75pt]  [font=\Large]  {$=$};
\draw (82,545.4) node [anchor=north west][inner sep=0.75pt]  [font=\Large]  {$\sim $};
\draw (294,242.15) node [anchor=north west][inner sep=0.75pt]  [font=\Large]  {$-$};
\draw (512,243.15) node [anchor=north west][inner sep=0.75pt]  [font=\Large]  {$+$};
\draw (135,398.15) node [anchor=north west][inner sep=0.75pt]  [font=\Large]  {$+$};
\draw (338,398.15) node [anchor=north west][inner sep=0.75pt]  [font=\Large]  {$+$};
\draw (289,556.15) node [anchor=north west][inner sep=0.75pt]  [font=\Large]  {$+$};

\end{tikzpicture}    
\end{equation}

\tikzset{every picture/.style={line width=0.75pt}} 
\begin{equation}
\begin{tikzpicture}[x=0.75pt,y=0.75pt,yscale=-0.6,xscale=0.6]

\draw [color={rgb, 255:red, 239; green, 36; blue, 36 }  ,draw opacity=1 ][line width=1.5]    (641,106.35) -- (665.7,104.6) ;
\draw [color={rgb, 255:red, 74; green, 144; blue, 226 }  ,draw opacity=1 ][line width=2.25]    (600.7,89.6) -- (600.5,65.85) ;
\draw   (560,106.35) .. controls (560,83.98) and (578.13,65.85) .. (600.5,65.85) .. controls (622.87,65.85) and (641,83.98) .. (641,106.35) .. controls (641,128.71) and (622.87,146.85) .. (600.5,146.85) .. controls (578.13,146.85) and (560,128.71) .. (560,106.35) -- cycle ;
\draw   (619.12,45.49) .. controls (617.84,30.65) and (628.78,16.56) .. (643.55,14.02) .. controls (658.33,11.48) and (671.35,21.45) .. (672.63,36.29) .. controls (673.92,51.13) and (662.98,65.22) .. (648.2,67.76) .. controls (630.36,70.83) and (621.42,72.39) .. (621.38,72.45) .. controls (621.42,72.39) and (620.67,63.41) .. (619.12,45.49) -- cycle ;
\draw [color={rgb, 255:red, 239; green, 36; blue, 36 }  ,draw opacity=1 ][line width=1.5]    (243,110.1) -- (264,108.2) ;
\draw [color={rgb, 255:red, 74; green, 144; blue, 226 }  ,draw opacity=1 ][line width=2.25]    (215.7,113.2) -- (243,110.1) ;
\draw   (162,110.1) .. controls (162,87.73) and (180.13,69.6) .. (202.5,69.6) .. controls (224.87,69.6) and (243,87.73) .. (243,110.1) .. controls (243,132.47) and (224.87,150.6) .. (202.5,150.6) .. controls (180.13,150.6) and (162,132.47) .. (162,110.1) -- cycle ;
\draw   (221.12,49.24) .. controls (219.84,34.4) and (230.78,20.31) .. (245.55,17.77) .. controls (260.33,15.23) and (273.35,25.21) .. (274.63,40.05) .. controls (275.92,54.89) and (264.98,68.98) .. (250.2,71.52) .. controls (232.36,74.58) and (223.42,76.15) .. (223.38,76.2) .. controls (223.42,76.15) and (222.66,67.15) .. (221.12,49.24) -- cycle ;
\draw [color={rgb, 255:red, 74; green, 144; blue, 226 }  ,draw opacity=1 ][line width=2.25]    (404.7,96.35) -- (413.38,75.36) ;
\draw   (352,109.25) .. controls (352,86.89) and (370.13,68.75) .. (392.5,68.75) .. controls (414.87,68.75) and (433,86.89) .. (433,109.25) .. controls (433,131.62) and (414.87,149.75) .. (392.5,149.75) .. controls (370.13,149.75) and (352,131.62) .. (352,109.25) -- cycle ;
\draw   (411.12,48.4) .. controls (409.84,33.56) and (420.78,19.47) .. (435.55,16.93) .. controls (450.33,14.39) and (463.35,24.36) .. (464.63,39.2) .. controls (465.92,54.04) and (454.98,68.13) .. (440.2,70.67) .. controls (422.36,73.74) and (413.42,75.29) .. (413.38,75.36) .. controls (413.42,75.29) and (412.66,66.31) .. (411.12,48.4) -- cycle ;
\draw [color={rgb, 255:red, 239; green, 36; blue, 36 }  ,draw opacity=1 ][line width=1.5]    (457.38,22.36) -- (472.7,6.35) ;
\draw [color={rgb, 255:red, 239; green, 36; blue, 36 }  ,draw opacity=1 ][line width=1.5]    (194.7,214.6) -- (202.5,234.6) ;
\draw [color={rgb, 255:red, 74; green, 144; blue, 226 }  ,draw opacity=1 ][line width=2.25]    (232.7,275.2) -- (260,272.1) ;
\draw   (179,272.1) .. controls (179,249.73) and (197.13,231.6) .. (219.5,231.6) .. controls (241.87,231.6) and (260,249.73) .. (260,272.1) .. controls (260,294.47) and (241.87,312.6) .. (219.5,312.6) .. controls (197.13,312.6) and (179,294.47) .. (179,272.1) -- cycle ;
\draw   (238.12,211.24) .. controls (238.12,211.24) and (238.12,211.24) .. (238.12,211.24) .. controls (238.12,211.24) and (238.12,211.24) .. (238.12,211.24) .. controls (236.84,196.4) and (247.78,182.31) .. (262.55,179.77) .. controls (277.33,177.23) and (290.35,187.21) .. (291.63,202.05) .. controls (292.92,216.89) and (281.98,230.98) .. (267.2,233.52) .. controls (249.36,236.58) and (240.42,238.15) .. (240.38,238.2) .. controls (240.42,238.15) and (239.66,229.15) .. (238.12,211.24) -- cycle ;
\draw   (176.6,316.42) .. controls (172.76,330.82) and (157.68,340.34) .. (142.92,337.7) .. controls (128.16,335.06) and (119.31,321.26) .. (123.15,306.87) .. controls (127,292.47) and (142.08,282.95) .. (156.84,285.59) .. controls (174.65,288.78) and (183.59,290.34) .. (183.65,290.31) .. controls (183.59,290.34) and (181.24,299.06) .. (176.6,316.42) -- cycle ;
\draw [color={rgb, 255:red, 239; green, 36; blue, 36 }  ,draw opacity=1 ][line width=1.5]    (518.63,191.05) -- (537.33,182.55) ;
\draw [color={rgb, 255:red, 74; green, 144; blue, 226 }  ,draw opacity=1 ][line width=2.25]    (462.7,274.2) -- (490,271.1) ;
\draw   (409,271.1) .. controls (409,248.73) and (427.13,230.6) .. (449.5,230.6) .. controls (471.87,230.6) and (490,248.73) .. (490,271.1) .. controls (490,293.47) and (471.87,311.6) .. (449.5,311.6) .. controls (427.13,311.6) and (409,293.47) .. (409,271.1) -- cycle ;
\draw   (468.12,210.24) .. controls (468.12,210.24) and (468.12,210.24) .. (468.12,210.24) .. controls (468.12,210.24) and (468.12,210.24) .. (468.12,210.24) .. controls (466.84,195.4) and (477.78,181.31) .. (492.55,178.77) .. controls (507.33,176.23) and (520.35,186.21) .. (521.63,201.05) .. controls (522.92,215.89) and (511.98,229.98) .. (497.2,232.52) .. controls (479.36,235.58) and (470.42,237.15) .. (470.38,237.2) .. controls (470.42,237.15) and (469.66,228.15) .. (468.12,210.24) -- cycle ;
\draw   (392.22,240.91) .. controls (392.22,240.91) and (392.22,240.91) .. (392.22,240.91) .. controls (392.22,240.91) and (392.22,240.91) .. (392.22,240.91) .. controls (377.57,238.21) and (366.88,223.93) .. (368.35,209.01) .. controls (369.82,194.09) and (382.88,184.18) .. (397.53,186.87) .. controls (412.18,189.57) and (422.87,203.85) .. (421.4,218.77) .. controls (419.64,236.79) and (418.77,245.82) .. (418.82,245.88) .. controls (418.77,245.82) and (409.91,244.17) .. (392.22,240.91) -- cycle ;
\draw [color={rgb, 255:red, 74; green, 144; blue, 226 }  ,draw opacity=1 ][line width=2.25]    (694.7,268.2) -- (722,265.1) ;
\draw   (641,265.1) .. controls (641,242.73) and (659.13,224.6) .. (681.5,224.6) .. controls (703.87,224.6) and (722,242.73) .. (722,265.1) .. controls (722,287.47) and (703.87,305.6) .. (681.5,305.6) .. controls (659.13,305.6) and (641,287.47) .. (641,265.1) -- cycle ;
\draw   (700.12,204.24) .. controls (700.12,204.24) and (700.12,204.24) .. (700.12,204.24) .. controls (700.12,204.24) and (700.12,204.24) .. (700.12,204.24) .. controls (698.84,189.4) and (709.78,175.31) .. (724.55,172.77) .. controls (739.33,170.23) and (752.35,180.21) .. (753.63,195.05) .. controls (754.92,209.89) and (743.98,223.98) .. (729.2,226.52) .. controls (711.36,229.58) and (702.42,231.15) .. (702.38,231.2) .. controls (702.42,231.15) and (701.66,222.15) .. (700.12,204.24) -- cycle ;
\draw   (624.22,234.91) .. controls (624.22,234.91) and (624.22,234.91) .. (624.22,234.91) .. controls (624.22,234.91) and (624.22,234.91) .. (624.22,234.91) .. controls (609.57,232.21) and (598.88,217.93) .. (600.35,203.01) .. controls (601.82,188.09) and (614.88,178.18) .. (629.53,180.87) .. controls (644.18,183.57) and (654.87,197.85) .. (653.4,212.77) .. controls (651.64,230.79) and (650.77,239.82) .. (650.82,239.88) .. controls (650.77,239.82) and (641.91,238.17) .. (624.22,234.91) -- cycle ;
\draw [color={rgb, 255:red, 239; green, 36; blue, 36 }  ,draw opacity=1 ][line width=1.5]    (719.63,250.05) -- (743.7,249.6) ;
\draw [color={rgb, 255:red, 74; green, 144; blue, 226 }  ,draw opacity=1 ][line width=2.25]    (199.7,454.2) -- (227,451.1) ;
\draw   (146,451.1) .. controls (146,428.73) and (164.13,410.6) .. (186.5,410.6) .. controls (208.87,410.6) and (227,428.73) .. (227,451.1) .. controls (227,473.47) and (208.87,491.6) .. (186.5,491.6) .. controls (164.13,491.6) and (146,473.47) .. (146,451.1) -- cycle ;
\draw   (236.34,406.12) .. controls (244.82,393.87) and (262.23,390) .. (275.23,397.47) .. controls (288.23,404.94) and (291.89,420.92) .. (283.42,433.17) .. controls (274.94,445.42) and (257.53,449.29) .. (244.53,441.82) .. controls (228.83,432.8) and (220.95,428.31) .. (220.88,428.32) .. controls (220.95,428.31) and (226.1,420.9) .. (236.34,406.12) -- cycle ;
\draw   (266.72,371.78) .. controls (262,357.66) and (269.34,341.4) .. (283.11,335.47) .. controls (296.88,329.54) and (311.87,336.19) .. (316.6,350.32) .. controls (321.32,364.44) and (313.98,380.7) .. (300.21,386.63) .. controls (283.59,393.79) and (275.26,397.4) .. (275.23,397.47) .. controls (275.26,397.4) and (272.42,388.84) .. (266.72,371.78) -- cycle ;
\draw [color={rgb, 255:red, 239; green, 36; blue, 36 }  ,draw opacity=1 ][line width=1.5]    (283.42,433.17) -- (298.7,443.6) ;
\draw [color={rgb, 255:red, 74; green, 144; blue, 226 }  ,draw opacity=1 ][line width=2.25]    (428.7,456.6) -- (455,463.1) ;
\draw   (376,449.1) .. controls (376,426.73) and (394.13,408.6) .. (416.5,408.6) .. controls (438.87,408.6) and (457,426.73) .. (457,449.1) .. controls (457,471.47) and (438.87,489.6) .. (416.5,489.6) .. controls (394.13,489.6) and (376,471.47) .. (376,449.1) -- cycle ;
\draw   (466.34,404.12) .. controls (474.82,391.87) and (492.23,388) .. (505.23,395.47) .. controls (518.23,402.94) and (521.89,418.92) .. (513.42,431.17) .. controls (504.94,443.42) and (487.53,447.29) .. (474.53,439.82) .. controls (458.83,430.8) and (450.95,426.31) .. (450.88,426.32) .. controls (450.95,426.31) and (456.1,418.9) .. (466.34,404.12) -- cycle ;
\draw   (496.72,369.78) .. controls (492,355.66) and (499.34,339.4) .. (513.11,333.47) .. controls (526.88,327.54) and (541.87,334.19) .. (546.6,348.32) .. controls (551.32,362.44) and (543.98,378.7) .. (530.21,384.63) .. controls (513.59,391.79) and (505.26,395.4) .. (505.23,395.47) .. controls (505.26,395.4) and (502.42,386.84) .. (496.72,369.78) -- cycle ;
\draw [color={rgb, 255:red, 239; green, 36; blue, 36 }  ,draw opacity=1 ][line width=1.5]    (456.42,439.17) -- (477.7,448.6) ;
\draw [color={rgb, 255:red, 239; green, 36; blue, 36 }  ,draw opacity=1 ][line width=1.5]    (718.63,436.05) -- (741.7,437.6) ;
\draw [color={rgb, 255:red, 74; green, 144; blue, 226 }  ,draw opacity=1 ][line width=2.25]    (692.7,450.2) -- (720,447.1) ;
\draw   (639,447.1) .. controls (639,424.73) and (657.13,406.6) .. (679.5,406.6) .. controls (701.87,406.6) and (720,424.73) .. (720,447.1) .. controls (720,469.47) and (701.87,487.6) .. (679.5,487.6) .. controls (657.13,487.6) and (639,469.47) .. (639,447.1) -- cycle ;
\draw   (698.12,386.24) .. controls (696.84,371.4) and (707.78,357.31) .. (722.55,354.77) .. controls (737.33,352.23) and (750.35,362.21) .. (751.63,377.05) .. controls (752.92,391.89) and (741.98,405.98) .. (727.2,408.52) .. controls (709.36,411.58) and (700.42,413.15) .. (700.38,413.2) .. controls (700.42,413.15) and (699.66,404.15) .. (698.12,386.24) -- cycle ;
\draw   (622.22,416.91) .. controls (607.57,414.21) and (596.88,399.93) .. (598.35,385.01) .. controls (599.82,370.09) and (612.88,360.18) .. (627.53,362.87) .. controls (642.18,365.57) and (652.87,379.85) .. (651.4,394.77) .. controls (649.64,412.79) and (648.77,421.82) .. (648.82,421.88) .. controls (648.77,421.82) and (639.91,420.17) .. (622.22,416.91) -- cycle ;
\draw [color={rgb, 255:red, 239; green, 36; blue, 36 }  ,draw opacity=1 ][line width=1.5]    (202.7,537.6) -- (210.5,557.6) ;
\draw [color={rgb, 255:red, 74; green, 144; blue, 226 }  ,draw opacity=1 ][line width=2.25]    (240.7,598.2) -- (268,595.1) ;
\draw   (187,595.1) .. controls (187,572.73) and (205.13,554.6) .. (227.5,554.6) .. controls (249.87,554.6) and (268,572.73) .. (268,595.1) .. controls (268,617.47) and (249.87,635.6) .. (227.5,635.6) .. controls (205.13,635.6) and (187,617.47) .. (187,595.1) -- cycle ;
\draw   (246.12,534.24) .. controls (244.84,519.4) and (255.78,505.31) .. (270.55,502.77) .. controls (285.33,500.23) and (298.35,510.21) .. (299.63,525.05) .. controls (300.92,539.89) and (289.98,553.98) .. (275.2,556.52) .. controls (257.36,559.58) and (248.42,561.15) .. (248.38,561.2) .. controls (248.42,561.15) and (247.66,552.15) .. (246.12,534.24) -- cycle ;
\draw   (184.6,639.42) .. controls (180.76,653.82) and (165.68,663.34) .. (150.92,660.7) .. controls (136.16,658.06) and (127.31,644.26) .. (131.15,629.87) .. controls (135,615.47) and (150.08,605.95) .. (164.84,608.59) .. controls (182.65,611.78) and (191.59,613.34) .. (191.65,613.31) .. controls (191.59,613.34) and (189.24,622.06) .. (184.6,639.42) -- cycle ;
\draw [color={rgb, 255:red, 239; green, 36; blue, 36 }  ,draw opacity=1 ][line width=1.5]    (526.63,514.05) -- (545.33,505.55) ;
\draw [color={rgb, 255:red, 74; green, 144; blue, 226 }  ,draw opacity=1 ][line width=2.25]    (470.7,597.2) -- (498,594.1) ;
\draw   (417,594.1) .. controls (417,571.73) and (435.13,553.6) .. (457.5,553.6) .. controls (479.87,553.6) and (498,571.73) .. (498,594.1) .. controls (498,616.47) and (479.87,634.6) .. (457.5,634.6) .. controls (435.13,634.6) and (417,616.47) .. (417,594.1) -- cycle ;
\draw   (476.12,533.24) .. controls (474.84,518.4) and (485.78,504.31) .. (500.55,501.77) .. controls (515.33,499.23) and (528.35,509.21) .. (529.63,524.05) .. controls (530.92,538.89) and (519.98,552.98) .. (505.2,555.52) .. controls (487.36,558.58) and (478.42,560.15) .. (478.38,560.2) .. controls (478.42,560.15) and (477.66,551.15) .. (476.12,533.24) -- cycle ;
\draw   (400.22,563.91) .. controls (385.57,561.21) and (374.88,546.93) .. (376.35,532.01) .. controls (377.82,517.09) and (390.88,507.18) .. (405.53,509.87) .. controls (420.18,512.57) and (430.87,526.85) .. (429.4,541.77) .. controls (427.64,559.79) and (426.77,568.82) .. (426.82,568.88) .. controls (426.77,568.82) and (417.91,567.17) .. (400.22,563.91) -- cycle ;

\draw (8,70.4) node [anchor=north west][inner sep=0.75pt]  [font=\Large]  {$L\circ E=$};
\draw (302,73.15) node [anchor=north west][inner sep=0.75pt]  [font=\Large]  {$+$};
\draw (522,73.4) node [anchor=north west][inner sep=0.75pt]  [font=\Large]  {$\circ $};
\draw (120,72.4) node [anchor=north west][inner sep=0.75pt]  [font=\Large]  {$($};
\draw (492,72.4) node [anchor=north west][inner sep=0.75pt]  [font=\Large]  {$)$};
\draw (78,225.4) node [anchor=north west][inner sep=0.75pt]  [font=\Large]  {$=$};
\draw (322,235.15) node [anchor=north west][inner sep=0.75pt]  [font=\Large]  {$+$};
\draw (556,233.15) node [anchor=north west][inner sep=0.75pt]  [font=\Large]  {$+$};
\draw (119,405.15) node [anchor=north west][inner sep=0.75pt]  [font=\Large]  {$-$};
\draw (328,405.15) node [anchor=north west][inner sep=0.75pt]  [font=\Large]  {$-$};
\draw (563,409.15) node [anchor=north west][inner sep=0.75pt]  [font=\Large]  {$-$};
\draw (81,556.4) node [anchor=north west][inner sep=0.75pt]  [font=\Large]  {$\sim $};
\draw (330,558.15) node [anchor=north west][inner sep=0.75pt]  [font=\Large]  {$+$};

\end{tikzpicture}    
\end{equation}
In particular, one has $[e,L]\sim 0$ and $[E,L]\sim 0$ and the claim follows.\blackqed\\
Finally, to complete the proof of Proposition \ref{thm: main proposition} we simply note that \textbf{Claim} implies that
\begin{align}
 (d_{cell}-t[B,-]) (e+tE)^p&=   \sum_{i=1}^p(e+tE)^{i-1}\big(d_{cell}(e+tE)-t[B,(e+tE)]\big)(e+tE)^{p-i}   \\
 &= t\sum_{i=1}^p(e+tE)^{i-1}\;L\;(e+tE)^{p-i}\\
 &\sim tL \sum_{i=1}^p(e+tE)^{p-1}\\
 &=0,
\end{align}
where the last equality follows since we are working over characteristic $p$.\qed\\
\begin{rmk}
Under \eqref{eq: action of cyclic cacti on HH_* and HH^*}, $L$ gives rise to the Lie action of $HH^*(\mathcal{A})$ on $HH_*(\mathcal{A})$, and \eqref{eq:graphical Cartan homotopy formula} is just a graphical representation of the non-commutative Cartan homotopy formula \cite{Get}. The \textbf{Claim} above is also a shadow of a classical identity in non-commutative calculus. Namely, writing $\iota=e+tE$, then for any Hochschild cochains $D,E$
\begin{equation}\label{eq:DGT homotopy}
[L_D,\iota_E]-(-1)^{|D|+1}\iota_{[D,E]}=[b+tB,T(D,E)]-T(\delta D,E)-(-1)^{|D|}T(D,\delta E),  
\end{equation}
where $T(-,-)$ is the Daletsky-Gelfand-Tsygan homotopy operator \cite{DGT} given by
\begin{equation}
T(D,E)(a_0\otimes\cdots\otimes a_n):=\sum \pm D(a_{j_1+1},\cdots,E(a_{j_2+1},\cdots),\cdots,a_0,\cdots)\otimes \cdots.
\end{equation}
In particular, $T(D,E)=0$ if $D$ is a Hochschild cochain of length one. Setting $D=E$ to be the differential $d$ of $\mathcal{A}$, \eqref{eq:DGT homotopy} implies that $[L_d,\iota_d]=0$.\\ 
\end{rmk}
\begin{rmk}
Note that Proposition \ref{thm: main proposition} also holds if one replaces $\mathbf{k}$ by any characteristic $p$ commutative ring $K$, and moreover implies that $(e+tE)^p$ is $(\mathcal{A},[\phi^{\otimes p}])$-equivalent to a degree $0$ cocycle of $(C^{cell}_{-*}(\mathrm{UCact}^p_{\circlearrowright};K)((t)),d_{cell}-t[B,-])$ for any even Hochschild cocycle $\phi$ such that
\begin{equation}\label{eq:condition for even HH cocycle}
\phi\in \mathrm{Hom}_{K}(\mathcal{A}[1],\mathcal{A})\subset \prod_{d\geq 0}\mathrm{Hom}_{K}(\mathcal{A}[1]^{\otimes d},\mathcal{A})\quad\textrm{and}\quad \phi\circ \phi=0;
\end{equation}
compare proof of Lemma \ref{thm:sim equivalence implies other equivalence}. This has the following application. \par\indent
Namely, let $\mathcal{A}_s$ be a flat family of d($\mathbb{Z}/2$)g algebras over the affine line $K=\mathbf{k}[s]$ representing a \emph{linear deformation} of $\mathcal{A}_0:=\mathcal{A}_s|_{s=0}$, i.e. the product of $\mathcal{A}_s$ is constant and the differential $d_s$ is of the form $d+sd_1$. Then the Kodaira-Spencer cocycle 
\begin{equation}
\frac{\partial d_s}{\partial s}=d_1   \in CC^2(\mathcal{A}_s,\mathcal{A}_s)
\end{equation}
satisfies the condition \eqref{eq:condition for even HH cocycle}, and thus $(e+tE)^p$ is $(\mathcal{A}_s,[d_1^{\otimes p}])$-equivalent to a degree $0$ cocycle of $(C^{cell}_{-*}(\mathrm{UCact}^p_{\circlearrowright};K)((t)),d_{cell}-t[B,-])$.
As a result, arguing verbatim as in Section 4.2 shows that 
\begin{equation}
F^{GGM}_{t\partial_s}=c\cdot\bigcap\nolimits^{C_p}([d_1],-)
\end{equation}
for some constant $c$. In a hypothetical situation where $\mathcal{A}_s/\mathbf{k}[s]$ is smooth\footnote{This rarely happens on the nose, but can sometimes be achieved after a suitable localization, cf. \cite[Proposition 3.1.12]{PS2}.}, then using the same argument as Corollary \ref{thm: nilpotence of p curvature} one may deduce that $F^{GGM}_{t\partial_s}$ is nilpotent.   
\end{rmk}

\subsection{Proof of Theorem \ref{thm: p curvature equals multiplicative operation}}
For the reader's convenience, we record the following immediate consequence of Corollary \ref{thm: simplified p curvature} and Proposition \ref{thm: main proposition}.\\
\begin{cor}\label{thm: p curvature as KS operation}
There exists a degree $0$ cohomology class 
\begin{equation}
[\alpha]\in H^0\big(C_{-*}^{cell}(\mathrm{UCact}^p_{\circlearrowright};\mathbf{k})((t)), d_{cell}-t[B,-]\big)\cong H^0\big((\mathrm{UCact}^p_{\circlearrowright,\mathbf{k}})^{tS^1}\big)     
\end{equation}
such that the $p$-curvature of the $t$-connection on $HH^{per}_*(\mathcal{A})$ is equal to 
\begin{equation}
 \Xi_{\mathcal{A}}([\alpha])([d^{\otimes p}],-),   
\end{equation}
 where $d$ is the differential of $\mathcal{A}$ and $\Xi_{\mathcal{A}}$ is the equivariant action map from \eqref{eq: equivariant action of cyclic cacti on HH_* and HH^*}. In particular, the $p$-curvature of the $t$-connection is a Kontsevich-Soibelman operation.  \qed
\end{cor}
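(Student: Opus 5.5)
The statement is essentially a repackaging of Corollary \ref{thm: simplified p curvature} and Proposition \ref{thm: main proposition}, so the plan is to chain them together and then pass from chain-level identities to cohomology.

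\textbf{Step 1: the chain-level formula.} By Corollary \ref{thm: simplified p curvature}, on the chain complex $CC^{per}_*(\mathcal{A})$ we have $F^{\mathcal{A}}_{2t^2\partial_t}=(e_d+tE_d)^p$. By the $t$-linear extension of the dg operad map of Theorem \ref{thm: action of cyclic cacti on HH_* and HH^*}, passed to unordered cells as in \eqref{eq: unordered action of cyclic cacti on HH_* and HH^*}, this equals $\mathrm{Act}^{\Sigma_p}_{\mathcal{A}}((e+tE)^p)(d^{\otimes p},-)$. Here one checks that the sign convention \eqref{eq:change of composition order} makes $\circ$ correspond to composition of operators in the correct order. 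Since $e+tE$ is even, no extra Koszul signs arise.

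\textbf{Step 2: replace $(e+tE)^p$ by a cocycle.} Proposition \ref{thm: main proposition} gives a degree $0$ element, namely $\alpha:=((e+tE)^p)_{red}$. It satisfies $(d_{cell}-t[B,-])\alpha=0$ and $\alpha\sim(e+tE)^p$. By Lemma \ref{thm:sim equivalence implies other equivalence}, this implies
\[\mathrm{Act}^{\Sigma_p}_{\mathcal{A}}(\alpha)(d^{\otimes p},-)=\mathrm{Act}^{\Sigma_p}_{\mathcal{A}}((e+tE)^p)(d^{\otimes p},-)\]
as chain-level endomorphisms of $CC_*(\mathcal{A})((t))$. Lemma \ref{thm:sim equivalence implies other equivalence} is stated for $d^{\otimes k}$, so we apply it with $k=p$, extended $t$-linearly. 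Set $[\alpha]$ to be the class of $\alpha$ in $H^0$ of the complex \eqref{eq: explicit chain model for S^1 fixed point of UCact}.

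\textbf{Step 3: identify with $\Xi_{\mathcal{A}}$ and pass to cohomology.} The main point is to verify that the explicit complex $(C^{cell}_{-*}(\mathrm{UCact}^p_{\circlearrowright};\mathbf{k})((t)),d_{cell}-t[B,-])$ computes $(\mathrm{UCact}^p_{\circlearrowright,\mathbf{k}})^{tS^1}$ for the diagonal circle action. This follows from the description of the two commuting circle actions as $(-1)^{|(-)|}(-)\circ B$ and $-B\circ(-)$, as recorded before Proposition \ref{thm: main proposition}.

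Next, we need that $\Xi_{\mathcal{A}}$ of \eqref{eq: equivariant action of cyclic cacti on HH_* and HH^*}, evaluated on a cocycle of this model, is induced by $\mathrm{Act}^{\Sigma_p}_{\mathcal{A}}$ applied $t$-linearly. Equivariance under the diagonal $S^1$ implies that for a $(d_{cell}-t[B,-])$-cocycle $\alpha$, the operator $\mathrm{Act}(\alpha)(d^{\otimes p},-)$ commutes with $b+tB$. Here we use that $d^{\otimes p}$ is a $\Sigma_p$-invariant cocycle, so it defines a class in $(CC^*(\mathcal{A})^{\otimes p})^{h\Sigma_p}$. Consequently $\mathrm{Act}(\alpha)(d^{\otimes p},-)$ descends to $HH^{per}_*(\mathcal{A})$, and it agrees there with $\Xi_{\mathcal{A}}([\alpha])([d^{\otimes p}],-)$ by the construction in \cite[Section 4.4]{Che3}.

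Combining Steps 1 through 3 then shows that the induced endomorphism of $HH^{per}_*(\mathcal{A})$ is exactly the $p$-curvature. I expect the only real obstacle to be this compatibility between the explicit Tate model and the abstract definition of $\Xi_{\mathcal{A}}$, in particular matching signs and the orientation of $t$; I would settle it by direct appeal to \cite{Che3}.
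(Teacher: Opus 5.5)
Your proposal is correct and takes essentially the same approach as the paper, which records this corollary as an immediate consequence of Corollary~\ref{thm: simplified p curvature} (the $p$-curvature equals $\mathrm{Act}^{\Sigma_p}_{\mathcal{A}}((e+tE)^p)(d^{\otimes p},-)$) and Proposition~\ref{thm: main proposition} (replacing $(e+tE)^p$ by its reduction, a degree $0$ cocycle for $d_{cell}-t[B,-]$ that acts identically on $d^{\otimes p}$). Your Step 3, checking that such a cocycle gives an operator commuting with $b+tB$ and that this operator agrees with $\Xi_{\mathcal{A}}$, spells out what the paper leaves implicit by citing \cite{Che3}.
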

Therefore, to prove Theorem \ref{thm: p curvature equals multiplicative operation}, it suffices to show that every Kontsevich-Soibelman operation associated to a degree $0$ class $[\alpha]\in H^0\big((\mathrm{UCact}^p_{\circlearrowright,\mathbf{k}})^{tS^1}\big)$ is a constant multiple of a multiplicative action of $HH^{even}(\mathcal{A})^{(1)}$ on $HH^{per}_*(\mathcal{A})$. As we will see, this reduces to certain basic topological properties of the Kontsevich-Soibelman operad.\par\indent
Recall from Theorem \ref{thm:comparison of Cact with Cyl} that there is an $\Sigma_p\times S^1\times S^1$-equivariant quasi-equivalence 
\begin{equation}
 \mathrm{Cact}^p_{\circlearrowright,\mathbf{k}}\simeq C_{-*}(\mathrm{Cyl}(p,1);\mathbf{k}). 
\end{equation}
Taking quotient of the $\Sigma_p$-action and restricting to the diagonal $S^1\subset S^1\times S^1$ on both sides, one obtains an $S^1$-equivariant quasi-equivalence 
\begin{equation}
 \mathrm{UCact}^p_{\circlearrowright,\mathbf{k}}\simeq C_{-*}(\mathrm{UCyl}(p,1);\mathbf{k}),
\end{equation}
where $\mathrm{UCyl}(p,1):=\mathrm{Cyl}(p,1)/\Sigma_p$ is the unordered configuration space of disks on a cylinder with one marked point on each boundary component modulo overall rotation (compare Section 3.2), and the $S^1$-action simultaneously rotates the marked points clockwise (or equivalently, rotates the lateral surface of the cylinder counterclockwise). \par\indent
There is an $S^1$-equivariant homotopy equivalence, by identifying the cylinder with $\mathbb{C}^*$, the disks with their centers and the difference between the two boundary marked points with $S^1$,
\begin{equation}\label{eq:UCyl as UConf}
\mathrm{UCyl}(p,1)\simeq \mathrm{UConf}_p(\mathbb{C}^*)\times S^1,    
\end{equation}
where $\mathrm{UConf}_p(\mathbb{C}^*)$ denotes the unordered configuration space of $p$ points in $\mathbb{C}^*$, and the $S^1$-action on $\mathrm{UConf}_p(\mathbb{C}^*)\times S^1$ is induced by the standard counterclockwise rotation $S^1\circlearrowright \mathbb{C}^*$ on the first factor and trivial on the second factor. \par\indent
By Kunneth formula,
\begin{equation}\label{eq:Kunneth decomp of equivariant cohomology of cyclic cacti}
H^0(C_{-*}(\mathrm{UCyl}(p,1);\mathbf{k})^{tS^1})\cong H^0(C_{-*}(\mathrm{UConf}_p(\mathbb{C}^*);\mathbf{k})^{tS^1})\otimes H_0(S^1;\mathbf{k})\oplus H^{1}(C_{-*}(\mathrm{UConf}_p(\mathbb{C}^*);\mathbf{k})^{tS^1})\otimes H_1(S^1;\mathbf{k}).
\end{equation}
\begin{lemma}\label{thm:action of degree 0 equivariant cohomology of UCyl}
The action map
\begin{equation}
H^0(C_{-*}(\mathrm{UCyl}(p,1);\mathbf{k})^{tS^1})\cong  H^0((\mathrm{UCact}^p_{\circlearrowright,\mathbf{k}})^{tS^1})\xrightarrow{\Xi_{\mathcal{A}}} \mathrm{Hom}_{\mathbf{k}((t))}\big(H^*((CC^*(\mathcal{A})^{\otimes p})^{\Sigma_p})\otimes HH_*^{per}(\mathcal{A}),HH_*^{per}(\mathcal{A})\big)  
\end{equation}
factors through the projection 
\begin{equation}
H^0(C_{-*}(\mathrm{UCyl}(p,1);\mathbf{k})^{tS^1})\rightarrow H^0(C_{-*}(\mathrm{UConf}_p(\mathbb{C}^*);\mathbf{k})^{tS^1})\otimes H_0(S^1;\mathbf{k}).     
\end{equation}
\end{lemma}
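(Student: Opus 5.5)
The plan is to show that the summand $H^{1}(C_{-*}(\mathrm{UConf}_p(\mathbb{C}^*);\mathbf{k})^{tS^1})\otimes H_1(S^1;\mathbf{k})$ in \eqref{eq:Kunneth decomp of equivariant cohomology of cyclic cacti} acts by zero under $\Xi_{\mathcal{A}}$. The idea is that this summand is obtained by composing with the space $\mathrm{Cyl}(0,1)=\mathrm{Cact}^0_{\circlearrowright}$, which records only the relative angle between the input and output marked points. The $S^1$-Tate construction of that space vanishes, so any operation factoring through it is zero.

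\textbf{Step 1: realize the $H_1(S^1)$ factor by an operadic composition.} Under \eqref{eq:UCyl as UConf}, the $S^1$ factor is the angle between $in$ and $out$, and the $S^1$-action on it is trivial. Consider the composition $\eta:\mathrm{Cyl}(p,1)\times\mathrm{Cyl}(0,1)\rightarrow \mathrm{Cyl}(p,1)$, which stacks a trivial cylinder carrying an extra rotation. Restrict the first factor to the locus where $out$ is fixed relative to $in$; up to homotopy this is $\mathrm{UConf}_p(\mathbb{C}^*)$. On this locus, $\eta$ is $S^1$-equivariantly homotopic to the identification \eqref{eq:UCyl as UConf}. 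Here $S^1$ acts on $\mathrm{Cyl}(0,1)$ through the quotient $S^1\times S^1/\mathrm{diag}$, matching Definition \ref{thm:space of cyclic cacti} for $k=0$.

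Consequently, every class in the second Künneth summand has the form $\theta^{\circlearrowright,1}(\beta,\gamma)$, with $\beta$ coming from $\mathrm{UConf}_p(\mathbb{C}^*)$ and $\gamma$ an equivariant class on $\mathrm{Cact}^0_{\circlearrowright}$. On the cellular side, $\gamma$ is represented by the cell $B$ of Figure \ref{fig:cells_in_cyclic_cacti_and_their_actions}. Equivalently, the operation is $\Xi_{\mathcal{A}}(\beta)([\phi],-)$ precomposed, up to sign, with the operation defined by $\gamma$. I would transfer this through the equivariant comparison of Theorem \ref{thm:comparison of Cact with Cyl}, so that the factorization holds at the level of Tate-equivariant classes and is compatible with the two-colored operad action of Theorem \ref{thm: action of cyclic cacti on HH_* and HH^*}.

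\textbf{Step 2: the Tate construction of $\mathrm{Cact}^0_{\circlearrowright}$ vanishes.} With respect to the relevant diagonal circle, $\mathrm{Cact}^0_{\circlearrowright}=S^1$ is a free $S^1$-orbit, so $C_{-*}(S^1;\mathbf{k})$ is an induced (free) $C_*(S^1)$-module. The Tate construction kills induced modules, so $H^*((\mathrm{Cact}^0_{\circlearrowright,\mathbf{k}})^{tS^1})=0$. Concretely, in the model $(C^{cell}_{-*}(S^1)((t)),d_{cell}-t[B,-])$, the $0$-cell and $1$-cell are exchanged by $t[B,-]$ up to a unit, and the complex is acyclic after inverting $t$. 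Hence $\gamma=0$ in Tate cohomology, and by multiplicativity of the equivariant action (the composition compatibility of $\Xi_{\mathcal{A}}$, as in \eqref{eq:restricting KS action} and \cite[Section 4.4]{Che3}), every operation in the second summand vanishes. This gives the asserted factorization through the projection onto $H^0(C_{-*}(\mathrm{UConf}_p(\mathbb{C}^*);\mathbf{k})^{tS^1})\otimes H_0(S^1;\mathbf{k})$.

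\textbf{Main obstacle.} I expect Step 1 to be the delicate part. One must check that the Künneth splitting \eqref{eq:Kunneth decomp of equivariant cohomology of cyclic cacti}, which is defined via a homotopy equivalence rather than operadically, really agrees with the image of $\theta^{\circlearrowright,1}$ on equivariant (Tate) chains, with the correct $S^1\times S^1$ bookkeeping between the diagonal action and the action on $\mathrm{Cact}^0_{\circlearrowright}$. I would first try to verify this directly on cells: write a representative of the $H_1(S^1)$-summand as $x\circ B$ with $x$ a cell whose input and output basepoints coincide. I would then use the relation of Figure \ref{fig:local_relations_2} together with the identity $(d_{cell}-t[B,-])(x\cdot t^{-1})=\pm x\circ B+\dots$ to exhibit it explicitly as a coboundary in the Tate complex.
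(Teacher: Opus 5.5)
Your Step 1 matches the paper: under the Künneth isomorphism \eqref{eq:Kunneth decomp of equivariant cohomology of cyclic cacti}, the class $[\alpha]\otimes[S^1]$ goes to $B\circ[\alpha]$, the composite with the Connes cell.

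\textbf{The gap is Step 2, whose key claim is false.} For $k=0$ the $S^1\times S^1$-action on $\mathrm{Cact}^0_{\circlearrowright}$ factors through $S^1\times S^1/\mathrm{diag}$ (Definition \ref{thm:space of cyclic cacti}). So the diagonal circle, which is the one used to define $\Xi_{\mathcal{A}}$, acts \emph{trivially} on it, not freely; you say as much yourself in Step 1. Concretely, $C^{cell}_{-*}(\mathrm{Cact}^0_{\circlearrowright};\mathbf{k})$ has only the $0$-cell $\mathrm{id}$ and the $1$-cell $B$. Here $d_{cell}=0$, $[B,\mathrm{id}]=B-B=0$, and $[B,B]$ would be a $2$-chain, of which there are none. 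Hence $d_{cell}-t[B,-]$ is identically zero, and the Tate cohomology is $H_{-*}(S^1;\mathbf{k})((t))$, in which the class of $B$ is nonzero.

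More decisively, no argument showing the operad-side class $B\circ[\alpha]$ is a Tate coboundary can work, including the cellular computation in your last paragraph. The Künneth map is an isomorphism, and the summand $H^1(C_{-*}(\mathrm{UConf}_p(\mathbb{C}^*);\mathbf{k})^{tS^1})\otimes H_1(S^1;\mathbf{k})$ is nonzero. Indeed, by Lemmas \ref{thm:elementary observation regarding UConf_p^Cp} and \ref{thm:simplifying S^1 Tate fixed points of UConf_p}, this Tate construction is that of $S^1/C_p$, which has nonzero odd cohomology. So the lemma says $\Xi_{\mathcal{A}}$ annihilates a genuinely nonzero summand, and the vanishing cannot come from the operad side.

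\textbf{The missing idea is on the module side.} By operadic compatibility, $\Xi_{\mathcal{A}}(B\circ[\alpha])([\phi],-)$ is, up to sign, the Connes operator applied after $\Xi_{\mathcal{A}}([\alpha])([\phi],-)$. The Connes operator induces zero on $HH^{per}_*(\mathcal{A})$: on $CC_*(\mathcal{A})((t))$ one has $B=[\,b+tB,\,-\partial_t\,]$, so $B$ sends $(b+tB)$-cycles to boundaries. This is exactly the paper's proof. Equivalently, the residual circle action on a Tate construction is homotopically trivial, which is a property of $CC^{per}_*(\mathcal{A})$ rather than of $\mathrm{UCact}^p_{\circlearrowright}$. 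With this in place of Step 2, your Step 1 suffices.
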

\emph{Proof}. Let $[S^1]\in H_1(S^1;\mathbf{k})$ be the fundamental class. The image of an element \begin{equation}
[\alpha]\otimes [S^1]\in H^{1}(C_{-*}(\mathrm{UConf}_p(\mathbb{C}^*);\mathbf{k})^{tS^1})\otimes H_1(S^1;\mathbf{k})    
\end{equation} in $H^0(C_{-*}(\mathrm{UCyl}(p,1);\mathbf{k})^{tS^1})\cong H^0\big((\mathrm{UCact}^p_{\circlearrowright,\mathbf{k}})^{tS^1}\big)$ via \eqref{eq:Kunneth decomp of equivariant cohomology of cyclic cacti} is equal to $B\circ [\alpha]$\footnote{Where we abused notation to denote by $[\alpha]$ its image under the inclusion $$
 H^0(C_{-*}(\mathrm{UConf}_p(\mathbb{C}^*);\mathbf{k})^{tS^1})\xrightarrow{\iota}H^0(C_{-*}(\mathrm{UCyl}(p,1);\mathbf{k})^{tS^1})    \cong H^0\big((\mathrm{UCact}^p_{\circlearrowright,\mathbf{k}})^{tS^1}\big),
$$
where $\iota$ is induced by identifying (up to homotopy) $\mathrm{UConf}_p(\mathbb{C}^*)$ with the subspace of $\mathrm{UCyl}(p,1)$ where the two boundary marked points are aligned.}, where $B$ is the $1$-cell representing the Connes operator (cf. Figure \ref{fig:cells_in_cyclic_cacti_and_their_actions}). The statement follows since the Connes operator acts trivially on the periodic cyclic homology. \qed\\

\begin{lemma}\label{thm:elementary observation regarding UConf_p^Cp}
There is an $S^1$-equivariant homotopy equivalence $\mathrm{UConf}_p(\mathbb{C}^*)^{C_p}\simeq S^1/C_p$, where $\mathrm{UConf}_p(\mathbb{C}^*)^{C_p}$ is equipped with the $S^1$-action induced from its inclusion into $\mathrm{UConf}_p(\mathbb{C}^*)$.    
\end{lemma}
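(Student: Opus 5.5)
We will identify the fixed locus explicitly and then show it is $S^1$-equivariantly equivalent to a single orbit. Here $C_p\subset S^1$ acts on $\mathbb{C}^*$ by multiplication by $p$-th roots of unity $\zeta^j$, $\zeta=e^{2\pi i/p}$. An unordered configuration $\{z_1,\dots,z_p\}\subset\mathbb{C}^*$ is $C_p$-fixed iff the set is stable under $z\mapsto\zeta z$. Since $\zeta$ acts freely on $\mathbb{C}^*$, a stable set is a disjoint union of free $C_p$-orbits. Each free orbit has exactly $p$ elements, so a stable set of cardinality $p$ is a single orbit $\{z,\zeta z,\dots,\zeta^{p-1}z\}$. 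Hence
\begin{equation*}
\mathrm{UConf}_p(\mathbb{C}^*)^{C_p}\cong \mathbb{C}^*/C_p,
\end{equation*}
via $z\mapsto C_p\cdot z$. Here the topology on the fixed locus is the subspace topology, and the bijection is a homeomorphism since $\mathbb{C}^*\to \mathrm{UConf}_p(\mathbb{C}^*)$, $z\mapsto C_p z$, is continuous, injective modulo $C_p$, and has local continuous inverses.

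Next we check equivariance. The $S^1$-action on $\mathrm{UConf}_p(\mathbb{C}^*)$ is the diagonal rotation $\lambda\cdot\{z_i\}=\{\lambda z_i\}$. Under the identification above, this becomes the action of $S^1$ on $\mathbb{C}^*/C_p$ induced by multiplication on $\mathbb{C}^*$, which is well defined because $S^1$ is abelian.

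Finally, we retract radially. Write $\mathbb{C}^*=S^1\times\mathbb{R}_{>0}$ via polar coordinates. The $C_p$- and $S^1$-actions only touch the $S^1$ factor, so $\mathbb{C}^*/C_p\cong (S^1/C_p)\times\mathbb{R}_{>0}$ equivariantly. The homotopy $r\mapsto r^{1-s}$ contracts $\mathbb{R}_{>0}$ to $\{1\}$ and commutes with the $S^1$-action, so it is an $S^1$-equivariant deformation retraction onto $S^1/C_p$. If one uses the disk model $\mathrm{UCyl}$ instead, one first passes through the equivalence \eqref{eq:UCyl as UConf}; this is harmless because that equivalence is $S^1$-equivariant and, being compatible with the $C_p$-action, induces an equivalence on $C_p$-fixed points.

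The only step needing care is the orbit-counting that forces a single free orbit, which uses that $p$ is prime only mildly: any finite cyclic group acting freely gives orbits of full size. Everything else is routine.
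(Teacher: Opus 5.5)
Your proof is correct and follows essentially the same route as the paper. The paper identifies the fixed locus with single free $C_p$-orbits parametrized by a fundamental domain $H\cong\mathbb{C}^*/C_p$, uses $z\mapsto z^p$ to identify $H$ with $\mathbb{C}^*$, and retracts $\mathbb{C}^*$ equivariantly onto a circle; your polar-coordinate splitting $\mathbb{C}^*/C_p\cong (S^1/C_p)\times\mathbb{R}_{>0}$ is the same argument, and it additionally makes explicit why every fixed configuration is a single orbit and why the residual action on the retract is the one on $S^1/C_p$.
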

\emph{Proof}. First, note that there is a homeomorphism
\begin{equation}\label{eq:Cp fixed point is also UConf}
\mathrm{UConf}_{p}(\mathbb{C}^*)^{C_p}\cong  \mathrm{UConf}_1(\mathbb{C}^*)   
\end{equation}    
Indeed, let $\zeta=e^{2\pi i/p}$, and define
\begin{equation}
H=\{z\in\mathbb{C}^*:\arg(z)\in[0,2\pi/p]\}/(x\sim\zeta x,x\in\mathbb{R}_{>0})\end{equation}
Then it is easy to check that the map $\mathrm{UConf}_1(H)\rightarrow \mathrm{UConf}_{p}(\mathbb{C}^*)^{C_p}$ given by $\{z_1\}\mapsto \{z_1,\zeta z_1,\cdots,\zeta^{p-1}z_1\}$ is a homeomorphism. Moreover, there is a homeomorphism $H\cong \mathbb{C}^*$ via the $p$-fold map $z\mapsto z^p$. The result follows since there is an $S^1$-equivariant homotopy equivalence $\mathrm{UConf}_1(\mathbb{C}^*)=\mathbb{C}^*\simeq S^1$. \qed\\
\begin{lemma}\label{thm:simplifying S^1 Tate fixed points of UConf_p}
The inclusion $\mathrm{UConf}_{p}(\mathbb{C}^*)^{C_p}\hookrightarrow \mathrm{UConf}_{p}(\mathbb{C}^*)$ induces a quasi-isomorphism
\begin{equation}
 C_{-*}(\mathrm{UConf}_{p}(\mathbb{C}^*)^{C_p};\mathbf{k})^{tS^1}\xrightarrow{\simeq} C_{-*}(\mathrm{UConf}_{p}(\mathbb{C}^*);\mathbf{k})^{tS^1}.   
\end{equation}
\end{lemma}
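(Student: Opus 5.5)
The plan is to deduce the statement from a localization argument of Atiyah--Segal/Quillen type for $S^1$-Tate constructions in characteristic $p$. Write $X:=\mathrm{UConf}_p(\mathbb{C}^*)$ and $Y:=X^{C_p}$. Since the Tate construction $(-)^{tS^1}$ is exact, the inclusion $Y\hookrightarrow X$ gives a cofiber sequence
\begin{equation*}
C_{-*}(Y;\mathbf{k})^{tS^1}\rightarrow C_{-*}(X;\mathbf{k})^{tS^1}\rightarrow C_{-*}(X,Y;\mathbf{k})^{tS^1}.
\end{equation*}
It therefore suffices to show that the relative term $C_{-*}(X,Y;\mathbf{k})^{tS^1}$ is acyclic.

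\textbf{Step 1: the action on $X\setminus Y$ is free.} I would first identify the isotropy groups. A closed subgroup of $S^1$ stabilizing a configuration $\{z_1,\dots,z_p\}$ must be finite, since positive-dimensional orbits in $\mathbb{C}^*$ cannot lie in a finite set. So it is some $C_m$. Moreover $C_m$ acts freely on $\mathbb{C}^*$, so the $p$-element set is a union of free $C_m$-orbits, forcing $m\mid p$. Hence every isotropy group is either trivial or $C_p$. The locus with isotropy $C_p$ is exactly $Y$, which is $S^1$-invariant, and $S^1$ acts freely on $X\setminus Y$.

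\textbf{Step 2: finiteness.} Next I would replace the pair $(X,Y)$, up to $S^1$-equivariant homotopy equivalence, by a finite relative $S^1$-CW pair $(\bar X,\bar Y)$. The concrete route is to retract equivariantly onto a compact submanifold with boundary: bound the moduli $|z_i|$ into a compact annulus by radial rescaling, and bound pairwise distances from below. Both operations commute with rotation and preserve $Y$, and Lemma \ref{thm:elementary observation regarding UConf_p^Cp} already shows that $Y$ is equivariantly of finite type. By Step 1, all relative cells of $(\bar X,\bar Y)$ are free, of the form $S^1\times D^n$. I expect this step to be the main technical obstacle, because one must check that the retraction can be chosen compatibly with the fixed locus. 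An alternative that avoids building a CW structure is to use the fact that $X\setminus Y$ is a free $S^1$-manifold whose quotient has dimension $2p-1$, and to argue by excision.

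\textbf{Step 3: acyclicity of the relative Tate term.} For a finite relative $S^1$-complex with free cells, the homotopy orbits are computed by the chains on the pair of quotients $(\bar X/S^1,\bar Y/S^1)$, so they are bounded. Equivalently, the Borel cohomology $H^*_{S^1}(X,Y;\mathbf{k})\cong H^*(X/S^1,Y/S^1;\mathbf{k})$ vanishes above degree $2p-1$. Hence $t$ acts nilpotently on it, and inverting $t$ gives zero. By induction on the free cells (each contributes $C_*(S^1)^{tS^1}\simeq 0$), together with exactness of $(-)^{tS^1}$, I conclude that $C_{-*}(X,Y;\mathbf{k})^{tS^1}\simeq 0$. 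The cofiber sequence above then yields the claimed quasi-isomorphism.

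The characteristic-$p$ hypothesis enters only implicitly here. It becomes essential afterwards, when Lemma \ref{thm:elementary observation regarding UConf_p^Cp} is used to compute the left-hand side as $C_{-*}(S^1/C_p;\mathbf{k})^{tS^1}$, whose cohomology is that of $BC_p$ with $t$ inverted; that group is nonzero precisely because $\mathrm{char}\,\mathbf{k}=p$.
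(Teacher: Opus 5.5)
Your proof is correct, but it takes a different route from the paper.

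\textbf{The paper's route.} The paper never analyzes the $S^1$-isotropy. It invokes $C_p$-localization: for a group of prime order every point is either $C_p$-fixed or lies in a free $C_p$-orbit, so $C_{-*}(\mathrm{UConf}_p(\mathbb{C}^*)^{C_p};\mathbf{k})^{tC_p}\rightarrow C_{-*}(\mathrm{UConf}_p(\mathbb{C}^*);\mathbf{k})^{tC_p}$ is a quasi-isomorphism. It then passes back to $S^1$ using the functorial characteristic-$p$ splitting $M^{tC_p}\simeq M^{tS^1}\oplus M^{tS^1}\theta$ for $S^1$-complexes $M$. This exhibits $f^{tS^1}$ as a retract of the quasi-isomorphism $f^{tC_p}$.

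\textbf{Your route.} You prove $S^1$-localization directly. Your isotropy computation uses that $p$ is prime and that nontrivial rotations act freely on $\mathbb{C}^*$, and it shows the $S^1$-action is free off the $C_p$-fixed locus. After the finiteness reduction, the relative term is then built from finitely many free cells, each of which is Tate-acyclic.

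\textbf{Comparison.}
\begin{itemize}
\item The paper's argument is shorter given its toolkit. It also fits directly with the $C_p$-Tate computation used immediately afterwards.
\item The equivariant finiteness you flag in Step 2 is needed for the paper's cited localization theorem just as much. There it is simply absorbed into the citation.
\item Your argument is self-contained and characteristic-free. It shows the lemma holds over any $\mathbf{k}$; when $p$ is invertible in $\mathbf{k}$ both sides are in fact zero. The characteristic is used only later, as you note.
\item The cost of your route is an isotropy analysis specific to this space. For $\mathrm{UConf}_n$ with $n$ composite there would be intermediate isotropy groups $C_m$, and you would have to track which of them have order divisible by $\mathrm{char}\,\mathbf{k}$.
\end{itemize}

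\textbf{A simplification for Step 2.} The compatibility with the fixed locus that you worry about is automatic. Any $S^1$-equivariant homotopy equivalence $\bar X\rightarrow X$ restricts to an $S^1$-equivariant homotopy equivalence $\bar X^{C_p}\rightarrow X^{C_p}$. In any $S^1$-CW structure on $\bar X$, the $C_p$-fixed set is a subcomplex, and by your Step 1 the remaining cells are free.
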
 
\emph{Proof}. By $C_p$-localization (cf. for instance \cite[Theorem 5.1]{Che3}) the induced map
\begin{equation}
  C_{-*}(\mathrm{UConf}_{p}(\mathbb{C}^*)^{C_p};\mathbf{k})^{tC_p}\xrightarrow{\simeq} C_{-*}(\mathrm{UConf}_{p}(\mathbb{C}^*);\mathbf{k})^{tC_p}  
\end{equation}
is a quasi-isomorphism. The statement then follows from the fact that there is a functorial decomposition $X^{tC_p}=X^{tS^1}\oplus X^{tS^1}\theta$ for an  $S^1$-chain complex $X$ over characteristic $p$, cf. \cite[Appendix A.3]{Che3}.\qed\par\indent
By Lemma \ref{thm:elementary observation regarding UConf_p^Cp} and Lemma \ref{thm:simplifying S^1 Tate fixed points of UConf_p},
\begin{equation}\label{eq:C_p localization for UConf_p}
 H^*(C_{-*}(\mathrm{UConf}_{p}(\mathbb{C}^*);\mathbf{k})^{tC_p}) \cong  H_{-*}(\mathrm{UConf}_{p}(\mathbb{C}^*)^{C_p};\mathbf{k})\otimes \mathbf{k}((t,\theta))\cong H_{-*}(S^1/C_p;\mathbf{k})\otimes \mathbf{k}((t,\theta)),
\end{equation}
where we identify $\mathbf{k}((t,\theta))$ with $H^*(\mathbf{k}^{tC_p})$. Moreover, the degree $0$ and degree $1$ generators of $ H_{-*}(S^1/C_p;\mathbf{k})$ correspond, under the equivalences of \eqref{eq:C_p localization for UConf_p} and \eqref{eq:UCyl as UConf}, to the generators $[e_0],[e_1]\in H_{-*}(\mathrm{UCyl}(p,1)^{C_p};\mathbf{k})$ shown in Figure \ref{fig:generator_of_UCyl}.
\begin{figure}[H]
 \centering
 \includegraphics[width=0.7\textwidth]{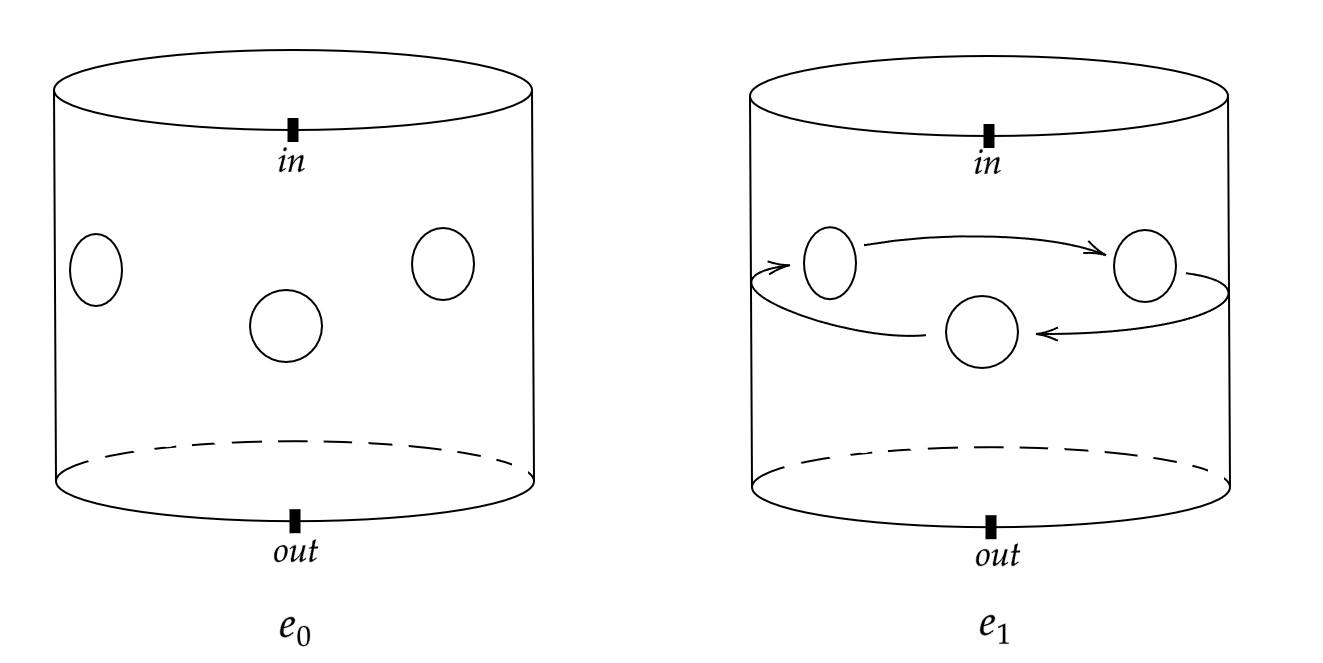}
 \caption{$e_0$ represents the $C_p$-invariant $0$-cycle where the centers of the $p$ disks lie at the $p$-th roots of unity of a chosen equator; $e_1$ represents the $C_p$-invariant $1$-cycle obtained from $e_0$ by moving the centers of the $p$ disks simultaneously through angle $[0,2\pi /p]$.}
\label{fig:generator_of_UCyl}
\end{figure}
As a preliminary step towards Theorem \ref{thm: p curvature equals multiplicative operation}, we prove a $C_p$-version of it as follows. We abuse notation and write  $[e_0]$ also for its image under
\begin{equation}
H_0(\mathrm{UCyl}(p,1)^{C_p};\mathbf{k})\rightarrow H^0(C_{-*}(\mathrm{UCyl}(p,1);\mathbf{k})^{tC_p})\cong H^0((\mathrm{UCact}^p_{\circlearrowright,\mathbf{k}})^{C_p}). 
\end{equation}
Applying \eqref{eq:restricting KS action} to $[e_0]$ gives a map 
\begin{equation}\label{eq:C_p equivariant action of e_0}
\Xi^p_{\mathcal{A}}([e_0])(-,-): H^*((CC^*(\mathcal{A})^{\otimes_R p})^{h\Sigma_p})\rightarrow \mathrm{End}_{\mathbf{k}((t,\theta))}(HH_*^{C_p,per}(\mathcal{A})).
\end{equation}
Restricting to the even part\footnote{The odd part of the $p$-th power map is $[\phi]\mapsto [\phi^{\otimes p}]: HH^{odd}(\mathcal{A})^{(1)}\rightarrow  H^{odd}((CC^*(\mathcal{A})^{\otimes p}\otimes \mathbf{k}(p))^{h\Sigma_p})$, where $\mathbf{k}(p)$ denotes the sign representation of $\Sigma_p$. This case can be dealt with separately using the approach of \cite[Section 5.2]{Che3}, which we will not detail here since it is not needed.} and pre-composing with the $p$-th power map
\begin{equation}\label{eq:pth power map}
[\phi]\mapsto [\phi^{\otimes p}]: HH^{even}(\mathcal{A})\rightarrow  H^{even}((CC^*(\mathcal{A})^{\otimes p})^{h\Sigma_p}),
\end{equation}
gives
\begin{equation}\label{eq:pth power action of e_0}
\Xi^p_{\mathcal{A}}([e_0])([(-)^{\otimes p}],-): HH^{even}(\mathcal{A})\rightarrow \mathrm{End}_{\mathbf{k}((t,\theta))}(HH_*^{C_p,per}(\mathcal{A})).    
\end{equation}\\
\begin{prop}\label{thm:pth power action of e_0 is multiplicative}
The map \eqref{eq:pth power action of e_0} satisfies the three properties specified in \eqref{eq:multiplicative plus Frobenius linear plus zero property}.    
\end{prop}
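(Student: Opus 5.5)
The three properties in \eqref{eq:multiplicative plus Frobenius linear plus zero property} have different flavours. Frobenius-linearity and vanishing at $0$ are formal consequences of the $p$-th power map \eqref{eq:pth power map}. Multiplicativity is the only real content, and I would deduce it from operadic compatibility of the fixed-point class $[e_0]$.

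\emph{Frobenius-linearity and vanishing.} These hold already at the level of \eqref{eq:pth power map}. At the chain level $(a\phi)^{\otimes p}=a^p\phi^{\otimes p}$. Also $0^{\otimes p}=0$. The map $\Xi^p_{\mathcal{A}}([e_0])(-,-)$ is linear in the first slot. The one point to check is that $[\phi]\mapsto[\phi^{\otimes p}]$ is well defined on cohomology for even $\phi$. That is, $(\phi+\delta\psi)^{\otimes p}-\phi^{\otimes p}$ must be a boundary in the homotopy fixed points. I would take this from the standard Steenrod-type argument: the cross terms form free $\Sigma_p$-orbits, hence are norms, and norms vanish in $H^*((-)^{h\Sigma_p})$ modulo boundaries. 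It is recorded in \cite[Section 5]{Che3}.

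\emph{Multiplicativity.} The plan is to compute the composite $\Xi^p([e_0])([\phi^{\otimes p}],\Xi^p([e_0])([\varphi^{\otimes p}],-))$ operadically. By Theorem \ref{thm: action of cyclic cacti on HH_* and HH^*} and the dg-operad structure of Lemma \ref{thm: associativity and equivariance of cyclic cacti operad}, this composite equals the action of $\theta^{\circlearrowright,1}(e_0,e_0)\in \mathrm{Cyl}(2p,1)$ on $\phi^{\otimes p}\otimes\varphi^{\otimes p}$. Geometrically, the composition stacks two cylinders, each carrying $p$ disks at the $p$-th roots of unity. The output of one is matched with the input of the other, and this matching is $C_p$-equivariant because $e_0$ is $C_p$-invariant. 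I would then use the little-disks compositions $\lambda_i$ to move the two rings of disks into a single ring of $p$ pairs of disks, one pair near each $\zeta^j$. This is a $C_p$-equivariant homotopy inside $\mathrm{Cyl}(2p,1)^{C_p}$, obtained by sliding the two equators together while keeping them separated in the radial direction of each disk. The resulting configuration is $\lambda(m^{\times p};e_0)$, where $m\in\mathrm{Cyl}(2,0)$ is a fixed point representing the cup product, inserted into each of the $p$ disks of $e_0$. Applying the action, $m$ acting on $\phi\otimes\varphi$ gives $\phi\cup\varphi$ up to homotopy. This yields $\Xi^p([e_0])([(\phi\cup\varphi)^{\otimes p}],-)$, provided the Hochschild-side homotopy $\phi\cup\varphi\simeq m(\phi,\varphi)$ is itself raised to the $p$-th power compatibly. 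I would handle that step by the same norm argument as in the first paragraph.

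\emph{Main obstacle.} The main obstacle is making the middle step equivariantly rigorous. The stacked configuration and the configuration "$p$ copies of the cup-product disk pair arranged at the roots of unity" must represent the same class in $H^0((\mathrm{UCact}^{2p}_{\circlearrowright})^{tC_p})$, as classes acting on $(\phi^{\otimes p}\otimes\varphi^{\otimes p})^{h\Sigma_p\times\Sigma_p}$. I would first use $C_p$-localization (the proof of Lemma \ref{thm:simplifying S^1 Tate fixed points of UConf_p}) to reduce to comparing classes in $H_0$ of the $C_p$-fixed locus. I expect the relevant component of $\mathrm{UConf}_{2p}(\mathbb{C}^*)^{C_p}$ to be homeomorphic to $\mathrm{UConf}_2(\mathbb{C}^*)$, by the same $z\mapsto z^p$ argument as in Lemma \ref{thm:elementary observation regarding UConf_p^Cp}. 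That space is path connected, so the two $0$-cycles coincide there, with each point labelled by the pair $(\phi,\varphi)$. The remaining bookkeeping is the Koszul sign and the ordering convention \eqref{eq:change of composition order}. Both are harmless because all inputs have even degree.
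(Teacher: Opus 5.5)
Your proposal is correct and follows the paper's own argument. In both, Frobenius-linearity and vanishing are formal, and multiplicativity comes from the picture in $\mathrm{Cyl}$ (Figure~\ref{fig:multiplicativity_of_pth_power_e_0_action}, transported to cyclic cacti by the equivariant comparison theorem). There, the stacked composite $\theta^{\circlearrowright,1}(e_0,e_0)$ of two $C_p$-symmetric rings of disks is deformed through $C_p$-fixed configurations into $e_0$ with a cup-product pair inserted in each disk.

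Three small remarks. First, the localization step is unnecessary, since a path in the $C_p$-fixed locus already gives equality of the Tate classes. Second, the paper takes the $p$-th power map \eqref{eq:pth power map} as given. That is just as well, because your norm argument for its well-definedness misses the term $(\delta\psi)^{\otimes p}$, which is not a norm and needs Steenrod's homotopy-invariance argument. Third, the cross-term orbits are free for $C_p$ but not for $\Sigma_p$.
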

\emph{Proof.} The only nontrivial property to check is the multiplicativity. This follows from Figure \ref{fig:multiplicativity_of_pth_power_e_0_action}. Again, the pictures are drawn for the operad $\mathrm{Cyl}$, which is justified by Theorem \ref{thm:comparison of Cact with Cyl}.
\begin{figure}[H]
 \centering
 \includegraphics[width=1.15\textwidth]{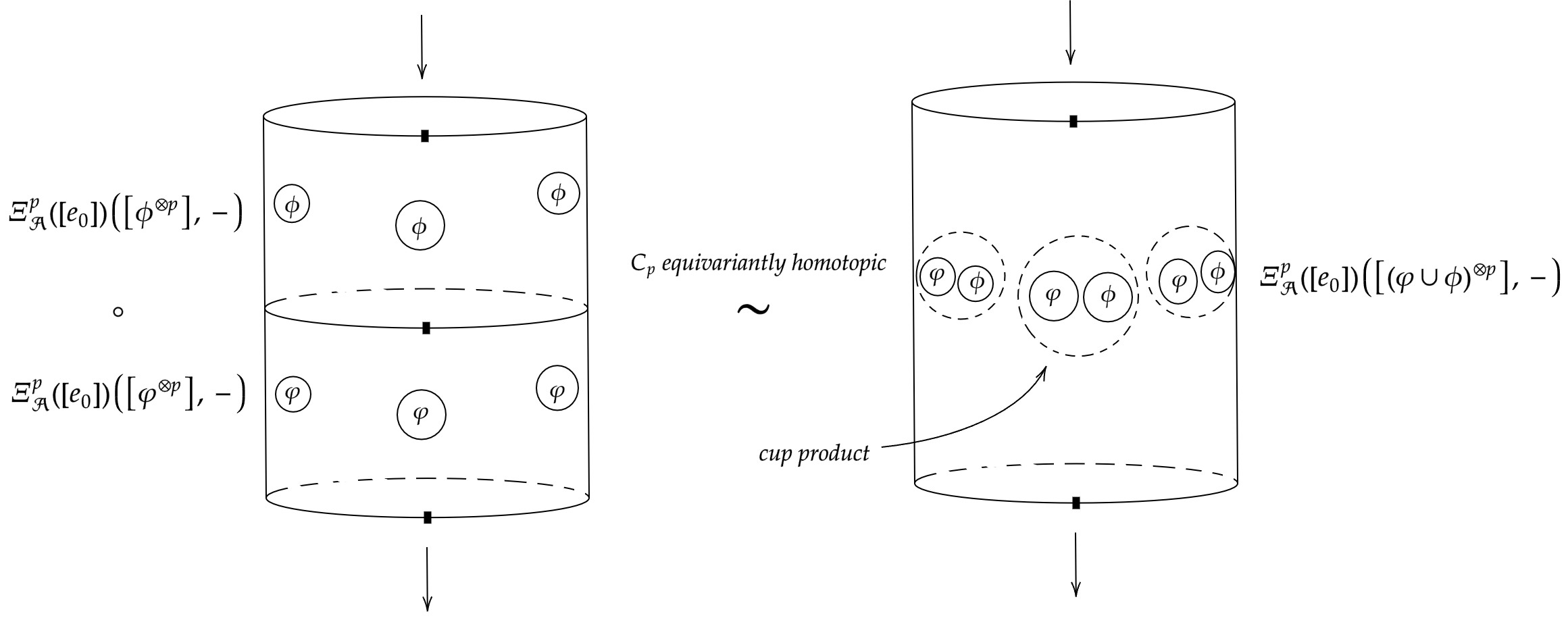}
 \caption{}
\label{fig:multiplicativity_of_pth_power_e_0_action}
\end{figure}\qed\\
\begin{rmk}\label{thm:e_0 gives C_p equivariant cap product}
In fact, it was proved in \cite[Corollary 6.21]{Che3} that using the explicit chain model for $CC^{C_p,per}_*(\mathcal{A})=(CC_*(\mathcal{A},\mathcal{A}^{\otimes^{\mathbb{L}}p})((t,\theta)),d_{eq})$ (cf. Section 2.3 loc.cit.), there is a chain-level formula (called the \emph{$C_p$-equivariant cap product}) representing $\Xi^p_{\mathcal{A}}([e_0])([\phi^{\otimes p}],-)$ of the form
$$\mathbf{a}^1\otimes a^1_1\otimes \cdots\otimes a^1_{k_1}\otimes\cdots\otimes \mathbf{a}^p\otimes a^p_1\otimes \cdots\otimes a^p_{k_p}\mapsto$$
\begin{equation}\label{eq:p fold cap product}
\sum_{0\leq j_1\leq k_1,\cdots,0\leq j_p\le k_p} \pm\phi(a^p_{j_p+1} \cdots,,a^p_{k_p})\mathbf{a}^1\otimes a^1_1\otimes\cdots\otimes a^1_{j_1}\otimes\phi(a^1_{j_1+1} \cdots,,a^1_{k_1})\mathbf{a}^2\otimes a^2_1\otimes\cdots \otimes\phi(a^{p-1}_{j_{p-1}+1} \cdots,,a^{p-1}_{k_{p-1}})\mathbf{a}^p\otimes a^p_1\otimes\cdots\otimes a^p_{j_p}.
\end{equation}
Proposition \ref{thm:pth power action of e_0 is multiplicative} also directly follows from this explicit description. In fact, using this interpretation one also obtains that the action \eqref{eq:pth power action of e_0} is additive, although we do not need it here. 
\end{rmk}
We are now ready to prove the main theorem of this section Theorem \ref{thm: p curvature equals multiplicative operation}, which we restate for the reader's convenience. \\
\begin{thm}(Theorem \ref{thm: p curvature equals multiplicative operation})\label{thm: p curvature equals multiplicative operation restated}
Let $\mathcal{A}$ be a d($\mathbb{Z}/2$)g algebra over $\mathbf{k}$, and denote by $F^{\mathcal{A}}_{2t^2\partial_t}$ the $p$-curvature of the $t$-connection (cf. Definition \ref{thm:t connection}) on the periodic cyclic homology along the vector field $2t^2\partial_t$. Then, there exists a map
\begin{equation}
\bigcap\nolimits^{C_p}: HH^{even}(\mathcal{A})\times HH_*^{per}(\mathcal{A})\rightarrow  HH_*^{per}(\mathcal{A})   
\end{equation}
such that
\begin{equation}\label{eq:multiplicative plus Frobenius linear plus zero property restated}
\bigcap\nolimits^{C_p}([\phi]\cup[\varphi],\alpha)=  \bigcap\nolimits^{C_p}([\phi],\bigcap\nolimits^{C_p}([\varphi],\alpha))\quad,\quad\bigcap\nolimits^{C_p}([a\phi],\alpha)=a^p\bigcap\nolimits^{C_p}([\phi],\alpha)\quad,\quad \bigcap\nolimits^{C_p}(0,\alpha)=0
\end{equation}
for all $[\phi],[\varphi]\in HH^{even}(\mathcal{A}), \alpha\in HH^{per}(\mathcal{A}), a\in \mathbf{k}$, together with a constant $c\in \mathbf{k}$ such that 
\begin{equation}
 F^{\mathcal{A}}_{2t^2\partial_t}=c\cdot\bigcap\nolimits^{C_p}([d],-).
\end{equation}
Here, $HH^{even}(\mathcal{A})$ denotes the even part of Hochschild cohomology, $\cup$ denotes the cup product and the differential $d$ is viewed as a Hochschild cohomology class of length one. 
\end{thm}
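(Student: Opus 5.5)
Starting from Corollary \ref{thm: p curvature as KS operation}, we write $F^{\mathcal{A}}_{2t^2\partial_t}=\Xi_{\mathcal{A}}([\alpha])([d^{\otimes p}],-)$ for some $[\alpha]\in H^0((\mathrm{UCact}^p_{\circlearrowright,\mathbf{k}})^{tS^1})\cong H^0(C_{-*}(\mathrm{UCyl}(p,1);\mathbf{k})^{tS^1})$. The aim is to show that, up to a scalar, the only degree $0$ class that acts nontrivially is the one coming from $[e_0]$.

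\textbf{Step 1: reduce to the configuration-space factor.} By Lemma \ref{thm:action of degree 0 equivariant cohomology of UCyl}, the action of $[\alpha]$ depends only on its component in $H^0(C_{-*}(\mathrm{UConf}_p(\mathbb{C}^*);\mathbf{k})^{tS^1})\otimes H_0(S^1)$. Lemma \ref{thm:simplifying S^1 Tate fixed points of UConf_p} and Lemma \ref{thm:elementary observation regarding UConf_p^Cp} then identify this group with $H^0(C_{-*}(S^1/C_p;\mathbf{k})^{tS^1})$. Since $S^1$ acts freely and transitively on $S^1/C_p$, the chain-level $S^1$-Tate construction is computed from $H_{-*}(S^1/C_p)=\mathbf{k}\oplus\mathbf{k}[1]$, with the Connes-type differential given by multiplication by $p=0$ on the fundamental class. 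Hence $H^*(\ldots)\cong H_{-*}(S^1/C_p)((t))$, and its degree $0$ part is one-dimensional, spanned by the class of the point, i.e.\ the $S^1$-equivariant lift of $[e_0]$. (The degree $-1$ class $[e_1]$ times $t$ would sit in degree $1$, so it does not contribute.) Therefore $[\alpha]\mapsto c\cdot[\tilde e_0]$ for some $c\in\mathbf{k}$, where $[\tilde e_0]\in H^0((\mathrm{UCact}^p_{\circlearrowright})^{tS^1})$ restricts to $[e_0]$ under $\mathrm{Res}_{C_p\subset S^1}$.

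\textbf{Step 2: define the operation.} Set $\bigcap\nolimits^{C_p}([\phi],-):=\Xi_{\mathcal{A}}([\tilde e_0])([\phi^{\otimes p}],-)$ on $HH^{per}_*(\mathcal{A})$, using the $p$-th power map \eqref{eq:pth power map}. Step 1 gives $F^{\mathcal{A}}_{2t^2\partial_t}=c\cdot\bigcap\nolimits^{C_p}([d],-)$, because $d$ is even as a Hochschild cochain. Frobenius-linearity and vanishing at $0$ follow at once from $(a\phi)^{\otimes p}=a^p\phi^{\otimes p}$.

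\textbf{Step 3: multiplicativity, which is the main obstacle.} Proposition \ref{thm:pth power action of e_0 is multiplicative} proves multiplicativity only for the $C_p$-version $\Xi^p_{\mathcal{A}}([e_0])$ acting on $HH^{C_p,per}_*$. To transfer it, I would use the commutative square \eqref{eq:restricting KS action}, together with the fact that $\mathrm{Res}_{C_p\subset S^1}:HH^{per}_*(\mathcal{A})\to HH^{C_p,per}_*(\mathcal{A})$ is injective. Injectivity follows from the functorial splitting $X^{tC_p}=X^{tS^1}\oplus X^{tS^1}\theta$ cited in the proof of Lemma \ref{thm:simplifying S^1 Tate fixed points of UConf_p}. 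Both sides of the multiplicativity identity are compatible with restriction, and multiplicativity holds after restriction, so it holds on $HH^{per}_*$. The delicate point is checking that the $\Xi^p$-action commutes with the $\theta$-splitting, i.e.\ that it is $\mathbf{k}((t,\theta))$-linear, so that restriction intertwines the iterated operations. It is also possible to argue directly with the stacking picture of Figure \ref{fig:multiplicativity_of_pth_power_e_0_action} in the $S^1$-Tate setting, as a backup.
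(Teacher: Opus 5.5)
The gap is in Step 1, and Step 3 depends on it. No class $[\tilde e_0]\in H^0((\mathrm{UCact}^p_{\circlearrowright,\mathbf{k}})^{tS^1})$ restricts to $[e_0]$. The degree $0$ part of $H^*(C_{-*}(\mathrm{UConf}_p(\mathbb{C}^*);\mathbf{k})^{tC_p})\cong H_{-*}(S^1/C_p;\mathbf{k})\otimes\mathbf{k}((t,\theta))$ is two-dimensional, spanned by $[e_0]$ and $[e_1]\theta$. The injective map $\mathrm{Res}_{C_p\subset S^1}$ sends the generator of the one-dimensional $S^1$-Tate group to $[e_0]-[e_1]\theta$; this is the input the paper takes from [Che3, Lemma 5.13, Cor.~5.14].

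The $\theta$-coefficient is not a convention. It records the arc $e_1$ along which the $S^1$-action carries the $C_p$-symmetric configuration $e_0$ to its $C_p$-translate. It survives even though $C_p$ acts trivially on $S^1/C_p$. (Also, $S^1$ does not act freely on $S^1/C_p$, since the isotropy is $C_p$; your dimension count is still right.)

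Consequently, the identity Step 3 actually uses, $\mathrm{Res}\circ\Xi_{\mathcal{A}}([\tilde e_0])([\phi^{\otimes p}],-)=\Xi^p_{\mathcal{A}}([e_0])([\phi^{\otimes p}],-)\circ\mathrm{Res}$, is false. The correct right-hand side carries the extra term $-\Xi^p_{\mathcal{A}}([e_1]\theta)([\phi^{\otimes p}],-)\circ\mathrm{Res}$. So the multiplicativity of Proposition \ref{thm:pth power action of e_0 is multiplicative} does not transfer by the argument you give. The point you flag as delicate is not the real issue. $\Xi^p_{\mathcal{A}}$ is $\mathbf{k}((t,\theta))$-linear by construction, but that does not make $\Xi^p_{\mathcal{A}}([e_0])$ preserve $\mathrm{Res}(HH^{per}_*(\mathcal{A}))$; it only makes it preserve the summand $HH^{per}_*(\mathcal{A})\theta$.

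Abbreviate $\Xi^p_{\mathcal{A}}(x)([\psi^{\otimes p}],-)$ as $\Xi^p(x)(\psi)$. The paper handles the correct class by expanding $\Xi^p([e_0]-[e_1]\theta)(\varphi)\circ\Xi^p([e_0]-[e_1]\theta)(\phi)$ with $\theta^2=0$. For the cross terms it needs a second operadic identity, $\Xi^p([e_1])(\varphi)\circ\Xi^p([e_0])(\phi)+\Xi^p([e_0])(\varphi)\circ\Xi^p([e_1])(\phi)=\Xi^p([e_1])(\varphi\cup\phi)$, which is Figure \ref{fig:C_p_Lie_cup_product_formula}. Your proposal never supplies this.

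Alternatively, you can repair your own route using exactly the linearity you mention:
\begin{itemize}
\item Since $[e_1]\theta$ acts through multiplication by $\theta$, its contribution lands in $HH^{per}_*(\mathcal{A})\theta$.
\item After Lemma \ref{thm:action of degree 0 equivariant cohomology of UCyl}, $[\alpha]$ is $c$ times the generator. Taking first components of $\mathrm{Res}\circ\Xi_{\mathcal{A}}([\alpha])=c\,\Xi^p_{\mathcal{A}}([e_0]-[e_1]\theta)\circ\mathrm{Res}$ gives $\Xi_{\mathcal{A}}([\alpha])([\psi^{\otimes p}],-)=c\,\mathrm{pr}_1\circ T_\psi\circ\mathrm{Res}$, where $T_\psi:=\Xi^p(e_0)(\psi)$.
\item $T_\varphi$ preserves $HH^{per}_*(\mathcal{A})\theta$, which $\mathrm{pr}_1$ kills. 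Hence $\mathrm{pr}_1T_\varphi\,\mathrm{Res}\,\mathrm{pr}_1T_\phi\,\mathrm{Res}=\mathrm{pr}_1T_\varphi T_\phi\,\mathrm{Res}$.
\item Multiplicativity of $T$ then gives multiplicativity of $\bigcap\nolimits^{C_p}:=\mathrm{pr}_1\circ T_{(-)}\circ\mathrm{Res}$. This is the formula recorded in the paper's closing remark of Section 4.
\end{itemize}
This route avoids Figure \ref{fig:C_p_Lie_cup_product_formula}, and it does not even need the exact coefficient of $[e_1]\theta$, only that the restriction lies in the span of $[e_0]$ and $[e_1]\theta$. Either way, what is missing from your proposal is an explicit treatment of the $\theta$-component of the restricted class.
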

\emph{Proof}. By Corollary \ref{thm: p curvature as KS operation}, it suffices to show that for any $[\alpha]\in H^0(\mathrm{UCact}^p_{\circlearrowright,\mathbf{k}})^{tS^1})$, the induced map
\begin{equation}
HH^{even}(\mathcal{A})\times HH_*^{per}(\mathcal{A})\xrightarrow{\Xi_{\mathcal{A}}([\alpha])([(-)^{\otimes p}],-)} HH_*^{per}(\mathcal{A})    
\end{equation}
is a constant multiple of an action of $HH^{even}(\mathcal{A})$ on $HH_*^{per}(\mathcal{A})$ satisfying \eqref{eq:multiplicative plus Frobenius linear plus zero property restated}. Again, only multiplicativity is nontrivial. By Lemma \ref{thm:action of degree 0 equivariant cohomology of UCyl}, it suffices to prove this for $[\alpha]$ lying in the image of 
\begin{equation}
H^0(C_{-*}(\mathrm{UConf}_p(\mathbb{C}^*);\mathbf{k})^{tS^1})\xrightarrow{\iota}H^0(C_{-*}(\mathrm{UCyl}(p,1);\mathbf{k})^{tS^1})    \cong H^0\big((\mathrm{UCact}^p_{\circlearrowright,\mathbf{k}})^{tS^1}\big),
\end{equation}
where $\iota$ is induced by identifying (up to homotopy) $\mathrm{UConf}_p(\mathbb{C}^*)$ with the subspace of $\mathrm{UCyl}(p,1)$ where the two boundary marked points are aligned. \par\indent
Consider the commutative diagram (cf. \eqref{eq:restricting KS action})
\begin{equation}\label{eq:restricting degree 0 KS action}
\begin{tikzcd}[row sep=1.2cm, column sep=0.8cm]
H^0(\mathrm{UCact}^k_{\circlearrowright,\mathbf{k}})^{tS^1})\otimes_{\mathbf{k}((t))} HH_*^{per}(\mathcal{A})\times HH^{even}(\mathcal{A})\arrow[d,"{\mathrm{Res}_{C_p\subset S^1}}\;\otimes\; {\mathrm{Res}_{C_p\subset S^1}}\;\times \mathrm{id}"]\arrow[rrr,"{\Xi_{\mathcal{A}}(-)([(-)^{\otimes p}],-)}"]& & & HH_*^{per}(\mathcal{A})\arrow[d,"{\mathrm{Res}_{C_p\subset S^1}}"]    \\
H^0(\mathrm{UCact}^k_{\circlearrowright,\mathbf{k}})^{tC_p})\otimes_{\mathbf{k}((t,\theta))} HH_*^{C_p,per}(\mathcal{A})\times HH^{even}(\mathcal{A})\arrow[rrr,"{\Xi^p_{\mathcal{A}}(-)([(-)^{\otimes p}],-)}"]& & &HH_*^{C_p,per}(\mathcal{A})
\end{tikzcd}    
\end{equation}
induced from the restriction of $S^1$ to $C_p$ Tate fixed points. It is well known that the vertical maps in \eqref{eq:restricting degree 0 KS action} are (split) injective, cf. \cite[Appendix A.3]{Che3}. Hence, it suffices to show that any $[\alpha]\in H^0(\mathrm{UCact}^k_{\circlearrowright,\mathbf{k}})^{tC_p}) \cong H^0(C_{-*}(\mathrm{UCyl}(p,1);\mathbf{k})^{tC_p})$ in the image of
\begin{equation}\label{eq:S^1 to C_p fixed point restriction for UConf}
H^0(C_{-*}(\mathrm{UConf}_p(\mathbb{C}^*);\mathbf{k})^{tS^1})\xrightarrow{\mathrm{Res}_{C_p\subset S^1}}H^0(C_{-*}(\mathrm{UConf}_p(\mathbb{C}^*);\mathbf{k})^{tC_p})\xrightarrow{\iota}H^0(C_{-*}(\mathrm{UCyl}(p,1);\mathbf{k})^{tC_p})       
\end{equation}
induces a constant multiple of a multiplicative action 
\begin{equation}
HH^{even}(\mathcal{A})\times HH_*^{C_p,per}(\mathcal{A})\xrightarrow{\Xi_{\mathcal{A}}^p([\alpha])([(-)^{\otimes p}],-)} HH_*^{C_p,per}(\mathcal{A}). 
\end{equation}
Indeed, it is proved in \cite[Lemma 5.13 and Corollary 5.14]{Che3} $H^0(C_{-*}(\mathrm{UConf}_p(\mathbb{C}^*);\mathbf{k})^{tS^1})$ is one-dimensional and a standard generator gets mapped under $\mathrm{Res}_{C_p\subset S^1}$ to 
\begin{equation}
[e_0]-[e_1]\theta,    
\end{equation}
where $[e_0],[e_1]$ are the classes from Figure \ref{fig:generator_of_UCyl}. Since $\theta^2=0$, for $[\phi],[\varphi]\in HH^{even}(\mathcal{A})$
$$\Xi_{\mathcal{A}}([e_0]-[e_1]\theta)([\varphi^{\otimes p}],-)\circ \Xi_{\mathcal{A}}([e_0]-[e_1]\theta)([\phi^{\otimes p}],-)=\Xi_{\mathcal{A}}([e_0])([\varphi^{\otimes p}],-)\circ \Xi_{\mathcal{A}}([e_0])([\phi^{\otimes p}],-)$$
$$-\big(\Xi_{\mathcal{A}}([e_1])([\varphi^{\otimes p}],-)\circ \Xi_{\mathcal{A}}([e_0])([\phi^{\otimes p}],-)+\Xi_{\mathcal{A}}([e_0])([\varphi^{\otimes p}],-)\circ \Xi_{\mathcal{A}}([e_1])([\phi^{\otimes p}],-)\big)\theta$$
\begin{equation}
=\Xi_{\mathcal{A}}([e_0]-[e_1]\theta)([(\varphi\cup \phi)^{\otimes p}],-),
\end{equation}
where the last equality follows from Figure \ref{fig:multiplicativity_of_pth_power_e_0_action} and Figure \ref{fig:C_p_Lie_cup_product_formula}. This completes the proof.\qed
\begin{figure}[H]
 \centering
 \includegraphics[width=1.1\textwidth]{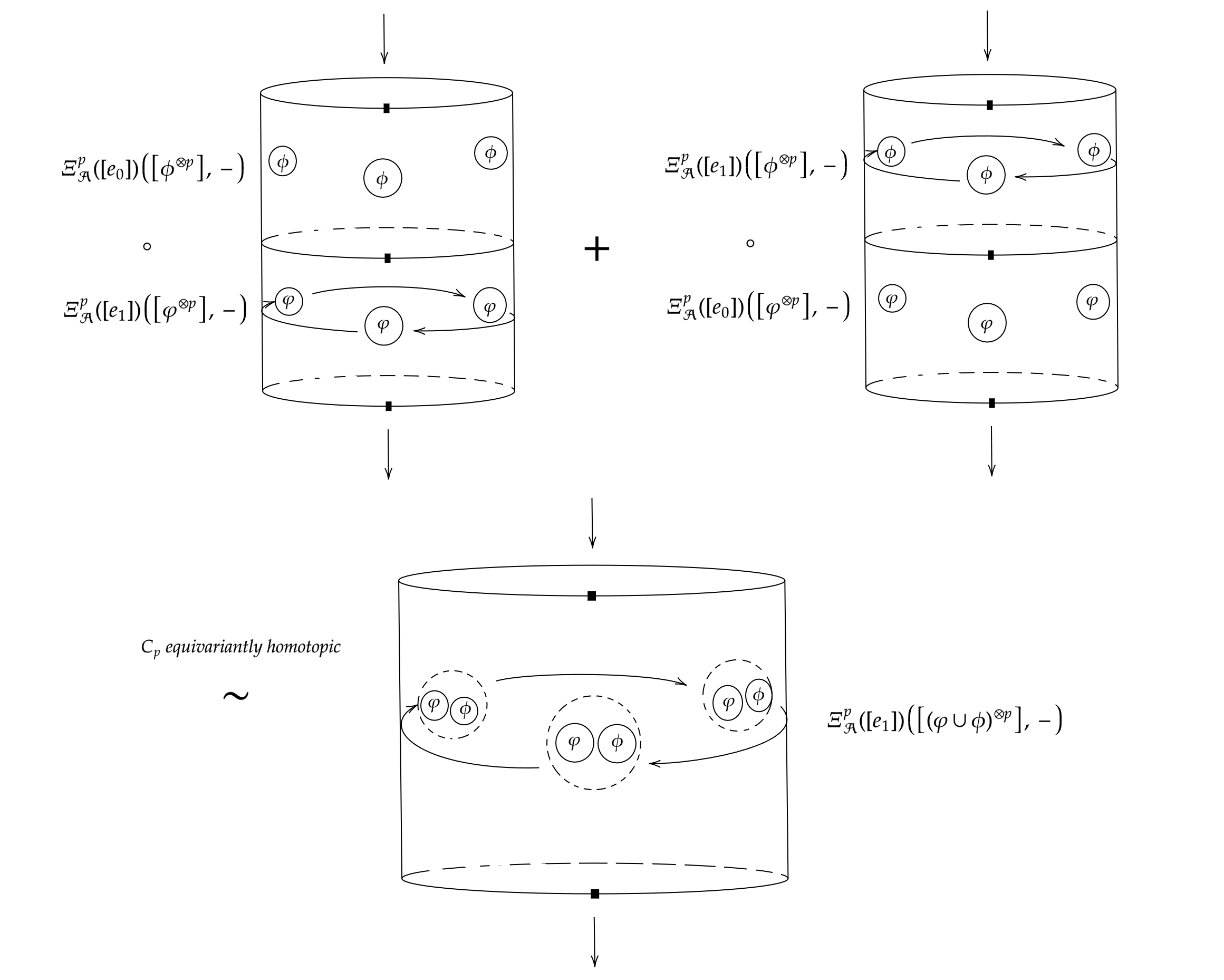}
 \caption{}
\label{fig:C_p_Lie_cup_product_formula}
\end{figure}
\begin{rmk}
We leave two remarks for the sake of completeness, even though they are not required for proving the main results. 
\begin{itemize} 
    \item It was further proved in \cite[Corollary 5.15]{Che3} that the endomorphism $\Xi_{\mathcal{A}}([e_0]-[e_1]\theta)$ can be expressed as the composition
    \begin{equation}\label{eq:action of e_0-e_1theta}
HH_*^{per}(\mathcal{A})\xrightarrow{\mathrm{Res}_{C_p\subset S^1}}    HH^{C_p,per}_*(\mathcal{A})\xrightarrow{\Xi^p([e_0])}  HH^{C_p,per}_*(\mathcal{A})=HH_*^{per}(\mathcal{A}) \oplus HH_*^{per}(\mathcal{A})\theta\xrightarrow{pr_1}HH_*^{per}(\mathcal{A}).
\end{equation}
Recall that $\Xi^p([e_0])$ agrees with the $C_p$-equivariant cap product (cf. Remark \ref{thm:e_0 gives C_p equivariant cap product}). 
    \item It is not difficult to see that the constant $c$ in Theorem \ref{thm: p curvature equals multiplicative operation restated} must equal $1$. Indeed, since the equality $F^{\mathcal{A}}_{2t^2\partial_t}=c\cdot\Xi_{\mathcal{A}}([e_0]-[e_1]\theta)(d^{\otimes p},-)$ is proved at the operadic level (which in particular implies that $c$ is independent of $\mathcal{A}$), it suffices to show that $c=1$ for one choice of d$(\mathbb{Z}/2)$g category $\mathcal{A}$. For instance, there is an abundance of supply of such $\mathcal{A}$ given by (summands of) the Fukaya category of a monotone symplectic manifold satisfying Abouzaid's generation criterion, cf. \cite{Che1}\cite{Che2}. 
\end{itemize}   
\end{rmk}

\section{Proof of Theorem \ref{thm:main theorem}}
In this section we assemble the various ingredients and complete the proof of the main theorem, which we recall for the reader's convenience.\\
\begin{thm}\label{thm:main theorem restated}
Let $\mathcal{A}$ be a smooth proper d($\mathbb{Z}/2$)g algebra over $\mathbb{C}$. Then the $t$-connection on $HH_*^{per}(\mathcal{A})$ (cf. \ref{thm:t connection}) has a regular singularity and quasi-unipotent monodromy at $t=0$.  
\end{thm}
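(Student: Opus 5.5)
The plan is to spread $\mathcal{A}$ out over a finitely generated ring, reduce modulo primes, and feed the nilpotence of Corollary \ref{thm: nilpotence of p curvature} into a formal version of Katz's theorem (Theorem \ref{thm:Katz original theorem}). The proof has three steps.

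\textbf{Step 1: spreading out.} By To\"en's theorem on smooth proper dg categories, $\mathcal{A}$ is Morita equivalent to $\mathcal{A}_R\otimes_R\mathbb{C}$. Here $R\subset\mathbb{C}$ is a finitely generated $\mathbb{Z}$-algebra and $\mathcal{A}_R$ is a smooth proper d$(\mathbb{Z}/2)$g algebra over $R$ that is perfect as an $R$-module. After inverting finitely many elements of $R$, I will arrange the following:
\begin{itemize}
\item $\mathcal{A}_R$ is semi-free of finite type;
\item $2$ is invertible in $R$;
\item $HH_*(\mathcal{A}_R/R)$ is a finite projective $R$-module whose formation commutes with arbitrary base change $R\to\kappa$.
\end{itemize}
The last condition uses generic freeness and properness: $CC_*(\mathcal{A}_R)$ is quasi-isomorphic to a perfect complex, namely $\mathcal{A}_R\otimes^{\mathbb{L}}_{\mathcal{A}_R^e}\mathcal{A}_R$.

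Smoothness and properness also give degeneration of the Hodge-to-de Rham spectral sequence over $\mathbb{C}$ (Kaledin). After further shrinking $R$, I want the $t$-adic filtration to be well behaved over $R$. Concretely, the aim is that $HH^{-}_*(\mathcal{A}_R)$ is a finite free $R[[t]]$-module and $HH^{per}_*(\mathcal{A}_R)=HH^{-}_*(\mathcal{A}_R)[t^{-1}]$, both compatible with base change to $\mathbb{C}$ and to the residue fields $R/\mathfrak{m}$ for almost all $\mathfrak{m}$. If degeneration modulo $p$ is not available for every $\mathfrak{m}$, I will work instead with the lattice given by the image of $HH^{-}_*$ and use that the periodic complex is a finite-type complex over $R((t))$. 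Base change of the chain-level connection \eqref{eq:t connection} is tautological. So the $t$-connection on $HH^{per}_*(\mathcal{A}_R)$ specializes to the $t$-connection of $\mathcal{A}_{\mathbb{C}}$ and to that of $\mathcal{A}_{R/\mathfrak{m}}$.

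\textbf{Step 2: nilpotence modulo $p$.} For each maximal ideal $\mathfrak{m}$ with residue characteristic $p>2$, the algebra $\mathcal{A}_{R/\mathfrak{m}}$ is smooth over the field $R/\mathfrak{m}$. Corollary \ref{thm: nilpotence of p curvature} then shows that the $p$-curvature $F_{2t^2\partial_t}$ of its $t$-connection is nilpotent. By Frobenius-linearity in the vector field, $F_{\partial_t}$ is nilpotent as well, since $F_{2t^2\partial_t}=2^p t^{2p}F_{\partial_t}$ and $t$ is invertible.

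\textbf{Step 3: the formal Katz criterion.} Here I invoke the formal version of Katz's theorem (Theorem \ref{thm:Katz theorem}). Its input is a free module of finite rank over $R((t))$ (or a lattice over $R[[t]][t^{-1}]$) with a connection whose reduction modulo $\mathfrak{m}$ has nilpotent $p$-curvature for almost all $\mathfrak{m}$. Its conclusion is that after base change to $\mathrm{Frac}(R)((t))$ the connection is regular singular with quasi-unipotent monodromy. The proof of the formal version follows Katz: use the cyclic vector lemma over $\mathrm{Frac}(R)((t))$, clear denominators by shrinking $R$, and apply the Newton polygon/Fuchs criterion in the form Katz uses. Nilpotence of the $p$-curvature for almost all $p$ forces the slopes to be $\le 1$ and the exponents to be rational. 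Base-changing along $\mathrm{Frac}(R)\subset\mathbb{C}$ preserves both properties, which proves Theorem \ref{thm:main theorem restated}.

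\textbf{Main obstacle.} I expect the hardest step to be Step 1. One must show that $HH^{per}_*$, viewed as an $R((t))$-module, is finite, locally free, and commutes with reduction modulo $\mathfrak{m}$. Only then does the $p$-curvature of the reduction actually compute the reduction of the $p$-curvature. The issue is that periodic cyclic homology involves a non-finite completion, and Hodge-to-de Rham degeneration in the $\mathbb{Z}/2$-graded setting in characteristic $p$ is delicate. My first attempt would be to work with the perfect $R[[t]]$-complex $CC^{-}_*(\mathcal{A}_R)$ and use its finiteness, which holds because $HH_*$ is perfect. I would apply semicontinuity to get freeness generically, and then apply Nakayama over $R[[t]]$ for base change, bypassing degeneration modulo $p$ entirely.
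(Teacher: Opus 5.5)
Your overall architecture is the paper's: spread out, get nilpotence of the $p$-curvature modulo $\mathfrak{m}$ from Corollary \ref{thm: nilpotence of p curvature}, and conclude with the formal Katz criterion (Theorem \ref{thm:Katz theorem}). Steps 2 and 3 match the paper, including the passage from $F_{2t^2\partial_t}$ to $F_{\partial_t}$ by $p$-linearity. (To\"{e}n's theorem itself is $\mathbb{Z}$-graded; the paper needs an extra argument, with $E_1$-algebras over $R[u^{\pm 1}]$, to obtain the $\mathbb{Z}/2$-graded version.)

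The gap is in Step 1. Kaledin's degeneration theorem is for $\mathbb{Z}$-graded categories. In the d$(\mathbb{Z}/2)$g setting Hodge-to-de Rham degeneration is open, as the paper itself notes in Section 6.1, so you cannot invoke it. Moreover, your stated target would already imply degeneration over $\mathbb{C}$. That target is: $HH^-_*(\mathcal{A}_R)$ finite free over $R[[t]]$, compatibly with base change to $\mathbb{C}$. To see why it implies degeneration, take the long exact sequence of $0\to CC^-_*\xrightarrow{t}CC^-_*\to CC_*\to 0$. It shows that $HH^-_*(\mathcal{A})$ is $t$-torsion free if and only if $HH^-_*(\mathcal{A})/t\cong HH_*(\mathcal{A})$. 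So both that plan and your fallback of making $HH^-_*$ free generically by semicontinuity can fail. No lattice is needed at all: Theorem \ref{thm:Katz theorem} only asks for a finite free $R((t))$-module.

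What is then missing is a concrete mechanism that makes $HH^{per}_*(\mathcal{A}_R)$ finite free over $R((t))$ and compatible with reduction to $\kappa(\mathfrak{m})((t))$. The paper proceeds as follows:
\begin{enumerate}
\item It makes $P=HH_*(\mathcal{A}_R/R)$ free.
\item It homotopy-transfers the mixed structure $(b,B)$ to an $A_\infty$-$R[\Lambda]/\Lambda^2$-module $(P,0,\{\delta_i\})$. This models $CC^{per}_*$ by the finite free complex $(P((t)),\sum_i\delta_i t^i)$ over $R((t))$, whose cohomology becomes free after inverting finitely many leading coefficients.
\item For base change, it localizes so that $R$ is regular. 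It proves that an unbounded acyclic complex of projectives over a ring of finite global dimension is contractible. It applies this to the cones of $(P_{\mathfrak{m}},0)\to CC_*(\mathcal{A}_R\otimes_R R_{\mathfrak{m}})$ and of $(H,0)\to(P_{\mathfrak{m}}((t)),d_{eq})$.
\end{enumerate}

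Your ``Nakayama over $R[[t]]$'' does not replace step 3. A $2$-periodic complex has no top degree from which to start the usual splitting induction. Free, even zero, cohomology need not commute with base change. For example, the $2$-periodic complex with both terms $k[x]/x^2$ and both differentials multiplication by $x$ is acyclic, but its reduction modulo $x$ has nonzero cohomology. The same issue undermines your claim that $HH_*(\mathcal{A}_R/R)$ commutes with arbitrary base change. The naive reduction of the unbounded complex $CC_*(\mathcal{A}_R)$ need not compute the derived one, and the paper handles exactly this point with the contractibility lemma over a regular base.
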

As mentioned in the introduction, the proof of Theorem \ref{thm:main theorem restated} uses a spreading out argument, i.e. reduces the statement over $\mathbb{C}$ to properties of the $t$-connection over positive characteristic fields (that are proved in Section 4). By a \emph{global ring} we mean an integral domain $R$ which is finitely generated as an algebra over $\mathbb{Z}$ such that $\mathrm{Frac}(R)$ has characteristic $0$. \par\indent
The possibility of a spreading out argument builds on the following two well-known results. \\
\begin{thm}\label{thm:Toen theorem}    
Let $\mathcal{A}$ be a smooth proper d$(\mathbb{Z}/2)$g algebra over $\mathbb{C}$. Then there exists a global ring $R\subset \mathbb{C}$ together with a smooth proper  d$(\mathbb{Z}/2)$g algebra $\mathcal{A}_R$ over $R$ and a quasi-equivalence of d$(\mathbb{Z}/2)$g algebras $\mathcal{A}_R\otimes_R\mathbb{C}\simeq \mathcal{A}$.\qed
\end{thm}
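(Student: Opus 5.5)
The plan is to spread out $\mathcal{A}$ in two steps: first descend it to a finitely generated subring of $\mathbb{C}$, then show that smoothness and properness survive after shrinking that ring. This is Toën's theorem on the finiteness of moduli of dg categories. The natural setting is $\mathbb{Z}/2$-graded, that is, $2$-periodic dg modules over $\mathbb{Z}[u^{\pm1}]$ with $|u|=2$; I would check that Toën's arguments go through verbatim there.

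\textbf{Step 1: compact objects.} A smooth and proper d$(\mathbb{Z}/2)$g algebra is homotopically finitely presented. Concretely, smooth plus proper implies that $\mathcal{A}$ is a compact object in the homotopy theory of d$(\mathbb{Z}/2)$g algebras (equivalently, of small categories up to Morita equivalence) over $\mathbb{C}$. Toën proves this in the $\mathbb{Z}$-graded case (saturated implies finitely presented), and I would adapt his argument. Compact objects are retracts of finite cell algebras. Such an algebra is a semi-free algebra on finitely many generators, with differential given by finitely many structure constants in $\mathbb{C}$; the retraction data also involves only finitely many constants.

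\textbf{Step 2: descent to $R$.} Let $R\subset\mathbb{C}$ be the $\mathbb{Z}$-algebra generated by all these constants, with finitely many denominators inverted as needed. Defining the cell algebra and the retraction over $R$ gives $\mathcal{A}_R$ with $\mathcal{A}_R\otimes_R\mathbb{C}\simeq\mathcal{A}$. Here I use that $\mathbb{C}$ is the filtered colimit of its finitely generated subrings, and that mapping spaces out of compact objects commute with filtered colimits. Since the Frobenius-free, characteristic-zero base is a domain inside $\mathbb{C}$, $\mathrm{Frac}(R)$ has characteristic $0$. This makes $R$ a global ring.

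\textbf{Step 3: smoothness and properness over $R$, after shrinking.} \emph{Properness:} $\mathcal{A}_R$ is built from finitely many free $R$-modules, so the underlying complex is perfect after passing to the retract. I would make the retraction a homotopy retract of perfect complexes, which is a condition involving finitely many homotopies, so it descends after enlarging $R$. \emph{Smoothness:} over $\mathbb{C}$ the diagonal bimodule $\mathcal{A}$ is a retract of a finite cell $\mathcal{A}^e$-module. The cell structure and the retraction-homotopy data again involve finitely many constants, so I enlarge $R$ to contain them. Then $\mathcal{A}_R\in\mathrm{Perf}(\mathcal{A}_R^e)$, which is smoothness over $R$.

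\textbf{Main obstacle.} The hard step is Step 1 in the $\mathbb{Z}/2$-graded setting, namely that smooth and proper implies homotopically finitely presented. For this I would follow Toën's proof, which shows that $\mathcal{A}$ is a retract of a finite cell algebra using the finite resolution of the diagonal together with properness. Alternatively, I could bypass compactness entirely: choose a minimal $A_\infty$ model on the finite-dimensional space $H^*(\mathcal{A})$. Smoothness then gives a finite twisted-complex expression for the diagonal, which involves only finitely many $\mu^k$ and constants. Finitely many $\mu^k$ suffice because the finite resolution has bounded length. Taking $R$ to be generated by these constants and by the required inverses of determinants then spreads out the resolution and the $A_\infty$ structure.
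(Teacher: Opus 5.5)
Your overall architecture (show $\mathcal{A}$ is homotopically finitely presented, then descend) is Toën's, but Step 3 fails as written. \emph{Properness.} A finite cell algebra is a free tensor algebra on finitely many generators, so it is an infinitely generated $R$-module. Finite presentation as an algebra therefore says nothing about the underlying complex. Your fallback, spreading out a homotopy retraction of $\mathcal{A}$ onto a perfect complex $P$, needs the map $\mathcal{A}_{\mathbb{C}}\to P_{\mathbb{C}}$ to come from some $\mathcal{A}_{R'}\to P_{R'}$. That is automatic only if $\mathcal{A}_R$ is already compact over $R$, which is exactly what you are trying to prove. Descending only $P\to\mathcal{A}$ is legitimate, since $P$ is compact. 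But it only yields a map whose cone $C$ satisfies $C\otimes_{R'}\mathbb{C}=0$, and that does not give $C\otimes_{R'}R''=0$ for any finitely generated $R''$; consider $\bigoplus_p\mathbb{F}_p$ over $\mathbb{Z}$. \emph{Smoothness.} The same circularity appears: the section $\mathcal{A}_\Delta\to N$ is a map out of the diagonal bimodule. Descending it presupposes that $\mathcal{A}_R$ is compact over $\mathcal{A}_R^e$; on the bar resolution it is an infinite family of components, not finitely many constants.

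Both points can be repaired from the homotopical finite presentation itself. A finite cell algebra $Q=T(V)$ is smooth, because its diagonal is resolved by the finite cell bimodule $\mathrm{cone}(Q\otimes V\otimes Q\to Q\otimes Q)$ with twisted differential. Smoothness passes to homotopy retracts, so $\mathcal{A}_R$ is smooth for free. For properness, descend a module structure instead of a retraction. Pick a perfect $P_R$ with $P_{\mathbb{C}}\simeq\mathcal{A}$. Since $\mathcal{A}_R$ is compact among algebras and $\mathrm{End}(P_{\mathbb{C}})=\mathrm{colim}_{R'}\mathrm{End}(P_{R'})$, the right $\mathcal{A}$-module structure descends to an $\mathcal{A}_{R'}$-module $M$ with underlying complex $P_{R'}$. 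Over a smooth algebra such a module is perfect. Hence the isomorphism $M_{\mathbb{C}}\simeq\mathcal{A}_{\mathbb{C}}$ in $\mathrm{Perf}(\mathcal{A}_{\mathbb{C}})=\mathrm{colim}\,\mathrm{Perf}(\mathcal{A}_{R'})$ descends, and $\mathcal{A}_{R''}$ is proper.

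Separately, the real content, Step 1 in the $2$-periodic setting, is only asserted. Your $A_\infty$ alternative does not avoid it. With only a $\mathbb{Z}/2$-grading, no degree argument kills the higher $\mu^k$, and the diagonal bimodule involves all of them. The claim that ``finitely many $\mu^k$ determine $\mathcal{A}$'' is essentially the homotopical finite presentation statement itself.

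The paper does not adapt Toën by hand. It uses your own reformulation, viewing d$(\mathbb{Z}/2)$g algebras over $R$ as $E_1$-algebras over $R[u^{\pm1}]$. It then cites \cite[Theorem 3.2]{KKM}: smooth proper $E_1$-algebras over an $E_\infty$-ring commute with filtered colimits of the base. Applied to $\mathbb{C}[u^{\pm1}]=\mathrm{colim}\,R[u^{\pm1}]$, this gives Step 1 together with the descent of smoothness and properness that your Step 3 lacks.
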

\emph{Proof}. This statement is originally proved by To\"{e}n \cite{To} when $\mathcal{A}$ is $\mathbb{Z}$-graded. The adaptation to the $\mathbb{Z}/2$-graded setting follows from \cite[Theorem 3.2]{KKM}. Indeed, \cite[Theorem 3.2]{KKM} proves that the functor 
\begin{equation}
DAlg^{sat}:DComm(\mathbb{S})\rightarrow Cat     
\end{equation}
sending an $E_{\infty}$-ring spectrum $\mathcal{R}$ to the category of smooth proper $E_1$-algebras over $\mathcal{R}$ commutes with filtered homotopy colimits. Theorem \ref{thm:Toen theorem} then follows from the observation that d($\mathbb{Z}/2$)g algebras over an ordinary commutative ring $R$ are equivalent to $E_1$-algebras over (the Eilenberg-Maclane spectrum of) the graded algebra $R[u,u^{-1}]$ (where $u$ has degree $2$), and the fact that $\mathbb{C}[u,u^{-1}]$ can be written as a filtered homotopy colimit of $R[u,u^{-1}]$, where $R\subset \mathbb{C}$ is finite type over $\mathbb{Z}$. \qed \\
\begin{thm}(Katz)\label{thm:Katz theorem}
Let $R$ be a global ring. Let $(M,\nabla)$ be a finite rank free $R((t))$-module equipped with a connection. Suppose that for almost all maximal ideals $\mathfrak{m}\in\mathrm{mSpec}R$, the connection $(M,\nabla)\otimes_{R((t))}\kappa(\mathfrak{m})((t))$\footnote{The notation $(M,\nabla)\otimes_{R((t))}\kappa(\mathfrak{m})((t))$ stands for the pullback connection. Equivalently, it is obtained from fixing an $R((t))$-basis of $M$ and then reducing the coefficients of the corresponding connection matrix to $\kappa(\mathfrak{m})((t))$ (which is easily seen to be independent of the choice of basis).} has nilpotent $p$-curvature, where $p$ is the characteristic of the finite residue field $\kappa(\mathfrak{m})$, then the connection $(M,\nabla)\otimes_{R((t))}\mathrm{Frac}(R)((t))$ has regular singularity and quasi-unipotent monodromy at $t=0$. \par\indent
Moreover, if there exists a positive number $N$ such that for almost all $\mathfrak{m}\in\mathrm{mSpec}R$, the $p$-curvature of $(M,\nabla)\otimes_{R((t))}\kappa(\mathfrak{m})((t))$ is nilpotent of order $\leq N$, then the size of largest Jordan block of the monodromy of $(M,\nabla)\otimes_{R((t))}\mathrm{Frac}(R)((t))$ is $\leq N$.\qed
\end{thm}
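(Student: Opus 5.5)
The plan is to adapt Katz's proof of \cite[Theorem 13.0]{Ka1} to the formal setting. The first step is to pass to the Euler vector field $\theta:=t\partial_t$. Since $p$-curvature is $p$-linear in the vector field, $F_{\theta}=t^pF_{\partial_t}$. Because $t$ is invertible in $\kappa(\mathfrak{m})((t))$, nilpotence of $F_{\partial_t}$, and its order of nilpotence, are the same as for $F_\theta$. The advantage of $\theta$ is that $\theta^p=\theta$ in characteristic $p$, so $F_\theta=\nabla_\theta^p-\nabla_\theta$.

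\textbf{Regularity.} Set $K=\mathrm{Frac}(R)$ and choose a cyclic vector for $M\otimes K((t))$. Since only finitely many elements of $K((t))$ are involved, after replacing $R$ by some $R[1/f]$ the cyclic vector is defined over $R((t))$. It presents $M$ as $R((t))[\theta]/(L)$ with
\[L=\theta^n+a_{n-1}\theta^{n-1}+\cdots+a_0,\qquad a_i\in R((t)).\]
By Fuchs' criterion, regularity is equivalent to $v_t(a_i)\geq 0$ for all $i$, which is a condition on finitely many leading coefficients. Suppose instead that the Newton polygon of $L$ has a positive slope $\lambda=r/e$. Its leading coefficients cut out a nonzero characteristic polynomial $P$ with coefficients in $R$. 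After a finite extension $R'$ of $R$ and a ramification $t=s^e$ (both harmless, since $R'$ is still a global ring and nilpotence of $p$-curvature survives pullback along $s\mapsto s^e$ for $p\nmid e$), a root $c\neq 0$ of $P$ produces a slope-$\lambda$ piece. On that piece the leading term of $\nabla_\theta$ is $c\,s^{-r}$ times a unit, up to the usual lower-order corrections. The leading term of $F_\theta$ is then $c^p s^{-pr}$, which is nonzero modulo every $\mathfrak{m}$ not containing $c$. Hence $F_\theta$ fails to be nilpotent for almost all $\mathfrak{m}$, a contradiction.

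This leading-order computation of $F_\theta$ on the top slope is the step I expect to be the main obstacle. What makes it possible is that it only involves finitely many Laurent coefficients, so only finitely many denominators are ever inverted. I would first try to carry it out for the operator $L$ directly, via the symbol of $L^{(p)}$ along its top slope, rather than through a full Turrittin decomposition, whose formal gauge transformations could introduce infinitely many denominators.

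\textbf{Quasi-unipotence.} Once regularity over $K((t))$ is known, I choose a $K[[t]]$-lattice $\Lambda$ stable under $\nabla_\theta$ whose residue $A_0\in\mathrm{End}(\Lambda/t\Lambda)$ has no two eigenvalues differing by a nonzero integer (Levelt's normalization). Then the monodromy is conjugate to $\exp(-2\pi i A_0)$. The lattice is defined by finitely many data, so after shrinking $\mathrm{Spec}\,R$ and enlarging $R$ to contain the eigenvalues of $A_0$ it descends to an $R[[t]]$-lattice. Since $F_\theta$ preserves $\Lambda$ and reduces modulo $t$ to $A_0^p-A_0$, nilpotence of $F_\theta$ modulo $\mathfrak{m}$ forces every eigenvalue $\alpha$ of $A_0$ to satisfy $\alpha^p\equiv\alpha$, i.e. to lie in $\mathbb{F}_p$ modulo $\mathfrak{m}$, for almost all $\mathfrak{m}$. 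By Katz's argument (Chebotarev density applied to $\alpha$ in the global field generated over $\mathbb{Q}$), this forces $\alpha\in\mathbb{Q}$. Consequently the monodromy eigenvalues $e^{-2\pi i\alpha}$ are roots of unity.

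\textbf{Jordan block bound.} For the refined statement, a Jordan block of size $m$ for eigenvalue $\alpha$ of $A_0$ has $\mathrm{rank}(A_0-\alpha)^{m-1}\neq 0$. This nonvanishing persists modulo $\mathfrak{m}$ for almost all $\mathfrak{m}$. Because $\alpha^p=\alpha$ modulo $\mathfrak{m}$, we have
\[A_0^p-A_0=(A_0-\alpha)\cdot u\]
on the generalized eigenspace, where $u$ is an invertible polynomial in $A_0$: its value at $\alpha$ is the derivative $p\alpha^{p-1}-1=-1$. Hence $(A_0^p-A_0)^{m-1}\neq 0$, so $F_\theta^{m-1}\neq 0$ and $F_\theta$ has nilpotence order at least $m$. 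Therefore $m\leq N$. Finally, the Jordan blocks of $\exp(-2\pi iA_0)$ have the same sizes as those of $A_0$, because the Levelt condition ensures no two distinct eigenvalues of $A_0$ merge under the exponential.
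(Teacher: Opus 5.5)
Your quasi-unipotence and Jordan-block steps are sound, and they are the standard Katz argument. The paper itself gives no proof of this theorem; it only cites \cite{Ka1} and \cite[Appendix A]{Che2}.
\begin{itemize}
\item A $\nabla_\theta$-stable lattice lies between $t^{a}\Lambda_0$ and $t^{-b}\Lambda_0$, so it really is finite data and descends to $R'$.
\item $F_\theta$ induces $A_0^p-A_0$ on $\Lambda/t\Lambda$.
\item $x^p-x=\prod_{a\in\mathbb{F}_p}(x-a)$ gives your invertible factor $u$.
\end{itemize}

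The gap is in the regularity step. You read off the leading term $c^ps^{-pr}$ of $F_\theta$ on a ``slope-$\lambda$ piece''. That piece is produced over $K'((s))$ by a formal splitting whose coefficients are infinitely many elements of $K'$ with no a priori common denominator. So it is not an object over any $R'[1/f]((s))$ and has no reduction modulo $\mathfrak{m}$. You flag this yourself, and the proposed substitute (``the symbol of $L^{(p)}$ along its top slope'') is not carried out. As written, nothing links the characteristic-$0$ Newton polygon of $L$ to $F_\theta$ modulo $\mathfrak{m}$.

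A smaller, repairable version of the same slip occurs with your cyclic vector. An element of $K((t))$ need not lie in $R[1/f]((t))$ for any $f$, so ``finitely many elements are involved'' is not a valid reason for descent. Instead, truncate the cyclic vector $t$-adically (or choose it in $M$ from the start). Then invert the leading coefficient of $\det(v,\nabla_\theta v,\dots,\nabla_\theta^{n-1}v)$. This makes $v$ cyclic over $R((t))$ and over every $\kappa(\mathfrak{m})((t))$ at once.

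The missing idea is to prove regularity directly in characteristic $p$, where nothing needs to descend.
\begin{itemize}
\item Suppose $F_\theta$ is nilpotent on $\bar M=M\otimes\kappa(\mathfrak{m})((t))$. The subspaces $\ker F_\theta^j$ are sub-connections, and $F_\theta=0$ on the graded pieces.
\item By Cartier descent each graded piece has a horizontal basis, hence a $\nabla_\theta$-stable $\kappa(\mathfrak{m})[[t]]$-lattice. The projector argument of \cite[Theorem 5.1]{Ka1} works over $\kappa(\mathfrak{m})((t))$ because $\kappa(\mathfrak{m})$ is perfect.
\item Extensions preserve the existence of a stable lattice: with stable lattices $\Lambda_1\subset\bar M_1$ and $\tilde\Lambda_2$ lifting one for the quotient, $t^{-b}\Lambda_1+\tilde\Lambda_2$ is stable for $b\gg0$. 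So $\bar M$ has a stable lattice $\Lambda$.
\item Fuchs' criterion still holds in characteristic $p$. The module $\Lambda'=\sum_{i\ge0}\kappa(\mathfrak{m})[[t]]\nabla_\theta^i\bar v$ lies in some $t^{-a}\Lambda$, so it is a lattice of rank $n$.
\item Suppose some $\bar a_i$ had a pole. Write $t^m\bar L=\sum_i b_i\theta^i$ with $m>0$ and $b_i\in\kappa(\mathfrak{m})[[t]]$. Let $j$ be the largest index with $b_j(0)\neq0$; it satisfies $j<n$ since $b_n=t^m$.
\item Applying $\nabla_\theta$ repeatedly gives $\Lambda'=\sum_{i<j}\kappa(\mathfrak{m})[[t]]\nabla_\theta^i\bar v+t\Lambda'$. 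By Nakayama, $\Lambda'$ is then generated by $j<n$ elements, which contradicts $\mathrm{rank}\,\Lambda'=n$.
\end{itemize}

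Hence, if some $a_i\in R((t))$ has a pole with leading coefficient $c\neq0$, then for every $\mathfrak{m}$ avoiding $c$ the reduced operator still has that pole. For those $\mathfrak{m}$, $F_\theta$ cannot be nilpotent, contradicting the hypothesis. This yields regularity with no Newton-polygon or Turrittin analysis, and your remaining steps then go through as written.
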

We remark that Theorem \ref{thm:Katz theorem} was originally proved in \cite{Ka1} over the rational functions, and we refer the readers to \cite[Appendix A]{Che2} for a straightforward adaptation to the case over formal Laurent series.\par\indent
\emph{Proof of Theorem \ref{thm:main theorem restated}}. By Theorem \ref{thm:Toen theorem}, we fix a global ring $R\subset \mathbb{C}$ together with a smooth proper  d$(\mathbb{Z}/2)$g algebra $\mathcal{A}_R$ over $R$ and a quasi-equivalence of d$(\mathbb{Z}/2)$g algebras \begin{equation} \mathcal{A}_R\otimes_R\mathbb{C}\simeq \mathcal{A}.\end{equation}
By replacing $\mathcal{A}_R$ with a semi-free resolution, we may assume that $\mathcal{A}_R$ is term-wise free over $R$. \par\indent
To put ourselves in the situation of Theorem \ref{thm:Katz theorem} and apply the results of Section 4, we record some preliminary results concerning coefficient basechange.\par\indent
\textbf{Claim 1}. Let $R$ be a ring of global dimension $n<\infty$, and $(C_\bullet,d)$ an unbounded (e.g. $2$-periodic) acyclic complex of projective $R$-modules. Then $(C_\bullet,d)$ is contractible.\par\indent
\emph{Proof of Claim 1}. Let $Z_i=\ker(d:C_i\rightarrow C_{i+1})$. Since $C_{\bullet}$ is acyclic, there are short exact sequences 
\begin{equation}\label{eq:cocycle SES}
0\rightarrow Z_i\rightarrow C_i\rightarrow Z_{i+1}\rightarrow 0,
\end{equation}
which assembles to exact sequences
\begin{equation}
0\rightarrow Z_i\rightarrow C_i\rightarrow C_{i+1}\rightarrow \cdots\rightarrow C_{i+n-1}\rightarrow Z_{i+n}\rightarrow 0.
\end{equation}
Since $R$ has global dimension $n$, $Z_{i}$ is projective by the syzygy theorem. Since $i$ is arbitrary, this implies that each short exact sequence \eqref{eq:cocycle SES} splits, and hence $(C_{\bullet},d)$ is contractible.\blackqed \par\indent
Since $R$ has finite type over $\mathbb{Z}$, its regular locus is open, and is moreover nonempty since $R$ is a domain. After replacing $R$ by a localization $R[\frac{1}{f}]$, we will from now on assume that each local ring $R_{\mathfrak{p}}$ is regular and hence of finite global dimension. 
\par\indent
\textbf{Claim 2}. After replacing $R$ with a localization $R[\frac{1}{f}]$, one can ensure that $HH^{per}_*(\mathcal{A}_R/R)$ is a finite free $R((t))$-module and that the natural maps 
\begin{equation}\label{eq:basechange of periodic cyclic homology to C}
HH^{per}_*(\mathcal{A}_R/R)\otimes_{R((t))}\mathbb{C}((t))\rightarrow HH^{per}_*(\mathcal{A}/\mathbb{C})   
\end{equation}
and 
\begin{equation}\label{eq:basechange of periodic cyclic homology to residue field}
HH^{per}_*(\mathcal{A}_R/R)\otimes_{R((t))}\kappa(\mathfrak{m})((t))\rightarrow HH^{per}_*((\mathcal{A}_R\otimes_R\kappa(\mathfrak{m}))/\kappa(\mathfrak{m}))   
\end{equation} 
are isomorphisms for any maximal ideal $\mathfrak{m}\in\mathrm{mSpec}(R)$.
 \par\indent
\emph{Proof of Claim 2}. Since $\mathcal{A}_R$ is smooth and proper over $R$, $HH_*(\mathcal{A}_R/R)$ is a finitely generated $R$-module. Since $R$ is Noetherian (it is finite type over $\mathbb{Z}$), by generic freeness we may assume, after replacing $R$ with a localization $R[\frac{1}{f}]$, that  $HH_*(\mathcal{A}_R/R)$ is a finite free $R$-module. In particular, there is a quasi-isomorphism 
\begin{equation}\label{eq:minimal model of Hochschild complex}
 \iota:  (P,0):=(HH_*(\mathcal{A}_R/R),0)\xrightarrow{\simeq} (CC_*(\mathcal{A}_R/R),b). 
\end{equation}
By transfer, there exists a unital $A_{\infty}$-$R[\Lambda]/\Lambda^2$-module structure on $P$\footnote{Explicitly, odd maps $\delta_i:P\rightarrow P, i\geq 1$ such that $\sum_{i=0}^s\delta_i\delta_{s-i}=0$ for all $s\geq 0$.} and a quasi-equivalence of $A_{\infty}$-$R[\Lambda]/\Lambda^2$-modules
\begin{equation}\label{eq:minimal model of mixed complex}
(P,0,\{\delta_i\})\xrightarrow{\simeq}(CC_*(\mathcal{A}_R/R),b,B).    
\end{equation}
In particular, \eqref{eq:minimal model of mixed complex} induces a quasi-isomorphism 
\begin{equation}\label{eq:quasi iso of t complexes}
(P((t)), d_{eq}=\sum_{i\geq 1} \delta_it^i )\xrightarrow{\simeq} (CC_*(\mathcal{A}_R/R)((t)),b+tB).  
\end{equation}
Since $P$ is term-wise finite free over $R$, $P((t))$ is term-wise finite free over $R((t))$. Since $R((t))$ is Noetherian, by generic freeness (and the fact that a Laurent series is invertible if its leading coefficient is) we deduce that there is a localization $R'=R[\frac{1}{f}]$ such that 
\begin{equation}
 H^*(CC_*((\mathcal{A}_{R}\otimes_R R')/R')((t)),b+tB)  \cong H^*(P\otimes_R R'((t)),   d_{eq})\cong H^*(P((t)),d_{eq})\otimes_{R((t))}R'((t))
\end{equation}
is finite free over $R'((t))$. This proves the first part of Claim 2. \par\indent
Now set $R=R'$. To show that \eqref{eq:basechange of periodic cyclic homology to C} is an isomorphism, note that it can be identified with the composition
\begin{equation}
  HH^{per}_*(\mathcal{A}_R/R)\otimes_{R((t))}\mathbb{C}((t))\cong H^*(P((t)),d_{eq})\otimes_{R((t))}\mathbb{C}((t))\cong H^*(P\otimes_R \mathbb{C}((t)),d_{eq})\cong HH_*^{per}(\mathcal{A}/\mathbb{C}),  
\end{equation}
where the last two isomorphisms use the flatness of $\mathbb{C}$ over $R$ (resp. of $\mathbb{C}((t))$ over $R((t))$). \par\indent
To show that \eqref{eq:basechange of periodic cyclic homology to residue field} is an isomorphism, note that it can be decomposed as a composition 
$$HH^{per}_*(\mathcal{A}_R/R)\otimes_{R((t))}\kappa(\mathfrak{m})((t))  \xrightarrow[\cong]{\eqref{eq:quasi iso of t complexes}}  H^*(P((t)),d_{eq})\otimes_{R((t))}\kappa(\mathfrak{m})((t))\xrightarrow[\cong]{(i)} $$
\begin{equation}\label{eq:chain of iso over residue field}
H^*(P\otimes_R \kappa(\mathfrak{m})((t)),d_{eq})\xrightarrow[\cong]{(ii)}HH_*^{per}\big((\mathcal{A}_R\otimes_R\kappa(\mathfrak{m}))/\kappa(\mathfrak{m})\big)
\end{equation}
To see that (i) in \eqref{eq:chain of iso over residue field} is an isomorphism, note that by our previous choice of $R$ both $P((t))$ and $H^*(P((t)),d_{eq})$ are finite free over $R((t))$, and there is a quasi-isomorphism $i:(H^*(P((t)),d_{eq}),0)\rightarrow (P((t)),d_{eq})$. Since $R_{\mathfrak{m}}((t))$ is flat over $R((t))$, $P_{\mathfrak{m}}((t))$ and $H^*(P_{\mathfrak{m}}((t)),d_{eq})$ are finite free over $R_{\mathfrak{m}}((t))$, and there is a quasi-isomorphism $i_{\mathfrak{m}}:(H^*(P_{\mathfrak{m}}((t)),d_{eq}),0)\rightarrow (P_{\mathfrak{m}}((t)),d_{eq})$. Apply \textbf{Claim 1} to the cone of $i_{\mathfrak{m}}$ (note that since $R_{\mathfrak{m}}$ is regular of finite global dimension, so is $\mathrm{R}_{\mathfrak{m}}((t))$) we obtain that $i_{\mathfrak{m}}$ is a homotopy equivalence. Therefore, applying $-\otimes_{R_{\mathfrak{m}}((t))}\kappa(\mathfrak{m})((t))$ to $i_{\mathfrak{m}}$ gives a homotopy equivalence
\begin{equation}
 \big(H^*(P_{\mathfrak{m}}((t)),d_{eq})\otimes_{R_{\mathfrak{m}}((t))}\kappa(\mathfrak{m})((t)),0\big)\rightarrow (P_{\mathfrak{m}}((t))\otimes_{R_{\mathfrak{m}}((t))}\kappa(\mathfrak{m})((t)),d_{eq}),
\end{equation}
which exactly recovers (i) after passing to cohomology.\par\indent
To see that (ii) in \eqref{eq:chain of iso over residue field} is an isomorphism, apply \textbf{Claim 1} to the cone of the quasi-isomorphism $\iota_{\mathfrak{m}}:(P_{\mathfrak{m}},0)\rightarrow (CC_*((\mathcal{A}_{R}\otimes_RR_{\mathfrak{m}})/R_{\mathfrak{m}}),b)$ obtained from \eqref{eq:minimal model of Hochschild complex} by localizing at $\mathfrak{m}$ (recall that $\mathcal{A}_R$ and $P$ are term-wise free over $R$), which implies that $\iota_{\mathfrak{m}}$ is a homotopy equivalence. Applying $-\otimes_{R_{\mathfrak{m}}}\kappa(\mathfrak{m})$ to $\iota_{\mathfrak{m}}$ gives a homotopy equivalence
\begin{equation}
\iota_{\kappa(m)}:    (P\otimes_{R}\kappa(\mathfrak{m}),0)\rightarrow (CC_*((\mathcal{A}_R\otimes_R \kappa(\mathfrak{m}))/\kappa(\mathfrak{m}),b).
\end{equation}
This further implies, by a standard induction argument involving the $t$-filtration, that there is a quasi-isomorphism
\begin{equation}
 (P\otimes_{R}\kappa(\mathfrak{m})((t)),d_{eq})\rightarrow (CC_*\big(\mathcal{A}_R\otimes_R \kappa(\mathfrak{m})/\kappa(\mathfrak{m}\big)((t)),b+tB),
\end{equation}
which implies that (ii) is an isomorphism. \blackqed\par\indent
We now assume that the global ring $R$ is chosen so that \textbf{Claim 2} holds. By Corollary 
\ref{thm: nilpotence of p curvature}, the $p$-curvature of the $t$-connection on $HH^{per}_*((\mathcal{A}_R\otimes_R\kappa(\mathfrak{m}))/\kappa(\mathfrak{m}))$ is nilpotent. Since the $t$-connection is functorial under change of coefficients, \textbf{Claim 2} then identifies that the mod $\mathfrak{m}$ reduction of the $t$-connection on $HH^{per}_*(\mathcal{A}_R/R)$ with the $t$-connection on $HH^{per}_*((\mathcal{A}_R\otimes_R\kappa(\mathfrak{m}))/\kappa(\mathfrak{m}))$. The proof of Theorem \ref{thm:main theorem restated} then follows by applying Katz's criterion (Theorem \ref{thm:Katz theorem}).\qed

\section{Further applications}
\subsection{Non-commutative Hodge filtration}
Give a pure Hodge structure $(V,F^{\bullet}V,V_{\mathbb{Q}})$ of weight $w$, the Rees construction re-packages the Hodge filtration into a $t$-lattice in $V((t))$ given by
\begin{equation}
\mathcal{H}:=\sum_{q}t^{-q}F^qV[[t]]\subset V((t)).
\end{equation}
Moreover, there is a rank one $t$-connection 
\begin{equation}
\mathcal{T}_{\frac{w}{2}}:=(\mathbb{C}((t)),\nabla^{w/2}_{\partial_t}=\partial_t-\frac{w}{2t})    
\end{equation}
such that the lattice $\mathcal{H}\subset V((t))$ is logarithmic (i.e. preserved by $\nabla_{t\partial_t}$) with respect to the connection $\nabla=\nabla^{w/2}\otimes \mathrm{id}$, cf. \cite[Section 2.1.7]{KKP}.\par\indent
When $\mathcal{A}$ is a $\mathbb{Z}$-graded dg category, it is known \cite{KKP}\cite{Shk} that the canonical $t$-connection on $HH^{per}_*(\mathcal{A})$ has a simple pole and can be moreover fully recovered from the Hodge filtration on $HH^{per}_*(\mathcal{A})$ induced from the chain level filtration by the sub-complexes $(t^nCC_*(\mathcal{A})[[t]],b+tB)$ via the above Rees construction. \par\indent
In the $\mathbb{Z}/2$-graded case, the naive $t$-filtration on $HH^{per}_*(\mathcal{A})$ is too coarse to recover the $t$-connection. Instead, the $t$-connection should be viewed as part of the data of a `non-commutative Hodge filtration'. Given Theorem \ref{thm:main theorem restated}, one can explicitly describe how an actual filtration arise from the $t$-connection following the exposition of \cite[Section 3.5]{Shk}.\par\indent
For the setup, consider $\mathcal{A}$ a smooth proper d$(\mathbb{Z}/2)$g category and further assume that the Hodge-to-de-Rham spectral sequence degenerates\footnote{In the $\mathbb{Z}$-graded case this is proved by work of Kaledin \cite{Kal1}, and to the author's knowledge is still open in the $\mathbb{Z}/2$-graded case.}. Geometrically, this means that the negative cyclic homology $\mathcal{G}^0:=HH^-_*(\mathcal{A})$ is a vector bundle over the formal disk $\mathrm{Spf}\;\mathbb{C}[[t]]$ whose generic fiber is $\mathcal{G}:=HH^{per}_*(\mathcal{A})$ and special fiber is $HH_*(\mathcal{A})$. \par\indent
By Theorem \ref{thm:main theorem restated}, $(HH^{per}_*(\mathcal{A}),\nabla_{\partial_t})$ is regular singular at $t=0$, and hence there is a Kashiwara-Malgrange $V$-filtration \cite{Kas}\cite{Mal2} on $\mathcal{G}$ by free $\mathbb{C}[[t]]$-submodules $V^{\alpha}\mathcal{G}\subset \mathcal{G}, \alpha\in \mathbb{C}$. In the most general case, the definition of the $V$-filtration requires a choice of ordering of $\mathbb{C}$; however, as we showed that the monodromy of $\nabla_{t\partial_t}$ is quasi-unipotent, all the exponents $\alpha$ are rational numbers and hence $V^{\bullet}$ is canonically defined. In fact, it is the unique exhaustive, separated and left-continuous (decreasing) filtration characterized by the following properties:
\begin{itemize}
      \item $t\cdot V^{\alpha}\mathcal{G}\subset V^{\alpha+1}\mathcal{G}$ and $\partial_t\cdot V^{\alpha}\mathcal{G}\subset V^{\alpha-1}\mathcal{G}$.
    \item $t\partial_t-\alpha$ acts nilpotently on $\mathrm{Gr}^{\alpha}_V(\mathcal{G})$
\end{itemize}
\par\indent
By our assumption, $\mathcal{G}^0\subset\mathcal{G}$ is a lattice. The triple $(\mathcal{G},\nabla,\mathcal{G}^0)$ gives rise to a finite dimensional filtered vector space defined by
\begin{equation}\label{eq:non commutative Hodge filtration}
\mathcal{H}:=\bigoplus_{\alpha\in (-1,0]}\mathrm{Gr}^{\alpha}_V(\mathcal{G})\quad,\quad F^n\mathcal{H}:=\bigoplus_{\alpha\in(-1,0]}  \mathrm{Gr}^{\alpha}_V(t^n\mathcal{G}^0),  
\end{equation}
where
\begin{equation}
 \mathrm{Gr}^{\alpha}_V(t^n\mathcal{G}^0):= \frac{(V^{\alpha}\mathcal{G}\cap t^n\mathcal{G}^0)+V^{>\alpha}\mathcal{G}}{V^{>\alpha}\mathcal{G}};
\end{equation}
\eqref{eq:non commutative Hodge filtration} (or rather a suitable $2$-periodification, cf. \cite[Section 3.5]{Shk} for more details) can be viewed as the analogue of the Hodge filtration in non-commutative setting.

\subsection{Variants of the monodromy theorem}
The study of non-commutative Hodge structures, especially the $t$-connection, associated to d$(\mathbb{Z}/2)$g categories has largely been motivated by the classical Hodge theory of isolated hypersurface singularities, cf. \cite{vSt} for a nice survey and \cite{ST} for a more detailed treatment. \par\indent
We now briefly describe the (slightly simplified) setup. Let $n\geq 2$, and take a polynomial $W\in\mathbb{C}[x_1,\cdots,x_n]$ such that
\begin{equation}\label{eq:superpotential W}
W(0,\cdots,0)=\partial_{x_1}W(0,\cdots,0)=\cdots=\partial_{x_n}W(0,\cdots,0)=0    
\end{equation}
and that the origin is the only critical point of $W$. One can view $W$ as a (germ of a) smooth function $W:\mathbb{C}^n\rightarrow\mathbb{C}$. The \emph{Milnor fiber of $W$} is given by $M_t:=W^{-1}(t)\cap B_{\epsilon}$, where $t$ is a small regular value of $W$ and $B_{\epsilon}$ is a small ball centered at the origin with no other critical points, which is known (cf. \cite{Mil}) to have the homotopy type of a wedge of $S^{n-1}$'s. The local system obtained from the middle cohomologies of Milnor fibers lying over a small punctured disk 
\begin{equation}
\bigcup_{0<|t|\ll 1} H^{n-1}(M_t;\mathbb{C})    
\end{equation}
is called the \emph{Gauss-Manin system of $W$}. \par\indent
A fundamental insight of Brieskorn \cite{Br} is that the Gauss-Manin system admits an algebraic description using the de-Rham model. In modern formulation, the algebraic Gauss-Manin system can be defined as
\begin{equation}\label{eq:algebraic Gauss Manin system}
\mathcal{G}:=\Omega^n(\mathbb{C}^n)_{(0)}[t^{-1}]/(-t^{-1}dW+d)\Omega^{n-1}(\mathbb{C}^n)_{(0)}[t^{-1}],
\end{equation}
where $\Omega^{\bullet}(\mathbb{C}^n)_{(0)}$ denotes germs of holomorphic differential forms on $\mathbb{C}^n$. It is equipped with a $D$-module structure (i.e. a module over the Weyl algebra $\mathbb{C}\langle q,\partial_q\rangle$) given by
\begin{equation}
q(\eta t^{-i})=W\eta t^{-i}-i\eta t^{-i+1}\quad,\quad \partial_q(\eta t^{-i})=\eta t^{-i-1}.    
\end{equation}
A key property is that the action of $\partial_q$ is invertible (equivalently, the obvious $\mathbb{C}[t^{-1}]$-module structure can be extended to a $\mathbb{C}[t,t^{-1}]$-module structure): an inverse is given by
\begin{equation}
\partial^{-1}_q(\eta t^{-i})=dW\wedge \eta' t^{-i}\;\;\mathrm{for}\;\; d\eta'=\eta.   
\end{equation}
From the categorical perspective, it is more natural to consider the \emph{(inverted) Fourier-Laplace dual}\footnote{The convention here is to identify the localized Weyl algebras $\mathbb{C}\langle q,\partial_q,\partial_q^{-1}\rangle$ and $\mathbb{C}\langle t,t^{-1},\partial_t\rangle$
 via $q\leftrightarrow t^2\partial_t=-\partial_{t^{-1}}, \partial_q\leftrightarrow t^{-1}$.} of the above $D$-module structure, i.e. the action of the localized Weyl algebra $\mathbb{C}\langle t,t^{-1},\partial_t\rangle$ where $t$ acts by multiplication and $\partial_t$ acts by $t^{-2}q$. Equivalently, it is the $D$-module structure induced from the $t$-connection on $\mathcal{G}$ given by
\begin{equation}\label{eq:t connection on Gauss Manin system}
\frac{d}{dt}+\frac{W}{t^2}.
\end{equation}
Let $\hat{\mathcal{G}}:=\mathcal{G}\otimes_{\mathbb{C}[t,t^{-1}]}\mathbb{C}((t))$, then there is an isomorphism (cf. \cite[Section 2]{Schu})
\begin{equation}
\hat{\mathcal{G}}\cong H^n(\Omega^{\bullet}(\mathbb{C}^n)_{(0)}((t)), td-dW\wedge).   
\end{equation}
The following theorem of Shklyarov connects the above story to the categorical $t$-connection of the d$(\mathbb{Z}/2)$g category of matrix factorizations $\mathrm{MF}(W)$.\\
\begin{thm}(\cite[Theorem 1.1]{Shk})\label{thm:Shklyarov theorem}
There is an isomorphism 
\begin{equation}
H^*(CC_*(\mathrm{MF}(W))[[t]],b+tB)\cong H^*(\Omega^{\bullet}(\mathbb{C}^n)[[t]],td-dW\wedge)    
\end{equation}
which intertwines the categorical $t$-connection $\nabla^{\mathrm{MF}(W)}_{\partial_t}$ with the connection  
\begin{equation}\label{eq:t connection on twisted de Rham}
\nabla^{W}_{\partial_t}=\partial_t+\frac{W}{t^2}+\frac{\Gamma}{t},    
\end{equation}
where $\Gamma|_{\Omega^q}=-\frac{q}{2}$. \qed
\end{thm}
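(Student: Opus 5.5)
The plan is to compute both sides through a mixed-complex HKR comparison and then to match the two connections term by term, using the same decomposition that defines the categorical $t$-connection in \eqref{eq:t connection}, namely $\nabla^{\mathcal{A}}_{t\partial_t}=\frac12 Gr-\frac12\nabla^{GGM}_{s\partial_s}$.

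\textbf{Step 1 (mixed HKR map).} Replace $\mathrm{MF}(W)$ by a convenient model, for instance the endomorphism algebra of the stabilized residue field $k^{stab}$, or a Koszul-type model. Then construct an explicit map of mixed complexes
\begin{equation*}
(CC_*(\mathrm{MF}(W)),b,B)\longrightarrow(\Omega^{\bullet}(\mathbb{C}^n),-dW\wedge,d),
\end{equation*}
in the style of Dyckerhoff, Segal and Polishchuk--Vaintrob, built from a supertrace together with exponentials of the connection and curvature forms of the matrix factorization. We check, or import, that this map is a quasi-isomorphism on the $b$-level. By the usual $t$-filtration argument it then induces the isomorphism of $b+tB$ homologies with $td-dW\wedge$ homologies. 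We use that the right-hand side is concentrated in form degree $n$, which holds for an isolated singularity.

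\textbf{Step 2 (the family $d_s=sd$).} Rescaling the differential of a matrix factorization by $s$ multiplies its curvature by $s^2$. So the family $\mathcal{A}_s$ of \eqref{eq:t-family} corresponds to $\mathrm{MF}(s^2W)$, up to the usual rescaling of the odd components. We apply Step 1 in families over $\mathbb{C}[s]$ (with $s$ inverted where necessary). On the de Rham side, the Getzler--Gauss--Manin connection of the family $W_s=s^2W$ is the naive derivative plus the Kodaira--Spencer term $\frac{1}{t}\partial_s(W_s)\wedge$ multiplication. Matching this with $\nabla'-\frac1t\iota_\kappa$ requires that the HKR map intertwine the contraction $\iota_\kappa$, where $\kappa$ is the Kodaira--Spencer cocycle, with multiplication by $\partial_sW_s$. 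This is a Cartan-type identity, and I would verify it using the exponential form of the HKR map.

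\textbf{Step 3 (the Euler operator).} Match $Gr=s\partial_s+2t\partial_t+\Gamma$ on Hochschild chains with the operator $s\partial_s+2t\partial_t-q$ on $\Omega^q[[t]]$. Both satisfy $[Gr,\text{differential}]=\text{differential}$, and the HKR map is homogeneous once chain length is paired with form degree. We then combine $\frac12 Gr-\frac12\nabla^{GGM}_{s\partial_s}$ and set $s=1$. The result is $t\partial_t+\frac{W}{t}-\frac{q}{2}$, which is exactly \eqref{eq:t connection on twisted de Rham} after dividing by $t$; here the factor $s^2$ accounts for the $\frac12$.

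\textbf{Main obstacle.} The main difficulty is Step 2 together with the compatibility in Step 1 at the chain level. The HKR map for a curved category is only quasi-compatible with $B$ and $d$, so the intertwining with $\iota_\kappa=e_\kappa+tE_\kappa$ holds only up to an explicit homotopy, and the $\Gamma$ normalization can shift by a constant ($\frac n2$). I would first attack this on the Koszul model, where the map is given by closed formulas. Any discrepancy by a scalar multiple of the identity would be absorbed by noting that it must vanish, since both connections have the same residue behaviour on the quasi-homogeneous case $W=\sum x_i^{a_i}$, which can be computed directly.
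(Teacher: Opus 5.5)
The paper gives no proof of this statement; it is quoted from \cite{Shk}, so your outline has to stand on its own. Steps 1 and 3 are fine in outline. The mixed-complex quasi-isomorphism can be imported: for instance, the trace map to the second-kind Hochschild complex of the curved algebra $(\mathbb{C}[x],W)$, as in Polishchuk--Positselski, followed by HKR, which is strictly compatible with $b$, $B$ and the curvature term. Your family map $\Phi_s$ picks up one factor of $s$ for each copy of $\partial\delta$, and each such copy raises form degree over chain length by one. Hence $\Phi_s$ intertwines $s\partial_s+\Gamma$ with $s\partial_s-q$, and therefore the two Euler operators.

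The gap is in Step 2, which carries the entire content of the theorem. You reduce the matching of $\nabla'-\frac1t\iota_\kappa$ with the de Rham Gauss--Manin connection to an intertwining of $\iota_\kappa$ with multiplication by $\partial_sW_s$. This tacitly assumes $\Phi_s\circ\partial_s=\partial_s\circ\Phi_s$, which is false: the same $s$-dependence that makes Step 3 work gives $\partial_s\circ\Phi_s-\Phi_s\circ\partial_s=\partial_s(\Phi_s)\neq 0$. What must actually be shown is
\[
\Phi_s\circ\big(\partial_s-\tfrac1t\iota_d\big)\simeq\big(\partial_s-\tfrac1t\partial_sW_s\big)\circ\Phi_s ,
\]
and here $\partial_s(\Phi_s)$, i.e.\ one more insertion of $\delta$, is the main term rather than a correction: it is what converts $\iota_d$ into multiplication by $2W$. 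Your reduced statement is not even well posed. Neither $\iota_\kappa$ nor multiplication by $\partial_sW_s$ commutes with the differentials, so ``intertwining up to homotopy'' has no meaning for them separately.

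The missing idea is that $d=[\delta,-]$ becomes inner in the ambient curved algebra $\mathcal{B}=\mathrm{End}_{\mathbb{C}[x]}(P)$ with curvature $W$. The argument then runs as follows.
\begin{enumerate}[label=(\roman*)]
\item Regard $\delta$ as a length-zero cochain on $\mathcal{B}$. Then the trace map is $\exp(s\mathcal{L}_\delta)$.
\item Conjugation by $\exp(s\mathcal{L}_\delta)$ turns $\iota_d$ into $\iota_{d\pm 2sW}$, because $d(\delta)=2\delta^2=2W$; this is where $W$ enters. One can see this from \eqref{eq:DGT homotopy} with $D=\delta$, where all $T$-terms vanish for length reasons.
\item The Cartan formula for the non-closed cochain $\delta$ is the analogue of \eqref{eq:cartan} with the extra term $\iota_{[\mu_{\mathcal{B}},\delta]}=\pm\iota_d$. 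It gives $\iota_d=\pm t\mathcal{L}_\delta$ modulo $[b+tB,\iota_\delta]$, and this absorbs $\partial_s\exp(s\mathcal{L}_\delta)=\mathcal{L}_\delta\exp(s\mathcal{L}_\delta)$.
\item Finally, HKR (after the supertrace) sends $e_W$ to multiplication by $W$ and $E_W$ to a multiple of $dW\wedge d$, which vanishes on cocycles of $td-dW\wedge$.
\end{enumerate}

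Your fallback cannot replace this. Nothing shows that the discrepancy is a scalar. Even a scalar discrepancy, such as your $\frac n2$, could only be ruled out by computing the categorical connection independently in the quasi-homogeneous case, which you do not do.

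Finally, a sign: for the differential $td-dW_s\wedge$ the Gauss--Manin connection is $\partial_s-\frac1t\partial_sW_s$; with $+$ it fails to commute with the differential. With the sign you wrote, Step 3 would output $-W/t$ instead of $+W/t$.
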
\par\indent
\begin{rmk}
Note that when restricted to the top forms, the formula \eqref{eq:t connection on twisted de Rham} differs from the (Fourier-Laplace dual of the) Gauss-Manin $D$-module \eqref{eq:t connection on Gauss Manin system} by the term $-\frac{n}{2t}$. This is a feature, not a bug: the number of variables $n$ is \emph{not} a categorical invariant due to Kn\"{o}rrer periodicity.
\end{rmk}
One of the central questions in singularity theory is the study of the monodromy operator on $H^{n-1}(M_t;\mathbb{C})$ associated to the family of Milnor fibers over a small punctured disk. The following is a fundamental result in this direction. \\
\begin{thm}(The Monodromy Theorem, \cite{Lan})\label{thm:monodromy theorem}
The monodromy operator $T: H^{n-1}(M_t)\rightarrow H^{n-1}(M_t)$ has eigenvalues given by roots of unity. Moreover, the size of its largest Jordan block is $\leq n$. \qed
\end{thm}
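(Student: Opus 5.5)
We would deduce Theorem \ref{thm:monodromy theorem} from the categorical results of this paper applied to $\mathcal{C}=\mathrm{MF}(W)$, and then transport the conclusion across the Fourier--Laplace dictionary. First, $\mathrm{MF}(W)$ is a smooth and proper d$(\mathbb{Z}/2)$g category because the critical point is isolated (Dyckerhoff). Theorem \ref{thm:main theorem} then shows that $\nabla^{\mathrm{MF}(W)}_{\partial_t}$ is regular singular with quasi-unipotent monodromy. For the Jordan block bound we invoke Theorem \ref{thm:nc Scherk intro}, so we must show $[d]^{\cup n}=0$ in $HH^*(\mathrm{MF}(W))$.

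For this, recall that $HH^{even}(\mathrm{MF}(W))\cong \mathrm{Jac}(W)=\mathbb{C}[[x]]/(\partial_1W,\dots,\partial_nW)$ as rings (for an isolated singularity $HH^{odd}=0$). We claim that $[d]$ corresponds to the class of $W$. The reason is that $d$ is the Kodaira--Spencer class of the rescaling family \eqref{eq:t-family}, $d_s=sd$, and this family is Morita equivalent to $\mathrm{MF}(sW)$. The Kodaira--Spencer class of $\mathrm{MF}(W+\epsilon g)$ is $[g]\in\mathrm{Jac}(W)$, and for $g=W$ this is the first-order deformation $W\mapsto(1+\epsilon)W$. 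The Brian\c{c}on--Skoda theorem (or simply Mather's $W^{n}\in J(W)$, using $W\in\overline{J(W)}$) gives $W^n\in(\partial_iW)$, so $[d]^{\cup n}=0$. Hence the monodromy of the categorical $t$-connection has Jordan blocks of size $\le n$ and eigenvalues that are roots of unity.

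Next we transfer to $T$. By Theorem \ref{thm:Shklyarov theorem}, the categorical connection is identified with $\nabla^W_{\partial_t}$ on $H^*(\Omega^\bullet((t)),td-dW\wedge)$. Since the singularity is isolated, this cohomology is concentrated in degree $n$, where $\Gamma=-n/2$. Thus $\nabla^W$ equals $\hat{\mathcal{G}}$ with connection \eqref{eq:t connection on Gauss Manin system}, twisted by the rank one $\mathcal{T}$ with residue $-n/2$. This twist multiplies the monodromy by the scalar $(-1)^n$, which affects neither the Jordan structure nor the root-of-unity property. It remains to compare the monodromy of $\hat{\mathcal{G}}$ at $t=0$ with that of the Gauss--Manin system at $q=0$. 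Here we use the classical fact that the formal microlocalization $\mathcal{G}\mapsto\hat{\mathcal{G}}$ of a regular singular $D$-module, with invertible $\partial_q$, preserves the formal monodromy: the $V$-filtrations match via $\partial_q\leftrightarrow t^{-1}$, and $q\partial_q$ corresponds to $-t\partial_t$ up to a shift. Finally, $\mathcal{G}$ computes $H^{n-1}(M_t)$, with the identification between Brieskorn's de Rham model and Milnor fibre cohomology valid for $n\ge2$.

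The main obstacle is this comparison step. The microlocal correspondence only controls the $\mathbb{C}((t))$-completion, and we must make sure that no information about $T$ (for instance eigenvalue $1$ blocks) is lost. We would handle it by decomposing into generalized eigenspaces via the $V$-filtration on both sides and checking that $\mathrm{Gr}_V^\alpha$ is preserved, with the nilpotent parts intertwined. A secondary technical point is that $W$ is polynomial, so the spreading-out of Theorem \ref{thm:Toen theorem} applies directly, whereas the germ version requires replacing $\mathrm{MF}(W)$ by its localized or completed version, which is Morita equivalent.
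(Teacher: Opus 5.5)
You take a genuinely different route from the paper, which gives no proof of this statement: it imports the Monodromy Theorem from Landman and uses it only as motivation.

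You instead derive it as a corollary of the paper's own results, following the chain the paper sketches around Scherk's theorem:
\begin{itemize}
\item Theorem~\ref{thm:main theorem} applied to $\mathrm{MF}(W)$ gives quasi-unipotence.
\item Theorem~\ref{thm:nc Scherk} with $r=n$, together with $[d]\leftrightarrow 2[W]$ and Brian\c{c}on--Skoda, gives the Jordan bound.
\item The $V$-filtration comparison of Remark~\ref{thm:remark about Scherk thm and FL transform} transports both conclusions to $T$.
\end{itemize}
This is not circular: the proofs of Theorems~\ref{thm:main theorem restated} and~\ref{thm:nc Scherk} (To\"{e}n, Katz, the operadic $p$-curvature computation) never use Landman.

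What your route buys is a uniform mod $p$ proof obtained by specializing a categorical statement. The same argument also gives Scherk's bound $r$ whenever $W^r\in J(W)$, and the bound $\mathrm{Ddim}(\mathrm{MF}(W))+1$. What it costs is a long list of external inputs: smoothness and properness of $\mathrm{MF}(W)$, the ring isomorphism $HH^*(\mathrm{MF}(W))\cong\mathrm{Jac}(W)$, Shklyarov's theorem, Brian\c{c}on--Skoda, regularity of the Gauss--Manin system, the Kashiwara--Malgrange comparison, and finite determinacy for general germs. It is also confined to isolated singularities, whereas the classical theorem needs none of this and holds for arbitrary degenerations.

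One point needs care in the eigenvalue part. The paper's Remark indexes $\mathrm{Gr}_V$ by $\alpha\in(-1,0]$, which presupposes real exponents. To deduce the root-of-unity statement without circularity, you must run the comparison with a $\mathbb{C}$-indexed $V$-filtration (a fixed ordering of $\mathbb{C}$). Only afterwards can you read off rationality of the exponents from the main theorem.

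There are also three small slips, none fatal.
\begin{itemize}
\item Rescaling $d\mapsto sd$ rescales $\delta\mapsto s\delta$, so the family is $\mathrm{MF}(s^2W)$, not $\mathrm{MF}(sW)$, and $[d]=2[W]$ as the paper states. The factor is irrelevant for nilpotence.
\item With $q\leftrightarrow t^2\partial_t$ and $\partial_q\leftrightarrow t^{-1}$ one gets $q\partial_q=t\partial_t-1$, not $-t\partial_t$ up to a shift. The matching $\mathrm{Gr}^{\alpha}_V\mathcal{M}_q=\mathrm{Gr}^{\alpha+1}_V\mathcal{M}_t$ by uniqueness of the $V$-filtration only works with the correct sign.
\item $W^n\in J(W)$ is not a separate result of Mather that offers a shortcut; it was Mather's question. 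Passing from $W\in\overline{J(W)}$ to $W^n\in J(W)$ is exactly the Brian\c{c}on--Skoda inclusion $\overline{I^n}\subset I$. Without it, nilpotence of the maximal ideal of the Artinian ring $\mathrm{Jac}(W)$ only gives $W^{\mu}\in J(W)$, hence the weaker bound $\mu$.
\end{itemize}
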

More relevant to us is the following generalization of the monodromy theorem obtained by Scherk.\\
\begin{thm} (\cite{Sche})\label{thm:Scherk thm}
If $W^{r}\in (\partial_{x_1}W,\cdots,\partial_{x_n}W)\subset \mathbb{C}[x_1,\cdots,x_n]$, then the size of the largest Jordan block of $T$ is $\leq r$. \qed     
\end{thm}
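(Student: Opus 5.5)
The plan is to deduce the bound from the non-commutative local monodromy statement (Theorem \ref{thm:nc Scherk intro}), applied to $\mathcal{C}=\mathrm{MF}(W)$. That statement is in turn a consequence of Theorem \ref{thm: p curvature equals multiplicative operation} together with the second half of Katz's criterion (Theorem \ref{thm:Katz theorem}). The argument has three parts: translate the hypothesis $W^r\in(\partial_{x_1}W,\cdots,\partial_{x_n}W)$ into $[d]^{\cup r}=0$; bound the Jordan blocks of the categorical $t$-connection of $\mathrm{MF}(W)$; and transfer that bound to the classical monodromy $T$ via Theorem \ref{thm:Shklyarov theorem} and the Fourier-Laplace correspondence.

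\textbf{Step 1 (algebraic translation).} Use the standard computation (Dyckerhoff) $HH^{even}(\mathrm{MF}(W))\cong \mathrm{Jac}(W)=\mathbb{C}[x]_{(0)}/(\partial_{x_1}W,\cdots,\partial_{x_n}W)$, where the cup product becomes multiplication. Since $W$ has an isolated critical point, $\mathrm{MF}(W)$ is smooth and proper. Under this isomorphism, the class $[d]$ is the Kodaira-Spencer class of the rescaling family \eqref{eq:t-family}. That family is quasi-equivalent to $\mathrm{MF}(sW)$ (up to rescaling the odd generators), so $[d]\mapsto [W]$, and the hypothesis gives $[d]^{\cup r}=0$.

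\textbf{Step 2 (bound for the categorical $t$-connection).} Spread out $\mathrm{MF}(W)$, or a compact generator's endomorphism algebra via Lemma \ref{thm:reduction from category to algebra}, over a global ring $R$ using Theorem \ref{thm:Toen theorem}. The relation $[d]^{\cup r}=0$ is witnessed by a finite chain-level identity, so after localizing $R$ it holds over $R$ and hence modulo every $\mathfrak{m}$. Arrange Claim 2 of Section 5 as well. Modulo $\mathfrak{m}$, Theorem \ref{thm: p curvature equals multiplicative operation} gives
\begin{equation*}
(F^{\mathcal{A}}_{2t^2\partial_t})^r=c^r\bigcap\nolimits^{C_p}([d]^{\cup r},-)=0 .
\end{equation*}
Here one iterates the multiplicativity; the Frobenius semilinearity only introduces $c^{p^i}$ factors, which do not affect vanishing. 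So the $p$-curvature along $2t^2\partial_t$, and hence along $\partial_t$ up to an invertible factor, is nilpotent of order $\le r$ for almost all $\mathfrak{m}$. The second part of Theorem \ref{thm:Katz theorem} then bounds the Jordan blocks of the monodromy of $\nabla^{\mathrm{MF}(W)}_{\partial_t}$ by $r$.

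\textbf{Step 3 (transfer to $T$).} By Theorem \ref{thm:Shklyarov theorem}, after inverting $t$ the connection becomes $\nabla^W_{\partial_t}$ on the twisted de Rham cohomology. Because $W$ has an isolated singularity, this cohomology is concentrated in $\Omega^n$ and identifies with $\hat{\mathcal{G}}$. On top forms, $\nabla^W_{\partial_t}$ differs from \eqref{eq:t connection on Gauss Manin system} by tensoring with the rank one regular connection $\mathcal{T}_{n/4}$. That twist multiplies the monodromy by a scalar and so preserves Jordan block sizes. It remains to compare the formal monodromy at $t=0$ of the Fourier-Laplace dual $\hat{\mathcal{G}}$ with the monodromy $T$ of the Gauss-Manin system. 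For this I would use the classical microlocal and stationary phase description (Malgrange; Sabbah): the formal microlocalization of a regular singular Gauss-Manin system with a single singular point $q=0$ has a formal connection in the $t$-variable whose monodromy is conjugate to $T$. Combined with the trivial $\exp(\lambda/t)$ factor at critical value $0$, this gives the same Jordan structure.

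I expect Step 3 to be the main obstacle: making the Fourier-Laplace and microlocalization comparison precise, including the passage between the analytic germ and the algebraic or formal model, and the sign conventions relating $\partial_q^{-1}$ to $t$. The identification $[d]\leftrightarrow[W]$ in Step 1 also needs care, including the compatibility of the Dyckerhoff isomorphism with the cup product and with the specific cochain $d$. Within the present framework, however, it should be a direct check using Kn\"{o}rrer periodicity to reduce to a convenient model.
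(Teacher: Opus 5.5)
Your proposal is correct and follows essentially the route the paper indicates in Remark~\ref{thm:remark about Scherk thm and FL transform}: apply Theorem~\ref{thm:nc Scherk} to $\mathrm{MF}(W)$ via $HH^*(\mathrm{MF}(W))\cong\mathrm{Jac}(W)$, then pass from $\nabla^{\mathrm{MF}(W)}_{\partial_t}$ to $T$ through Theorem~\ref{thm:Shklyarov theorem} and a Fourier--Laplace comparison. The slips are harmless: the rescaled family $sd$ consists of factorizations of $s^2W$, so $[d]=2[W]$ rather than $[W]$, and the rank-one twist on top forms is by $-\frac{n}{2t}$, i.e.\ $\mathcal{T}_{n/2}$. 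Also, where you invoke stationary phase, the paper argues directly with $V$-filtrations, $\mathrm{Gr}^{\alpha}_V\mathcal{M}_q=\mathrm{Gr}^{\alpha+1}_V\mathcal{M}_t$, using the invertibility of $\partial_q$ so that eigenvalue-$1$ Jordan blocks do not shift.
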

The relation between Scherk's theorem and the classical monodromy theorem lies in the following result\footnote{Remarkably, the proof of Brian\c{c}on and Skoda on this purely algebraic statement uses deep results from analysis.} of Brian\c{c}on and Skoda \cite{BS}: for any $W\in \mathbb{C}[x_1,\cdots,x_n]$ with $W(0,\cdots,0)=0$, $W^n$ is contained in the Jacobian ideal of $W$.  \par\indent
Using reduction mod $p$ and the topological interpretation of $p$-curvature in Section 4, we obtain the following non-commutative generalization of Scherk's monodromy theorem.\\
\begin{thm}\label{thm:nc Scherk}
Let $\mathcal{C}$ be a smooth proper d$(\mathbb{Z}/2)$g category over $\mathbb{C}$. Suppose the Hochschild cohomology class of its differential $[d]\in HH^*(\mathcal{C})$ satisfies $[d]^{\cup r}=0$, then the size of the largest Jordan block of the monodromy of $\nabla^{\mathcal{C}}_{\partial_t}$ is $\leq r$.   
\end{thm}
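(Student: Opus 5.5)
The plan is to run the proof of Theorem \ref{thm:main theorem restated} again, now using the quantitative second half of Katz's criterion (Theorem \ref{thm:Katz theorem}). First, by Lemma \ref{thm:reduction from category to algebra} we replace $\mathcal{C}$ by a smooth proper d$(\mathbb{Z}/2)$g algebra $\mathcal{A}$. Under the Morita equivalence the class $[d_{\mathcal{C}}]\in HH^*(\mathcal{C})$ corresponds to $[d_{\mathcal{A}}]\in HH^*(\mathcal{A})$. This holds because Hochschild cohomology, with its cup product, is Morita invariant, and the class of the differential is the Kodaira--Spencer class of the canonical family \eqref{eq:t-family}, which is preserved by the functoriality of the family $d\mapsto sd$. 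Consequently $[d_{\mathcal{A}}]^{\cup r}=0$ in $HH^*(\mathcal{A})$.

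Next we spread out. Theorem \ref{thm:Toen theorem} gives a global ring $R$ and a model $\mathcal{A}_R$, and we localize $R$ as in \textbf{Claim 2}. The relation $[d]^{\cup r}=0$ then needs to descend to $R$. Over $\mathbb{C}$, the cochain $d^{\cup r}$ equals $\delta\psi$ for some Hochschild cochain $\psi$. Because $\mathcal{A}$ is smooth and proper, $HH^*$ is computed by the perfect complex $\mathrm{RHom}_{\mathcal{A}^e}(\mathcal{A},\mathcal{A})$, which is compatible with flat base change along $R\to\mathbb{C}$. Hence $[d_R]^{\cup r}\otimes 1=0$ in $HH^*(\mathcal{A}_R/R)\otimes_R\mathbb{C}$. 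After a further localization $R[1/f]$, we may take $HH^*(\mathcal{A}_R/R)$ finite free and compatible with base change to residue fields, by the same generic freeness and Claim 1 argument used in Claim 2. Then $[d_R]^{\cup r}=0$ already over $R$, and therefore $[d_{\kappa(\mathfrak{m})}]^{\cup r}=0$ in $HH^*(\mathcal{A}_R\otimes\kappa(\mathfrak{m}))$ for every maximal ideal $\mathfrak{m}$. Concretely, one only needs the finitely many coefficients of a primitive $\psi$ to lie in $R$, and since $\psi$ is a single cochain this can be arranged by enlarging $R$. I expect this descent, together with checking that the cup product commutes with reduction, to be the main technical point, although it is routine.

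Now fix $\mathfrak{m}$ with residue characteristic $p>2$. By Theorem \ref{thm: p curvature equals multiplicative operation}, $F_{2t^2\partial_t}=c\cdot\bigcap^{C_p}([d],-)$. Multiplicativity gives
\[
(F_{2t^2\partial_t})^r=c^r\cdot\bigcap\nolimits^{C_p}([d]^{\cup r},-)=c^r\cdot\bigcap\nolimits^{C_p}(0,-)=0 .
\]
Since $2t^2\partial_t$ differs from $\partial_t$ by a unit factor and $p$-curvature is Frobenius-linear in the vector field, $F_{\partial_t}=(2t^2)^{-p}F_{2t^2\partial_t}$. The operator $F_{2t^2\partial_t}$ is $\mathbb{F}_p((t))$-linear and commutes with functions, because $p$-curvature is $\mathcal{O}$-linear. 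Therefore $F_{\partial_t}^r=0$ as well. This nilpotence bound holds uniformly for almost all $\mathfrak{m}$, on $HH^{per}_*(\mathcal{A}_R\otimes\kappa(\mathfrak{m}))$, which Claim 2 identifies with the reduction of $(HH^{per}_*(\mathcal{A}_R/R),\nabla)$.

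Finally we apply the ``moreover'' part of Theorem \ref{thm:Katz theorem} with $N=r$. This bounds the Jordan blocks of the monodromy over $\mathrm{Frac}(R)((t))$, and hence over $\mathbb{C}((t))$ by the base change isomorphism \eqref{eq:basechange of periodic cyclic homology to C}, by $r$.
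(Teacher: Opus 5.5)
Your proposal is correct and follows the paper's proof essentially step for step. You reduce to a smooth proper algebra, spread out so that $HH^*(\mathcal{A}_R/R)$ is finite free with $HH^*(\mathcal{A}_R/R)\otimes_R\mathbb{C}\cong HH^*(\mathcal{A}/\mathbb{C})$ (so $[d]^{\cup r}=0$ holds over $R$ and hence over every $\kappa(\mathfrak{m})$), use the multiplicativity in Theorem \ref{thm: p curvature equals multiplicative operation} to get $p$-curvature nilpotent of order $\leq r$, and conclude with the quantitative half of Theorem \ref{thm:Katz theorem}. One aside is inaccurate: a primitive $\psi$ does not in general have ``finitely many coefficients'' that can be adjoined to $R$, since a cochain in $\prod_k\mathrm{Hom}(\mathcal{A}[1]^{\otimes k},\mathcal{A})$ generally has infinitely many components, but your finite-freeness argument already does the descent, so nothing depends on it.
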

\emph{Proof}. By Lemma \ref{thm:reduction from category to algebra}, it suffices to consider the case where $\mathcal{C}=\mathcal{A}$ is a d$(\mathbb{Z}/2)$g algebra. Analogous to the proof of Theorem \ref{thm:main theorem restated} \textbf{Claim 2}, we fix a global ring $R$ and a smooth proper $R$-linear d$(\mathbb{Z}/2)$g algebra $\mathcal{A}_R$ such that $\mathcal{A}_R\otimes_R\mathbb{C}\simeq \mathcal{A}$, with the further property that $HH^*(\mathcal{A}_R/R)$ is a finite free $R$-module and the natural map
\begin{equation}
HH^*(\mathcal{A}_R/R)\otimes_R\mathbb{C}\rightarrow HH^*(\mathcal{A}/\mathbb{C})   
\end{equation}
is an isomorphism. In particular, the equality $[d]^{\cup r}=0$ already holds in $HH^*(\mathcal{\mathcal{A}_R}/R)$ and hence also in $HH^*\big((\mathcal{A}_R\otimes_R\kappa(\mathfrak{m}))/\kappa(\mathfrak{m})\big)$ for any maximal ideal $\mathfrak{m}$ of $R$. By Theorem \ref{thm: p curvature equals multiplicative operation}, this implies that for any $\mathfrak{m}$, the $p$-curvature (recall $p$ is the characteristic of $\kappa(\mathfrak{m})$) of the $t$-connection on $HH^{per}_*\big((\mathcal{A}_R\otimes_R\kappa(\mathfrak{m}))/\kappa(\mathfrak{m})\big)$ is nilpotent of order $\leq r$. The statement then follows from the second part of Theorem \ref{thm:Katz theorem}.\qed\\
\begin{rmk}\label{thm:remark about Scherk thm and FL transform}
Theorem \ref{thm:Scherk thm} can be recovered by applying Theorem \ref{thm:nc Scherk} to $\mathcal{C}=\mathrm{MF}(W)$ together with Theorem \ref{thm:Shklyarov theorem}, and the fact that there is a ring isomorphism $HH^*(MF(W))\cong\mathrm{Jac}(W)$, where $\mathrm{Jac}(W)=\mathbb{C}[x_1,\cdots,x_n]/(\partial_{x_1}W,\cdots,\partial_{x_n}W)$ denotes the Jacobian algebra of $W$, which moreover identifies the classes $[d]$ and $2[W]$\footnote{This follows, for instance, by using the fact that the pair of twisted HKR isomorphisms (cf. \cite{CT}) $\big(\mathrm{Jac}(W),\Omega^n(\mathbb{C}^n)/dW\wedge\Omega^{n-1}(\mathbb{C}^n)\big)\cong \big(HH^*(\mathrm{MF}(W)),HH_*(\mathrm{MF}(W))\big)$ form an isomorphism of a ring acting on a module, and then restricting Theorem \ref{thm:Shklyarov theorem}'s identification of $\nabla^{\mathrm{MF}(W)}_{t^2\partial_t}$ with $\nabla^W_{t^2\partial_t}$ to $t=0$.}. One subtlety is that a priori, Theorem \ref{thm:Shklyarov theorem} identifies $\nabla^{\mathrm{MF}(W)}$ with the (inverted) Fourier-Laplace dual of the Gauss-Manin system (up to a half-integer twist, which is irrelevant for the size of Jordan blocks). On the other hand, on a regular holonomic $\mathbb{C}\langle q,\partial_q\rangle$-module $\mathcal{M}_q$, the Kashiwara-Malgrange-Saito comparison theorem identifies the $e^{2\pi i\alpha}$-generalized eigenspace, for $-1< \alpha\leq 0$, of the monodromy operator $T$ with (cf. \cite[Section 5]{vSt}) 
\begin{equation}
\mathrm{Gr}^{\alpha}_V\mathcal{M}_q
\end{equation}
and moreover the logarithm of its nilpotent part is identified, up to factors of $2\pi i$, with $q\partial_q-\alpha$. When $\mathcal{M}_q$ is the Gauss-Manin $D$-module of $W$, recall that $\partial_q$ is invertible. With the identification of variables $q\leftrightarrow t^2\partial_t, \partial_q\leftrightarrow t^{-1}$ and the relation $t\partial_t=t\cdot t^{-2}q=\partial_q q=q\partial_q+1$, one sees that the $V$-filtration on the (inverted) Fourier-Laplace dual module $\mathcal{M}_t$ satisfies (by uniqueness by the $V$-filtration)
\begin{equation}
\mathrm{Gr}^{\alpha}_V\mathcal{M}_q=\mathrm{Gr}^{\alpha+1}_V\mathcal{M}_t.
\end{equation}
Moreover, multiplication by $t=\partial_q^{-1}$ induces an isomorphism $t\cdot: \mathrm{Gr}^{\alpha}_V\mathcal{M}_t\xrightarrow{\cong} \mathrm{Gr}^{\alpha+1}_V\mathcal{M}_t$, and hence the Jordan block types of the nilpotent part of the monodromy is preserved under the (inverted) Fourier-Laplace transform\footnote{The invertibility of $\partial_q$ is crucial here, otherwise the sizes of Jordan blocks associated to the eigenvalue $1$ can shift by $\pm 1$ under Fourier-Laplace transform, cf. \cite[Corollary 2.1.28]{PS1}. Indeed, for a general $D$-module $\mathcal{M}_q$, the vanishing and nearby cycles agree for $-1<\alpha<0$, while at the endpoints of $[-1,0]$ they are related by maps $U=\partial_q: \mathrm{Gr}^0_V\mathcal{M}_q\rightarrow \mathrm{Gr}^{-1}_V\mathcal{M}_q, V=q: \mathrm{Gr}^{-1}_V\mathcal{M}_q\rightarrow \mathrm{Gr}^0_V\mathcal{M}_q$, and the logarithm of the monodromy operator on the nearby cycle associated to the endpoint $1$ is identified with $VU$. Under the ordinary Fourier-Laplace transform $q\leftrightarrow -\partial_t, \partial_q\leftrightarrow t$, the roles of the relevant vanishing and nearby cycles are reversed and the logarithm of the monodromy becomes $UV$, whose sizes of Jordan blocks can differ from $VU$ by shifts of $\pm 1$. This issue is absent if $U$ or $V$ is invertible.}.
\end{rmk}
Theorem \ref{thm:main theorem restated}, together with Theorem \ref{thm:nc Scherk} applied to the case $r=1$, immediately gives the following.\\
\begin{cor}\label{thm:nc quasi homogeneous}
Let $\mathcal{C}$ be a smooth proper d$(\mathbb{Z}/2)$g category over $\mathbb{C}$. Suppose $[d]=0\in HH^*(\mathcal{C})$, then the $t$-connection on $HH^{per}_*(\mathcal{C})$ is regular singular with finite monodromy. \qed   \\
\end{cor}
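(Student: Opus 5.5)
The corollary should follow by combining Theorem \ref{thm:main theorem restated} with Theorem \ref{thm:nc Scherk} in the case $r=1$. The work is to check that a regular singular connection whose monodromy has all Jordan blocks of size one, and whose eigenvalues are roots of unity, has finite monodromy.

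First, apply Theorem \ref{thm:main theorem restated} to $\mathcal{C}$, after reducing to an algebra via Lemma \ref{thm:reduction from category to algebra}. This shows that $(HH^{per}_*(\mathcal{C}),\nabla^{\mathcal{C}}_{\partial_t})$ is regular singular at $t=0$ and that its monodromy $T$ has eigenvalues that are roots of unity. Over $\mathbb{C}((t))$ the formal monodromy of a regular singular connection is well defined up to conjugacy. Concretely, after a gauge transformation the connection becomes $\partial_t + A/t$ with $A$ a constant matrix, and one may take $T=\exp(-2\pi i A)$.

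Second, note that $[d]=0$ gives $[d]^{\cup 1}=0$. Theorem \ref{thm:nc Scherk} with $r=1$ then says every Jordan block of $T$ has size at most $1$, so $T$ is semisimple.

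Third, a semisimple matrix all of whose eigenvalues are roots of unity has finite order. If $N$ is the least common multiple of the orders of the eigenvalues, then $T^N=\mathrm{id}$, so the monodromy is finite. One can phrase this equivalently via the canonical form: choose $A$ with eigenvalues in $\mathbb{Q}$ (possible because of quasi-unipotence) and semisimple. Pulling back along $t=s^N$ then yields a connection gauge-equivalent to one with integer residues and trivial monodromy, so the connection becomes trivial after a finite cover.

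I expect no real obstacle, since both inputs are already established. The one point needing care is that ``size of the largest Jordan block'' in Theorem \ref{thm:nc Scherk} refers to the full monodromy $T$ and not just its unipotent part. Bounding blocks of size $\leq 1$ for $T$ is exactly semisimplicity, which is the property the argument uses.
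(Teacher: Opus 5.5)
Your proposal is correct and follows the paper's argument: the corollary is stated as an immediate consequence of Theorem \ref{thm:main theorem restated} together with Theorem \ref{thm:nc Scherk} for $r=1$. You also spell out the step the paper leaves implicit, namely that a semisimple monodromy whose eigenvalues are roots of unity has finite order.
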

\begin{rmk}
When $\mathcal{C}=\mathrm{MF}(W)$, the condition $[d]=0\in HH^*(\mathcal{A})$ can be identified with the condition $W\in (\partial_{x_1}W,\cdots,\partial_{x_n}W)$ (cf. Remark \ref{thm:remark about Scherk thm and FL transform}), which by a theorem of Saito \cite{Sai} is equivalently to $W$ being \emph{quasi-homogeneous}. Therefore, Corollary \ref{thm:nc quasi homogeneous} is a non-commutative generalization of the classical fact that a quasi-homogeneous isolated hypersurface singularity has finite monodromy (cf. \cite[p 71]{Mil}). 
\end{rmk}
In the recent work of Pomerleano-Seidel \cite{PS2}, a refinement of Theorem \ref{thm:nc Scherk} concerning the conjugacy class of the monodromy operator is obtained in the context of quantum connection (recall this arises as one sets $\mathcal{C}$ to be the Fukaya category of a closed monotone symplectic manifold) using mod $p$ methods, cf. Theorem 1.4 in loc.cit. It also has a counterpart in classical singularity theory known as Scherk's conjecture, proved by Varchenko \cite{Var}. A non-commutative counterpart of these statements, at the level of generality of Theorem \ref{thm:main theorem restated}, seems to be lacking (one issue is, for instance, in general $HH^*(\mathcal{C})$ and $HH_*^{per}(\mathcal{C})$ don't even have the same rank over $\mathbb{C}$ and $\mathbb{C}((t))$). It poses an interesting question whether such a counterpart exists when restricted to a meaningful subclass of d$(\mathbb{Z}/2)$g categories $\mathcal{C}$. \\
\begin{conj}
Let $\mathcal{C}$ be a smooth proper Calabi-Yau d$(\mathbb{Z}/2)$g category such that the non-commutative Hodge-to-de-Rham spectral sequence degenerates\footnote{Such assumptions are satisfied for $\mathcal{A}=\mathrm{MF(W)}$ where $W$ is an isolated hypersurface singularity  and $\mathcal{A}=\mathrm{Fuk}(X)$ for a large class of closed monotone symplectic manifolds $X$ \cite{Gan}.}. Then, under the identification $HH^*(\mathcal{C})\cong HH_*(\mathcal{C})$ via the Calabi-Yau structure, the nilpotent part of the monodromy of $\nabla^{\mathcal{C}}_{\partial_t}$ lies in the closure of the conjugacy class of the operator $[d]\cup-\in \mathrm{End}(HH^*(\mathcal{C}))$.
\end{conj}
Another question which we won't pursue in this paper (but think is worth further investigation) is the formulation and proof of a non-commutative version of Theorem \ref{thm:monodromy theorem}. At first sight, this is slightly mysterious as the dimension of the ambient space of an isolated hypersurface singularity $W$ is not an invariant of $\mathcal{C}=\mathrm{MF}(W)$ due to Kn\"{o}rrer periodicity. We remark that it is possible that a correct formulation requires working with a `$\mathbb{Z}/2$-graded smooth and proper deformation of a smooth $\mathbb{Z}$-graded dg category' instead of just a single smooth proper d$(\mathbb{Z}/2)$g category (for instance, this perspective is central to the work of \cite{PS1} on the quantum connection). \\
\begin{question}
Formulate and prove a non-commutative version of Theorem \ref{thm:monodromy theorem}. 
\end{question}

\subsubsection{Monodromy and categorical dimensions}
The ensuing discussion can be motivated by the following question: given a smooth proper d$(\mathbb{Z}/2)$g category $\mathcal{C}$, is there a meaningful upper bound for the order of nilpotence of $[d]\in HH^*(\mathcal{C})$ (the proof of Corollary \ref{thm: nilpotence of p curvature} shows it is finite)? In light of Theorem \ref{thm:nc Scherk}, that constant will also give an upper bound for the sizes of Jordan blocks of the monodromy of $\nabla^{\mathcal{C}}_{\partial_t}$. \par\indent
Our answer depends on various notions of categorical dimensions, which are measures of the complexity of a dg category. We now briefly recall the relevant definitions, see e.g. \cite{BF} \cite{BFL}\cite{EL} for more details. \par\indent
Let $\mathcal{T}$ be a ($\mathbb{Z}/2$-graded) triangulated category, and $\mathcal{I}\subset \mathcal{T}$ a full subcategory. Let $\langle\mathcal{I}\rangle$ denote the full subcategory of $\mathcal{T}$ whose objects are isomorphic to summands of finite coproducts of shifts of objects in $\mathcal{I}$. For two full subcategories $\mathcal{I}_1,\mathcal{I}_2$, denote by $\mathcal{I}_1*\mathcal{I}_2$ the full subcategory of objects $B$ such that there is a
distinguished triangle 
\begin{equation}
B_1\rightarrow B\rightarrow B_2 \rightarrow B_1[1]\quad, \quad B_i\in \mathcal{I}_i.   
\end{equation}
Then, inductively set
\begin{equation}
\langle\mathcal{I}\rangle_0:=\langle\mathcal{I}\rangle\quad,\quad   \langle\mathcal{I}\rangle_n:=\langle\langle\mathcal{I}\rangle_{n-1}*\langle\mathcal{I}\rangle \rangle.
\end{equation}

\begin{mydef}\label{thm:Hochschild and diagonal dimension}
Let $\mathcal{A}$ be a d$(\mathbb{Z}/2)$g algebra. The \emph{Hochschild dimension} of $\mathcal{A}$, denoted $\mathrm{Hdim}(\mathcal{A})$, is the minimum $n$ for which $\mathcal{A}_{\Delta}\in \langle \mathcal{A}^{e}:=\mathcal{A}^{op}\otimes \mathcal{A}\rangle_n\subset \mathrm{Perf}(\mathcal{A}^e)$. The \emph{diagonal dimension} of $\mathcal{A}$, denoted $\mathrm{Ddim}(\mathcal{A})$, is the minimum $n$ for which there exist $G\in \mathrm{Perf}(\mathcal{A}^{op}),F\in \mathrm{Perf}(\mathcal{A})$ such that $\mathcal{A}_{\Delta}\in \langle G\boxtimes F\rangle_n\subset \mathrm{Perf}(\mathcal{A}^e)$. \par\indent
If $\mathcal{C}$ is more generally a d$(\mathbb{Z}/2)$g category, one can define $\mathrm{Ddim}(\mathcal{C})$ to be the diagonal dimension of $\mathcal{A}=\mathrm{End}_{\mathrm{Perf}(\mathcal{C})}(G)$, for any choice of compact generator $G\in \mathrm{Perf}(\mathcal{C})$\footnote{This is well-defined by the Morita invariance of diagonal dimension, cf. \cite[Proposition 4.3]{EL}.}. 
\end{mydef}

Let $\mathcal{T}$ be a triangulated category and $G\in \mathcal{T}$ an object. Then a morphism $f: X\rightarrow Y$ is called a \emph{$G$-ghost} if $\mathrm{Hom}_{\mathcal{T}}^i(G,f)$ is zero for all $i$. The following lemma (often called the \emph{ghost lemma}) is a standard tool in obtaining lower bounds for generation time; see e.g. \cite[Lemma 2.12]{BFL} for a proof. \\
\begin{lemma}\label{thm:ghost lemma}
Let $\mathcal{T}$ be a triangulated category and $G\in \mathcal{T}$ an object. If there exists a sequence of $G$-ghosts $f_i:X_{i-1}\rightarrow X_i, 1\leq i\leq t$ such that $f_t\circ\cdots\circ f_1\neq 0$, then $X_0\notin \langle G\rangle_{t-1}$. \qed 
\end{lemma}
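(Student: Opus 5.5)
We argue by induction on $t$, using only the octahedral axiom and the definition of ghosts. The inductive claim is slightly stronger than the statement: \emph{if $X_0\in\langle G\rangle_{n}$ and $f_1,\dots,f_{n+1}$ are composable $G$-ghosts starting at $X_0$, then $f_{n+1}\circ\cdots\circ f_1=0$.} Taking $n=t-1$ then gives the lemma by contraposition.

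First, observe that the class of objects $X$ such that every $G$-ghost out of $X$ vanishes is closed under shifts, finite coproducts and direct summands, and contains $G$. It contains $G$ because a ghost $f:G\to Y$ satisfies $f=f\circ\mathrm{id}_G\in\mathrm{Hom}^0(G,f)(\mathrm{id}_G)=0$; the other closure properties are immediate. Consequently every $G$-ghost out of an object of $\langle G\rangle_0=\langle G\rangle$ is zero, which is the base case $n=0$.

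For the inductive step, take $X_0\in\langle G\rangle_n$ with $n\ge1$. Since the property ``every composite of $n+1$ ghosts out of $X$ vanishes'' is stable under summands, shifts and finite sums, we may assume there is a triangle $B_1\xrightarrow{u}X_0\xrightarrow{v}B_2\to B_1[1]$ with $B_1\in\langle G\rangle_{n-1}$ and $B_2\in\langle G\rangle$. Put $g=f_n\circ\cdots\circ f_1$. By induction, $g\circ u=0$, since $u$ followed by $n$ ghosts starting at $B_1\in\langle G\rangle_{n-1}$ is a composite of the required form (precomposition with $u$ does not need $u$ to be a ghost). So $g$ factors as $g=h\circ v$ for some $h:B_2\to X_n$. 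Then $f_{n+1}\circ g=(f_{n+1}\circ h)\circ v$, and $f_{n+1}\circ h$ is a ghost out of $B_2\in\langle G\rangle$: indeed $\mathrm{Hom}^i(G,f_{n+1}\circ h)=\mathrm{Hom}^i(G,f_{n+1})\circ\mathrm{Hom}^i(G,h)=0$. Hence $f_{n+1}\circ h=0$ by the base case, and therefore $f_{n+1}\circ g=0$.

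The only subtle point is the indexing and the order of the triangle in the convention $\langle\mathcal{I}\rangle_n=\langle\langle\mathcal{I}\rangle_{n-1}*\langle\mathcal{I}\rangle\rangle$. The argument above uses that the $\langle G\rangle_{n-1}$ piece is the \emph{sub}object $B_1$, so the composite of the first $n$ ghosts is killed on $B_1$ and factors through the quotient $B_2$, which absorbs the last ghost. If the convention were reversed, we would instead factor the first ghost through $B_2$ and apply induction on the other side. In either case the count works out to $t$ ghosts for generation time $t-1$.
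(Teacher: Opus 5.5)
Your proof is correct. It is the standard inductive proof of the ghost lemma: the first $n$ ghosts are killed on the $\langle G\rangle_{n-1}$-piece $B_1$ by induction, the composite is factored through $v: X_0\to B_2$ via the exact sequence $\mathrm{Hom}(B_2,X_n)\to\mathrm{Hom}(X_0,X_n)\to\mathrm{Hom}(B_1,X_n)$, and the last ghost is killed on $B_2\in\langle G\rangle$, with closure under shifts, finite sums and summands handling the outer $\langle-\rangle$. The paper gives no proof of its own and simply cites \cite[Lemma 2.12]{BFL} for this standard argument.
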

Recall that there is a decreasing length filtration $F^iHH^*(\mathcal{A})$ on the Hochschild cohomology of a d$(\mathbb{Z}/2)$g algebra $\mathcal{A}$, cf. \eqref{eq:length filtration on HH^*}. The following lemma follows essentially from \cite[Corollary 3.10]{KK} (which applies to more general settings), and we include a proof for completeness.\\
\begin{lemma}\label{thm:F^1HH^* is C^e ghost}
Fix $[\varphi]\in F^1HH^*(\mathcal{A})$, viewed as the class of a closed morphism $\varphi\in \mathrm{Hom}_{D(\mathcal{A}^e)}(\mathcal{A}_{\Delta},\mathcal{A}_{\Delta})$. Then $[\varphi]$ is an $\mathcal{A}^e$-ghost.     
\end{lemma}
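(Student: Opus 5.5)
We must show that for every $i$, the map $\mathrm{Hom}^i_{D(\mathcal{A}^e)}(\mathcal{A}^e,\varphi)$ vanishes, where $\mathcal{A}^e$ is the free bimodule of rank one.

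\textbf{Step 1 (identify the functor).} Since $\mathcal{A}^e$ is free of rank one over itself, $\mathrm{Hom}_{D(\mathcal{A}^e)}(\mathcal{A}^e,M)\cong H^*(M)$ naturally in $M$. It therefore suffices to show that $\varphi$ induces the zero map $H^*(\mathcal{A}_\Delta)\to H^*(\mathcal{A}_\Delta)$. Concretely, this map is $[x]\mapsto [\varphi\circ s(x)]$, where $s:\mathcal{A}\to B(\mathcal{A})$ is a quasi-inverse of the bar augmentation $\epsilon$ in $D(\mathcal{A}^e)$, and $\varphi$ is viewed as an $\mathcal{A}^e$-linear map $B(\mathcal{A})\to\mathcal{A}$, as in \eqref{eq:length filtration on HH^*}.

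\textbf{Step 2 (use a one-sided splitting).} The key point is that we only need a splitting of $\epsilon$ after restricting to underlying complexes. Restriction along $\mathbf{k}\to\mathcal{A}^e$ is exactly what the functor $\mathrm{Hom}(\mathcal{A}^e,-)$ computes. On underlying complexes, $\epsilon:B(\mathcal{A})\to\mathcal{A}$ has the explicit chain-level section $x\mapsto 1\otimes x$, landing in $B_{\le 0}(\mathcal{A})=\mathcal{A}\otimes\mathcal{A}$. Alternatively one can use $x\mapsto x\otimes 1$; both are chain maps because $d(1)=0$.

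This section is not $\mathcal{A}^e$-linear, but it is a quasi-inverse to $\epsilon$ in the derived category of complexes. Hence it computes the map $H^*(\varphi)$, where $\varphi$ is regarded as a morphism $\mathcal{A}_\Delta\to\mathcal{A}_\Delta$ via $B(\mathcal{A})\simeq\mathcal{A}_\Delta$. Explicitly, $H^*(\varphi)[x]=[\varphi(1\otimes x)]$.

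\textbf{Step 3 (vanishing).} Choose a representative $\varphi\in F^1CC^*(\mathcal{A})$, so that $\varphi$ vanishes on $B_{\le 0}(\mathcal{A})$. Then $\varphi(1\otimes x)=0$ for every $x$, so $H^*(\varphi)=0$ in all degrees. This is exactly the statement that $[\varphi]$ is an $\mathcal{A}^e$-ghost.

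\textbf{Main obstacle.} The delicate point is the representative in Step 3. The class $[\varphi]\in F^1HH^*$ is a priori defined via the image of $H^*(F^1CC^*)$, so we must pick a cocycle representative that actually lies in $F^1$; this is possible by the definition of the filtration on cohomology.

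One must also justify the identification in Step 2: that computing $\mathrm{Hom}(\mathcal{A}^e,\varphi)$ through the semi-free bar model agrees with precomposition by an underlying-complex section. I would handle this by noting that the forgetful functor $D(\mathcal{A}^e)\to D(\mathbf{k})$ is corepresented by $\mathcal{A}^e$. Since $\epsilon$ is a quasi-isomorphism and $\epsilon\circ(1\otimes-)=\mathrm{id}$, the map $(1\otimes -)$ is its inverse in $D(\mathbf{k})$. It is independent of choices because any two sections differ by a null-homotopic map.
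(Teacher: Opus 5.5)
Your proposal is correct and is essentially the paper's argument: lift $[x]\in H^*(\mathcal{A}_\Delta)$ to $[1\otimes x]\in B_0(\mathcal{A})$ and observe that a representative $f_\varphi$ of $\varphi$ lying in $F^1$ vanishes on $B_0(\mathcal{A})$. The only difference is packaging. You realize the lift as the $\mathbf{k}$-linear section $x\mapsto 1\otimes x$ of $\epsilon$, using that $\mathrm{Hom}_{D(\mathcal{A}^e)}(\mathcal{A}^e,-)$ corepresents the forgetful functor. The paper instead uses the $\mathcal{A}^e$-linear map $\tilde f:\mathcal{A}^e\to B_0(\mathcal{A})$ determined by $1\otimes x$.
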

\emph{Proof}. Let 
\begin{equation}
 B(\mathcal{A})=\bigoplus_{n\geq 0} B_n(\mathcal{A})\rightarrow \mathcal{A}_{\Delta},   
\end{equation} 
where $B_n(\mathcal{A})=\mathcal{A}\otimes \mathcal{A}[1]^{\otimes n}\otimes\mathcal{A}$,
denote the bar resolution of $\mathcal{A}_{\Delta}$ (cf. \eqref{eq:bar resolution}). Represent $\varphi$ by a closed morphism $f_{\varphi}:B(\mathcal{A})\rightarrow \mathcal{A}_{\Delta}$. By definition of the length filtration, 
\begin{equation}\label{eq:varphi vanishes on length 0 part}
f_{\varphi}|_{B_0(\mathcal{A})}=0.     
\end{equation}
Let $[f]\in \mathrm{Hom}_{D(\mathcal{A}^e)}(\mathcal{A}^e, \mathcal{A}_{\Delta})$ be the class of a closed morphism corresponding to a cohomology class $[x]\in H^*(\mathcal{A}_{\Delta})$, and consider its lift $[\tilde{f}]$ to $B_0(\mathcal{A})=\mathcal{A}^e=\mathcal{A}^{op}\otimes \mathcal{A}$ corresponding to the class $[1\otimes x]$. Then, $\varphi\circ f$ is represented by the composition
\begin{equation}
\mathcal{A}^e\xrightarrow{\tilde{f}}  B_0(\mathcal{A})\xrightarrow{\subset} B(\mathcal{A})\xrightarrow{f_{\varphi}}\mathcal{A}_{\Delta},
\end{equation}
which must be null by \eqref{eq:varphi vanishes on length 0 part}. Since $f$ is arbitrary, this proves the claim. \qed\par\indent
As an immediate corollary, we obtain the following quantitative version of Corollary \ref{thm: nilpotence of p curvature}.\\
\begin{cor}\label{thm:bounding nilpotence order of d by Hdim}
Let $\mathcal{A}$ be a smooth (so that $\mathrm{Hdim}(\mathcal{A})<\infty$) d$(\mathbb{Z}/2)$g algebra. Then
\begin{equation}
[d]^{\cup (\mathrm{Hdim}(\mathcal{A})+1)}=0 \in HH^*(\mathcal{A}).     
\end{equation}
\end{cor}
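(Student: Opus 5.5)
The plan is to combine the ghost lemma (Lemma \ref{thm:ghost lemma}) with Lemma \ref{thm:F^1HH^* is C^e ghost}, applied inside the triangulated category $\mathcal{T}=D(\mathcal{A}^e)$ (or $\mathrm{Perf}(\mathcal{A}^e)$, which contains $\mathcal{A}_\Delta$ by smoothness) with the test object $G=\mathcal{A}^e$. Set $n:=\mathrm{Hdim}(\mathcal{A})$, so that $\mathcal{A}_\Delta\in\langle\mathcal{A}^e\rangle_n$ by definition.

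\emph{Step 1.} The differential $d$, viewed as a length-one Hochschild cocycle, lies in $F^1CC^*(\mathcal{A})$, so $[d]\in F^1HH^*(\mathcal{A})$. Lemma \ref{thm:F^1HH^* is C^e ghost} then shows that the closed endomorphism $\varphi_d:\mathcal{A}_\Delta\to\mathcal{A}_\Delta$ representing $[d]$ (with the appropriate shift in the $\mathbb{Z}/2$-graded sense) is an $\mathcal{A}^e$-ghost.

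\emph{Step 2.} Identify the cup product on $HH^*(\mathcal{A})=\mathrm{Hom}_{D(\mathcal{A}^e)}(\mathcal{A}_\Delta,\mathcal{A}_\Delta)$ with Yoneda composition, up to sign. This standard fact is already implicit in the paper's use of the filtration $F^\bullet$. Under this identification, $[d]^{\cup(n+1)}$ corresponds to $\pm\,\varphi_d\circ\cdots\circ\varphi_d$, an $(n+1)$-fold composite of ghosts from $X_0=\mathcal{A}_\Delta$ to $X_{n+1}=\mathcal{A}_\Delta$, with shifts absorbed.

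\emph{Step 3.} Suppose for contradiction that $[d]^{\cup(n+1)}\neq 0$. Then there is a sequence of $n+1$ ghosts whose composite is nonzero, and Lemma \ref{thm:ghost lemma} with $t=n+1$ gives $\mathcal{A}_\Delta\notin\langle\mathcal{A}^e\rangle_{n}$. This contradicts the definition of $n$, so $[d]^{\cup(n+1)}=0$.

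The main point requiring care is Step 2. The ghost lemma needs actual morphisms in $\mathcal{T}$ between shifts of $\mathcal{A}_\Delta$, and in the $\mathbb{Z}/2$-graded setting $[d]$ is an odd-shifted (degree $2\equiv 0$) endomorphism. So one must check that the $\mathbb{Z}/2$-graded ghost notion, which requires $\mathrm{Hom}^i(G,f)=0$ for both parities $i$, is preserved under composition with shifts. One must also check that the cup product agrees with composition up to sign. Both are routine once $\mathcal{A}_\Delta$ is resolved by $B(\mathcal{A})$ and morphisms are computed in $\mathrm{Hom}_{\mathcal{A}^e}(B(\mathcal{A}),\mathcal{A})$; signs are irrelevant for vanishing. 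Finiteness of $\mathrm{Hdim}$ follows from smoothness, since $\mathcal{A}_\Delta$ is perfect and hence lies in some $\langle\mathcal{A}^e\rangle_n$.
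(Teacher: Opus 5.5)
Your proposal is correct and follows the paper's proof: $[d]\in F^1HH^*(\mathcal{A})$ is an $\mathcal{A}^e$-ghost by Lemma~\ref{thm:F^1HH^* is C^e ghost}, and Lemma~\ref{thm:ghost lemma} applied to the $(\mathrm{Hdim}(\mathcal{A})+1)$-fold composite of $[d]$ then contradicts $\mathcal{A}_\Delta\in\langle\mathcal{A}^e\rangle_{\mathrm{Hdim}(\mathcal{A})}$. The identification of cup product with Yoneda composition that you single out in Step 2 is also used implicitly in the paper, and since $[d]$ has even degree, no shifts are actually needed (so ``odd-shifted'' is a slip).
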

\emph{Proof}. Since $[d]\in F^1HH^*(\mathcal{A})$, Lemma \ref{thm:F^1HH^* is C^e ghost} implies that it is an $\mathcal{A}^e$-ghost when viewed as an element $[d]\in\mathrm{Hom}_{D(\mathcal{A}^e)}(\mathcal{A}_{\Delta},\mathcal{A}_{\Delta})$. The statement then follows by applying Lemma \ref{thm:ghost lemma} to the sequence $f_i=[d], 1\leq i\leq \mathrm{Hdim}(\mathcal{A})+1$. \qed\\
\begin{thm}\label{thm:bounding Jordan blocks by Ddim}
Let $\mathcal{C}$ be a smooth proper d$(\mathbb{Z}/2)$g category over $\mathbb{C}$. Then the size of the largest Jordan block of the monodromy of $\nabla^{\mathcal{C}}_{\partial_t}$ is $\leq \mathrm{Ddim}(\mathcal{C})+1$.     
\end{thm}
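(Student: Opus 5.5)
The plan is to combine Theorem \ref{thm:nc Scherk} with a bound on the nilpotence order of $[d]$ in terms of $\mathrm{Ddim}$. By Lemma \ref{thm:reduction from category to algebra} and the Morita invariance built into Definition \ref{thm:Hochschild and diagonal dimension}, we may assume $\mathcal{C}=\mathcal{A}$ is a smooth proper d$(\mathbb{Z}/2)$g algebra with $n:=\mathrm{Ddim}(\mathcal{A})$. By Theorem \ref{thm:nc Scherk} it then suffices to prove that
\[
[d]^{\cup(n+1)}=0\in HH^*(\mathcal{A}).
\]

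To prove this vanishing, fix $G\in\mathrm{Perf}(\mathcal{A}^{op})$ and $F\in\mathrm{Perf}(\mathcal{A})$ with $\mathcal{A}_\Delta\in\langle G\boxtimes F\rangle_n$. By the ghost lemma (Lemma \ref{thm:ghost lemma}) applied to the sequence $f_i=[d]$, $1\le i\le n+1$, in $\mathrm{Perf}(\mathcal{A}^e)$, it suffices to show that $[d]$, viewed as an endomorphism of $\mathcal{A}_\Delta$, is a $G\boxtimes F$-ghost. This is the analogue of Lemma \ref{thm:F^1HH^* is C^e ghost} with $\mathcal{A}^e$ replaced by $G\boxtimes F$, and it is the main step.

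For this step I would use adjunction. Up to twisting by duals, $\mathrm{Hom}_{\mathcal{A}^e}(G\boxtimes F,M)$ is computed by $\mathrm{Hom}_{\mathcal{A}}(F, G^\vee\otimes_{\mathcal{A}}M)$ for a bimodule $M$, so applying $\mathrm{Hom}(G\boxtimes F,-)$ to $\varphi:\mathcal{A}_\Delta\to\mathcal{A}_\Delta$ gives the map induced on $\mathrm{Hom}_{\mathcal{A}}(F,N)$, where $N:=G^\vee$ and we use $N\otimes_{\mathcal{A}}\mathcal{A}_\Delta\cong N$. The endomorphism of $N$ induced by $[d]\in HH^*(\mathcal{A})$ is the value of the characteristic morphism $HH^*(\mathcal{A})\to Z(D(\mathcal{A}))$ at $N$.

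I claim this value is zero on every perfect module $N$. First, on the free module $\mathcal{A}$ it is zero: the action of a class in $F^1HH^*$ on the free module factors through the length-zero part of the bar resolution, exactly as in the proof of Lemma \ref{thm:F^1HH^* is C^e ghost}. Second, for general perfect $N$, the cocycle $d$ acts on any dg module by its own differential $d_N$. I would check that this action is homotopic to zero, with an explicit null-homotopy obtained from a semi-free model of $N$; if that direct argument fails, I would instead pass through the Hochschild dimension as in Corollary \ref{thm:bounding nilpotence order of d by Hdim}. Once the characteristic morphism vanishes at $N$, the ghost property follows, and hence $[d]^{\cup(n+1)}=0$.

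The main obstacle is this last vanishing, namely that $[d]$ annihilates every perfect module and not only the free one. Naturality of the central action with respect to cones is not sufficient, since central actions need not vanish on extensions. The fallback is to use the known inequality relating $\mathrm{Ddim}$ to generation time of $\mathcal{A}_\Delta$ by objects of the form $\mathcal{A}^{op}\otimes F$, and to prove the ghost property for each tensor factor separately, using that $[d]\in F^1$ kills morphisms out of free modules on one side.
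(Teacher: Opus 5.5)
Your reduction to an algebra, the adjunction step and the ghost-lemma bookkeeping are fine. The gap is exactly the step you call the main obstacle, and you leave it unproved. You only show that $[d]$ acts by zero on the free module, which, as you note, does not propagate through cones. Also, the sentence ``$d$ acts on any dg module by its own differential $d_N$'' is not right as stated.

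The missing step is true, short, and needs no semi-free model. In the bar model $\prod_k\mathrm{Hom}(N\otimes\mathcal{A}[1]^{\otimes k},N)$ for $\mathrm{RHom}_{\mathcal{A}}(N,N)$, the characteristic image $\chi_N(d)$ is the length-one element $n\otimes a\mapsto\pm n\cdot da$. Regard $d_N$ itself as an odd element of this complex concentrated in length zero. Its coboundary has:
\begin{itemize}
\item length-zero part $2d_N^2=0$;
\item nothing in length $\geq 2$, since all structures are strict;
\item length-one part $\pm\big(d_N(n\cdot a)-d_N(n)\cdot a\big)$, which equals $\pm\chi_N(d)(n\otimes a)$ by the Leibniz rule for $N$. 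That rule is the length-one component of the module equation, written in the same conventions as $\chi_N$.
\end{itemize}
So $\chi_N(d)$ is the coboundary of $d_N$, and $\chi_N([d])=0$ for every dg module $N$. Deformation-theoretically, $(N,(1+\epsilon)d_N)$ deforms $N$ along the first-order deformation $(\mathcal{A},(1+\epsilon)d)$, whose Kodaira--Spencer class is $[d]$, so the obstruction $\chi_N([d])$ vanishes.

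With this in hand, $[d]$ is a $G\boxtimes F$-ghost for all $G,F$. The ghost lemma then gives $[d]^{\cup(n+1)}=0$ in $HH^*(\mathcal{A})$, and Theorem \ref{thm:nc Scherk} finishes.

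Your fallbacks would not work as stated. For the given model one only has $\mathrm{Ddim}(\mathcal{A})\leq\mathrm{Hdim}(\mathcal{A})$, which is the wrong direction. Replacing $G$ by free modules costs the generation time of $G$, so there is no inequality with the same $n$ unless you change the model.

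Changing the model is precisely what the paper does. It sets $\mathcal{B}=\mathrm{End}(\mathcal{A}\oplus G^\vee\oplus F)$; under the Morita equivalence, $G\boxtimes F$ is carried to a summand of $\mathcal{B}^e$, so $\mathrm{Hdim}(\mathcal{B})\leq\mathrm{Ddim}(\mathcal{A})$. It then needs only the free-bimodule statement (Lemma \ref{thm:F^1HH^* is C^e ghost} and Corollary \ref{thm:bounding nilpotence order of d by Hdim}) for $\mathcal{B}$. Finally it applies Theorem \ref{thm:nc Scherk} to $\mathcal{B}$, which is legitimate because the $t$-connection is Morita invariant.

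Both arguments rest on the same fact. $[d]$ is represented by a cochain of length $\geq 1$ in every Morita model (for instance the full subcategory on $\{\mathcal{A},N\}$ for perfect $N$), hence acts by zero on every object of that model.
\begin{itemize}
\item The paper moves $G^\vee$ and $F$ into the free part by changing the algebra, and so reuses what it has already proved.
\item Your route, once the step above is filled in, keeps the original algebra and proves the stronger statement that $[d]$ lies in the kernel of $HH^*(\mathcal{A})\to Z(D(\mathcal{A}))$. This gives $[d]^{\cup(\mathrm{Ddim}(\mathcal{A})+1)}=0$ directly in any model, and it shows why $\mathrm{Ddim}$ rather than $\mathrm{Hdim}$ is the natural invariant.
\end{itemize}
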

\emph{Proof}. By the Morita invariance of categorical $t$-connection (cf. Lemma \ref{thm:reduction from category to algebra}) and diagonal dimension, it suffices to prove the theorem for $\mathcal{C}=\mathrm{Perf}(\mathcal{A})$ for a smooth proper d$(\mathbb{Z}/2)$g algebra $\mathcal{A}$. Furthermore, by Theorem \ref{thm:nc Scherk} and Corollary \ref{thm:bounding nilpotence order of d by Hdim}, it suffices to find one compact generator $X\in \mathrm{Perf}(\mathcal{A})$ such that $\mathcal{B}=\mathrm{End}_{\mathrm{Perf}(\mathcal{A
})}(X)$ satisfies $\mathrm{Ddim}(\mathcal{B})\geq \mathrm{Hdim}(\mathcal{B})$\footnote{We remark that the Hochschild dimension is in general not derived Morita invariant.}.\par\indent
To see this, let $n=\mathrm{Ddim}(\mathcal{A})$, and fix $G\in \mathrm{Perf}(\mathcal{A}^{op}), F\in \mathrm{Perf}(\mathcal{A})$ such that $\mathcal{A}_{\Delta}\in \langle G\boxtimes F\rangle_n$. Let $G^{\vee}:=\mathrm{Hom}_{\mathrm{Perf}(\mathcal{A}^{op})}(G,\mathcal{A})\in \mathrm{Perf}(\mathcal{A})$, and consider the compact generator
\begin{equation}
X:=\mathcal{A}\oplus G^{\vee}\oplus F\in \mathrm{Perf}(\mathcal{A}).     
\end{equation}
Since $G$ (resp. $F$) is a direct summand of $X^{\vee}$ (resp. $X$), $G\boxtimes F\in \langle X^{\vee}\boxtimes X\rangle_0$ and hence $\mathcal{A}_{\Delta}\in \langle X^{\vee}\boxtimes X\rangle_n$. Let $\mathcal{B}$ be the endomorphism d$(\mathbb{Z}/2)$g algebra $\mathrm{End}_{\mathrm{Perf}(\mathcal{A})}(X)$ and consider the quasi-equivalence
\begin{equation}
 \mathrm{Perf}(\mathcal{A}^e)\rightarrow     \mathrm{Perf}(\mathcal{B}^e) 
\end{equation}
given by
\begin{equation}
N\mapsto   X \otimes_{\mathcal{A}}N\otimes_{\mathcal{A}} X^{\vee},
\end{equation}
which sends $\mathcal{A}_{\Delta}$ to $\mathcal{B}_{\Delta}$ and $X^{\vee}\boxtimes X$ to $(X\otimes_{\mathcal{A}} X^{\vee})\boxtimes (X\otimes_{\mathcal{A}} X^{\vee})\simeq \mathcal{B}^e$. Hence
\begin{equation}
\mathcal{B}_{\Delta}\in \langle \mathcal{B}^e\rangle_n,    
\end{equation}
which completes the proof. \qed\par\indent
Applying Theorem \ref{thm:bounding Jordan blocks by Ddim} to $\mathcal{C}=\mathrm{MF}(W)$ (combined with Theorem \ref{thm:Shklyarov theorem}) gives the following result.\\
\begin{cor}\label{thm:bounding Jordan blocks of Milnor fiber by Ddim}
Let $W$ be as in \eqref{eq:superpotential W}, and $M_t$ its Milnor fiber. Then the size of the largest Jordan block of the monodromy operator $T:H^{n-1}(M_t)\rightarrow H^{n-1}(M_t)$ is $\leq\mathrm{Ddim}(\mathrm{MF}(W))+1$.\qed    
\end{cor}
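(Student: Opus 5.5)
The plan is to obtain this corollary directly from Theorem \ref{thm:bounding Jordan blocks by Ddim} applied to $\mathcal{C}=\mathrm{MF}(W)$, and then transport the Jordan block bound from the categorical $t$-connection to the topological monodromy $T$ on $H^{n-1}(M_t)$.

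\textbf{Step 1: the categorical bound.} The category $\mathrm{MF}(W)$ of matrix factorizations of an isolated hypersurface singularity is a smooth proper d$(\mathbb{Z}/2)$g category over $\mathbb{C}$; this is standard (Dyckerhoff, Preygel), and I would cite it rather than reprove it. Theorem \ref{thm:bounding Jordan blocks by Ddim} then says that every Jordan block of the monodromy of $\nabla^{\mathrm{MF}(W)}_{\partial_t}$ on $HH^{per}_*(\mathrm{MF}(W))$ has size $\leq \mathrm{Ddim}(\mathrm{MF}(W))+1$.

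\textbf{Step 2: identification with the Gauss--Manin system.} By Theorem \ref{thm:Shklyarov theorem}, after inverting $t$ the categorical $t$-connection is isomorphic to the twisted de Rham connection $\nabla^W_{\partial_t}=\partial_t+\frac{W}{t^2}+\frac{\Gamma}{t}$ on $H^*(\Omega^\bullet((t)),td-dW\wedge)$. Only degree $n$ survives, since the twisted de Rham cohomology of an isolated singularity is concentrated in top degree. There $\Gamma=-\frac n2$, so this connection is the inverted Fourier--Laplace dual $\frac{d}{dt}+\frac{W}{t^2}$ on $\hat{\mathcal{G}}$, twisted by the rank one connection $\mathcal{T}_{n/2}$. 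Tensoring with a rank one connection $\partial_t-\frac{c}{t}$ only multiplies the monodromy by the scalar $e^{\pm 2\pi i c}$. This does not change Jordan block sizes.

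\textbf{Step 3: passing through the Fourier--Laplace transform.} This is the main obstacle, and Remark \ref{thm:remark about Scherk thm and FL transform} supplies the argument. By the Kashiwara--Malgrange comparison, the generalized eigenspaces of $T$ on $H^{n-1}(M_t)$, for eigenvalues $e^{2\pi i\alpha}$ with $\alpha\in(-1,0]$, are $\mathrm{Gr}^\alpha_V\mathcal{M}_q$, and $\log T_{u}$ corresponds up to $2\pi i$ to $q\partial_q-\alpha$. The variable identification gives $\mathrm{Gr}^\alpha_V\mathcal{M}_q=\mathrm{Gr}^{\alpha+1}_V\mathcal{M}_t$, with nilpotent operator $t\partial_t-(\alpha+1)$. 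Because $\partial_q$ is invertible, multiplication by $t$ gives isomorphisms $\mathrm{Gr}^{\alpha}_V\mathcal{M}_t\cong\mathrm{Gr}^{\alpha+1}_V\mathcal{M}_t$ that intertwine the nilpotent parts. Hence the Jordan block structure of the monodromy of $\mathcal{M}_t$ matches that of $T$, including at the eigenvalue $1$, where a general Fourier--Laplace transform could shift block sizes by $\pm1$.

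\textbf{Step 4: the formal versus algebraic point.} I would also check that passing from the algebraic or analytic Gauss--Manin system to its formal completion $\hat{\mathcal{G}}$ preserves the monodromy. This holds because the connection is regular singular, and the $V$-filtration graded pieces are unchanged by completion. Combining Steps 1--4 gives the bound $\leq \mathrm{Ddim}(\mathrm{MF}(W))+1$.
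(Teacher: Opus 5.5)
Your proposal is correct and follows the paper's route. You apply Theorem~\ref{thm:bounding Jordan blocks by Ddim} to $\mathrm{MF}(W)$, identify its $t$-connection via Theorem~\ref{thm:Shklyarov theorem} with a half-integer twist of the inverted Fourier--Laplace dual of the Gauss--Manin system, and transfer Jordan block sizes through the $V$-filtration argument of Remark~\ref{thm:remark about Scherk thm and FL transform}, which uses the invertibility of $\partial_q$; your Step~4 on formal completion only makes explicit a point the paper leaves implicit.
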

One may interpret Corollary \ref{thm:bounding Jordan blocks of Milnor fiber by Ddim} as giving a topological lower bound of the diagonal dimension of $\mathrm{MF}(W)$, which to the author's knowledge is new (though it can only be nontrivial when $W$ is non-quasi-homogeneous). \par\indent
To conclude this section, we consider the special case where $\mathcal{C}$ is \emph{weak smooth Calabi-Yau category}, i.e. there exists an element $\Omega\in HH_\nu(\mathcal{C})=\mathrm{Hom}_{D(\mathcal{C}^e)}(\mathcal{C}^!,\mathcal{C}_{\Delta}[\nu])$ (called the \emph{volume form}) such that the corresponding morphism \footnote{Here $\mathcal{C}^!:=\mathrm{Hom}_{D(\mathcal{C}^e)}(\mathcal{C}_{\Delta},\mathcal{C}^e)\in \mathcal{D}(\mathcal{C}^e)$ denotes the bimodule dual of $\mathcal{C}_{\Delta}$. }
\begin{equation}
\Theta_{\Omega}:\mathcal{C}^!\rightarrow \mathcal{C}_{\Delta}[\nu]    
\end{equation}
is a quasi-isomorphism (for some $\nu\in\mathbb{Z}/2$); this in particular implies that $-\cap\Omega: HH^*(\mathcal{C})\rightarrow HH_{*+\nu}(\mathcal{C})$ is an isomorphism. Recall that there is a filtration 
\begin{equation}
CC^{\leq n}_*(\mathcal{C}):=B_{\leq n}(\mathcal{C})\otimes_{\mathcal{C}^e}\mathcal{C}_{\Delta}\subset CC_*(\mathcal{C})    
\end{equation} 
on Hochschild chains by length of tensor factors, which induces a filtration of $HH_*(\mathcal{C})$. We define the \emph{length} of a volume form $\Omega$, denoted $\mathrm{length}(\Omega)$, to be the smallest non-negative integer $n$ such that $\Omega\in HH^{\leq n}_*(\mathcal{C})$. The next lemma shows the length of volume form is in fact an invariant of $\mathcal{C}$ itself.\\
\begin{lemma}\label{thm:invariance of length}
Let $\Omega,\Omega'$ be two choices of volume forms for a weak smooth Calabi-Yau $\mathcal{C}$, then $\mathrm{length}(\Omega)=\mathrm{length}(\Omega')$.    
\end{lemma}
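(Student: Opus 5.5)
The strategy is to show that any two volume forms differ by the action of an invertible element of $HH^*(\mathcal{C})$ of length zero, and that capping with such an element preserves the length filtration on Hochschild homology.

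First, compare $\Omega$ and $\Omega'$. Both give quasi-isomorphisms $\Theta_\Omega,\Theta_{\Omega'}:\mathcal{C}^!\rightarrow\mathcal{C}_\Delta[\nu]$ (possibly with different parities $\nu,\nu'$). Then $u:=\Theta_{\Omega'}\circ\Theta_\Omega^{-1}$ is an automorphism of $\mathcal{C}_\Delta$ in $D(\mathcal{C}^e)$, up to shift. So $u$ is an invertible class in $HH^*(\mathcal{C})$ under the cup product, and $\Omega'=u\cap\Omega$. This uses the fact that $\Theta_{u\cap\Omega}=u\circ\Theta_\Omega$, which follows from the module structure of $HH_*$ over $HH^*$ under the identification $HH_*=\mathrm{Hom}(\mathcal{C}^!,\mathcal{C}_\Delta)$. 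Symmetrically, $\Omega=u^{-1}\cap\Omega'$.

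Second, show that capping with any Hochschild class is non-increasing on the length filtration: if $\beta\in HH^{\le n}_*(\mathcal{C})$ and $\varphi\in HH^*(\mathcal{C})$, then $\varphi\cap\beta\in HH^{\le n}_*(\mathcal{C})$. At the chain level the cap product $e_\varphi$ from Section 2.3 sends $x_0\otimes x_1\otimes\cdots\otimes x_m$ to a sum of terms $\pm\varphi(x_{k+1},\dots,x_m)x_0\otimes\cdots\otimes x_k$ with $k\le m$, so it maps $CC_*^{\le n}$ into $CC_*^{\le n}$. Equivalently, in bimodule language, $\beta$ is represented through $B_{\le n}(\mathcal{C})\otimes_{\mathcal{C}^e}\mathcal{C}_\Delta$, and $\varphi$ acts on the second factor $\mathcal{C}_\Delta$, so the $B_{\le n}$ factor is untouched.

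The main care needed is that the cap product on homology is computed by this chain formula for arbitrary, not necessarily length-filtered, cocycle representatives of $\varphi$. The bimodule description avoids any issue here. Write $\varphi$ as an endomorphism of $\mathcal{C}_\Delta$ in the derived category, and use the fact that $B_{\le n}\otimes_{\mathcal{C}^e}(-)$ is functorial. Then $\mathrm{id}\otimes\varphi$ acts on $B_{\le n}(\mathcal{C})\otimes_{\mathcal{C}^e}\mathcal{C}_\Delta$ compatibly with the inclusion into $CC_*(\mathcal{C})$, since $B_{\le n}$ consists of free bimodules and so tensoring computes the derived functor.

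Combining the two steps, $\mathrm{length}(\Omega')=\mathrm{length}(u\cap\Omega)\le\mathrm{length}(\Omega)$, and by symmetry (using $u^{-1}$) the reverse inequality holds, so the two lengths are equal.
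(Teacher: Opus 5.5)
Your proposal is correct and follows the same route as the paper: write $\Omega'=\phi\cap\Omega$ for some Hochschild class, note that capping with a Hochschild class cannot increase length, and conclude by symmetry. The paper gets $\phi$ from the surjectivity of $-\cap\Omega$, while you build it as $\Theta_{\Omega'}\circ\Theta_\Omega^{-1}$; and your chain-level check that $e_\varphi$ preserves $CC^{\le n}_*$ fills in the step the paper states without justification.
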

\emph{Proof}. Since $\Omega$ is a volume form, there exists a Hochschild cohomology class $\phi$ such that $\phi\cap \Omega=\Omega'$. Since cap product with a Hochschild cochain cannot increase length, $\mathrm{length}(\Omega)\geq \mathrm{length}(\Omega')$. Thus by symmetry, $\mathrm{length}(\Omega)=\mathrm{length}(\Omega')$.\qed\par\indent
Let $\mathcal{C}$ be a weak smooth Calabi-Yau d$(\mathbb{Z}/2)$g category. Define the \emph{length} of $\mathcal{C}$, denoted $\mathrm{length}(\mathcal{C})$, to be the length of any choice of volume form (which is well-defined by Lemma \ref{thm:invariance of length}).   \\
\begin{prop}\label{thm:bounding nilpotence of d by CY length}
Let $\mathcal{A}$ be a weak smooth and proper Calabi-Yau d$(\mathbb{Z}/2)$g algebra. Then
\begin{equation}
\mathrm{Ddim}(\mathcal{A})\leq \mathrm{length}(\mathcal{A}). 
\end{equation}
\end{prop}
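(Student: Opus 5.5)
Let $n=\mathrm{length}(\mathcal{A})$ and fix a volume form $\Omega\in HH_\nu(\mathcal{A})$ with $\Omega\in HH^{\leq n}_*(\mathcal{A})$. The aim is to show that $\mathcal{A}_{\Delta}$ lies in $\langle G\boxtimes F\rangle_n$ for suitable perfect $G,F$; the natural choice is $G=\mathcal{A}$, $F=\mathcal{A}$ (up to shift), so that $G\boxtimes F=\mathcal{A}^e$. Since $\mathcal{A}$ is smooth, $\mathcal{A}^!$ is perfect, and the Calabi-Yau condition gives a quasi-isomorphism $\Theta_\Omega:\mathcal{A}^!\xrightarrow{\simeq}\mathcal{A}_\Delta[\nu]$. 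It therefore suffices to show that $\mathcal{A}^!$, or a bimodule having $\mathcal{A}_\Delta$ as a retract, lies in $\langle\mathcal{A}^e\rangle_n$.

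The key step uses the length-$n$ representative of $\Omega$ to factor $\Theta_\Omega$ through the truncated bar complex. Under the identification $HH_*(\mathcal{A})=\mathrm{Hom}_{D(\mathcal{A}^e)}(\mathcal{A}^!,\mathcal{A}_\Delta[\nu])$, which comes from $CC_*=B(\mathcal{A})\otimes_{\mathcal{A}^e}\mathcal{A}_\Delta$ together with $\mathcal{A}_\Delta\otimes^{\mathbb{L}}_{\mathcal{A}^e}-\simeq\mathrm{Hom}_{\mathcal{A}^e}(\mathcal{A}^!,-)$ for perfect $\mathcal{A}_\Delta$, a class supported in $CC^{\leq n}_*=B_{\leq n}(\mathcal{A})\otimes_{\mathcal{A}^e}\mathcal{A}_\Delta$ should correspond to a morphism $\mathcal{A}^!\to\mathcal{A}_\Delta[\nu]$ factoring as
\begin{equation}
\mathcal{A}^!\rightarrow \mathrm{Hom}_{\mathcal{A}^e}\big((B_{\leq n}(\mathcal{A}))^{\vee},\mathcal{A}_\Delta\big)\simeq B_{\leq n}(\mathcal{A})\otimes_{\mathcal{A}^e}(\ldots)\rightarrow\mathcal{A}_\Delta[\nu].
\end{equation}
More concretely, I expect $\Theta_\Omega$ to factor through the bimodule $B_{\leq n}(\mathcal{A})$ itself, as $\mathcal{A}^!\to B_{\leq n}(\mathcal{A})[\nu]\to B(\mathcal{A})[\nu]\simeq\mathcal{A}_\Delta[\nu]$. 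The first map is obtained by dualizing/adjunction from the chain $\Omega$: a chain $\sum a_0\otimes a_1\otimes\cdots\otimes a_k$ with $k\leq n$ defines an element of $B_{\leq n}(\mathcal{A})\otimes_{\mathcal{A}^e}\mathcal{A}_\Delta\simeq\mathrm{Hom}_{\mathcal{A}^e}(\mathcal{A}^!,B_{\leq n}(\mathcal{A}))$, valid since $\mathcal{A}_\Delta$ is perfect. I expect this identification, i.e. making precise that the length filtration on chains matches maps out of $\mathcal{A}^!$ into $B_{\leq n}$, to be the main technical obstacle, particularly on the cycle-versus-chain level: $B_{\leq n}\otimes_{\mathcal{A}^e}\mathcal{A}_\Delta$ is exactly $CC^{\leq n}_*$, and a representative cycle there yields a closed morphism.

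Given the factorization, $\mathcal{A}_\Delta[\nu]\simeq\mathcal{A}^!$ is a retract of $B_{\leq n}(\mathcal{A})[\nu]$: compose $\Theta_\Omega^{-1}$ with the factorization. Finally, $B_{\leq n}(\mathcal{A})$ is an iterated extension of the $n+1$ free bimodules $\mathcal{A}\otimes\mathcal{A}[1]^{\otimes i}\otimes\mathcal{A}$, $0\leq i\leq n$, via the length filtration, whose graded pieces are direct sums of shifts of $\mathcal{A}^e$. Properness makes these finite sums. Hence $B_{\leq n}(\mathcal{A})\in\langle\mathcal{A}^e\rangle_n$, so $\mathcal{A}_\Delta\in\langle\mathcal{A}^e\rangle_n$, and taking $G=\mathcal{A}$, $F=\mathcal{A}$ gives $\mathrm{Ddim}(\mathcal{A})\leq n$; indeed this shows even $\mathrm{Hdim}(\mathcal{A})\leq n$.
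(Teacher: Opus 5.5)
Your argument is correct and is essentially the paper's proof: both use a cycle of length $\leq N$ representing the volume form to exhibit $\mathcal{A}_\Delta$ (up to shift) as a retract of a bimodule built from $\mathcal{A}^e$ by $N$ cones. This gives $\mathcal{A}_\Delta\in\langle\mathcal{A}^e\rangle_N$, hence $\mathrm{Ddim}(\mathcal{A})\leq N$.

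The only difference is which bimodule you factor $\Theta_\Omega$ through. The paper factors it through the dual $B_{\leq N}(\mathcal{A})^!$: the restriction $\mathcal{A}^!\to B_{\leq N}(\mathcal{A})^!$ followed by $\Theta_z$, using the finite freeness of $B_{\leq N}(\mathcal{A})$ to identify $CC^{\leq N}_\nu$ with maps out of $B_{\leq N}(\mathcal{A})^!$. You factor it through $B_{\leq n}(\mathcal{A})[\nu]$ itself, via the smoothness identification $\mathrm{Hom}(\mathcal{A}^!,M)\simeq\mathcal{A}_\Delta\otimes^{\mathbb{L}}_{\mathcal{A}^e}M$ and its naturality in $M$. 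This is a dual formulation of the same step.
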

\emph{Proof}. Let $N:=\mathrm{length}(\mathcal{A})$, and $z\in CC_\nu^{\leq N}(\mathcal{A})=\mathrm{Hom}_{D(\mathcal{A}^e)}(B_{\leq N}(\mathcal{A})^!,\mathcal{A}[\nu])$ be a cycle representing a volume form $\Omega$. Let 
\begin{equation}
\Theta_z: B_{\leq N}(\mathcal{A})^!\rightarrow \mathcal{A}_{\Delta}[\nu]    
\end{equation}
be the corresponding closed morphism. Then $\Theta_\Omega: \mathcal{A}^!\rightarrow \mathcal{A}_{\Delta}[\nu]$ factors through $\Theta_z$, which (as $\Theta_{\Omega}$ is a quasi-isomorphism) establishes $\mathcal{A}_{\Delta}$ as a shifted idempotent summand of $B_{\leq N}(\mathcal{A})^!$. By construction, $B_{\leq N}(\mathcal{A})$ and hence $B_{\leq N}(\mathcal{A})^!$ can be generated from $\mathcal{A}^e$ by at most $N$ cones (properness guarantees that, up to replacing $\mathcal{A}$ with a quasi-equivalent model, each term in its bar complex is a finite free $\mathcal{A}^e$ module). Therefore, $\mathcal{A}_{\Delta}\in \langle \mathcal{A}^e=\mathcal{A}^{op}\boxtimes\mathcal{A}\rangle_N$, which completes the proof.\qed \par\indent
Given a weak smooth Calabi-Yau d$(\mathbb{Z}/2)$g category $\mathcal{C}$, there exists a compact generator $G$ of $\mathrm{Perf}(\mathcal{C})$ such that $\mathcal{A}:=\mathrm{End}_{\mathrm{Perf}(\mathcal{C})}(G)$ satisfies $\mathrm{length}(\mathcal{A})\leq \mathrm{length}(\mathcal{C})$. This observation combined with Theorem \ref{thm:bounding Jordan blocks by Ddim} and Proposition \ref{thm:bounding nilpotence of d by CY length} implies that:\\
\begin{cor}
Let $\mathcal{C}$ be a weak smooth proper Calabi-Yau d$(\mathbb{Z}/2)$g category. Then the size of the largest Jordan block of the monodromy of $\nabla^{\mathcal{C}}_{\partial_t}$ is $\leq \mathrm{length}(\mathcal{C})+1$. \qed    
\end{cor}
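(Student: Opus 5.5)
The plan is to combine Theorem \ref{thm:bounding Jordan blocks by Ddim} with Proposition \ref{thm:bounding nilpotence of d by CY length}. Once we pass to a well-chosen endomorphism algebra of a generator, the whole statement reduces to showing that the length of a volume form does not increase.

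\textbf{Step 1: reduce to an algebra.} By Lemma \ref{thm:reduction from category to algebra} and the Morita invariance of both the $t$-connection and $\mathrm{Ddim}$ (Definition \ref{thm:Hochschild and diagonal dimension}), it suffices to find a compact generator $G$ of $\mathrm{Perf}(\mathcal{C})$ with $\mathcal{A}:=\mathrm{End}_{\mathrm{Perf}(\mathcal{C})}(G)$ satisfying two properties. First, $\mathcal{A}$ is again weak smooth proper Calabi-Yau. Second, $\mathrm{length}(\mathcal{A})\leq\mathrm{length}(\mathcal{C})$. Granting this, the chain
\[
\text{Jordan block size}\leq \mathrm{Ddim}(\mathcal{C})+1=\mathrm{Ddim}(\mathcal{A})+1\leq \mathrm{length}(\mathcal{A})+1\leq \mathrm{length}(\mathcal{C})+1
\]
holds. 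The first inequality is Theorem \ref{thm:bounding Jordan blocks by Ddim}, the equality is Morita invariance of $\mathrm{Ddim}$, and the second inequality is Proposition \ref{thm:bounding nilpotence of d by CY length}.

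The weak CY property transfers because the bimodule equivalence $\mathrm{Perf}(\mathcal{C}^e)\simeq\mathrm{Perf}(\mathcal{A}^e)$ sends $\mathcal{C}_\Delta\mapsto\mathcal{A}_\Delta$. It commutes with bimodule duality, so it sends $\mathcal{C}^!\mapsto\mathcal{A}^!$. Consequently the image $\Omega_{\mathcal{A}}$ of a volume form $\Omega$ under the Morita isomorphism $HH_*(\mathcal{C})\cong HH_*(\mathcal{A})$ is again a volume form.

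\textbf{Step 2: choose the generator and control length.} This is the main obstacle, since length is defined through the chain-level filtration $CC^{\leq n}_*$, which is not Morita invariant. Let $N=\mathrm{length}(\mathcal{C})$, and represent $\Omega$ by a cycle $z$ of length $\leq N$ in $CC_*(\mathcal{C})$. Only finitely many objects $X_1,\dots,X_m$ of $\mathcal{C}$ appear in $z$. I would take $G$ to be $\bigoplus_i X_i\oplus G_0$, where $G_0$ is any compact generator, so that $G$ is still a generator.

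The full subcategory $\mathcal{C}'$ of $\mathrm{Perf}(\mathcal{C})$ on the objects $X_i$ and $G_0$ includes into $\mathcal{C}$ and into the one-object category $\mathcal{A}$ compatibly. For the second inclusion, a chain in $CC_*(\mathcal{C}')$ maps to $CC_*(\mathcal{A})$ via the matrix-algebra map
\[
x_0\otimes\cdots\otimes x_n\mapsto \iota x_0\pi\otimes\iota x_1\pi\otimes\cdots,
\]
where $\iota$ and $\pi$ are the inclusions and projections of the summands. This map preserves the number of tensor factors, and the corresponding statement for $CC_*(\mathcal{C}')\to CC_*(\mathcal{C})$ is clear. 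Both maps are quasi-isomorphisms by Morita invariance. Therefore the image of $z$ is a cycle of length $\leq N$ representing $\Omega_{\mathcal{A}}$, which gives $\mathrm{length}(\mathcal{A})\leq N$.

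The delicate points are checking that these maps are filtered and compatible with normalization (units inserted as idempotents still vanish in normalized chains). One also needs properness to realize $\mathcal{A}$ by a model whose bar terms are finite free, which is exactly the form in which Proposition \ref{thm:bounding nilpotence of d by CY length} applies. If a direct length comparison fails, the fallback is to use Lemma \ref{thm:invariance of length}: capping with a Hochschild class cannot increase length, so any volume form of $\mathcal{A}$ has length at most that of the transported one.
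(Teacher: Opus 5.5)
Your proposal is correct and follows the paper's argument. The paper also passes to $\mathcal{A}=\mathrm{End}_{\mathrm{Perf}(\mathcal{C})}(G)$ for a compact generator $G$ with $\mathrm{length}(\mathcal{A})\le\mathrm{length}(\mathcal{C})$ and chains Theorem \ref{thm:bounding Jordan blocks by Ddim} with Proposition \ref{thm:bounding nilpotence of d by CY length}; it only asserts that such a $G$ exists, and your choice $G=\bigoplus_i X_i\oplus G_0$ supplies that justification.

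One small correction: the matrix map does not descend to normalized chains, because the idempotent $\iota 1_{X}\pi$ of a summand $X$ of $G$ is not the unit of $\mathcal{A}$. This is harmless, since length is defined via the unnormalized filtration $B_{\le n}(\mathcal{C})\otimes_{\mathcal{C}^e}\mathcal{C}_\Delta$. On unnormalized chains your map is exactly the inclusion of the Hochschild complex relative to the separable subalgebra $\bigoplus_Y \mathbb{C}\,e_Y$, so it is a length-preserving quasi-isomorphism.
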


\end{document}